\documentclass[reqno,twoside,11pt,english]{amsart}
\usepackage{amsmath,amsfonts,amssymb,amsthm,epsfig}
\newcommand{\RNum}[1]{\uppercase\expandafter{\romannumeral #1\relax}}
\usepackage{comment}
\usepackage{ulem}
\usepackage{marginnote}
\usepackage{color}
\usepackage[final,colorlinks,linkcolor=blue,anchorcolor=red,citecolor=blue]{hyperref}
\usepackage{graphicx}
\usepackage{titletoc,epsf}
\usepackage{amsmath,amsfonts,latexsym,amsthm,amsxtra,amssymb,bbm}
\allowdisplaybreaks
\usepackage{graphicx,float}
\newcommand{\Si}{\Sigma}
\newcommand{\lam}{\lambda}
\newcommand{\La}{\Lambda}

\newcommand{\ba}{\backslash}

\newcommand{\la}{\lambda}
\newcommand{\de}{\delta}
\newcommand{\De}{\Delta}
\newcommand{\na}{\nabla}
\newcommand{\Om}{\Omega}

\newcommand{\oo}{\infty}

\newcommand{\pa}{\partial}

\newcommand{\si}{\sigma}
\newcommand{\va}{\varphi}

\newcommand{\non}{\nonumber}

\newcommand{\al}{\alpha}
\newcommand{\ga}{\gamma}

\newcommand{\dist}{\text{\rm dist}}
\newcommand{\medint}{-\kern -,375cm\int}         %  average integral
\newcommand{\medintinrigo}{-\kern -,315cm\int}
\newcommand{\wto}{\rightharpoonup}                %  weak convergence

  \newcommand{\wq}{\infty}
 \newcommand{\cal}{\mathcal}

\newcommand{\3}{{\mathcal C}}

\newcommand{\HH}{{\mathcal H}}

\newcommand{\R}{{\mathbb R}}

\newcommand{\N}{{\mathbb N}}

\newcommand{\ep}{\varepsilon}
\newcommand{\su}{\subset}

\newcommand{\ri}{\rightarrow}

\def\supp{\textup{supp}}

\def\loc{\text{\rm loc}}

\newcommand{\reg}{\text{\rm reg}}
\newcommand{\sing}{\text{\rm sing}}
\newcommand{\spt}{\text{\rm spt}}

\newcommand{\Span}{\text{\rm span}}
\numberwithin{equation}{section}
\newtheorem{theorem}{Theorem}[section]

\newtheorem{corollary}[theorem]{Corollary}

\newtheorem{definition}[theorem]{Definition}

\newtheorem{lemma}[theorem]{Lemma}

\newtheorem{proposition}[theorem]{Proposition}
\newtheorem{remark}[theorem]{Remark}
\newtheorem*{remark*}{Remark}

\theoremstyle{definition}

\makeatother

\begin{document}

\title[Quantitative regularity of 1/2-harmonic maps]{Quantitative stratification and global regularity for 1/2-harmonic mappings}

 \author[C.Y. Guo, G.C. Jiang, C. Wang, C.L. Xiang, G.F. Zheng ]{Changyu Guo, Guichun Jiang$^*$, Changyou Wang,  Changlin Xiang, \\
 Gaofeng Zheng}

\address[Changyu Guo]{Research Center for Mathematics and Interdisciplinary Sciences, Shandong University, 266237, Qingdao, P. R. China, and Department of Physics and Mathematics, University of Eastern Finland, 80101, Joensuu, Finland}
\email{changyu.guo@sdu.edu.cn}

\address[Guichun Jiang]{School of Mathematics and Statistics, Hubei Normal University, and Huangshi Key Laboratory of Metaverse and Virtual Simulation, Huangshi 435002, China}
\email{gcjiang@hbnu.edu.cn}

\address[Changyou Wang]{Department of Mathematics, Purdue University, West Lafayette, IN 47907, USA}
\email{wang2482@purdue.edu}

\address[Changlin Xiang]{Three Gorges Mathematical Research Center, China Three Gorges University,  443002, Yichang,  P. R. China}
\email{changlin.xiang@ctgu.edu.cn}

\address[Gaofeng Zheng]{School of Mathematics and Statistics, and Key Lab NAA--MOE, Central China Normal University, Wuhan 430079,  P. R.  China}
\email{gfzheng@ccnu.edu.cn}

\keywords{1/2-harmonic maps; regularity theory; quantitative stratification; quantitative symmetry; singular set.}
	
\subjclass[2020]{53C43, 35J48}
\thanks{$^*$Corresponding author: Guichun Jiang}
%\thanks{C.-Y. Guo is supported by the Young Scientist Program of the Ministry of Science and Technology of China (No.~2021YFA1002200), the NSF of China (No.~12101362 and 12311530037), the Taishan Scholar Project and the NSF of Shandong Province (No.~ZR2022YQ01). C. Y. Wang is partially supported by NSF grants 1764417 and 2101224.  C.-L. Xiang is financially supported by the NSFC  (No.~12271296) and  the NSF of Hubei province (No. 2024AFA061). G.-F. Zheng is supported by the National Natural Science Foundation of China (No.~12271195 and No.~12171180). Jiang, Xiang and Zheng are also partly supported by the Open Research Fund of Key Laboratory of Nonlinear Analysis \& Applications  (Central China Normal University) (No.NAA2023ORG003), Ministry of Education, P. R. China.}

\date{\today}

\begin{abstract}
In this paper, we extend the celebrated global regularity theory of Naber-Valtorta [Ann. Math. 2017] to 1/2-harmonic mappings into manifolds. Inspired by their work, we first adapt Lin's defect measure theory [Ann. Math. 1999] to such maps building on the partial regularity established by Millot-Pegon-Schikorra [Arch. Ration. Mech. Anal. 2021]. Then apply it to show that the set of singular points of such maps can be quantitatively stratified via a new notion of boundary symmetry with the aid of 
{the celebrated 
harmonic extension method by Caffarelli-Silverstre}. As in that of Naber-Valtorta, developing the necessary 
quantitative regularity estimates, and then combining it with the Reifenberg type theorems and a delicate covering argument allow us to get sharp growth estimates on the volume of tubular neighborhood around singular points and establish 
the rectifiability of each singular stratum. 
\end{abstract}

%{\small
%	\noindent {\bf Keywords}: Biharmonic maps; regularity; compactness; dimension reduction; quantitative strata; Minkowski dimension.
%	\smallskip
%	\newline}

%\bigskip

\maketitle
\tableofcontents

%{\small
%	\keywords {\noindent {\bf Keywords:} 1/2-harmonic maps; regularity theory; quantitative stratification; quantitative symmetry; singular set.}
%\smallskip
%\newline
%\subjclass{\noindent {\bf 2020 Mathematics Subject Classification:} 53C43, 35J48}
%	\tableofcontents}
%\bigskip

\section{Introduction and main results}
In recent years, $1/2$-harmonic maps and the associated {heat} flows have attracted {considerable attention}, see for instance \cite{Da and Ri, Da and Ri 2, HSSW-2022-CPDE, Millot-Pegon-Schikorra-2021-ARMA, Millot-Pegon-2020, Millot-Sire-15, Struwe-2024-APDE} and the references therein. Indeed, it is not only a natural extension of harmonic maps, 
more importantly, but also has close connection
with many different subjects arising from geometry and mathematical physics, such as minimal surfaces with free boundary, free boundary valued harmonic maps, 
{the} Ginzburg-Landau {theory}  for {super}conductivity (see e.g. \cite{Berlyand, Da Lio-book-2018, F. L. T. Riviere, F. Da Lio and  A. Pigati, A. M. Fraser, 
Schikorra-2018-Anal, Struwe-1984}). Meanwhile,
{to handle the} nonlocal nature, many important local mathematical structures and theories have also been extended to this setting, see for instances \cite{DaLio-etal-2022-AMPA, Da and Ri, Da and Ri 2,Mazowiecka-Schikorra-2018}. In particular, the conservation law of Rivi\`ere \cite{Riviere-2007-Invent} and the integration by compensation theory of  \cite{CLMS-1993} 
have been successfully generalized to the nonlocal settings.

In this paper, we study quantitative stratification of the singular set of 1/2-harmonic maps into 
{compact Riemannian} manifolds {without boundary}.  Let $\Omega\subset\mathbb{R}^n$ be a bounded open set and $N\hookrightarrow\mathbb{R}^d$ be a smooth 
{compact Riemannian submanifold without boundary}. Set $\ga_{n}:=\pi^{-\frac{n+1}{2}} {\Gamma(\frac{n+1}{2})}$. Given a measurable map $u:\mathbb{R}^n\ri N$, 
{the}  $1/2$-Dirichlet energy $\mathcal{E}(u,\Om)$ in $\Omega$ is defined as
$$\mathcal{E}(u,\Om):=\frac{\ga_{n}}{4}\iint_{(\mathbb{R}^n\times\mathbb{R}^n)\ba(\Om^c\times\Om^c)}\frac{|u(x)-u(y)|^2}{|x-y|^{n+1}}dxdy. $$
To introduce the formal definition of 1/2-harmonic maps, let us first recall  the spaces
\[\widehat{H}^{1/2}(\Om, \mathbb{R}^d):=\Big\{u\in L_{\loc}^2(\mathbb{R}^n)\ \big|\ \mathcal{E}(u,\Om)<\oo\Big\}\]
and 
\[\widehat{H}^{1/2}(\Om, N):=\Big\{u\in \widehat{H}^{1/2}(\Om, \mathbb{R}^d)\ \big|\  u(x)\in N \text{ for almost every }   x\in\R^n\Big\}.\]
{An} $1/2$-harmonic map {is} defined as {a} critical point of the energy functional $\mathcal{E}(u,\Om).$ 
More precisely,  we have the following {definition}.
\begin{definition}\label{uy}
\begin{enumerate}
\item	{A map} $u\in \widehat{H}^{1/2}(\Om, N)$ is called a weakly $1/2$-harmonic map if
	\begin{eqnarray}\label{1.1}
	\frac{d}{dt}\Big|_{t=0}\mathcal{E}\big(  \pi_N({u+t\va}),\Om\big)=0\qquad \text{ for all }\,\va\in C_0^\infty(\Omega,\mathbb{R}^d).
	\end{eqnarray}
	Here {$\pi_N: N_\delta=\big\{y\in \R^d: {\rm{dist}}(y,N)<\delta\big\}\to N$} denotes the nearest point projection {map from $N_\delta$ to $N$, 
	which is smooth provided $\delta=\delta(N)>0$ is sufficiently small}.	
\item	If a weakly $1/2$-harmonic map $u\in \widehat{H}^{1/2}(\Om, N)$  satisfies {additionally}
	\begin{eqnarray}\label{1.1}
	\frac{d}{dt}\Big|_{t=0}\mathcal{E}\big(u(x+t\psi(x))\big)=0\qquad \text{for all }\,\psi\in C_0^\infty(\Omega,\mathbb{R}^n),
	\end{eqnarray}
	then it is called a stationary $1/2$-harmmonic map.
\item Finally, {a map} $u\in \widehat{H}^{1/2}(\Om, N)$ is called a minimizing $1/2$-harmonic map if
	\begin{eqnarray}\label{1.7}
	\mathcal{E}(u,\Om)\leq \mathcal{E}(v,\Om)
	\end{eqnarray}
	for all $v\in \widehat{H}^{1/2}(\Omega,N)$ with $v=u$ on $\R^d\setminus\Omega$.
	\end{enumerate}
\end{definition}

In {an} interesting work \cite{Da and Ri}, Da Lio and  Rivi\`{e}re showed that weakly $1/2$-harmonic maps from 1-dimensional line $\R$ into the unit sphere $\mathbb{S}^{m-1}$ are smooth. This result was generalized to general closed target $N$ in \cite{Da and Ri 2}. Regarding a $1/2$-harmonic map as the free boundary of a 
conformal minimal harmonic map, F. Da Lio and  A. Pigati proved in \cite[Theorem 1.4]{F. Da Lio and  A. Pigati} that weakly $1/2$-harmonic maps from a one dimensional submanifold (such as the unit circle $S^1$) into $N$ are smooth.

For a map $u:\R^n\to N$ and an open set $\Omega\subset \R^n$, we define its regular set $\reg(u)$ in $\Omega$ by
\[\reg(u)=\Big\{x\in \Omega: u \text{ is  continuous in a neighborhood of }x\Big\},\]
and its singular set $\sing(u)$ of $u$ by
\[
\sing(u):=\Om\backslash \reg(u).
\]
While weakly $1/2$-harmonic maps {smooth} in the (critical) dimension $n=1$. But,  in dimensions $n\ge 2$ (supercritical), weakly $1/2$-harmonic maps may be discontinuous everywhere as observed in \cite[Remark 1.4]{Millot-Pegon-2020} and thus a partial regularity theory for weakly $1/2$-harmonic maps is not possible. On the other hand, making use of the well-known harmonic extension approach of \cite{L. Caffarelli and L.Silvestre}, and also the regularity theory of free boundary-valued harmonic maps (see e.g. \cite{F. Duzaar and K. Steffen, F. Duzaar and K. Steffen 2, R. Hardt and F. H. Lin, Scheven-2006-MZ}), a partial regularity theory for stationary $1/2$-harmonic maps has been developed in \cite{Millot-Pegon-2020, Millot-Sire-15}. 

For our later reference, we  recall  the relevant (partial) regularity  results from \cite[Theorem 4.18 and Remark 4.24]{Millot-Sire-15} and \cite{Millot-Pegon-Schikorra-2021-ARMA} as follows.

\begin{theorem}[\cite{Millot-Sire-15}  \cite{Millot-Pegon-Schikorra-2021-ARMA}]\label{thm3}
	\begin{enumerate}
		\item If $u\in \widehat{H}^{1/2}(\Omega,N)$ is a weakly  $1/2$-harmonic map and $n=1$, then $u$ is smooth in $\Omega$;
		
		\item If $n\ge 2$ and $u\in \widehat{H}^{1/2}(\Omega,N)$ is a stationary  $1/2$-harmonic map, then $u\in C^\infty(\Om\ba \sing(u))$ and $\HH^{n-1}({\sing}(u))=0$ for  $n\geq2$;
		
		\item Furthermore, if $u\in \widehat{H}^{1/2}(\Omega,N)$ is a minimizing  $1/2$-harmonic map, then $\sing(u)$ is a locally finite 
		for $n=2$, and {the Hausdorff dimension} $\dim({\sing}(u))\le n-2$ for  $n\geq3$. 
	\end{enumerate}
	
%	(1) If $u\in \widehat{H}^{1/2}(\Omega,N)$ is a weakly  1/2-harmonic map and $n=1$, then $u$ is smooth in $\Omega$;
%	
% (2)	If $n\ge 2$ and $u\in \widehat{H}^{1/2}(\Omega,N)$ is a stationary  1/2-harmonic map, then $u\in C^\infty(\Om\ba \sing(u))$ and $\HH^{n-1}({\sing}(u))=0$ for  $n\geq2$;
%
% (3) Furthermore, if $u\in \widehat{H}^{1/2}(\Omega,N)$ is a minimizing  1/2-harmonic map, then $\sing(u)$ is a locally finite in $\Om$ for $n=2$, and $\dim({\sing}(u))\le n-2$ for  $n\geq3$. 
\end{theorem}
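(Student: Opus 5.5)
This theorem is recalled from \cite{Millot-Sire-15, Millot-Pegon-Schikorra-2021-ARMA}. The plan is to reproduce the standard route through the Caffarelli--Silvestre extension. Let $u^e$ be the Poisson (harmonic) extension of $u$ to $\R^{n+1}_+$. Then $\mathcal{E}(u,\Om)$ is, up to an additive constant, the Dirichlet energy $\frac12\int |\nabla u^e|^2$ over half-balls above $\Om$. A $1/2$-harmonic map corresponds to $u^e$ being harmonic with the boundary condition $\partial_\nu u^e \perp T_{u}N$ on $\Om\times\{0\}$; this is a free-boundary-type harmonic map problem.

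For (1), when $n=1$ I would not work with the extension. Instead I would write the Euler--Lagrange equation in the Da Lio--Rivi\`ere form $(-\Delta)^{1/4}v=\Omega v+\dots$, with an antisymmetric three-commutator structure. Using the Da Lio--Rivi\`ere commutator estimates, I would derive a Morrey-type decay $\|(-\Delta)^{1/4}u\|_{L^{2,\infty}(B_r)}\lesssim r^\alpha$ under small energy. This gives $C^{0,\alpha}$, and bootstrapping then gives smoothness. Small energy holds on small intervals because the problem is critical.

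For (2), the key input is stationarity. Stationarity gives the monotonicity formula: $r^{1-n}\int_{B_r^+(x,0)}|\nabla u^e|^2$ is nondecreasing in $r$. Next I would prove an $\varepsilon$-regularity theorem: if the normalized energy is at most $\varepsilon$, then $u$ is H\"older continuous on $B_{r/2}(x)$. This step is the main obstacle, and I would follow \cite{Millot-Pegon-Schikorra-2021-ARMA}. The first step there is to rewrite the equation via the $1/2$-Laplacian conservation law (a nonlocal analogue of Rivi\`ere's, with antisymmetric potential). The second step is to obtain Morrey decay of the energy using the monotonicity together with nonlocal compensation estimates. Higher regularity then follows by Schauder bootstrapping. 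Once this is in place, $\sing(u)\subset\{x:\liminf_r r^{1-n}\int_{B^+_r}|\nabla u^e|^2\ge\varepsilon\}$. A Vitali covering argument then gives $\HH^{n-1}(\sing(u))=0$, using absolute continuity of the energy measure.

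For (3), minimizers satisfy compactness: a sequence of minimizers with bounded energy converges strongly, and the limit is again a minimizer. I would prove this with a Luckhaus-type extension or interpolation lemma adapted to the extension problem. Tangent maps at singular points are then homogeneous minimizers. For $n=2$ these are $0$-homogeneous, and a homogeneous minimizer that is nonconstant must be singular only at the origin; this gives isolated singularities. For general $n$, I would apply Federer's dimension reduction. This needs the fact that $1/2$-harmonic maps from $\R$ cannot have a homogeneous singularity, which is (1) combined with homogeneity. Together these give $\dim\sing(u)\le n-2$.
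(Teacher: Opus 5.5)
The gap is in the $\varepsilon$-regularity step of (2). Your sketches of (1) (Da Lio--Rivi\`ere's three-term commutator argument) and (3) (compactness of minimizers, homogeneous tangent maps, Federer reduction) are the standard routes. In (3) the reduction ends at a $0$-homogeneous map of one variable, which is constant by (1) or simply by finiteness of its $\widehat{H}^{1/2}$-energy. Both the isolated singularities for $n=2$ and the reduction also need upper semicontinuity of $\sing$ under strong convergence; this follows from $\varepsilon$-regularity plus compactness.

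Now the gap. The argument of \cite{Millot-Pegon-Schikorra-2021-ARMA} that you propose to follow is specific to $N=\mathbb{S}^{d-1}$. Its compensation structure comes from the rotational symmetry of the sphere: it is a nonlocal analogue of the conservation law $\mathrm{div}(u^i\nabla u^j-u^j\nabla u^i)=0$ for sphere-valued harmonic maps. It also uses the explicit form $(-\Delta)^{1/2}u=\big(\frac{\gamma_n}{2}\int\frac{|u(x)-u(y)|^2}{|x-y|^{n+1}}dy\big)u$ of the equation. A general compact $N$ has no symmetry-induced conservation law. A ``nonlocal Rivi\`ere'' replacement would require building a fractional gauge under Morrey-type smallness in supercritical dimension, i.e. a nonlocal Rivi\`ere--Struwe theory, and the references you invoke do not provide this. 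So, as written, your (2) covers only spheres, while the statement is for arbitrary closed $N\hookrightarrow\R^d$.

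The missing idea is one you already set up: for $s=1/2$ the extension problem is a genuine free-boundary harmonic map problem. $u^e$ is harmonic in $\R^{n+1}_+$ with $u^e(\cdot,0)=u\in N$. By the Dirichlet-to-Neumann characterization \cite{Millot-Sire-15}, $\partial_z u^e(\cdot,0)$ is a multiple of $(-\Delta)^{1/2}u$, hence weakly orthogonal to $T_uN$ on $\Om$. Stationarity of $u$ gives stationarity of $u^e$ under domain variations tangent to $\partial\R^{n+1}_+$ \cite[Proposition 2.15]{Millot-Pegon-Schikorra-2021-ARMA}. So $u^e$ is a stationary harmonic map with free boundary on the support manifold $N$. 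Scheven's boundary $\varepsilon$-regularity \cite[Theorem 2.1]{Scheven-2006-MZ} then gives H\"older continuity of $u^e$ near $\mathbf{x}_0$ once $\Theta(u^e,B_r^+(\mathbf{x}_0))$ is small, and higher regularity follows from \cite{Gulliver-Jost1987}. This is exactly the input the paper uses in the proof of Proposition~\ref{prop: defect measure}. With it, your covering argument gives $\HH^{n-1}(\sing(u))=0$ for general $N$.

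For (3) with general $N$, the Luckhaus-type compactness you need is the partially constrained minimizing theory of Hardt--Lin \cite{R. Hardt and F. H. Lin} applied to $u^e$. The compactness result of \cite{Millot-Pegon-Schikorra-2021-ARMA} is again only for spheres.
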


This theory is analogous to the classical regularity theory of  harmonic maps, see e.g.  R. Schoen and K. Uhlenbeck  \cite{Schoen-Uhlenbeck-1982} for minimizing harmonic maps, 
{H\'elein \cite{Helein1991} and} Evans \cite{Evans-1991} and Bethuel \cite{Bethuel-1993} for stationary harmonic maps, and also the important work of L. Simon  \cite{Simon-book-1996} which  {initiated} the study the structure of singular sets of minimizing harmonic maps, and the seminar work of Lin \cite{Lin-1999-Annals} on the structure of singular sets of stationary harmonic maps. 

Far beyond the partial regularity theory, a major breakthrough was made recently by  Naber and Valtorta \cite{Naber-V-2017}, 
{who} proved the rectifiability (of every stratum) of the singular sets of minimizing and stationary harmonic maps (see Definition \ref{ddd} below for singular stratums). Based on this rectifiablity theory, they further obtained an a priori estimates for harmonic maps, which solves partially the regularity conjecture of Rivi\`ere \cite{Riviere-2007-Invent} on weakly harmonic maps. It is worth mentioning that this work plays an important role in their recent breakthrough \cite{Naber-Val-2025-arXiv} on the energy identity of harmonic maps in supercriticial dimensions. The techniques of \cite{Naber-V-2017}  have been recently extended  to  minimizing  and stationary biharmonic maps (see \cite{Breiner-Lamm-2015, Guo-Jiang-Xiang-Zheng}), 
giving similar rectifiability results on the singular stratums and optimal global regularity on the gradient of those maps. 

In view of the above progresses on harmonic maps,  and taking into account of its potential applications on the energy identity of stationary $1/2$-harmonic maps, it is natural to establish a quantitative stratification theory for singular sets of minimizing and stationary $1/2$-harmonic maps and then obtain  optimal  a priori estimates for gradients of  
$1/2$-harmonic maps. 
 
 To state our results, we introduce, for any given constant $\Lambda>0$, the subset
\begin{equation}\label{eq: bounded energy space}
\widehat{H}^{1/2}_\La(\Om, N)=\Big\{u\in \widehat{H}^{1/2}(\Om, N)\ \big|\  \mathcal{E}(u,\Om)<\La\Big\}.
\end{equation}
In this article, we shall always consider the problem in a ball
\[D_R(x_0)=\big\{x\in \mathbb{R}^n\ \big|\ |x-x_0|<R\big\}.\]
For simplicity, denote $D_R(0)$ by $D_R$ when the  ball is centered at the origin. For any set $A\subset \R^n$, we define the $r$-tubular neighborhood of $A$ in $\R^n$ as
\[ D_r(A)=\big\{x\in \R^n\ \big|\  \dist(x,A)\le r \big\}.\]

{Before stating our results, let $S_{\ep, r}^k(u)$ denote $k$-th quantitative singular stratum of $u$, defined by Definition 3.5 below,  
and 
$$S_\ep^k(u)=\bigcap_{r>0} S^k_{\ep, r}(u), \ \ S^k(u)=\bigcup_{\ep>0} S_\ep^k(u).$$}
Our first main result gives sharp growth estimates on the volume of tubular neighborhood around singular points and establishes the expected rectifiability of each stratum. 

\begin{theorem}[Stratification of stationary $1/2$-harmonic maps]\label{thm80}
Given $\Lambda>0$, let  $u\in \widehat{H}^{1/2}_\La(D_8, N)$ be a stationary $1/2$-harmonic map. Then, for each $\ep>0$ there exists $C_\ep=C_\ep(n,N,\Lambda,\ep)$ such that
\begin{align}\label{J}
\mbox{\rm Vol}(D_r(S^k_{\ep,r}(u))\cap D_1)\leq C_\ep r^{n-k},\qquad \text{for all }\,r\in(0,1].
\end{align}
Consequently, for all $r\in(0,1]$,
 \begin{align}\label{Jk}
 \mbox{\rm Vol}(D_r(S^k_{\ep}(u))\cap D_1)\leq C_\ep r^{n-k}.
 \end{align}
 Moreover, for each $k$, $S^k_\ep(u)$ and  $S^k(u)$ are $k$-rectifiable and  upper Ahlfors $k$-regular,  and  for $\HH^k$-almost every $x\in S^k(u)$, there exists a unique $k$-plane $V^k\subset \R^n$ such that every tangent map of $u$ at $x$ is $k$-symmetric with respect to $V^k$.
\end{theorem}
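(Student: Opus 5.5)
The plan is to run the Naber--Valtorta scheme on the Caffarelli--Silvestre extension. Let $u^e$ be the harmonic extension of $u$ to $\R^{n+1}_+$. Stationarity of $u$ gives a boundary monotonicity formula for the normalized energy
\[
\theta(x,r)=r^{1-n}\int_{B_r^+(x)}|\nabla u^e|^2\,dz ,\qquad x\in D_8,
\]
which is nondecreasing in $r$ and uniformly bounded by $C(n,\Lambda)$ on $D_4\times(0,2]$. Its derivative controls $\int r^{2-n}|\partial_r u^e|^2$ over half-annuli. The quantitative stratum $S^k_{\ep,r}$ is defined through boundary $(k,\ep)$-symmetry of $u^e$: closeness in $L^2$ on half balls to a map that is $0$-homogeneous and invariant along a $k$-plane $V\subset\R^n\times\{0\}$.

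\textbf{Step 1: rigidity and compactness.} Using the $\ep$-regularity and compactness theory of Theorem \ref{thm3}, together with Lin's defect-measure theory adapted to stationary $1/2$-harmonic maps, I prove two statements by contradiction.
\begin{itemize}
\item Quantitative rigidity: if the drop $\theta(x,r)-\theta(x,\gamma r)<\delta$, then $u$ is $(0,\ep)$-symmetric on $D_r(x)$.
\item Cone splitting: if $u$ is $(0,\delta)$-symmetric at points that $\rho r$-effectively span a $k$-plane, then $u$ is $(k,\ep)$-symmetric.
\end{itemize}
The delicate point is that weak limits of extensions may carry a defect measure. I would run the stratification on the pair (limit map plus defect measure $\nu$), showing that homogeneity and translation invariance pass to the measure. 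This needs the boundary-stationarity identity to persist in the limit. That identity follows from the energy monotonicity identity in the extension.

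\textbf{Step 2: the key $L^2$-best-approximation estimate.} For a measure $\mu$ on $S^k_{\ep,r}\cap D_r(x)$, I bound the Jones $\beta$-number by
\[
\beta^k_{\mu,2}(x,r)^2\le \frac{C}{r^k}\int_{D_r(x)} W_r(y)\,d\mu(y), \qquad W_r(y)=\theta(y,8r)-\theta(y,r).
\]
The proof follows Naber--Valtorta. Write the second moment matrix of $\mu$. If the $(k+1)$-st eigenvalue is large, then $\mu$ spreads in $k+1$ independent directions $v_i$. Testing $\partial_r u^e$ against these directions gives
\[
\lambda_{k+1}\int|v\cdot\nabla u^e|^2\lesssim \int|(z-y)\cdot\nabla u^e(z)|^2\,d\mu(y)\,dz .
\]
The right side is controlled by the monotonicity drop. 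The left side is bounded below by $\ep$-nonsymmetry along $k+1$ directions, using Step 1.

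I expect this estimate to be the main obstacle. The extension variable lives on the half space, so the directions $v$ must be horizontal. The boundary terms that arise from integrating over half balls, and the lack of interior equation on $\partial\R^{n+1}_+$, must be handled through the Neumann-type condition $\partial_{n+1}u^e\perp T_uN$.

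\textbf{Step 3: covering and conclusion.} With Step 2 in hand, the discrete and rectifiable Reifenberg theorems of Naber--Valtorta yield packing estimates. The argument runs an inductive covering in which the ball types are "good" or "bad", and energy drops by $\delta$ on bad balls; each point can be bad only boundedly many times, since $\theta\le C(\Lambda)$. This gives \eqref{J}, and \eqref{Jk} follows because $S^k_\ep\subset S^k_{\ep,r}$. Taking $r\to0$ gives upper Ahlfors regularity. Rectifiability of $S^k_\ep$ follows from the rectifiable Reifenberg theorem, using summability of $\sum_j W_{2^{-j}}$ $\mu$-a.e. Then $S^k$ is a countable union of these sets.

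For uniqueness of the tangent plane at $\HH^k$-a.e. point, the $\beta$-numbers tend to $0$ at points of density, by Step 2 and the finiteness of $\theta$. Hence the approximate tangent plane $V^k$ of $S^k_\ep$ exists. Cone splitting, applied to tangent maps at such points, forces every tangent map to be $k$-symmetric with respect to that $V^k$.
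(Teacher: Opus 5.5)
Your proposal is correct in outline and follows essentially the same route as the paper. Both arguments rest on the same ingredients: boundary $(k,\ep)$-symmetry of the Caffarelli--Silvestre extension $u^e$, its monotonicity formula, and Lin-type defect-measure compactness for the rigidity and cone-splitting statements. They then use the second-moment bound $\beta^k_{2,\mu}(x,r)^2\le Cr^{-k}\int W_r\,d\mu$ of Theorem \ref{ji}, and finish with the Naber--Valtorta covering argument (discrete Reifenberg, energy-drop iteration, rectifiable Reifenberg).

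The obstacle you anticipate in Step 2 does not arise. The estimate of Proposition \ref{hj} is pointwise in $\mathbf{z}$: you multiply the eigen-relation by $\nabla u^e(\mathbf{z})$, use $\int x\cdot v_k\,d\mu=0$ after centering at $x_{cm}$, and apply Cauchy--Schwarz. So no integration by parts, boundary terms or Neumann condition enter. The half-space geometry matters only through the horizontal directions $(v_k,0)$ and through unique continuation for the harmonic limit in the lower bound.
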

Recall that a  subset $A\subset \R^n$ is said to be upper Ahlfors $k$-regular, if there is a constant $M>0$ such that
\[\cal{H}^k(A\cap D_r(x))\le Mr^k\qquad \text{for all }\,x\in A \text{ and } 0<r<\text{diam}(A). \]

As an application of the volume estimate  in Theorem \ref{thm80}, we obtain optimal first order regularity estimate for minimizing 
$1/2$-harmonic maps. To state it, following Naber and Valtorta \cite{Naber-V-2017}, we introduce the notion of (scale invariant) regularity scale.

\begin{definition}[Regularity scale]\label{pppv}
Given a map $u\colon D_2(0)\ri N$  and $x\in D_1(0)$, we define the regularity scale $r_u(x)$ of $u$ at $x$ by
$$r_u(x):=\max\Big\{0\leq r\leq 1\ \big|\ \sup_{D_r(x)}\left(r|\nabla u(x)|\right)\leq1\Big\}.$$
\end{definition}

Our second main theorem provides a regularity estimate on the gradient of a minimizing $1/2$-harmonic map,
{as well as} an estimate on its regularity scale.
%By elliptic regularity theory for fractional Laplacian, if $u$ is a 1/2-harmonic mapping and $r_u(x)=r>0$, then $u\in C^\infty(D_{{r}/{2}}(x))$.

\begin{theorem}[Regularity estimates on minimizing $1/2$-harmonic maps]\label{thm84} There exists a positive constant  $C=C(n,N,\La)$ such that,
for any minimizing $1/2$-harmonic map $u\in \widehat{H}^{1/2}_\La(D_2(0), N)$, {for $0<r<1$} there holds
\begin{equation}\label{emhm}
\quad {\rm Vol}\big(\big\{x\in D_1(0) \big|\ |\nabla u(x)|>r^{-1}\big\}\big)\leq{\rm Vol}\big(\big\{x\in D_1(0) \big|\ r_u(x)<r\big\}\big)\leq Cr^2.
\end{equation}
In particular, we have $\na u\in L^{2,\wq}(D_1(0))$, the weak $L^2$-space on $D_1(0)$.
\end{theorem}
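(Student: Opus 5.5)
The plan follows Naber--Valtorta: reduce Theorem \ref{thm84} to the volume estimate \eqref{J} of Theorem \ref{thm80} with $k=n-2$, via an $\ep$-regularity statement saying that points of small regularity scale lie in a quantitative stratum. The first inequality in \eqref{emhm} is immediate from Definition \ref{pppv}. If $|\nabla u(x)|>r^{-1}$, then for every $s\ge r$ we have $s|\nabla u(x)|>1$, so no $s\ge r$ is admissible in the definition of $r_u(x)$. Hence $r_u(x)<r$, and it suffices to bound $\mathrm{Vol}(\{r_u<r\}\cap D_1)$.

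The key step is a quantitative $\ep$-regularity lemma for minimizers. There exists $\ep=\ep(n,N,\La)>0$ such that if $u$ is minimizing with energy bound $\La$ and $x\in D_1$, $0<s\le1$, and $u$ is $(n-1,\ep)$-symmetric on $D_{s}(x)$ (in the boundary-symmetry sense of Definition 3.5, via the Caffarelli--Silvestre extension), then $r_u(x)\ge s/2$, say. I would prove this by contradiction and compactness. Take a sequence $u_i$ of minimizers, rescaled to unit scale, with $\ep_i\to0$ and $r_{u_i}(0)<1/2$. The extensions converge, and for minimizers the convergence is strong in energy (no defect measure, by the minimizing comparison argument as in Schoen--Uhlenbeck / Millot--Pegon), to a minimizing $1/2$-harmonic map $u_\infty$ that is $(n-1)$-symmetric on $D_1$. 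By part (3) of Theorem \ref{thm3}, $\dim\sing(u_\infty)\le n-2$. An $(n-1)$-symmetric map is homogeneous and translation invariant along an $(n-1)$-plane, so it reduces to a $0$-homogeneous minimizing $1/2$-harmonic map of one variable. By part (1) of Theorem \ref{thm3} that map is smooth, hence constant. Thus $u_\infty$ is constant, and $\varepsilon$-regularity together with strong convergence gives uniform gradient bounds on $D_{3/4}$ for large $i$, contradicting $r_{u_i}(0)<1/2$. One also needs to rule out $(n-1)$-symmetric but non-$n$-symmetric limits; the dimension bound $n-2$ on the singular set handles this, since a nonconstant such map would be singular along an $(n-1)$-plane.

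Given the lemma, if $r_u(x)<r$ then $u$ is not $(n-1,\ep)$-symmetric on $D_s(x)$ for any $s\in[2r,1]$. This is exactly the condition $x\in S^{n-2}_{\ep,2r}(u)$, after matching scale conventions in Definition 3.5. Hence $\{r_u<r\}\cap D_1\subset D_{2r}(S^{n-2}_{\ep,2r}(u))\cap D_1$, and \eqref{J} with $k=n-2$ gives the bound $C_\ep (2r)^{2}$. Because minimizers are stationary, Theorem \ref{thm80} applies; the domain change from $D_8$ to $D_2$ is handled by a covering and rescaling argument. Finally, $\nabla u\in L^{2,\infty}(D_1)$ follows directly from the first inequality with $t=r^{-1}$: $\mathrm{Vol}(\{|\nabla u|>t\})\le Ct^{-2}$ for $t>1$, and the case $t\le1$ is trivial.

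The main obstacle is the compactness step in the $\ep$-regularity lemma, namely strong convergence of minimizers in the nonlocal energy, including the tail term. I would handle it through the extension: construct competitors by interpolating on annuli of the half-ball, following the Millot--Sire/Millot--Pegon arguments. A second point to verify is that minimality passes to the limit, so that part (3) of Theorem \ref{thm3} applies to $u_\infty$.
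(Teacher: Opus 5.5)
Your proposal is correct and matches the paper's proof. The paper likewise derives $\{x\in D_1(0): r_u(x)<r\}\subset S^{n-2}_{\ep,r}(u)$ (up to the scale constant $\kappa_2$, which you absorb by writing $s/2$) from the $\ep$-regularity Theorem~\ref{thm: sym-to-reg}, proved by compactness of minimizers plus the fact that an $(n-1)$-symmetric limit is a $0$-homogeneous $1/2$-harmonic map of one variable and hence constant, and then applies \eqref{J} with $k=n-2$; the only differences are cosmetic, namely that you fold the paper's $(n,\ep)$-regularity and symmetry self-improvement lemmas into one contradiction argument, and your extra appeal to $\dim \sing(u_\infty)\le n-2$ is redundant but harmless.
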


As was already observed in \cite[Theorem 1.4]{Millot-Pegon-2020}, the map $u\colon D_1^2\to S^1$ with $u(x)=\frac{x}{|x|}$ is a minimizing $1/2$-harmonic map 
with $|\nabla u(x)|\sim 1/r$, which belongs to $L^{2,\wq}(D_1)$ but not in $L^p$ for any  $p\ge 2$. This shows the sharpness of Theorem \ref{thm84} at least when $n=2$. 

As in the case of  harmonic maps, see e.g. \cite{Cheeger-Naber-2013-CPAM, Naber-V-2017}, the above regularity results can be  improved for stationary $1/2$-harmonic maps  under extra geometrical assumptions on the target similar to the case of stationary harmonic maps. This observation  first appeared in the celebrated work of F.H. Lin \cite{Lin-1999-Annals} 
dealing harmonic maps and then was generalized to bi-harmonic maps by Scheven \cite{Scheven-2008-ACV}  and Breiner-Lamm \cite{Breiner-Lamm-2015}. More precisely, we shall prove that a stationary/minimizing $1/2$-harmonic maps enjoy higher regularity if there are no non-constant smooth  ``$1/2$-harmonic spheres" of certain dimensions, that is, {non-constant smooth} $1/2$-harmonic maps from $\R^{\ell+1}\backslash \{0\}$ to $N$ {homogeneous of degree zero}. 
%But to deduce such regularity, we need first established a compactness result  for stationary 1/2-harmonic maps, see Theorem \ref{thm: compactness of stationry HM}. Then, following the approach of 
Similar to the case of harmonic maps (see Naber and Valtorta \cite{Naber-V-2017,Naber-V-2018}), we have the following improved regularity estimates for 
$1/2$-harmonic maps.

\begin{theorem}[Improved estimates on $1/2$-harmonic maps]\label{thm85}
Let $u\in \widehat{H}_\Lambda^{1/2}(D_8(0),N)$ be a stationary (minimizing resp.) $1/2$-harmonic map. Assume that for some $k\geq1$ there exists no  nonconstant smooth 0-homogeneous stationary (minimizing resp.)  1/2-harmonic maps $\R^{\ell+1}\backslash\{0\}\to N$ for all $1\leq \ell\leq k$. Then there exists a constant $C=C(m,K_N,\La)>0$ such that
\[
{\rm Vol}\big(\big\{x\in B_1(0):r|\nabla u|>1\big\}\big)\leq{\rm Vol}\big(\big\{x\in B_1(0):r_u(x)<r\big\}\big)\leq Cr^{2+k}.
\]
In particular, both $|\na u|$ and $r_u$ have uniform bounds in $L^{2+k,\infty}(B_1(0))$.
\end{theorem}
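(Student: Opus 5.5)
The plan is to reduce Theorem \ref{thm85} to the volume estimate \eqref{J} of Theorem \ref{thm80} with $k$ replaced by $n-3-k$ (stationary case) or $n-2-k$ type indices, by showing that the bad set $\{r_u<r\}$ is contained in a quantitative stratum of low dimension. Concretely, the key claim is an \emph{$\ep$-regularity theorem under no-bubble hypotheses}: there exists $\ep=\ep(n,N,\La,k)>0$ such that if $u$ is as in the statement and for some $x\in D_1$ and $s\in(0,1]$ the map $u$ is $(n-k-1,\ep)$-symmetric on $D_{2s}(x)$ (in the sense of the boundary symmetry of Section 3, i.e. its Caffarelli--Silvestre extension is $\ep$-close in $L^2$ to a $0$-homogeneous map invariant along an $(n-k-1)$-plane), then $r_u(x)\ge s/2$ (more precisely $r_u\ge cs$). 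Granting this, every $x$ with $r_u(x)<r$ lies in $S^{n-k-2}_{\ep,r}(u)$ (with the obvious adjustment of constants), so \eqref{J} gives ${\rm Vol}(\{r_u<r\}\cap D_1)\le C_\ep r^{n-(n-k-2)}=C r^{2+k}$. The first inequality is trivial since $|\nabla u(x)|>r^{-1}$ forces $r_u(x)<r$, and the weak $L^{2+k}$ bounds follow directly.

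The $\ep$-regularity claim is proved by contradiction and compactness. Suppose $u_i$ are counterexamples with $\ep_i\to0$, rescaled so $s=1$, $x=0$, and $r_{u_i}(0)<1/2$. By the compactness theory for stationary $1/2$-harmonic maps built from Lin's defect measure adaptation (earlier in the paper), after passing to a subsequence the extensions converge weakly with energy measures converging to $\mu=|\nabla u^e|^2dx+\nu$, and the limit is $0$-homogeneous and invariant along an $(n-k-1)$-plane $V$. Thus the limit $u$ restricted to $\R^n$ is, up to the $V$ factor, a $0$-homogeneous stationary $1/2$-harmonic map from $\R^{k+1}\setminus\{0\}$; its singular set in $\R^{k+1}$ is a cone, and by dimension reduction each nonzero point of this singular set would produce a tangent map which is a $0$-homogeneous map on $\R^{\ell+1}$ with $\ell<k$, nonconstant at a singular point (by the $\ep$-regularity of Theorem \ref{thm3}). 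By the hypothesis with $\ell$ running from $1$ to $k$ and iterating downward, no such smooth maps exist, so the limit must be constant. One must also exclude the defect measure: its support is $(n-1)$-rectifiable with density given by bubbles, and the same homogeneity/invariance forces it to be a union of planes of dimension $\ge n-k-1$; blowing up at a generic point of its support yields a nonconstant smooth $0$-homogeneous $1/2$-harmonic sphere $\R^{\ell+1}\setminus\{0\}\to N$ with $\ell\le k$ (this is where the stationary hypothesis and Lin's argument that bubbles at defect points are homogeneous tangent objects enter), contradiction. In the minimizing case the defect measure vanishes by strong compactness of minimizers, so only the first argument is needed.

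Hence $u_i\to$ constant strongly in energy on $D_1$, so for $i$ large the normalized energy at every point of $D_{3/4}$ and every small scale is below the $\ep$-regularity threshold of \cite{Millot-Pegon-Schikorra-2021-ARMA}, giving $\sup_{D_{1/2}}|\nabla u_i|\le C\delta_i\to0$, which contradicts $r_{u_i}(0)<1/2$. The main obstacle I expect is the defect-measure exclusion: showing that a nonzero defect measure supported on a plane of dimension $\ge n-k-1$ necessarily produces a smooth $0$-homogeneous $1/2$-harmonic map from $\R^{\ell+1}\setminus\{0\}$ with $\ell\le k$. I would attack it by taking a density point of $\nu$, blowing up the sequence there (diagonal argument) so that the new limit has energy concentrated on a plane, and then following Lin's construction of a homogeneous nonconstant tangent map in the normal directions, using the monotonicity formula for the extension to guarantee homogeneity and the partial regularity to select a smooth one after further dimension reduction.
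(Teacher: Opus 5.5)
Your overall architecture matches the paper's. Theorem \ref{thm85} is proved like Theorem \ref{thm84}: Theorem \ref{thm: sym-to-reg}(3)/(4) gives $\{r_u<cr\}\subset S^{n-k-2}_{\ep,r}(u)$, and then \eqref{J} applies. Your single contradiction argument merges the paper's compactness, $(n,\ep)$-regularity and symmetry self-improvement lemmas, which is only a repackaging. Your reduction of the limit to a $0$-homogeneous map on $\R^{k+1}$, followed by downward dimension reduction, is fine.

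The gap is in the step you yourself flag as the main obstacle: excluding the defect measure $\nu$ in the stationary case. The mechanism you propose does not work. At an $\HH^{n-1}$-density point of $\nu$, tangent blow-ups give $c\,\HH^{n-1}$ on a hyperplane together with a \emph{constant} weak limit (Step 1 of the proof of Theorem \ref{thm: compactness of stationary HM}). Monotonicity only gives homogeneity of such limits, so it never yields a nonconstant homogeneous map, and there is no ``homogeneous tangent map in the normal directions'' to extract. Already in Lin's setting the bubble is a finite-energy harmonic plane, not a tangent map. A nonconstant object appears only after rescaling at the concentration scale fixed by the normalization in Claim 2 of that proof. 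What you obtain there is $v_\infty$, a smooth, nonconstant, finite-energy $1/2$-harmonic map of the single normal variable. Its smoothness comes from that normalization plus $\ep$-regularity, not from dimension reduction. It is not homogeneous, and it always corresponds to $\ell=1$, never to some other $\ell\le k$.

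The missing idea is the conformal identification that links this bubble to the $\ell=1$ hypothesis. Suppose $g$ is $0$-homogeneous on $\R^2$; then $g^e$ is $0$-homogeneous on $\R^3_+$. So $g^e$ restricted to the hemisphere $S^2_+$ is harmonic for the round metric and equals $g$ on the equator. Its conormal derivative there is $-\partial_z g^e$, which is orthogonal to $T_gN$ exactly when $g$ is $1/2$-harmonic. Stereographic projection maps $S^2_+$ conformally onto the unit disc and fixes the equator pointwise. Hence $g$ is $1/2$-harmonic precisely when $g|_{S^1}$ is a $1/2$-harmonic circle.

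Conversely, the Cayley transform turns $v_\infty$ into a finite-energy $1/2$-harmonic map on $S^1$ minus one point. That point has zero capacity, so the equation extends across it as in the proof of Proposition \ref{prop: defect measure}(ii). The one-dimensional regularity theory (Theorem \ref{thm3}(1), and its $S^1$ version due to Da Lio--Pigati) then makes it smooth. Hence $\nu\neq 0$ produces a nonconstant smooth $0$-homogeneous $1/2$-harmonic map $\R^2\setminus\{0\}\to N$, which the case $\ell=1$ excludes. This is the content hidden in Lemma \ref{lem: compactness}(2), which invokes Theorem \ref{thm: compactness of stationary HM}.

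Also fix the order of your argument. You call the limit stationary before excluding $\nu$, but a weak limit of stationary maps is only weakly $1/2$-harmonic (Proposition \ref{prop: defect measure}). Strong convergence must be established first. It must also be reused for every tangent map in your dimension reduction, so that those tangent maps are stationary and the hypothesis for $\ell<k$ applies to them.
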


Before ending this section, let us briefly discuss the proofs of these theorems. As in the case of harmonic maps, Theorem \ref{thm80} would be the key result, and after it, all the later regularity theorems follow routinely. For the proof of Theorem \ref{thm80}, we will adapt the approach of A. Naber and D. Valtorta \cite{Naber-V-2017,Naber-V-2018}: 
\begin{enumerate}
	\item Establish the necessary quantitative $\epsilon$-regularity theorem.
	\item Combine it with the Discrete Reifenberg Theorem proved by \cite{Naber-V-2017}, together with a delicate refined covering argument from \cite{Naber-V-2018}, to prove the improved volume estimate \eqref{J} and the rectifiability of the quantitative stratum. 
\end{enumerate}
%establish the necessary quantitative $\epsilon$-regularity theorems and then combine it with the Discrete Reifenberg Theorem proved in \cite{Naber-V-2017}, together with a delicate refined covering argument from \cite{Naber-V-2018}, to prove the improved volume estimate \eqref{J} and rectifiability of the quantitative stratum.  
In case of stationary harmonic or biharmonic maps, the proofs of these theorems rely heavily on the monotonicity formula for rescaled energies. \textbf{In our case, it is not known whether such kind of monotonicity  formula  exists.} To overcome this difficulty, we use the well-known idea of Caffarelli-Silvestre in \cite{L. Caffarelli and L.Silvestre} and consider the harmonic extension $u^e$ of a stationary $1/2$-harmonic map $u$ (see \eqref{extension}). Then $u^e$ is harmonic function with Dirichlet boundary value $u$. The stationarity of $u$ implies $u^e$ is also stationary by \cite[Proposition 2.15]{Millot-Pegon-Schikorra-2021-ARMA} and the rescaled energy of $u^e$ enjoys a monotonicity formula by Theorem \ref{thm1}. Since $u^e$ is smooth in the extension direction $z$ by \eqref{extension}, the singular set of $u^e$ must lie on the original space $\R^n$ and hence coincides with the singular set of $u.$ \textbf{However, we do not know whether a similar relation holds for the quantitative stratification of the singular set of $u$ and $u^e$.} 
%At least we have not shown this yet. 
In order to overcome this difficulty, we introduce a new definition of quantitative stratification for the singular set of $u^e$, whose quantitative symmetry lies on the flat boundary of a half ball in $\R^{n+1}_+$ (see Definition \ref{def:quantitative symmetry}). Using this as the definition of quantitative stratification for the singular set of $u$, 
the quantitative symmetry of $u$ and $u^e$ coincide and the quantitative analysis can be 
achieved through the monotonicity formula of $u^e$.

The paper is arranged as follows. In Section \ref{sec:partial regularity},  we first recall some preliminaries and then establish an analog of Lin's defect measure theory for stationary 
$1/2$-harmonic maps, from which we obtain the classical stratification of singular sets and weak compactness theory for stationary $1/2$-harmonic maps, see Theorem \ref{thm: compactness of stationary HM}. In Section \ref{sec:Quantitative stratification}, we introduce a new quantitative symmetry and the quantitative stratification of singular set for stationary $1/2$-harmonic maps associated to this new quantitative symmetry. Then we derive some geometric properties of each stratum. In Section \ref{sec:Reifenberg theorems}, we recall the rectifiable-Reifenberg theorem of \cite{Naber-V-2017} and estimate Jones' number. In Section \ref{sec:covering lemma}, we prove a main covering lemma based on the delicate covering argument of Naber-Valtorta \cite{Naber-V-2018}. In the last two sections, we prove the main results of this paper.

\textbf{Notations.} Throughout the paper, we will use the following notations:
\begin{itemize}
	\item $\mathbf{x} = (x,z)$ with $x\in \mathbb{R}^n$ and $z\in\mathbb{R}$ denotes point in $\mathbb{R}^{n+1}$;
	\item $\mathbb{R}^n$ is often identified with $\pa\mathbb{R}^{n+1}_+=\mathbb{R}^n\times\{0\}.$  A set $A\su\mathbb{R}^n$ is also  identified with $A\times\{0\}\subset \pa\mathbb{R}^{n+1}_+.$
	\item   $B_r(\mathbf{x} )$ denotes the
	open ball in $\mathbb{R}^{n+1}$ of radius $r$ centered at $\mathbf{x} = (x,z)$;
	\item   $B^+_r(\mathbf{x})$ usually denotes the half	open ball in $\mathbb{R}^{n+1}_+$ of radius $r$ centered at $\mathbf{x} = (x,0)$;
	\item $D_r(x)$ denote the the open ball/disk  in $\mathbb{R}^n$ centered at $x$.
	\item For any set $A\subset \R^n$, we denote the $r$-tubular neighborhood of $A$ in $\R^n$ by
	\[ D_r(A)=\{x\in \R^n: \dist(x,A)\le r  \}.\]
\end{itemize}

  If $x = 0$, we simply write $B_r(x)$ and $D_r(x)$ as $B_r$ and $D_r$,
respectively. For an arbitrary set $G\su\mathbb{R}^{n+1}$, we write
$$G^+=G\cap\mathbb{R}^{n+1}_+\ \ \ \ \mbox{and}\ \ \ \ \pa^+G=\pa G\cap\mathbb{R}^{n+1}_+.$$
If $G\su\mathbb{R}^{n+1}_+$  is a bounded open set, we shall say that $G$ is admissible if
 $\pa G$ is Lipschitz regular; and  the (relative) open set $\pa^0 G\su\mathbb{R}^{n+1}_+$ defined by
$$\pa^0G=\Big\{\mathbf{x}\in\pa G\cap\pa \mathbb{R}^{n+1}_+:B_r^+(\mathbf{x})\su G\ \ \mbox{for some }\ r>0\Big\},$$
is nonempty and has Lipschitz boundary, such that $\pa G=\pa^+G\cup\overline{\pa^0G}$.

\section{Partial regularity and defect measure}\label{sec:partial regularity}

In this section, we collect some standard facts for $1/2$-harmonic maps, where the proof can be found in \cite{Millot-Sire-15,Millot-Pegon-Schikorra-2021-ARMA}.
First, we introduce some related function spaces. The Sobolev-Slobodeckij space $H^{1/2}(\Om)$ consists of functions $u\in L^2(\Om)$ satisfying
\[[u]^2_{{H}^{1/2}(\Om)} :=\frac{\ga_{n}}{2}\iint_{\Om\times\Om}\frac{|u(x)-u(y)|^2}{|x-y|^{n+1}}dxdy<\oo.\]
It is a separable Hilbert space with norm given by$\|\cdot\|^2_{H^{1/2}(\Om)}:=\|\cdot\|^2_{L^2(\Om)}+[\cdot]^2_{{H^{1/2}(\Om)}}$. The space $H^{1/2}_{\loc}(\Om)$ consists of functions which belongs to $H^{1/2}(\Om')$ for any  open subset {$\Om'\Subset\Om$}.   
The linear subspace $H_{00}^{1/2}(\Om)\su H^{1/2}(\mathbb{R}^n)$ is defined by
$$H_{00}^{1/2}(\Om):=\Big\{u\in  H^{1/2}(\mathbb{R}^n)\big|\ u=0\ a.e. \ x\in\mathbb{R}^n\ba\Om\Big\}.$$
Endowed with the induced norm, $H_{00}^{1/2}(\Om)$ is also a Hilbert space and
\[[u]^2_{{H}^{1/2}(\mathbb{R}^n)} =2\mathcal{E}(u,\Om) \qquad \forall\, u\in H_{00}^{1/2}(\Om).\]
If $\Om$ is a bounded open domain with sufficiently smooth boundary (for example, $\pa\Om$ is Lipschitz regular), then
$$H_{00}^{1/2}(\Om)=\overline{C_{c}^{\wq}(\Om)}^{\|\cdot\|_{H^{1/2}(\R^{n})}}.$$
%(see \cite[Theorem 1.4.2.2]{Gulliver-Jost-1987}). 
The topological dual space of $H_{00}^{1/2}(\Om)$ is denoted by $H^{-1/2}(\Om),$ that is,
\[
H^{-1/2}(\Om)=\left(H_{00}^{1/2}(\Om)\right)^{\ast}.
\]
\begin{lemma}[{\cite[Lemma 2.1]{Millot-Pegon-Schikorra-2021-ARMA}}]\label{lem:2.1}
	Let $D_\rho(x_0)\su\Om$. There exists a
	constant $C_\rho=C(\rho,n)>0$ such that
	$$\int_{\mathbb{R}^n}\frac{|u(x)|^2}{(|x-x_0|+1)^{n+1}}dx\leq C_\rho\big(\mathcal{E}(u,D_\rho(x_0))+\|u\|^2_{L^2(D_\rho(x_0))}\big)$$
	for every $u\in\widehat{H}^{1/2}(\Om).$
\end{lemma}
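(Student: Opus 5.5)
We split $\R^n$ into the near region $D_\rho(x_0)$ and the far region $\R^n\setminus D_\rho(x_0)$, and we control the far part through the cross term of the energy $\mathcal{E}(u,D_\rho(x_0))$. This cross term is available because, by definition, $\mathcal{E}(u,D_\rho(x_0))$ integrates over all pairs $(x,y)$ with at least one point in $D_\rho(x_0)$. Translating, we may take $x_0=0$ and write $D=D_\rho$.

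\emph{Near region.} On $D$ the weight satisfies $(|x|+1)^{-(n+1)}\le 1$. Hence the contribution of $D$ is at most $\|u\|^2_{L^2(D)}$.

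\emph{Far region.} Fix $x\notin D$. For every $y\in D$ we have
\[
|u(x)|^2\le 2|u(x)-u(y)|^2+2|u(y)|^2 .
\]
We average this over $y\in D$ and multiply by $(|x|+1)^{-(n+1)}$. The key geometric observation is that for $y\in D$ and $|x|\ge\rho$,
\[
|x-y|\le |x|+\rho\le C(\rho)(|x|+1),
\]
so that
\[
(|x|+1)^{-(n+1)}\le C(\rho,n)\,|x-y|^{-(n+1)} .
\]
Integrating over $x\in D^c$ then gives
\[
\int_{D^c}\frac{|u(x)|^2}{(|x|+1)^{n+1}}\,dx\le \frac{C}{|D|}\int_{D^c}\int_D\frac{|u(x)-u(y)|^2}{|x-y|^{n+1}}\,dy\,dx+\frac{2}{|D|}\,\|u\|^2_{L^2(D)}\int_{\R^n}\frac{dx}{(|x|+1)^{n+1}} .
\]
The last integral is finite because the exponent is $n+1>n$. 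The double integral is over $D^c\times D$, which lies in $(\R^n\times\R^n)\setminus(D^c\times D^c)$. It is therefore bounded by $\frac{4}{\ga_n}\mathcal{E}(u,D)$.

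\emph{Conclusion.} Adding the near and far bounds gives the estimate with $C_\rho$ depending only on $\rho$ and $n$. The only delicate point is the kernel comparison $|x-y|\lesssim |x|+1$ with a constant depending on $\rho$. This is routine because $y$ ranges over a bounded set. No property of $u$ beyond finiteness of $\mathcal{E}(u,\Om)\ge\mathcal{E}(u,D)$ is used.
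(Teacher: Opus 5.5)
Your proof is correct and is essentially the standard argument of the cited source \cite{Millot-Pegon-Schikorra-2021-ARMA}; the paper quotes the lemma without proof. That argument averages $|u(x)|^2\le 2|u(x)-u(y)|^2+2|u(y)|^2$ over $y\in D_\rho(x_0)$, compares the weight via $|x-y|\le\max\{1,\rho\}(|x-x_0|+1)$, and bounds the cross term by $\mathcal{E}(u,D_\rho(x_0))$, exactly as you do. Your near/far split is harmless but not needed, since $D\times D$ also lies in the energy domain.
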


By Lemma \ref{lem:2.1}, $\widehat{H}^{1/2}(\Om)$
is a Hilbert space for the scalar product induced by the norm 
$$\|u\|_{\widehat{H}^{1/2}(\Om)}=\big(\|u\|_{L^2(\Om)}+\mathcal{E}(u,\Om)\big)^{1/2}.$$
Given an open set $\Om\su\mathbb{R}^n$, the fractional Laplacian $(-\De)^{1/2}$ in $\Om$ is defined
as the continuous linear operator $(-\De)^{1/2}:\widehat{H}^{1/2}(\Om)\ri(\widehat{H}^{1/2})'$ induced by the
quadratic form $\mathcal{E}(\cdot,\Om)$. In other words, the weak form of the fractional Laplacian
$(-\De)^{1/2}u$ of a given function $u\in\widehat{H}^{1/2}(\Om)$ is defined through its action on $ \widehat{H}^{1/2}(\Om)$ by
\begin{equation}
\big\langle(-\De)^{1/2}u,\va\big\rangle_\Om=\frac{\ga_{n}}{2}\iint_{(\mathbb{R}^n\times\mathbb{R}^n)\ba(\Om^c\times\Om^c)}\frac{(u(x)-u(y))(\va(x)-\va(y))}{|x-y|^{n+1}}dxdy.
\end{equation}
Notice that the restriction of the linear form $(-\De)^{1/2}u$ to the subspace $H^{1/2}_{00}(\Om)$ belongs
to $H^{-1/2}(\Om)$ with the estimate $$\big\|(-\De)^{1/2}u\big\|^2_{H^{-1/2}(\Om)}\leq 2\mathcal{E}(u,\Om).$$
We say that $u_{i}\wto u$ in $\widehat{H}^{1/2}(B_{1})$, if
%\[\begin{aligned} & \lim_{i\to\wq}\iint_{(\R^{n}\times\R^{n})\backslash\Om^{c}\times\Om^{c}}\frac{(u_{i}(x)-u_{i}(y))\cdot(\va(x)-\va(y))}{|x-y|^{n+1}}dxdy\\& \qquad=\iint_{(\R^{n}\times\R^{n})\backslash\Om^{c}\times\Om^{c}}\frac{(u(x)-u(y))\cdot(\va(x)-\va(y))}{|x-y|^{n+1}}dxdy\end{aligned}\quad\quad\forall\,\va\in C_{c}^{\wq}(B^{n}).\]
\[
\big\langle(-\De)^{1/2}u_{i},\va\big\rangle_{\Om}\to\big\langle(-\De)^{1/2}u,\va\big\rangle_{\Om}\quad\quad\text{for all }\,\va\in C_{c}^{\wq}(B_1).
\]
\subsection{Monotonicity formula and partial regularity}

The following alternative {description} of weakly $1/2$-harmonic maps are well-known.
\begin{proposition}[{\cite[Remark 4.24 ]{Millot-Sire-15}}]\label{Millot-Sire}
	Let $\Om\subset\mathbb{R}^n$ be a bounded open set.
	A map $u\in \widehat{H}^{1/2}(\Omega,N)$ is a weakly $1/2$-harmonic map in $\Om$ if and only if
	\begin{equation*}
	\big\langle(-\De)^{1/2}u,\va\big\rangle_{\Om}=0
	\end{equation*}for every $\va\in H^{1/2}_{00}(\Om,\mathbb{R}^d)$ satisfying  
	{$\va(x)\in T_{u(x)} N$} for almost everywhere $x\in \Om.$
\end{proposition}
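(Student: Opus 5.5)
We prove the equivalence by computing the first variation in \eqref{1.1} and matching it with tangential test functions. Fix $\va\in C_0^\infty(\Om,\mathbb{R}^d)$ and set $u_t=\pi_N(u+t\va)$. This is well defined for $|t|$ small because $u\in N$ a.e. and $\va$ is bounded, so $u+t\va\in N_\delta$. Since $\pi_N$ is smooth on $N_\delta$ and $\pi_N(u)=u$, we have
\[
u_t=u+t\,d\pi_N(u)\va+R_t,\qquad |R_t|\le Ct^2|\va|^2,
\]
and the remainder is also controlled in $\widehat H^{1/2}$. This holds because $|\pi_N(a)-\pi_N(b)|\le C|a-b|$, and the second-order Taylor remainder of $\pi_N$ is Lipschitz with constant $O(t^2)$ in the relevant arguments. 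Writing $\psi:=d\pi_N(u)\va=P_{T_uN}\va$, which is supported in $\Om$, the quadratic form expands as $\mathcal E(u_t,\Om)=\mathcal E(u,\Om)+t\langle(-\De)^{1/2}u,\psi\rangle_\Om+o(t)$. Hence \eqref{1.1} is equivalent to
\[
\big\langle(-\De)^{1/2}u,P_{T_uN}\va\big\rangle_\Om=0\qquad\text{for all }\va\in C_0^\infty(\Om,\mathbb{R}^d).
\]
We must check that $\psi\in\widehat H^{1/2}(\Om)$ with compact support in $\Om$, so that it lies in $H^{1/2}_{00}(\Om)$. For this we use
\[
|\psi(x)-\psi(y)|\le C|\va(x)-\va(y)|+C\|\va\|_\infty|u(x)-u(y)|,
\]
which follows from the smoothness of $P_{T_yN}$ in $y$. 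Lemma \ref{lem:2.1} and $\mathcal E(u,\Om)<\infty$ then give finiteness of the relevant double integrals. Note that $\psi$ need not be continuous.

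The converse direction ($\Leftarrow$) is immediate from this: $P_{T_uN}\va$ is an admissible test function in the proposition, so it has zero pairing. For the forward direction ($\Rightarrow$), let $\va\in H^{1/2}_{00}(\Om,\mathbb{R}^d)$ with $\va(x)\in T_{u(x)}N$ a.e. Then $\va=P_{T_uN}\va$. We choose $\va_k\in C_c^\infty(\Om)$ with $\va_k\to\va$ in $H^{1/2}(\mathbb{R}^n)$, using density of $C_c^\infty$ in $H^{1/2}_{00}$ when $\partial\Om$ is Lipschitz, or working on Lipschitz subdomains in general. We then want $P_{T_uN}\va_k\to P_{T_uN}\va=\va$ in $H^{1/2}_{00}$, or at least weakly, which suffices because $(-\De)^{1/2}u\in H^{-1/2}(\Om)$.

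This convergence is the main obstacle. The product of the merely $\widehat H^{1/2}$ field $P_{T_uN}$ with $\va_k-\va$ is not obviously bounded in $H^{1/2}$, since the cross term $|u(x)-u(y)|^2|\va_k-\va|^2(y)/|x-y|^{n+1}$ requires an $L^\infty$ bound on $\va_k-\va$. To handle this we first truncate: we approximate $\va$ by bounded tangential fields $P_{T_uN}(T_M\va)$, where $T_M$ is a smooth radial truncation, and use $|P(y)\va(y)-P(x)\va(x)|$ estimates together with dominated convergence. We then approximate bounded $\va$ by mollified $\va_k$ that are uniformly bounded. With uniform $L^\infty$ bounds, the cross term converges to zero by dominated convergence, since $|u(x)-u(y)|^2/|x-y|^{n+1}$ is integrable off $\Om^c\times\Om^c$ and $\va_k\to\va$ a.e. along a subsequence. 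This yields weak convergence, and hence $\langle(-\De)^{1/2}u,\va\rangle_\Om=\lim_k\langle(-\De)^{1/2}u,P_{T_uN}\va_k\rangle_\Om=0$.
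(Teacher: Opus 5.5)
The paper gives no proof of this proposition; it quotes Millot--Sire. Your route is the standard one. The expansion $\pi_N(u+t\varphi)=u+tP_{T_uN}\varphi+R_t$, with $R_t$ of size $O(t^2)$ in the $\widehat H^{1/2}$ seminorm, identifies the derivative in the definition with $\langle(-\Delta)^{1/2}u,P_{T_uN}\varphi\rangle_\Omega$. This gives the ``if'' direction at once, and that direction needs no regularity of $\Omega$. The ``only if'' direction is then a density argument. There, the uniform $L^\infty$ bounds on the approximants let dominated convergence handle the cross term $|u(x)-u(y)|^2|(\varphi_k-\varphi)(y)|^2|x-y|^{-n-1}$. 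This is correct whenever $C^\infty_c(\Omega)$ is dense in $H^{1/2}_{00}(\Omega)$, e.g.\ for Lipschitz $\partial\Omega$. That is the case the paper flags just before the statement, and the only one it uses (balls).

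\textbf{The gap: the aside ``or working on Lipschitz subdomains in general'' does not hold up.} To localize, you would first need $\varphi$ to be a limit of fields compactly supported in $\Omega$, and that is exactly the density that can fail. Take $\Omega=B\setminus K$ with $K\subset B$ compact, $|K|=0$, and positive $H^{1/2}$-capacity (say a Cantor set of dimension in $(n-1,n)$). Then $H^{1/2}_{00}(\Omega)=H^{1/2}_{00}(B)$ contains functions equal to $1$ on $K$, whereas every $H^{1/2}$-limit of functions in $C^\infty_c(\Omega)$ vanishes quasi-everywhere on $K$. For such $\Omega$ even the linear analogue of the ``only if'' direction is false: take $w$ with $\langle(-\Delta)^{1/2}w,\cdot\rangle_B=\mu$ for a nonzero finite-energy measure $\mu$ on $K$. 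So you should state the density hypothesis explicitly rather than claim the general case.

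\textbf{Smaller repairs.}
\begin{itemize}
\item Mollifying $\varphi$ does not by itself give support compactly inside $\Omega$. Instead, take $\tilde\varphi_k\in C^\infty_c(\Omega)$ from the density and compose with a smooth truncation $T_{2M}$, where $M=\|\varphi\|_\infty$. The resulting $\varphi_k$ are uniformly bounded, bounded in $H^{1/2}$, and convergent in $L^2$. Hence $P_{T_uN}\varphi_k$ is bounded in $H^{1/2}_{00}(\Omega)$ and converges weakly to $\varphi$, which is all you need since $(-\Delta)^{1/2}u\in H^{-1/2}(\Omega)$.
\item In the truncation step, $T_M\varphi$ is already tangential, because it is a scalar multiple of $\varphi$. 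So $P_{T_uN}(T_M\varphi)=T_M\varphi\to\varphi$ in $H^{1/2}$ follows from the uniform Lipschitz bound on $T_M$ alone; no estimate on $P$ is needed there.
\item In the $O(t^2)$ Lipschitz bound for $R_t$, the segment from $u(x)$ to $u(y)$ may leave $N_\delta$. Either extend $\pi_N$ to a smooth map on $\mathbb{R}^d$ with bounded derivatives, or treat $|u(x)-u(y)|\ge\delta/2$ separately using $|R_t|\le Ct^2\|\varphi\|_\infty^2$.
\end{itemize}
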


In the case $N=\mathbb{S}^{d-1}$, we can write down the equation for $1/2$-harmonic maps:
\begin{equation*}
(-\De)^{1/2}u(x)=\Big(\frac{\ga_{n}}{2}\int_{\mathbb{R}^n}\frac{|u(x)-u(y)|^2}{|x-y|^{n+1}}dy\Big)u(x)\qquad \text{in } \mathcal{D}'(\Om).
\end{equation*}
\begin{definition}
	Let $\Om\subset\mathbb{R}^n$ be a bounded open set. Given a map $u\in \widehat{H}^{1/2}(\Omega,\mathbb{R}^d)$
	and a vector field $X\in C^1_0(\Om; \mathbb{R}^n)$, the first (inner)
	variation of $\mathcal{E}(\cdot,\Om)$ at $u$ and evaluated at $X$ is defined as
	$$\de\mathcal{E}(u,\Om)[X]:=\frac{d}{dt}\Big|_{t=0}\mathcal{E}(u\circ\phi_{-t},\Om),$$
	where $\{\phi_t\}_{t\in\mathbb{R}^n}$ denotes the integral flow on $\mathbb{R}^n$ generated by $X$, that is, for every
	$x\in\mathbb{R}^n$, the map $t\mapsto\phi_t(x)$  is defined as the unique solution of the ordinary
	differential equation
	\begin{equation*}
	\begin{cases}\frac{d}{dt}\phi_t(x)=X(\phi_t(x)),\\
	\phi_0(x)=x.
	\end{cases}
	\end{equation*}
\end{definition}

Given a measurable function $u$ defined on $\mathbb{R}^n,$ we shall denote by $u^e$ its harmonic extension to the half-space $\mathbb{R}^{n+1}_+$. That is,
\begin{equation}\label{extension}
u^e(x,z):=\ga_{n}\int_{\mathbb{R}^n}\frac{zu(y)}{(|x-y|^2+z^2)^\frac{{n+1}}{2}}dy.\end{equation}
Then $u^e$ solves
\begin{eqnarray}\label{harmonic function}
\begin{cases}
\De u^e=0\ &\mbox{in}\ \mathbb{R}^{n+1}_+,\\
u^e=u\ &\mbox{on}\ \pa\mathbb{R}^{n+1}_+.
\end{cases}
\end{eqnarray}
For $\mathbf{x}_0=(x_0,0)\in\mathbb{R}^n\times\{0\}$, we define the Dirichlet energy $\mathbf{E}(u^e,B_r^+(\mathbf{\mathbf{x}_0}))$ in $B_r^+(\mathbf{\mathbf{x}_0})$ by
$$\mathbf{E}(u^e,B_r^+(\mathbf{\mathbf{x}_0})):=\frac{1}{2}\int_{B_r^+(\mathbf{x}_0)}|\na u^e|^2d\mathbf{x}.$$
%with $\mathbf{x}_0=(x_0,0)\in\mathbb{R}^n\times\{0\}.$

The following monotonicity formula plays a central role in the regularity issue of stationary $1/2$-harmonic maps.
\begin{theorem}[Monotonicity formula,~\cite{Millot-Pegon-Schikorra-2021-ARMA,Scheven-2006-MZ}]\label{thm1}
	Let $\Om\su\mathbb{R}^n$ be a bounded open set. If $u\in \widehat{H}^{1/2}(\Omega,N)$ is a stationary $1/2$-harmonic map in $\Om$, then for every $\mathbf{x}_0=(x_0,0)\in\Om\times\{0\},$  the ``density function''
		$$r\in(0,dist(x_0,\Om^c))\mapsto\Theta(u^e,B_r^+(\mathbf{\mathbf{x}_0})):=\frac{1}{r^{n-1}}\mathbf{E}(u^e,B_r^+(\mathbf{\mathbf{x}_0}))$$
	is {monotonically} nondecreasing. Moreover, for every $0<\rho<r<\dist(x_0,\Om^c)$, it holds
	\begin{eqnarray}\label{1.8}
	\Theta(u^e,B_r^+(\mathbf{\mathbf{x}_0}))-\Theta(u^e,B_\rho^+(\mathbf{\mathbf{x}_0}))= \int_{B_r^+(\mathbf{x}_0)\ba B_\rho^+(\mathbf{x}_0)}\frac{|(\mathbf{x}-\mathbf{x}_0)\cdot\na u^e|^2}{|\mathbf{x}-\mathbf{x}_0|^{n+1}}d\mathbf{x}.
	\end{eqnarray}	
	%for every $0<\rho<r<dist(x_0,\Om^c).$
	%In particular, upon redefining $\Phi_u(a,\cdot)$ on a set of measure zero, we may assume \eqref{1.8} holds for all $0<\rho<r\leq R/4$ and thus $r\mapsto\Phi_u(a,r)$ is monotonically nondecreasing.
\end{theorem}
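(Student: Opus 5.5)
The plan is to pass to the extension and run the usual Pohozaev/domain-variation computation there. By \cite[Proposition 2.15]{Millot-Pegon-Schikorra-2021-ARMA}, stationarity of $u$ implies that $u^e$ is stationary in the following boundary sense. For every vector field $\mathbf{X}=(X,X_{n+1})\in C^1_c(B_R^+(\mathbf{x}_0)\cup\pa^0 B_R^+(\mathbf{x}_0);\mathbb{R}^{n+1})$ that is tangent to $\pa\mathbb{R}^{n+1}_+$ (so $X_{n+1}=0$ on $\{z=0\}$), we have
\[
\int_{\mathbb{R}^{n+1}_+}\Big(|\na u^e|^2\,\mathrm{div}\,\mathbf{X}-2\sum_{i,j}\pa_i u^e\cdot\pa_j u^e\,\pa_i\mathbf{X}^j\Big)d\mathbf{x}=0 .
\]
The idea behind this is that $\mathcal{E}(u,\Om)$ equals the minimal Dirichlet energy of extensions, up to a term that does not depend on the inner variation; this lets one convert $\de\mathcal{E}(u,\Om)[X]=0$ into the displayed identity for $u^e$. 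I will take this as given, as the paper allows.

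Next, fix $\mathbf{x}_0=(x_0,0)$ and test the identity with the radial field $\mathbf{X}(\mathbf{x})=\eta(|\mathbf{x}-\mathbf{x}_0|)(\mathbf{x}-\mathbf{x}_0)$, where $\eta$ is a cutoff that approximates $\chi_{[0,r)}$. This field is admissible because its last component is $\eta z$, which vanishes on $\{z=0\}$. Write $s=|\mathbf{x}-\mathbf{x}_0|$. Then $\mathrm{div}\,\mathbf{X}=(n+1)\eta+s\eta'$ and $\pa_i\mathbf{X}^j=\eta\de_{ij}+\eta'(s)\frac{(x-x_0)_i(x-x_0)_j}{s}$. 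Substituting gives
\[
(n-1)\int\eta|\na u^e|^2+\int s\eta'\,|\na u^e|^2-2\int s\eta'\,|\pa_s u^e|^2=0 .
\]
Let $\eta\to\chi_{[0,r)}$. For a.e.\ $r$ this yields
\[
(n-1)\int_{B_r^+}|\na u^e|^2=r\int_{\pa^+B_r^+}\big(|\na u^e|^2-2|\pa_s u^e|^2\big)\,d\HH^n .
\]
Set $E(r)=\mathbf{E}(u^e,B_r^+(\mathbf{x}_0))$, which is absolutely continuous with $E'(r)=\frac12\int_{\pa^+B_r^+}|\na u^e|^2$. Then
\[
\frac{d}{dr}\big(r^{1-n}E(r)\big)=r^{1-n}E'-(n-1)r^{-n}E=r^{1-n}\int_{\pa^+B_r^+}|\pa_s u^e|^2\,d\HH^n .
\]
Integrating from $\rho$ to $r$ and using the coarea formula turns the right-hand side into
\[
\int_{B_r^+\ba B_\rho^+}\frac{|(\mathbf{x}-\mathbf{x}_0)\cdot\na u^e|^2}{|\mathbf{x}-\mathbf{x}_0|^{n+1}}\,d\mathbf{x},
\]
because $|\pa_s u^e|^2=|(\mathbf{x}-\mathbf{x}_0)\cdot\na u^e|^2/s^2$ and $s^{1-n}\cdot s^{-2}=s^{-(n+1)}$. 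This is exactly \eqref{1.8}, and nonnegativity of the integrand gives the monotonicity.

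The main obstacle is justifying the boundary stationarity identity for $u^e$ and the admissibility of radial test fields that do not vanish on the flat part. One needs $\na u^e\in L^2_{\loc}$ up to $\{z=0\}$, which follows from $\mathcal{E}(u,\Om)<\oo$ together with Lemma \ref{lem:2.1}, which controls the tails of $u$ and hence of $u^e$. One also has to check that the flat boundary contributes no term. Each component $u^e$ is harmonic, so interior variations vanish automatically, and the boundary contribution is exactly the inner variation of $u$, which is zero by stationarity. Finally, the limit $\eta\to\chi_{[0,r)}$ is valid for a.e.\ $r$ by the Lebesgue differentiation theorem. The identity then extends to all $0<\rho<r<\dist(x_0,\Om^c)$ because both sides are continuous in $r$: $E$ is absolutely continuous and the integrand on the right is integrable.
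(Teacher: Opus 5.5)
Your proposal is correct. The paper gives no proof of its own and simply quotes \cite{Millot-Pegon-Schikorra-2021-ARMA,Scheven-2006-MZ}, where the formula is derived exactly as you do: insert the boundary-tangent radial field $\eta(|\mathbf{x}-\mathbf{x}_0|)(\mathbf{x}-\mathbf{x}_0)$ into the stationarity identity for $u^e$ (\cite[Proposition 2.15]{Millot-Pegon-Schikorra-2021-ARMA}, where the weight term drops out for $s=1/2$), then integrate the resulting Pohozaev identity in $r$. Your computations of $\mathrm{div}\,\mathbf{X}$, of the derivative of $r^{1-n}\mathbf{E}$, and of the coarea step all check out, and they give the stated constant (no extra factor $2$).
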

Using this monotonicity formula, we may define the density function $\Xi(u,\cdot): \Omega\to [0,\infty)$ of a stationary $1/2$-harmonic map $u\colon \Omega\to N$ by
 \begin{equation}\label{eq: density of map}
\Xi(u,x_0):=\lim_{r\searrow 0} \Theta(u^e,B_r^+( {\mathbf{x}_0})),\qquad x_0\in \Omega.
 \end{equation}
Letting $\rho\to 0$ in \eqref{1.8}, we obtain that for every $0<r<\dist(x_0,\pa\Omega)$ and $\mathbf{x}_0=(x_0,0)\in\Om\times\{0\}$,
\[ \Theta(u^e,B_r^+( {\mathbf{x}_0}))=\Xi(u,x_0)+ \int_{B_r^+(\mathbf{x}_0)} \frac{|(\mathbf{x}-\mathbf{x}_0)\cdot\na u^e|^2}{|\mathbf{x}-\mathbf{x}_0|^{n+1}}d\mathbf{x}
\]

Next, we define the singular set of a $1/2$-harmonic map as follows.
\begin{definition}\label{singset}
	For any $1/2$-harmonic map $u\in \widehat{H}^{1/2}(\Omega,N)$, we define its regular set $\reg(u)$ by
	\[\reg(u)=\big\{x\in \Omega: u \text{ is  continuous in a neighborhood of }x\big\},\]
	and the singular set $\sing(u)$ of $u$ by
	\[
	\sing(u):=\Om\backslash \reg(u).
	\]
\end{definition}

\begin{remark}\label{singular set}
	{\rm Since $u^e$ is smooth in $\R^{n}\times (0,\infty)$ by \eqref{extension}, we have
	$$\sing(u^e)=\sing(u)\times\{0\}.$$
	This allows us to study $\sing(u)$ through the boundary singularity of $u^e$.}
\end{remark}

Thanks to the monotonicity formula, the following partial regularity result holds, 
{see the proof by} \cite[Theorem 4.18 and Remark 4.24]{Millot-Sire-15} and  \cite[Theorem
5.1]{Millot-Pegon-Schikorra-2021-ARMA}.   
%\textbf{(Need to check whether the following theorem holds for general targets)}

\begin{theorem}[Partial regularity] \label{thm: partial regularity of MPS}
	There exist $\varepsilon_{1}=\varepsilon_{1}(n)>0$ and $\kappa_{2}=\kappa_{2}(n)\in(0,1)$
	such that the following holds. Let $u\in\widehat{H}^{1/2}\left(D_{2R};N\right)$
	be a weakly $1/2$-harmonic map in $D_{2R}$ such that {for every $\mathbf{x}\in\partial^{0}B_{2R}^{+}$}, the function
	$\boldsymbol{\Theta}\left(u^{\mathrm{e}},\mathbf{x},r\right)$ 
	is {monotonically nondecreasing for $r\in(0,2R-|\mathbf{x}|)$}.
	If
	\[
	\boldsymbol{\Theta}\left(u^{\mathrm{e}},0,R\right)\leqq\varepsilon_{1},
	\]
	then $u\in C^{0,1}\left(D_{\kappa_{2}R}\right)$ and
	\[
	R^{2}\|\nabla u\|_{L^{\infty}\left(D_{\kappa_{2}R}\right)}^{2}\leqq C_{2}\boldsymbol{\Theta}\left(u^{\mathrm{e}},0,R\right),
	\]
	for a constant $C_{2}=C_{2}(n)>0$.\end{theorem}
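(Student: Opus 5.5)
The strategy is to pass to the Caffarelli--Silvestre extension and run an $\varepsilon$-regularity argument of Schoen--Uhlenbeck/Bethuel type for harmonic maps with free boundary on $\partial\mathbb{R}^{n+1}_+$, in the spirit of Scheven~\cite{Scheven-2006-MZ}. By scaling we may take $R=1$.

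\textbf{Step 1: smallness at every boundary point and scale.} The hypothesis is that $\boldsymbol{\Theta}(u^e,\cdot,r)$ is monotone at every $\mathbf{x}\in\partial^0B^+_2$. For $|x|<1/2$ and $r\le 1/2$ we have $B_r^+(\mathbf{x})\subset B_1^+$, so
\[
\boldsymbol{\Theta}(u^e,\mathbf{x},r)\le \boldsymbol{\Theta}(u^e,\mathbf{x},1/2)\le 2^{n-1}\boldsymbol{\Theta}(u^e,0,1)\le 2^{n-1}\varepsilon_1 .
\]
Interior points $\mathbf{x}=(x,z)$ with $z>0$ are handled in the same way. One uses interior monotonicity for the harmonic function $u^e$ on balls $B_r(\mathbf{x})$ with $r<z$, and compares with the half ball centred at $(x,0)$ when $r\ge z$. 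This gives small normalized energy $r^{1-n}\int_{B_r(\mathbf{x})\cap\mathbb{R}^{n+1}_+}|\nabla u^e|^2\le C\varepsilon_1$ uniformly on all balls in $B^+_{1/2}$. Consequently $u^e$ has small BMO norm there, and by the trace theory (Lemma~\ref{lem:2.1} controlling tails) so does $u$ on $D_{1/2}$.

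\textbf{Step 2: Hölder continuity via the equation.} Using Proposition~\ref{Millot-Sire}, $u^e$ solves $\Delta u^e=0$ in $\mathbb{R}^{n+1}_+$ with the nonlinear Neumann condition
\[
\partial_\nu u^e=(-\Delta)^{1/2}u\perp T_uN \quad\text{on } D_1 .
\]
The first attempt is the Schoen--Uhlenbeck/Scheven reflection-plus-decay argument. Write $\partial_\nu u^e$ through the second fundamental form. Alternatively, use the Da Lio--Rivière antisymmetric structure,
\[
(-\Delta)^{1/2}u=\Omega\cdot\nabla^{1/2}u+\text{compensated terms},
\]
with $\Omega$ antisymmetric and small in the Morrey-type norm from Step 1. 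Compare on each half ball with the harmonic replacement having Neumann data zero, estimate the error by the smallness, and iterate. This yields a Campanato decay $\boldsymbol{\Theta}(u^e,\mathbf{x},\theta r)\le \tfrac12\boldsymbol{\Theta}(u^e,\mathbf{x},r)+$ (error), and hence $u\in C^{0,\alpha}(D_{1/4})$.

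\textbf{Step 3: bootstrap to Lipschitz with the quantitative bound.} Once $u$ is continuous, localize $u$ near a point of $N$ and use the projected equation in local coordinates. Linear estimates for the Dirichlet-to-Neumann operator with Hölder/Schauder bounds upgrade $u$ to $C^{1,\alpha}$ on $D_{\kappa_2}$. To obtain the sharp bound $\|\nabla u\|^2_\infty\le C_2\boldsymbol{\Theta}(u^e,0,1)$, rather than merely finiteness, argue by contradiction and compactness. Normalize $v_i=(u_i-\bar u_i)/\boldsymbol{\Theta}_i^{1/2}$. The limit is a linear function with vanishing $1/2$-Laplacian in $D_1$ (the nonlinearity is quadratic, so it vanishes in the limit), and such a function has Lipschitz bound controlled by its energy. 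Alternatively, track constants linearly through the Campanato iteration.

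\textbf{Main obstacle.} The hardest part is Step 2: obtaining decay from energy smallness alone, using only the weak equation and monotonicity, in arbitrary dimension. This requires handling the nonlocal tail of $u$ outside $D_1$, and the fact that stationarity gives only monotonicity, not minimality. I would first follow Millot--Pegon--Schikorra~\cite{Millot-Pegon-Schikorra-2021-ARMA}. Their commutator estimates give the Morrey-space decay $\sup_{x,r} r^{-(n-1)}\mathcal{E}(u,D_r(x))\lesssim\varepsilon$. They then apply a Rivière-type gauge to $(-\Delta)^{1/2}$, and the tail terms are absorbed using Lemma~\ref{lem:2.1}.
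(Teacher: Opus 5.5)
The paper gives no argument of its own here: the statement is quoted from \cite{Millot-Sire-15} and \cite[Theorem 5.1]{Millot-Pegon-Schikorra-2021-ARMA}. Your outline (monotonicity $\Rightarrow$ small scale-invariant energy at every boundary point and scale $\Rightarrow$ H\"older decay via a compensated structure $\Rightarrow$ bootstrap) is the architecture of that cited proof, and your Step 1 is correct as written.

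\textbf{Step 3.} The compactness argument you put first, for the linear bound $R^2\|\nabla u\|^2_{L^\infty(D_{\kappa_2R})}\le C_2\boldsymbol{\Theta}(u^e,0,R)$, does not work. Suppose $\boldsymbol{\Theta}_i\to0$ while $\|\nabla u_i\|^2_{L^\infty(D_{\kappa_2})}\ge i\,\boldsymbol{\Theta}_i$. Your normalized maps then converge only weakly in energy, and sup norms of gradients are merely lower semicontinuous under such convergence. So a Lipschitz bound for the limit contradicts nothing. Uniform $C^1$ convergence would need $\|\nabla u_i\|_{C^{0,\alpha}}\lesssim\boldsymbol{\Theta}_i^{1/2}$, which is a strengthening of the claim itself.

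The blow-up is also posed in the wrong space. The maps $(u_i-\bar u_i)/\boldsymbol{\Theta}_i^{1/2}$ need not stay bounded outside $D_1$, because the local quantity $\boldsymbol{\Theta}(u^e,0,1)$ does not control $u$ far away. You must instead normalize $u_i^e$ on $B_1^+$. The limit is then not a map with vanishing $1/2$-Laplacian. It is a harmonic function on $B_1^+$ whose $T_pN$-components satisfy a homogeneous Neumann condition and whose normal components have constant boundary values on $D_1$.

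Compactness can legitimately be used only for an $L^2$ excess-decay lemma, iterated into a Campanato bound linear in $\boldsymbol{\Theta}$. Your second option is the one that works: track constants linearly, using Schauder estimates for the linearized mixed problem around a point $p\in N$ and absorbing the quadratic terms via the small oscillation from Step 2.

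\textbf{Step 2 for general $N$.} Both cited sources treat sphere targets, where the antisymmetric structure comes directly from the conservation law of the sphere. For a general closed $N$ in supercritical dimension, you do not supply the Morrey-space gauge your plan relies on. Without compensation, the error $\int_{D_r}\partial_\nu u^e\cdot(u^e-h)$ against your harmonic replacement $h$ is of critical size, so ``estimate the error by the smallness'' does not close the decay.

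For general $N$, the route consistent with the rest of the paper (see the proof of Proposition \ref{prop: defect measure}) is the free-boundary $\varepsilon$-regularity of \cite{Scheven-2006-MZ} applied to $u^e$, followed by \cite{Gulliver-Jost1987} for $C^{1,\alpha}$. In that generality the constants must also depend on $N$: replacing $N$ by $\lambda N$ multiplies the admissible energy threshold by $\lambda^2$. So $\varepsilon_1=\varepsilon_1(n)$ is justified only in the sphere setting of the cited sources.
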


For the purpose {later}, we restate Theorem \ref{thm: partial regularity of MPS} in terms of the regularity scale of $u$.

\begin{proposition}\label{prop: reg-scale-estimate} Suppose $u\in\widehat{H}^{1/2}(B_{2},N)$ is
	a stationary $1/2$-harmonic map. There exist $\varepsilon_{2}=\varepsilon_{2}(n)>0$
	and $\kappa_{2}=\kappa_{2}(n)\in(0,1)$ such that if ${\cal E}(u,B_{2})<\varepsilon_{2}$,
	then
	\[
	r_{u}(0)\ge\kappa_{2}.
	\]
\end{proposition}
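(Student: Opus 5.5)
The plan is to reduce the statement to Theorem \ref{thm: partial regularity of MPS}, which requires two inputs: monotonicity of the density $\boldsymbol{\Theta}(u^e,\mathbf{x},r)$ at boundary points, and smallness of $\boldsymbol{\Theta}(u^e,0,R)$ for a suitable $R$. Since $u$ is stationary, Theorem \ref{thm1} supplies monotonicity at every $\mathbf{x}=(x,0)$ with $x\in D_2$ and $r<2-|x|$. So the hypotheses of Theorem \ref{thm: partial regularity of MPS} hold with $2R=2$, that is $R=1$, and the whole proof comes down to bounding the extension energy by the fractional energy.

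\textbf{Energy comparison.} The first task is to show
\[
\mathbf{E}(u^e,B_1^+)\le C(n)\,\mathcal{E}(u,D_2).
\]
This is the standard Caffarelli--Silvestre type local estimate; I expect it is available in \cite{Millot-Pegon-Schikorra-2021-ARMA} or \cite{Millot-Sire-15} (something like $\mathbf E(u^e,B_r^+)\le C\mathcal E(u,D_{2r})$), and I would cite it. To prove it directly, write $u^e-\bar u$, where $\bar u$ is the average of $u$ over $D_2$. Differentiating the Poisson kernel gives
\[
|\nabla u^e(x,z)|\lesssim \int_{\mathbb{R}^n}\frac{|u(y)-u(x)|}{(|x-y|^2+z^2)^{(n+1)/2}}\,dy
\]
(using $\int\nabla P=0$). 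Then split $y\in D_2$ from $y\notin D_2$. For $y\in D_2$, Cauchy--Schwarz and integration in $z$ produce the double integral over $D_2\times D_2$. For $y\notin D_2$ with $x\in D_1$, the integral is controlled by the cross term $D_2\times D_2^c$ in $\mathcal E(u,D_2)$, possibly together with Lemma \ref{lem:2.1}. I expect this step to be the main technical point. The cleanest route is to invoke the known result rather than redo the kernel computation; the only care needed is that constant functions have zero energy on both sides, so no $L^2$ term appears.

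\textbf{Conclusion.} Set $\varepsilon_2:=\varepsilon_1/C(n)$. Then $\boldsymbol{\Theta}(u^e,0,1)=\mathbf E(u^e,B_1^+)\le C\mathcal E(u,D_2)<\varepsilon_1$. Theorem \ref{thm: partial regularity of MPS} then gives
\[
\|\nabla u\|_{L^\infty(D_{\kappa_2})}^2\le C_2\,\boldsymbol{\Theta}(u^e,0,1)\le C_2\varepsilon_1.
\]
Decrease $\varepsilon_1$ if necessary so that $C_2\varepsilon_1\le 1$, and decrease $\kappa_2$ to at most $1$. Then for $r=\kappa_2$ we have $\sup_{D_r}r|\nabla u|\le \kappa_2\sqrt{C_2\varepsilon_1}\le 1$. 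By Definition \ref{pppv} this gives $r_u(0)\ge\kappa_2$.

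One might worry that shrinking $\varepsilon_1$ conflicts with the $\kappa_2$ fixed by Theorem \ref{thm: partial regularity of MPS}. It does not: that theorem remains true for any smaller $\varepsilon_1$ with the same $\kappa_2$ and $C_2$. All constants therefore depend only on $n$ (and on $N$ through the cited theorems).
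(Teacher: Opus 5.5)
Your proposal is correct and follows the paper's proof: the energy comparison you plan to cite is exactly Lemma \ref{lem: 2.9} (constant $C_1$). The paper likewise applies Theorem \ref{thm: partial regularity of MPS} with $R=1$ after choosing $\varepsilon_2\le\min\{\varepsilon_1/C_1,\,1/(C_1C_2)\}$, which is equivalent to your shrinking of $\varepsilon_1$. Citing the lemma is the right call, because the direct kernel sketch would not close as written: pointwise Cauchy--Schwarz gives $|\nabla u^e(x,z)|^2\lesssim z^{-1}\int_{D_2}|u(y)-u(x)|^2(|x-y|^2+z^2)^{-(n+1)/2}\,dy$, and the resulting $z$-integral $\int_0^1 z^{-1}(|x-y|^2+z^2)^{-(n+1)/2}\,dz$ diverges logarithmically at $z=0$.
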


To prove this proposition, we need the following lemma.
\begin{lemma}[{\cite[Lemma 2.9]{Millot-Pegon-Schikorra-2021-ARMA}}]\label{lem: 2.9}
	Let $u\in\widehat{H}^{1/2}(\Om)$.  There exists a constant $C=C(n)>0$
	such that for any $D_{3r}(x)\subset\Om$ with $\mathbf{x}=(x,0)$,
	\[
	\mathbf{E}(u^e,B_{r}^{+}(\mathbf{x}))\le C{\cal E}(u,D_{2r}(x)),
	\]
	and
		\[{\|u^e\|^2_{L^2(B_{r}^{+}(\mathbf{x}))}}\le C\Big( r^2 {\cal E}(u,D_{2r}(x))+r{\|u\|^2_{L^2(D_{2r}(x))}} \Big).\]
{In particular,}
	\[
	\boldsymbol{\Theta}\left(u^e,B_{r}^{+}(\mathbf{x})\right)\le C_{1}(n)\boldsymbol{\theta}\left(u,D_{2r}(x)\right)
	\]
	for some $C_{1}=C_{1}(n)>0$, where $\boldsymbol{\theta}\left(u,D_{r}(x)\right)=\frac{1}{r^{n-1}}\mathcal{E}(u,D_r(x_0)).$
\end{lemma}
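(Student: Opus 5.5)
We rescale to $r=1$, $x=0$, using $u_r(y)=u(x+ry)$. Then $(u_r)^e(\mathbf{y})=u^e(\mathbf{x}+r\mathbf{y})$, $\mathbf{E}$ and $\mathcal{E}$ both scale by $r^{n-1}$, and $\|u^e\|^2_{L^2(B^+_r)}$ and $\|u\|^2_{L^2(D_{2r})}$ scale by $r^{n+1}$ and $r^n$ respectively. So the two displayed inequalities with their factors $r^2$ and $r$ follow from the case $r=1$, $D_3\subset\Om$, and the density bound is the first estimate divided by $r^{n-1}$ (with $(2r)^{n-1}$ absorbed into $C_1$). The tool throughout is a cutoff splitting. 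Fix $\varphi\in C_c^\infty(D_{3/2})$ with $\varphi\equiv1$ on $D_{5/4}$, and use that the extension commutes with adding constants, since $\gamma_n\int z(|x-y|^2+z^2)^{-\frac{n+1}2}dy=1$.

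\textbf{Energy estimate.} Let $c=\medint_{D_1}u$ and write $u-c=v+w$ with $v=\varphi(u-c)$ and $w=(1-\varphi)(u-c)$. Then $\nabla u^e=\nabla v^e+\nabla w^e$.

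For the local part, I use the Caffarelli--Silvestre/Fourier identity $\int_{\R^{n+1}_+}|\nabla v^e|^2=c_n[v]^2_{H^{1/2}(\R^n)}$. The standard product estimate for a smooth cutoff gives $[v]^2_{H^{1/2}(\R^n)}\le C\big(\mathcal{E}(u,D_2)+\|u-c\|^2_{L^2(D_{3/2})}\big)$. A fractional Poincaré inequality on $D_{3/2}$, which is insensitive to averaging only over $D_1\subset D_{3/2}$, bounds the last term by $C\,[u]^2_{H^{1/2}(D_{3/2})}\le C\mathcal{E}(u,D_2)$.

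For the tail part, $w$ vanishes on $D_{5/4}$. Hence for $(x,z)\in B_1^+$, differentiating the Poisson kernel gives
\[
|\nabla w^e(x,z)|\le C\int_{\R^n\setminus D_{5/4}}\frac{|u(y)-c|}{(1+|y|)^{n+1}}\,dy .
\]
To control this, I write $|u(y)-c|\le\medint_{D_1}|u(y)-u(x')|\,dx'$. For $x'\in D_1$ and $|y|\ge5/4$ we have $\tfrac14\le|x'-y|\le C(1+|y|)$, so the right-hand side is at most
\[
C\iint_{D_1\times D_{5/4}^c}\frac{|u(x')-u(y)|}{|x'-y|^{n+1}}\,dx'dy .
\]
By Cauchy--Schwarz this is bounded by $C\,\mathcal{E}(u,D_2)^{1/2}\big(\iint_{D_1\times D_{5/4}^c}|x'-y|^{-n-1}\big)^{1/2}$. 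The last integral is finite precisely because $D_1$ and $D_{5/4}^c$ are separated. Thus $\sup_{B_1^+}|\nabla w^e|^2\le C\mathcal{E}(u,D_2)$, and adding the two parts proves the first inequality.

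\textbf{$L^2$ estimate.} Without subtracting $c$, split $u=\varphi u+(1-\varphi)u$. Since the Poisson kernel has unit mass, $\|(\varphi u)^e(\cdot,z)\|_{L^2(\R^n)}\le\|\varphi u\|_{L^2}$, and integrating in $z\in(0,1)$ bounds this part by $\|u\|^2_{L^2(D_2)}$. For the tail, $|((1-\varphi)u)^e(x,z)|\le C\int|u(y)|(1+|y|)^{-n-1}dy$ on $B_1^+$. By Cauchy--Schwarz and Lemma \ref{lem:2.1} with $D_\rho=D_2$, this is at most $C\big(\mathcal{E}(u,D_2)+\|u\|^2_{L^2(D_2)}\big)^{1/2}$. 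Squaring and rescaling gives the second inequality.

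The main obstacle is the tail control. A naive Cauchy--Schwarz over $D_2\times D_2^c$ produces $\int_{D_2}\dist(x,\pa D_2)^{-1}dx=\infty$. This is exactly why I average over the smaller ball $D_1$ and put the cutoff transition in the annulus $D_{3/2}\setminus D_{5/4}$: it leaves a fixed gap that makes the kernel integral finite.
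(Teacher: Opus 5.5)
Your proof is correct. The paper does not prove this lemma: it quotes it verbatim from \cite[Lemma 2.9]{Millot-Pegon-Schikorra-2021-ARMA}, so there is no internal argument to compare against. What you give is the standard near/far cutoff proof of such extension estimates, and the details hold up:
\begin{itemize}
\item The scaling exponents are right: $r^{n-1}$ for both energies, and $r^{n+1}$ versus $r^n$ for the two $L^2$ norms.
\item The product estimate does close with only $\|u-c\|_{L^2(D_{3/2})}$. On $D_{3/2}\times(D_2\setminus D_{3/2})$ use $|\varphi(x)|=|\varphi(x)-\varphi(y)|\le C|x-y|$, and on $D_{3/2}\times D_2^c$ use $|x-y|\ge 1/2$.
\item The kernel comparison $(1+|y|)^{-n-1}\le C|x'-y|^{-n-1}$ is valid for $x'\in D_1$ and $|y|\ge 5/4$.
\item Applying Lemma \ref{lem:2.1} on $D_2\subset D_3$ for the $L^2$ tail is legitimate.
\end{itemize}
Your explanation of why you average over $D_1$ and leave a gap to $D_{5/4}^c$ is also correct.

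There is a shorter, cutoff-free route to the energy bound if you want it. Since $\int\nabla_{(x,z)}P(x-y,z)\,dy=0$ for the Poisson kernel $P$, you can write $\nabla u^e(x,z)=\int\nabla P(x-y,z)\,(u(y)-u(x))\,dy$, with $|\nabla P(x-y,z)|\le C(|x-y|^2+z^2)^{-\frac{n+1}{2}}$. Split this kernel as $(|x-y|^2+z^2)^{-\alpha}\cdot(|x-y|^2+z^2)^{\alpha-\frac{n+1}{2}}$ with $\frac n4<\alpha<\frac{n+1}{4}$, apply Cauchy--Schwarz, and integrate in $z$. This gives
\[
\int_0^\infty|\nabla u^e(x,z)|^2\,dz\le C\int_{\mathbb{R}^n}\frac{|u(x)-u(y)|^2}{|x-y|^{n+1}}\,dy
\]
for a.e.\ $x$. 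Integrating over $x\in D_1$ yields $\mathbf{E}(u^e,B_1^+)\le C\,\mathcal{E}(u,D_1)$, slightly stronger than the stated bound. It uses no Fourier identity, no fractional Poincaré inequality, and no cutoff. Your decomposition has the advantage that the same cutoff also delivers the $L^2$ estimate, for which a tail bound such as Lemma \ref{lem:2.1} is needed in either approach.
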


%we can prove Proposition  \ref{prop: reg-scale-estimate} now.

\begin{proof}[Proof of Proposition \ref{prop: reg-scale-estimate}]
	Choose $\varepsilon_{2}\le\min\{\varepsilon_{1}/C_{1},1/(C_{1}C_{2})\}$
	such that ${\cal E}_{s}(u,B_{2})<\varepsilon_{2}$ implies
	\[
	\boldsymbol{\Theta}\left(u^e,B_{1}^{+}(\mathbf{x_0})\right)\le C_{1}\boldsymbol{\theta}\left(u, D_2(x_{0})\right)\le C_{1}\varepsilon_{2}\le\varepsilon_{1}.
	\]
	Then Theorem \ref{thm: partial regularity of MPS} yields
	\[
	\kappa_{2}^{2}\|\nabla u\|_{L^{\infty}\left(D_{\kappa_{2}}\right)}^{2}\leqq C_{2}\boldsymbol{\Theta}\left(u^{\mathrm{e}},B_1^+\right)\kappa_{2}^{2}\le C_{2}C_{1}\varepsilon_{2}\le1.
	\]
	The proof is complete in view of the definition of regularity scale.
\end{proof}

Another {important}  consequence {of }Lemma \ref{lem: 2.9} is follows.
\begin{corollary}
	Suppose $\{u_i\}$ is a bounded sequence in $\widehat{H}^{1/2}(D_4)$. Then there exists a subsequence of the extended functions $\{u_i^e\}$ which converges weakly in $H^1(B_2^+)$ and strongly in $L^2(B_2^+)$.
\end{corollary}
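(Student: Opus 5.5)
The plan is to use Lemma \ref{lem: 2.9} to obtain uniform $H^1$ bounds on the extensions and then apply standard compactness. Let $\Lambda:=\sup_i\big(\mathcal{E}(u_i,D_4)+\|u_i\|^2_{L^2(D_4)}\big)<\infty$, which is the meaning of boundedness in $\widehat{H}^{1/2}(D_4)$.

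We first get local uniform bounds near the flat boundary. For any $x\in \overline{D_2}$ and $r$ small, say $r=1/2$ so that $D_{3r}(x)\subset D_4$, Lemma \ref{lem: 2.9} gives
\[
\mathbf{E}(u_i^e,B_r^+(\mathbf{x}))\le C\,\mathcal{E}(u_i,D_{2r}(x))\le C\Lambda
\]
and
\[
\|u_i^e\|^2_{L^2(B_r^+(\mathbf{x}))}\le C\big(r^2\Lambda+r\Lambda\big).
\]
Cover the part of $B_2^+$ lying near $\{z=0\}$, say the slab $\{0<z<1/4\}$, by finitely many such half balls $B_{1/2}^+(\mathbf{x}_j)$ with $x_j\in \overline{D_2}$.

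Next we bound the part of $B_2^+$ away from the boundary, i.e. $\{z\ge 1/4\}\cap B_2^+$. There the Poisson kernel is smooth. For $\mathbf{x}=(x,z)$ with $z\ge 1/4$ and $|x|\le 2$, we have $(|x-y|^2+z^2)^{-(n+1)/2}\lesssim (1+|y|)^{-(n+1)}$, and similarly for its derivatives. Hence $|u_i^e|+|\nabla u_i^e|\le C\int_{\R^n}\frac{|u_i(y)|}{(1+|y|)^{n+1}}dy$. By Cauchy--Schwarz against the integrable weight $(1+|y|)^{-(n+1)}$, combined with Lemma \ref{lem:2.1} (applied with $\rho=1$, $x_0=0$, and $D_1\subset D_4$), this is bounded by $C(\Lambda)$. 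Alternatively one could simply apply Lemma \ref{lem: 2.9} with larger balls, but the interior kernel bound is cleaner.

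Combining the two parts, $\sup_i\|u_i^e\|_{H^1(B_2^+)}<\infty$. Since $B_2^+$ is a bounded Lipschitz domain, Banach--Alaoglu yields a subsequence converging weakly in $H^1(B_2^+)$, and Rellich--Kondrachov upgrades this to strong convergence in $L^2(B_2^+)$.

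The only delicate point is making sure the covering stays inside the admissible range $D_{3r}(x)\subset D_4$ of Lemma \ref{lem: 2.9} while still reaching all of $B_2^+$, including the region of large $z$. The interior kernel estimate via Lemma \ref{lem:2.1} handles that region.
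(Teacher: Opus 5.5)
Your proof is correct and matches the paper's implicit one-line argument: Lemma \ref{lem: 2.9} gives a uniform $H^1(B_2^+)$ bound on $u_i^e$, and weak compactness in $H^1$ together with Rellich--Kondrachov then gives the subsequence. Your extra care is warranted, since applying Lemma \ref{lem: 2.9} once on all of $B_2^+$ would require $D_6\subset D_4$. Your covering near $\{z=0\}$, combined with the Poisson-kernel bound via Lemma \ref{lem:2.1} on $\{z\ge 1/4\}$, closes that gap. You should drop the aside about applying Lemma \ref{lem: 2.9} with larger balls, because it would violate the same constraint.
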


%
%\begin{lemma} (\cite[Lemma 2.19]{Millot-Pegon-Schikorra-2021-ARMA})
%	Let $\Omega\subseteq\mathbb{R}^{n}$ be an open set, and $u\in\widehat{H}^{s}\left(\Omega;\mathbb{R}^{d}\right)\cap L^{\infty}\left(\mathbb{R}^{n}\right)$
%	be such that $\|u\|_{L^{\infty}\left(\mathbb{R}^{n}\right)}\leqq M$.
%	For every $\varepsilon>0$, there exists $\delta=\delta(n,s,M,\varepsilon)>0$
%	and $\alpha=\alpha(n,s,M,\varepsilon)\in(0,1/4]$ such that
%	\[
%	\boldsymbol{\Theta}_{s}\left(u^{\mathrm{e}},\mathbf{x}_{0},r\right)\leqq\delta\quad\Longrightarrow\quad\boldsymbol{\theta}_{s}\left(u,x_{0},\alpha r\right)\leqq\varepsilon
%	\]
%	for every $\mathbf{x}_{0}=\left(x_{0},0\right)\in\Omega\times\{0\}$
%	and $r>0$ satisfying $\bar{D}_{r}\left(x_{0}\right)\subseteq\Omega$.\end{lemma}

\subsection{Defect measure theory and  stratification of singular sets}
In this subsection, we discuss the defect measure theory and stratification for singular sets of  stationary $1/2$-harmonic maps. To the best of our knowledge, only part of the defect measure theory have been studied in \cite{Millot-Sire-15} and \cite{Millot-Pegon-Schikorra-2021-ARMA}. In the case of stationary  harmonic maps, this was first considered in the seminal paper of Lin \cite{Lin-1999-Annals},  {which was later} extended to stationary biharmonic maps by Scheven in \cite{Scheven-2008-ACV,Scheven-2009-Poincare}.

We start with the notion of tangent maps (see e.g.  \cite{Millot-Pegon-Schikorra-2021-ARMA,Millot-Sire-15}). Assume $u\in \widehat{H}_\Lambda^{1/2}(\Omega,N)$ is a stationary 
$1/2$-harmonic map. Let $x_0\in\Om\su\mathbb{R}^n$ and $ \mathbf{x}_0=(x_0,0)$. By Lemma \ref{lem: 2.9} there holds
\begin{equation}\label{uniformly bound}
\mathbf{E}(u^e,B_r^+(\mathbf{x}_0))\leq C(n)\mathcal{E}(u,D_{2r}(x_0)),\qquad \forall\,D_{2r}(x_0)\subset\Omega.\end{equation}
Let  {$d_0=\frac14\dist(x_0,\pa\Om)$}. Then, by the monotonicity formula in Theorem \ref{thm1}, for any $\rho>0$ and  $r<{d_0}/\rho$, there holds
$$\Theta(u^e,B_{r\rho}^+(\mathbf{\mathbf{x}_0}))\leq\Theta(u^e,B^+_{d_0}(\mathbf{\mathbf{x}_0}))\leq C(n)\mathcal{E}(u,D_{2{d_0}}(x_0)).$$
 This implies that
$$\Theta(u^e_{\mathbf{x}_0,r},B_{\rho}^+ )\leq C\mathcal{E}(u,D_{2{d_0}}(x_0))\le C(n)\mathcal{E}(u,\Omega),\qquad \forall\,0<r<{d_0}/\rho.$$
Here $u^e_{\mathbf{x}_0,r}=(u_{x_0,r})^e$ and $u_{x_0,r}=u(x_0+r\cdot)$. Since $u$ is $N$-valued, we conclude that $\{u^e_{\mathbf{x}_0,r}: 0<r<{d_0}/\rho \}$ is uniformly bounded in $H^1(B_{\rho}^+)$.
By \cite[Lemma 2.8]{Millot-Pegon-Schikorra-2021-ARMA}, there exists a constant $C(n)>0$ such that
$$\rho^{1-n}[u_{x_0,r}]^2_{H^{1/2}(D_{\rho/2})}\leq C(n) \Theta(u^e_{\mathbf{x}_0,r},B_{\rho}^+ )\le C\mathcal{E}(u,\Omega), \qquad \forall\,0<r<d/\rho.$$
Thus $\{u_{x_0,r}: 0<r<{d_0}/\rho\}$ is uniformly bounded in $H^{1/2}(D_{\rho/2})$ for  fixed $\rho>0$. Then, as in the proof of \cite[Lemma 2.19]{Millot-Pegon-Schikorra-2021-ARMA}, we find that
$$\mathcal{E}(u_{x_0,r},D_{\frac{\rho}{2}} )\leq C\Big([u_{x_0,r}]^2_{H^{1/2}(D_{\rho})}+\iint_{D_{\frac{\rho}{2}}\times D_{\rho}^c}\frac{dxdy}{|x-y|^{n+1}}\Big)\leq C\rho^{n-1}\Big(\mathcal{E}(u,\Om)+1 \Big)$$
for all $0<r<{d_0}/\rho$.  Therefore, for  any sequence  $r_j\to 0$, there exists a subsequence (still denoted as $r_j$) and a map $v\in \widehat{H}^{1/2}_\loc(\mathbb{R}^n)$ such that for any $\rho>0$, it holds
$$u_{x_0,r_j}\rightharpoonup v\quad \mbox{ in } \widehat{H}^{1/2}(D_\rho)\quad\text{and}\quad u^e_{\mathbf{x}_0,r_j}\rightharpoonup v^e\quad \mbox{ in } H^1(B^+_\rho).$$
%for any $\rho>0$.

Similarly,  {we can show} that, for any $x_0\in \Omega$, if $\{u_i\}\subset \widehat{H}^{1/2}(\Omega,N)$ is 
a  uniformly bounded {family of} stationary $1/2$-harmonic maps, then for any given $\rho>0$ and $r\to 0$, the scaled sequence  $\{(u_i)_{x_0,r}\}$ and $\{(u_i^e)_{\mathbf{x}_0,r}\}$ are bounded in $\widehat{H}^{1/2}(D_{\rho},N)$ and $H^1(B^+_{\rho},\R^d)\cap L^\wq(\Om,\R^d)$ respectively.

\begin{definition}
	We say that $v\in \widehat{H}_{\loc}^{1/2}(\mathbb{R}^n,N)$ is a tangent map of a stationary $1/2$-harmonic map $u\in  \widehat{H}^{1/2}(\Omega, N)$ at the point $a\in \Omega$, if there is a sequence $r_j\to 0$ such that $u_{a,r_j}\equiv u(a+r_j\cdot)\wto v$ in $\widehat{H}_{\loc}^{1/2}(\mathbb{R}^n,N)$.
\end{definition}

To study tangent maps,  we first consider a general case, which will be important for later {purposes}.

\begin{proposition}[Defect measures]\label{prop: defect measure}
	Suppose that $\{u_{i}\}_{i\ge1}\subset \widehat{H}^{1/2}(D_{1},N)$
	is a sequence of stationary $1/2$-harmonic maps satisfying
	\[
	u_{i}\wto u\quad\text{weakly in }\widehat{H}^{1/2}(D_1),\qquad u_{i}\to u\quad\text{strongly in }L^{2}(D_{1}) \text{ and a.e. }
	\]
	and there is a Radon measure $\mu^e$ on $\R^{n+1}_+ \cup D_1$ such that
	\[
	\mu^e_i:=|\na u^e_{i}|^{2}d\mathbf{x}\wto \mu^e %\qquad\text{on }
	\]
	in the sense that for any $\varphi\in C_c(\R^{n+1}_+ \cup D_1)$ there holds
	 $$\int \varphi d \mu^e_i \to \int \varphi d \mu^e,\qquad \text{as }i\to \wq.$$	
	Then the following conclusions hold:
	\begin{itemize}
	\item[(i)] There is a closed $(n-1)$-rectifiable set $\Sigma\subset D_1$, with
	${\cal H}^{n-1}(\Sigma\cap K)<\infty$ for all compact set $K\subset D_1$, such that $u\in C^{\infty}(D_1\backslash\Sigma)$
	and
	\[
	u_{i}\to u\qquad\text{in } C_{\loc}^{1}(D_1\backslash\Sigma).
	\]
	\item[(ii)] {The map} $u$ is a weakly $1/2$-harmonic map in $D_1$  with $\sing(u)\subset\Sigma$.
	
	\item[(iii)] There exist a nonnegative Radon measure $\nu$ with ${\rm spt}(\nu)\cup \sing(u)=\Sigma$ and a  {nonnegative measurable}
	function $\Theta\colon \Sigma\to [0,\infty)$  such that
	\[
	\mu^e=|\na u^e|^{2}dx+\nu,\qquad\text{and}\qquad \nu=\Theta{\cal H}^{n-1}|_{\Sigma}.
	\]
	Moreover, there are constants $C,c$ depending only on $n,N$ such that
	\[
	c\ep_{1}\le\Theta(x)\le C\Lambda\qquad \text{for } {\cal H}^{n-1}\text{-a.e. } x\in \Sigma,
	\]
	where $\ep_1>0$ is {the constant given by Theorem 2.7}.
	\end{itemize}
	
%{\color{red}	{\upshape(iv) } (not proved yet! Maybe we do not need this) } $u$ enjoys the unique continuation property in the sense that if there is another weakly 1/2-harmonic map $v$ in $D_1$ such that $u=v$ almost everywhere on an open set, and $v$ is smooth away from a set $\Sigma'$ of finite $\mathcal{H}^{n-1}$-measure, then $u\equiv v$ on $D_1$.
\end{proposition}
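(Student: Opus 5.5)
We follow Lin's scheme for stationary harmonic maps, carried out for the extensions $u_i^e$ on the boundary portion $D_1\times\{0\}$, where the monotonicity formula (Theorem \ref{thm1}) and the $\varepsilon$-regularity (Theorem \ref{thm: partial regularity of MPS}) are available.

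\textbf{Step 1: the concentration set.} Define
\[
\Sigma:=\bigcap_{r>0}\Big\{x\in D_1:\ \liminf_{i\to\infty} r^{1-n}\mu^e_i\big(B_r^+(\mathbf{x})\big)\ge \varepsilon_1\Big\}
=\bigcap_{r>0}\Big\{x\in D_1:\ r^{1-n}\mu^e\big(\overline{B_r^+(\mathbf{x})}\big)\ge \varepsilon_1\Big\}.
\]
\begin{itemize}
\item Closedness of $\Sigma$ follows from upper semicontinuity of $\mu^e$ on closed balls.
\item If $x\notin\Sigma$, then some $\Theta(u_i^e,B_r^+(\mathbf{x}))<\varepsilon_1$ for all large $i$. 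Theorem \ref{thm: partial regularity of MPS} then gives uniform Lipschitz bounds for $u_i$ on $D_{\kappa_2 r}(x)$.
\item Bootstrapping in the Euler--Lagrange equation (via Proposition \ref{Millot-Sire} and the Schauder-type estimates behind Theorem \ref{thm3}) gives uniform $C^{1,\alpha}$, indeed $C^k$, bounds. This yields $C^1_{\loc}$ convergence on $D_1\setminus\Sigma$ and smoothness of $u$ there.
\item Smooth convergence away from $\Sigma$ lets us pass to the limit in the weak equation tested against $\varphi$ tangent to $u$ supported off $\Sigma$. Since $\mathcal H^{n-1}(\Sigma\cap K)<\infty$, a capacity cutoff (sets of finite $\mathcal H^{n-1}$ measure have zero $H^{1/2}$-capacity in $\mathbb R^n$) extends this to all test functions. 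Hence $u$ is weakly $1/2$-harmonic, which is (ii).
\end{itemize}

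\textbf{Step 2: density bounds.} By the monotonicity formula and Lemma \ref{lem: 2.9}, $r^{1-n}\mu^e_i(B^+_r(\mathbf{x}))\le C\Lambda$ uniformly. A Vitali covering then gives $\mathcal H^{n-1}(\Sigma\cap K)\le C(\Lambda,K)$, and shows that $\mu^e\llcorner(\R^{n+1}_+\cup D_1)$ restricted to $\Sigma$ has upper density at most $C\Lambda$ and lower density at least $c\varepsilon_1$.

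Next write $\mu^e=|\nabla u^e|^2d\mathbf x+\nu$ with $\nu\ge0$ by weak lower semicontinuity. In the open half space $u_i^e\to u^e$ smoothly, because the $u_i^e$ are harmonic and bounded. Hence $\nu$ is supported on $D_1\times\{0\}$, and in fact on $\Sigma$. Since $|\nabla u^e|^2d\mathbf x$ has zero $(n-1)$-density $\mathcal H^{n-1}$-a.e., the density bounds pass to $\nu$. By the standard density theorem we get $\nu=\Theta\,\mathcal H^{n-1}\llcorner\Sigma$ with $c\varepsilon_1\le\Theta\le C\Lambda$ a.e. The identity $\operatorname{spt}\nu\cup\sing(u)=\Sigma$ follows: points of $\Sigma$ outside $\operatorname{spt}\nu$ have $\mu^e$-density equal to that of $|\nabla u^e|^2$, and that density being $\ge\varepsilon_1$ forces singularity by $\varepsilon$-regularity applied to $u$.

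\textbf{Step 3: rectifiability, the main obstacle.}
\begin{itemize}
\item First show that $\Theta^{n-1}(\mu^e,\mathbf{x})$ exists everywhere on $\Sigma$. The stationarity of $u_i^e$ (\cite[Proposition 2.15]{Millot-Pegon-Schikorra-2021-ARMA}) passes to the limit as a monotonicity inequality for the measure $\mu^e$: the radial-derivative term in \eqref{1.8} is nonnegative and lower semicontinuous.
\item Then apply Preiss's theorem, or more simply Lin's argument: blow up $\nu$ at $\mathcal H^{n-1}$-a.e. point.
\item By monotonicity, tangent measures are cones. Using Marstrand--Mattila and the fact that the density is bounded and positive, a.e. tangent measure is $\Theta\,\mathcal H^{n-1}\llcorner V$ for an $(n-1)$-plane $V\subset\R^n$. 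This gives rectifiability.
\end{itemize}
The delicate point is the blow-up limit of $\mu^e$ near the boundary. One must check that the limiting density is translation invariant along the support, i.e.\ Lin's ``energy is constant along the plane'' argument, using the boundary version of the stationarity identity tested with tangential vector fields $X$ on $\R^n$. We expect this to be the hardest step.
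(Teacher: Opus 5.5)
Your proposal is correct and follows essentially the same route as the paper. It defines $\Sigma$ as the $\varepsilon$-concentration set, closed and locally $\mathcal{H}^{n-1}$-finite by monotonicity and a covering argument. It obtains $C^1_{\loc}$ convergence off $\Sigma$ by $\varepsilon$-regularity and weak $1/2$-harmonicity of $u$ across $\Sigma$ via a zero-capacity cutoff. It proves $\spt(\nu)\cup\sing(u)=\Sigma$ by comparing densities at regular points of $u$.

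The differences are cosmetic. You use Theorem~\ref{thm: partial regularity of MPS} plus bootstrapping and $H^{1/2}$-capacity in $\R^n$, where the paper uses Scheven and Gulliver--Jost on $u^e$ and $W^{1,2}$-small extended cutoffs with the Dirichlet-to-Neumann identity. Your Steps 2--3 fill in what the paper simply defers to Lin and Scheven. On the Preiss route, the existence of the $(n-1)$-density from the limiting monotonicity already suffices, so the translation-invariance step you flag as hardest is not actually needed.
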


We call $\nu$ {as a} {\it defect measure} of the sequence $\{u_i\}$, and $\Sigma$ the {\it energy {concentration} set} of the sequence $\{u_i\}$.

\begin{proof} The proof is rather standard. {For the convenience of readers, we sketch it}.
First of all, following \cite[proof of Theorem 7.1]{Millot-Pegon-Schikorra-2021-ARMA}, we obtain that $u_i^e\to u^e$ strongly in $H^1_{\loc}(B^+_1)$. Thus the energy concentrating set $\Sigma$ lies in $\pa^0 B_1^+$. {In fact,}  $\Sigma$ {can be defined} by
	\begin{align*}
	\Sigma:&=\Big\{x\in D_1: \lim_{r\to 0}\liminf_{i\ri\oo}r^{1-n}\mu^e_i(B^+_{r}(\mathbf{x}))  \ge \ep_0  \Big\}\\
	&=\Big\{x\in D_1: \lim_{r\to 0}\liminf_{i\ri\oo}\Theta(u^e_i,B^+_{r}(\mathbf{x}))  \ge \ep_0  \Big\}\\
	&=\Big\{x\in D_1: \lim_{r\to 0} r^{1-n}\mu^e(B^+_{r}(\mathbf{x}))  \ge \ep_0  \Big\}.
	%\\&=\left\{x\in D_1: \Xi(\mu, {x})  \ge \ep_0  \right\}.
	\end{align*}
	Then, {thanks to the monotonicity formula \eqref{1.8}},  $\Si$ is closed and has locally finite $(n-1)$-dimensional Hausdorff measure by a standard covering argument.
	Assume $x\in D_1\ba\Si,$ then there exists $0<r<1$ such that
	\[
	\liminf_{i\ri\oo}r^{1-n}\int_{B^+_r(\textbf{x})}|\na u^e_i|^2d\textbf{x}<\ep_0.
	\]
	By \cite[Theorem 2.1]{Scheven-2006-MZ}, $u^e_i\in C^{0,\al}(B^+_{r/2}(\textbf{x}))$. Higher regularity in \cite{Gulliver-Jost1987} implies $u^e_i$ is uniformly bounded in $C^{1,\al}(B^+_{r/4}(\textbf{x}))$. Hence by a covering argument, we see that $u_{i}\to u$ in  $C_{\loc}^{1}(D_1\backslash\Sigma)$. 
	This proves assertion (i) except the assertion that $\Sigma$ is $(n-1)$-rectifiable.
	
    Next we prove assertion	(ii) by  {a method similar to} that of \cite{Naber-V-2018}. Indeed, by  the Dirichlet-to-Neumann characterization of \cite[Lemma 2.9]{Millot-Sire-15} and strong convergence in (i), we find that $u$ is a weakly $1/2$-harmonic map in $D_1\ba\Sigma$. In order to show this fact is also true in the whole domain $D_1$, we take any test vector field $w\in C_c^\wq(D_1,\R^d)$ satisfying $w(x)\in T_{u(x)}N$ for  a.e. $x\in D_1$. Denote by $W$  any smooth extension of $w$ compactly supported in $B^+_1\cup D_1$. Since $\HH^{n-1}_{\rm{loc}}(\Si)<\oo,$ $\Si$ has vanishing $(1,2)$-capacity. More precisely, since $\Si\cap \mbox{spt}(w)$ is compact, by \cite[Theorem 5.1.9 and Corollary 3.3.4]{Adams-1996}, there exists $\psi_k\in C^\wq_c(\R^{n})$, with $\Psi_k$ being any smooth extension of {$\psi_k$} compactly supported in $B^+_1\cup D_1$
    {satisfying}
	$$ \Psi_k\equiv1  \mbox{ on } \Si\cap \mbox{spt}(w) \quad \text{and}\quad \|\Psi_k\|_{W^{1,2}(B^+_1)}\to 0\ \mbox{as}\ k\ri\oo.$$
	Set $\phi_k=1-\psi_k$ and $\Phi_k=1-\Psi_k$ such that $\phi_k\equiv\Phi_k\equiv0$ on $\Si\cap \mbox{spt}(w)$. Multiplying $\Phi_k W$ on both sides of  \eqref{harmonic function} and integrating we have
	\begin{align*}
	0=&\int_{B^+_1}\na u^e\cdot\na(\Phi_k W)d\mathbf{x}=\int_{B^+_1}\na u^e\cdot\na((1-\Psi_k) W) d\mathbf{x}\\
	&=\int_{B^+_1}\na u^e\cdot\na  W d\mathbf{x}-\int_{B^+_1}\na u^e\cdot\na  (\Psi_k W) d\mathbf{x}.
	\end{align*}
	The choice of $\Psi_k$ implies that
	\begin{align*}
	\int_{B^+_1}\na u^e\cdot\na  (\Psi_kW) d\mathbf{x}&= \int_{B^+_1}\na u^e\cdot(\na  \Psi_k W+\Psi_k \na W) d\mathbf{x}\\
	&\leq C\|u^e\|_{W^{1,2}}\|W\|_{C^1}\|\Psi_k\|_{W^{1,2}}\ri0\ \ \mbox{as}\ n\ri\oo.
	\end{align*}
	Hence we obtain
	$$\int_{B^+_1}\na u^e\cdot\na  W d\mathbf{x}=0.$$
	Applying the Dirichlet-to-Neumann characterization of \cite[Lemma 2.9]{Millot-Sire-15} again, we deduce that
	\begin{align*}
	\langle(-\De )^{1/2}u,w\rangle_{D_1}=\int_{B^+_1}\na u^e\cdot \na W d\mathbf{x}=0,
	\end{align*}
	This shows that $u$ is a weakly 1/2-harmonic map on $D_1 $.
	Since $u$ is smooth outside $\Sigma$ by assertion (i), $\sing(u)\subset \Sigma$. 	
	
	Now we can prove assertion (iii). The fact that $\sing(u)\cup\spt(\nu)\subset\Sigma$ can be inferred  from assertion (i), since for each  $x_0\in D_1\ba\Si$,   we can find a radius $0<r<\dist(x_0,\pa D_1)$ such that $u_i^e\to u^e$ strongly in $H^1(B_r^+)$, which then implies that $\nu(\bar{B}_{r/2}^+)=0$. Thus $\sing(u)\cup\spt(\nu)\subset\Sigma$.
	
	Next assume $x_0\in \Si\backslash  \sing(u)$. Then $\na u^e$ is bounded in a neighborhood of $\mathbf{x}_0$ and so $r^{1-n}\int_{B^+_r(\mathbf{x}_0)}|\na u^e|^2\to 0$ as $r\to 0$. Hence the definition of $\Si$ yields
	$${\lim_{r\ri 0}}r^{1-n}\nu^e(B^+_r(\mathbf{x}_0))\geq\ep_0,$$
	which implies $\mathbf{x}_0\in\spt(\nu^e)$. This shows $\sing(u)\cup\spt(\nu^e)\supset\Sigma$.
	
		The remaining assertions and  the countably $(n-1)$-rectifiability of $\Sigma$ in assertion (i) follow from the same argument as that of 
		{Lin \cite{Lin-1999-Annals} and}  Scheven \cite{Scheven-2008-ACV}. We omit the details. \end{proof}

Using Proposition \ref{prop: defect measure}, we can now explore properties of tangent maps  of stationary  $1/2$-harmonic maps.

\begin{proposition} \label{prop: tangent measure1}
	Let $u\in \widehat{H}^{1/2}(D_1, N)$ be a stationary weakly $1/2$-harmonic map. {For $x_0\in D_1$, suppose} there is a sequence $r_j\to 0$ such that
	{$u_{j}=u_{x_0,r_j}:=u(x_0+r_j\cdot)$ satisfies}
	\[
	u_{j}\rightharpoonup v\quad\text{in }\widehat{H}_\loc^{1/2}(\mathbb{R}^n, N) \text{ and almost everywhere},
	\] and
	 $$|\na u^e_{j}|^{2}d\mathbf{x}\wto\mu^e $$
	  in the sense of Radon measures as  in Proposition \ref{prop: defect measure}. 	Then
	\begin{itemize}
	\item[(i)] There is a closed $(n-1)$-rectifiable set $\Sigma\subset \mathbb{R}^n$ with locally finite	
	${\cal H}^{n-1}$-measure, such that $v\in C_{\loc}^{1}(\R^n\backslash \Sigma)$
	and
	\[
	u_{j}\to v\qquad\text{in } C_{\loc}^{1}(\R^n\backslash\Sigma).
	\]
	\item [(ii)] {\rm{(symmetry of tangent map)}} $v$ is a  weakly $1/2$-harmonic map  with ${\rm sing}(v)\subset\Sigma$, and moreover, it is 
	{homogeneous of degree zero} with respect to the origin, i.e.
	$$v(\lambda x)=v(x), \qquad\text{for all } x\in\R^n,\lambda>0.$$
	\item [(iii)] There exist a  Radon measure $\nu$ in $\R^{n+1}$ with ${\rm spt}(\nu)\cup \sing(v)=\Sigma$ and a density function $\Theta_{\nu}\colon \Sigma\to [0,\infty)$  such that
	\[
	\mu^e=|\na v^e|^{2}d\mathbf{x}+\nu,\qquad\text{and}\qquad \nu=\Theta_{\nu}{\cal H}^{n-1}|_{\Sigma}.
	\]
	Moreover, there are constants $C,c$ depending only on $m,n,N$ such that
	\[
	c\ep_{1}\le\Theta_{\nu}(x)\le C\Lambda\qquad \text{for } {\cal H}^{n-1}\text{-almost every } x\in \Sigma,
	\]
	where $\ep_1$ is given in {\upshape (iii)} of Proposition \ref{prop: defect measure}.
	\item[(iv)] {\rm{(symmetry of tangent measure)}} $\mu$ is a cone measure in the sense that for any $\la>0$ and measurable set $A\subset\mathbb{R}^n$, there holds
	\[\la^{1-n}\mu(\la A)=\mu(A).\]
	Consequently, thanks to the homogeneity of $v$ in assertion {\upshape(ii)}, the measure $\nu$ is a cone measure as well.
	\end{itemize}
\end{proposition}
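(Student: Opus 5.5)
Statements (i) and (iii) follow by applying Proposition \ref{prop: defect measure} on each ball $D_R$. For every $R>0$ and large $j$, the rescaled maps $u_j=u_{x_0,r_j}$ are stationary $1/2$-harmonic maps in $D_{R}$. By the discussion before the definition of tangent maps, their energies $\mathcal{E}(u_j,D_R)$ and $\Theta(u_j^e,B_R^+)$ are bounded by $C(n)R^{n-1}(\mathcal{E}(u,D_1)+1)$. Proposition \ref{prop: defect measure}, applied on $D_R$ after rescaling to unit size, gives a closed rectifiable concentration set $\Sigma_R$. It also gives $C^1_{\loc}$ convergence off $\Sigma_R$, weak $1/2$-harmonicity of $v$, and the decomposition $\mu^e=|\nabla v^e|^2d\mathbf{x}+\nu$ with density bounds. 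Because $\Sigma$ is defined intrinsically by the density condition $\lim_{r\to0}r^{1-n}\mu^e(B_r^+(\mathbf{x}))\ge\ep_0$, the sets $\Sigma_R$ are compatible: $\Sigma_R\cap D_{R'}=\Sigma_{R'}$ for $R'<R$. Taking $R=1,2,\dots$ and $\Sigma=\bigcup_R\Sigma_R$ therefore yields (i), (iii) and the first half of (ii). The upper density bound $C\Lambda$ comes from monotonicity, using $\mathcal{E}(u,D_1)\le\Lambda$.

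The main step is the homogeneity in (ii) and the cone property in (iv). Both come from the monotonicity identity \eqref{1.8} applied to $u$ at $\mathbf{x}_0$. For fixed $0<\rho<R$, rescaling gives
\[
\Theta(u_j^e,B_R^+)-\Theta(u_j^e,B_\rho^+)=\Theta(u^e,B_{r_jR}^+(\mathbf{x}_0))-\Theta(u^e,B_{r_j\rho}^+(\mathbf{x}_0)).
\]
The right-hand side tends to $\Xi(u,x_0)-\Xi(u,x_0)=0$, since the density function is monotone and bounded. The left-hand side equals $\int_{B_R^+\setminus B_\rho^+}|\mathbf{x}|^{-1-n}|\mathbf{x}\cdot\nabla u_j^e|^2$. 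Since $u_j^e\rightharpoonup v^e$ weakly in $H^1$, lower semicontinuity gives $\mathbf{x}\cdot\nabla v^e=0$ a.e. in $\R^{n+1}_+$. Hence $v^e$ is $0$-homogeneous, and its boundary trace $v$ satisfies $v(\lambda x)=v(x)$.

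For (iv), the convergence of the measures $\mu^e_j$ combined with the above gives
\[
\rho^{1-n}\mu^e(B_\rho^+)=\lim_{j\to\infty}\Theta(u_j^e,B_\rho^+)=\Xi(u,x_0)
\]
for every $\rho$ with $\mu^e(\partial B_\rho^+)=0$. By monotonicity this extends to all $\rho$. This is only the radial part of the statement. To get full scaling invariance on arbitrary sets, I would pass the whole identity to the limit in distributional form. Testing the stationarity of $u_j^e$ with radial fields $X=\varphi(|\mathbf{x}|)\mathbf{x}$, and noting that $\mathbf{x}\cdot\nabla u_j^e\to0$ strongly in $L^2_{\loc}$ (which follows from the vanishing of the left-hand side of the identity above), shows that $\mu^e$ satisfies $\frac{d}{d\lambda}\big(\lambda^{1-n}\mu^e(\lambda A)\big)=0$. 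This is the same argument Lin uses for harmonic maps. Concretely, one sees that the rescalings $\lambda^{1-n}(\delta_{\lambda})_\#$ of $\mu^e_j$ differ from those of a sequence with zero radial derivative by a term of order $\int|\mathbf{x}\cdot\nabla u_j^e|^2/|\mathbf{x}|^{n+1}$, and this term tends to $0$.

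Finally, $|\nabla v^e|^2d\mathbf{x}$ is itself a cone measure because $v^e$ is $0$-homogeneous, since $|\nabla v^e|^2$ is then homogeneous of degree $-2$. So $\nu=\mu^e-|\nabla v^e|^2d\mathbf{x}$ is a cone measure as well. I expect the main obstacle to be the passage from the radial identity to full scaling invariance of $\mu^e$ on arbitrary sets. The argument above handles it via the strong vanishing of the radial derivative.
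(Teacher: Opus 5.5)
Your treatment of (i)--(iii) and of the homogeneity of $v$ matches the paper's proof: the monotonicity identity at $\mathbf{x}_0$, whose right-hand side tends to $\Xi(u,x_0)-\Xi(u,x_0)=0$, followed by lower semicontinuity. The gap is in the key step of (iv).

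If you test stationarity with $X=\varphi(|\mathbf{x}|)\mathbf{x}$, where $\varphi$ depends only on $|\mathbf{x}|$, the identity you get is
\[
\int\Big((n-1)\varphi(|\mathbf{x}|)+|\mathbf{x}|\varphi'(|\mathbf{x}|)\Big)|\nabla u_j^e|^2\,d\mathbf{x}=2\int\frac{\varphi'(|\mathbf{x}|)}{|\mathbf{x}|}\,|\mathbf{x}\cdot\nabla u_j^e|^2\,d\mathbf{x}.
\]
This is just the monotonicity formula again. In the limit it says only that $\rho^{1-n}\mu^e(B_\rho^+)$ is constant, which is exactly the radial statement you already said is insufficient. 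It gives no information on how the angular distribution of $\mu^e$ changes with the radius, so it cannot yield $\lambda^{1-n}\mu^e(\lambda A)=\mu^e(A)$ for general $A$.

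The $L^2$-smallness of $\mathbf{x}\cdot\nabla u_j^e$ cannot supply this on its own, so your ``concretely'' sentence is not a valid argument. A small radial derivative makes $u_j^e$ and its dilates close in $L^2$, but it does not make their energy densities close. For example, maps $\epsilon\,k(\theta/\epsilon)\psi(|\mathbf{x}|)$ with $\theta$ an angular variable have radial derivative $O(\epsilon)$. Yet their energy densities converge to a multiple of $\psi(|\mathbf{x}|)^2|\mathbf{x}|^{-2}$, which is not conical unless $\psi$ is constant. The angular part of the energy can only be controlled through the equation, using non-radial test functions.

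The missing ingredient, which is what the paper uses, is to test with $X=\Psi(\mathbf{x})\mathbf{x}$ for an arbitrary, non-radial $\Psi\in C^1_c(\R^{n+1}_+\cup\R^n)$. This field is tangential on $\{z=0\}$, so it is admissible. Since $\partial_\alpha(\Psi x_\beta)=x_\beta\partial_\alpha\Psi+\Psi\delta_{\alpha\beta}$, stationarity gives
\[
\int\Big((n-1)\Psi+\mathbf{x}\cdot\nabla\Psi\Big)|\nabla u_j^e|^2\,d\mathbf{x}=2\int\big\langle(\nabla\Psi\cdot\nabla)u_j^e,\,(\mathbf{x}\cdot\nabla)u_j^e\big\rangle\,d\mathbf{x}.
\]
The right-hand side is now a genuine cross term rather than a square of the radial derivative. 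By Cauchy--Schwarz it is bounded by
\[
2\|\nabla\Psi\|_\infty\|\nabla u_j^e\|_{L^2(\mathrm{spt}\Psi)}\|\mathbf{x}\cdot\nabla u_j^e\|_{L^2(\mathrm{spt}\Psi)},
\]
which tends to $0$ by your strong vanishing of $\mathbf{x}\cdot\nabla u_j^e$ together with the uniform energy bound. Hence $\int\big((n-1)\Psi+\mathbf{x}\cdot\nabla\Psi\big)\,d\mu^e=0$ for every such $\Psi$. Applying this to $\Psi(\cdot/\lambda)$ gives $\frac{d}{d\lambda}\big(\lambda^{1-n}\int\Psi(\mathbf{x}/\lambda)\,d\mu^e\big)=0$, which is the cone property on arbitrary sets. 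With this replacement, your argument for (iv), including the final step for $\nu$, coincides with the paper's.
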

We call $\mu^e$ or $(v,\nu)$  from Proposition \ref{prop: tangent measure1} a {\it tangent measure} of $u$ at  $x_0\in D_1$.

%\begin{lemma}\label{qya2} Assume that $u\in W^{2,2}(B_2,N)$ is a stationary weakly 1/2-harmonic map, $v$ is a tangent map of $u$ at $a\in\Omega,$ then $v$ is 0-homogeneous with respect to the origin, i.e. $$v(x)=v(\lambda x),\qquad\forall x\in\R^n,\lambda>0.$$\end{lemma}
\begin{proof}
	
	Assertions (i)--(iii) are direct consequences of Proposition  \ref{prop: defect measure}, except the conclusion concerning the homogeneity of $v$. 
	We will prove it together with assertion (iv).	
	Denote $u^e_j:=u^e_{x_0,r_j}$. Using the monotonicity formula \eqref{1.8}, we find that
	\begin{eqnarray}\label{sinaf}
	\Theta(u^e_j,B_r^+(\mathbf{\mathbf{0}}))-\Theta(u^e_j,B_\rho^+(\mathbf{\mathbf{0}}))= \int_{B_r^+(\mathbf{0})\ba B_\rho^+(\mathbf{0})}\frac{|\mathbf{x}\cdot\na u^e_j|^2}{|\mathbf{x}|^{n+1}}d\mathbf{x}.
	\end{eqnarray}
	Note that, for any $r>0$ we have $\Theta(u^e_j,B_r^+( {\mathbf{0}}))=\Theta(u^e,B_{rr_j}^+( {\mathbf{x}_0}))$ for all $j\ge 1$.
	Hence by the definition  \eqref{eq: density of map} of density function $\Xi(u,\cdot)$, it follows that
	$$r^{1-n}\mu^e(B_r^+( {\mathbf{0}}))=\lim_{j\to\oo}\Theta(u^e_j,B_r^+( {\mathbf{0}}))=\lim_{j\to\oo}\Theta(u^e,B_{rr_j}^+( {\mathbf{x}_0}))=\Xi(u,x).$$
	
	On the other hand, note that $\na u_j^e\to \na v^e$ a.e. in $B^+_r$. Thus,  sending $j\ri\oo$ in \eqref{sinaf}    and using Fatou's lemma we obtain
\[\int_{B_r^+(\mathbf{0})\ba B_\rho^+(\mathbf{0})}\frac{|\mathbf{x}\cdot\na v^e|^2}{|\mathbf{x}|^{n+1}}d\mathbf{x} =0 \]
for any $r>\rho>0$.	This shows that $v^e$ is homogeneous {of degree zero} with respect to origin and so is $v$.

To prove assertion (iv), it is equivalent to prove that, for any $\Psi\in C_c^1(\mathbb{R}^{n+1}_+)$, there holds
\begin{equation}\label{eq: scaling invariance}
\begin{aligned}
	\frac{d}{d\la}(\mu^e_{0,\la}(\Psi))&=\frac{d}{d\la}\big(\la^{1-n}\int_{\mathbb{R}^{n+1}_+}\Psi_\la d\mu^e\big)\\
	&={-\la^{-n}}\int_{\mathbb{R}^{n+1}_+}\Big((n-1){\Psi(\frac{\mathbf{x}}{\la})}+\frac{\mathbf{x}}{\la}\cdot\na\Psi(\frac{\mathbf{x}}{\la})\Big)d\mu^e\\
	&=0,
\end{aligned}
\end{equation}
for all $\la>0$, where $\Psi_\la(\mathbf{x})=\Psi(\mathbf{x}/\la)$.

To see this,  choosing the test vector field $\Psi_\la(\mathbf{x})\mathbf{x}$ \sout{and using} in the the stationarity {identity} 
of $u^e_j$ (see \cite[Proposition 2.15]{Millot-Pegon-Schikorra-2021-ARMA}), we derive that
\begin{eqnarray}\label{eq: stationarity}
	\sum_{\al,\beta=1}^{n+1}\int_{\mathbb{R}^{n+1}_+}(\de_{\al\beta}|\na u^e_j|^2-2\langle\na_\al u^e_j,\na_\beta u^e_j\rangle)\na_\al(\Psi_\la\mathbf{x}_\beta)d\mathbf{x}=0, \qquad \forall\,j\ge 1.
\end{eqnarray}
By assumption, the first term in the above equality converges to
\begin{equation}\label{eq: first term}
 \int_{\mathbb{R}^{n+1}_+} {\rm div}(\Psi_\la(\mathbf{x})\mathbf{x})d\mu^e=\int_{\mathbb{R}^{n+1}_+} \Big((n+1)\Psi_\la(\mathbf{x})+\frac{\mathbf{x}}{\la}\cdot\na\Psi(\frac{\mathbf{x}}{\la})\Big)d\mu^e.
\end{equation}
	To deal with the second term in \eqref{eq: stationarity}, note that $ \int_{B_r^+(\mathbf{0})}{|\mathbf{x}\cdot\na u^e_j|^2}d\mathbf{x}\to 0$ holds for any $r>0$ as $j\to \wq$ by  \eqref{sinaf}. Thus, by using the integral inequality
\begin{align*}
\big(\sum_{\al,\beta=1}^{n+1}\int_{\mathbb{R}^{n+1}_+}\mathbf{x}_\al{(\Phi_\la)_\beta} \langle\na_\al u^e_j,\na_\beta u^e_j\rangle d\mathbf{x}\big)^2
\leq \big(\int_{\mathbb{R}^{n+1}_+} |{(\mathbf{\Phi}_\la)_\beta\cdot\na_\beta} u^e_j|^2 d\mathbf{x}\big)\big(\int_{\mathbb{R}^{n+1}_+} |\mathbf{x}\cdot\na u^e_j|^2 d\mathbf{x}\big)
\end{align*}
and	letting $j\ri\oo$ we obtain
	\begin{eqnarray*}
		\sum_{\al,\beta=1}^{n+1}\int_{\mathbb{R}^{n+1}_+}\mathbf{x}_\al{(\Phi_\la)_\beta} \langle\na_\al u^e_j,\na_\beta u^e_j\rangle d\mathbf{x}\to 0.
	\end{eqnarray*}
Therefore we infer that
\[\int_{\mathbb{R}^{n+1}_+}2\langle\na_\al u^e_j,\na_\beta u^e_j\rangle\na_\al(\Psi_\la\mathbf{x}_\beta)d\mathbf{x}\to  2\int_{\mathbb{R}^{n+1}_+} \Psi_\la d\mu^e.  \]
	Combining this convergence together with  \eqref{eq: stationarity} and \eqref{eq: first term}  we deduce \eqref{eq: scaling invariance}. 
	Since $\nu=\mu^e-|\na v^e|^2d\mathbf{x}$ and {$v^e$} is homogeneous {of degree zero, $\nu=\nu^e\lfloor\partial\mathbb{R}^{n+1}_+$ is a cone measure}.
	The proof is complete.
	\end{proof}
	
Now we are able to discuss the stratification of singular set for stationary $1/2$-harmonic maps. First observe that
\begin{lemma}\label{kkkl}
	Let $u\in \widehat{H}^{1/2}(\Omega,N)$ be a stationary $1/2$-harmonic map. Then $$y\in \reg(u)\quad \Longleftrightarrow\quad \Xi(u^e,y)=0.$$
\end{lemma}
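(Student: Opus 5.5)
The plan is to prove the two implications separately, with $\Xi(u^e,y)$ read as the density $\Xi(u,y)=\lim_{r\searrow0}\Theta(u^e,B_r^+(\mathbf{y}))$ from \eqref{eq: density of map}, where $\mathbf{y}=(y,0)$.

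\emph{($\Rightarrow$)} Suppose $y\in\reg(u)$, so $u$ is continuous on some disk $D_\rho(y)$. By Theorem \ref{thm3}(2), or more directly the regularity theory for continuous weakly $1/2$-harmonic maps (continuity yields small oscillation, hence smoothness), $u$ is smooth, in particular Lipschitz, on a smaller disk $D_{\rho/2}(y)$. I then estimate $\nabla u^e$ near $\mathbf{y}$ by splitting $u=u\eta+u(1-\eta)$, where $\eta$ is a cutoff supported in $D_{\rho/2}(y)$.
\begin{itemize}
\item The extension of $u\eta$ is the Poisson extension of a Lipschitz compactly supported function, so its gradient is bounded near $\mathbf{y}$.
\item The extension of $u(1-\eta)$ is harmonic with zero boundary data on $D_{\rho/4}(y)$. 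Since $u$ is bounded, reflection shows it is smooth up to the flat boundary near $\mathbf{y}$.
\end{itemize}
Hence $|\nabla u^e|\le C$ on $B^+_{r_0}(\mathbf{y})$, and therefore $\Theta(u^e,B_r^+(\mathbf{y}))\le Cr^{n+1}/r^{n-1}=Cr^2\to0$. This gives $\Xi(u,y)=0$.

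\emph{($\Leftarrow$)} Suppose $\Xi(u,y)=0$. Then there is $r>0$ with $\Theta(u^e,B_r^+(\mathbf{y}))\le\varepsilon_1/2$. Monotonicity (Theorem \ref{thm1}) holds at every boundary point near $y$, and the density is continuous in the center at a fixed scale. So for $x$ close to $y$ and a slightly smaller radius $R$, say $R=r/2$, I get
\[
\Theta(u^e,B_R^+(\mathbf{x}))\le 2^{n-1}\Theta(u^e,B_r^+(\mathbf{y}))\cdot(\text{inclusion factor}).
\]
The cleaner route is to apply Theorem \ref{thm: partial regularity of MPS} directly at $\mathbf{y}$ with a suitable $R$. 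Since $u$ is stationary, the monotonicity hypothesis of Theorem \ref{thm: partial regularity of MPS} holds at all points of $\partial^0B^+_{2R}(\mathbf{y})$ once $D_{2R}(y)\subset\Omega$. The smallness $\Theta(u^e,B_R^+(\mathbf{y}))\le\varepsilon_1$ then gives $u\in C^{0,1}(D_{\kappa_2R}(y))$, so $y\in\reg(u)$. In this direction I only need to choose $R$ small enough that both the smallness and the containment $D_{2R}(y)\subset\Omega$ hold; nothing more is required, since $\Xi(u,y)=0$ means $\Theta(u^e,B_R^+(\mathbf{y}))\to0$ as $R\to0$.

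The main obstacle is the forward direction. Continuity of $u$ has to be upgraded to Lipschitz or smooth regularity, and the nonlocal tail contribution to $\nabla u^e$ at the boundary has to be controlled. I would rely on the partial regularity results quoted in Theorem \ref{thm3} together with the cutoff-and-reflection estimate above. Alternatively, one could note that on a regular neighborhood the map is $C^1$, and the scaled energies $r^{1-n}\mathcal{E}(u,D_{2r}(y))\lesssim r^{1-n}(r^{n+1}+r^n\cdot r)\to0$ by Lemma \ref{lem: 2.9}, with the tail term handled by boundedness of $u$. This yields $\Xi=0$ without using pointwise gradient bounds on $u^e$.
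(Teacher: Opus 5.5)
Your proposal is correct and follows essentially the paper's argument. Near a regular point, the regularity theory (the paper cites \cite[Theorem 5.1 and Proposition 6.2]{Millot-Pegon-Schikorra-2021-ARMA}) makes $u$ of class $C^1$, so the density vanishes; conversely, small density at one scale together with monotonicity lets Theorem \ref{thm: partial regularity of MPS} apply, exactly as you do. Your handling of the nonlocal tail of $u^e$ just spells out a step the paper leaves implicit.

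One small correction: the Poisson extension of a merely Lipschitz, compactly supported function can have an unbounded $\partial_z$-derivative at the boundary, with logarithmic growth. So you should invoke the smoothness of $u\eta$ rather than its Lipschitz bound. Alternatively, note that a logarithmic bound already gives $\Theta(u^e,B_r^+(\mathbf{y}))\to0$, as does your route via Lemma \ref{lem: 2.9}, where the tail term is $O(r^n)$ rather than $O(r^{n+1})$.
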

\begin{proof} ($\Rightarrow$) If $y\in \reg(u)$, then $u$ is $C^1$ in a neighborhood of $y$ by  \cite[Theorem 5.1 and Proposition 6.2]{Millot-Pegon-Schikorra-2021-ARMA}, 
hence $\Xi(u^e,y)=0$. \\
($\Leftarrow$) It follows from monotonicity formula \eqref{1.8} and the $\ep$-regularity theory.
\end{proof}

The next lemma  extends the monotonicity formula \eqref{1.8} of stationary $1/2$-harmonic maps to tangent measures.
\begin{lemma}[Monotonicity of tangent measures]\label{kun}
	Suppose $u\in \widehat{H}^{1/2}(\Omega,N)$ is a stationary $1/2$-harmonic map, $v$ is a tangent map of $u$ at a point $x_0\in {\rm sing}(u)$ 
	and  $\mu^e=(v^e,\nu)$ is the  tangent measure given by Proposition \ref{prop: tangent measure1}.
	Then for every $\mathbf{y}:=(y,0)\in \mathbb{R}^{n+1}_+$, the function $\Theta(\mu^e,B^+_\rho(\mathbf{y}))$ is {monotonically} nondecreasing 
	{of $\rho$},  and
	$$\Xi(\mu^e,y):=\lim_{\rho\ri0}\Theta(\mu^e,B^+_\rho(\mathbf{y})) \quad\text{exists and}\quad \Xi(\mu^e,y)\leq\Xi(\mu^e,0).$$
\end{lemma}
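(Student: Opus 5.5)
The plan is to pass the monotonicity identity \eqref{1.8} to the limit along the blow-up sequence. Then I will use the cone structure of $\mu^e$ from Proposition \ref{prop: tangent measure1}(iv) to compare the density at $y$ with the density at the origin.

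\emph{Step 1: monotonicity of $\Theta(\mu^e,B^+_\rho(\mathbf{y}))$.} Let $u_j=u_{x_0,r_j}$. Each $u_j$ is stationary on $D_{R_j}$ with $R_j=\dist(x_0,\pa\Omega)/r_j\to\infty$. Hence for fixed $\mathbf{y}=(y,0)$ and $0<\rho<r$, formula \eqref{1.8} applied to $u_j^e$ at $\mathbf{y}$ gives, for $j$ large,
\[
\Theta(u_j^e,B_r^+(\mathbf{y}))-\Theta(u_j^e,B_\rho^+(\mathbf{y}))=\int_{B_r^+(\mathbf{y})\ba B_\rho^+(\mathbf{y})}\frac{|(\mathbf{x}-\mathbf{y})\cdot\na u^e_j|^2}{|\mathbf{x}-\mathbf{y}|^{n+1}}d\mathbf{x}\ \ge 0 .
\]
Since $\mu^e_j=|\na u^e_j|^2d\mathbf{x}\wto\mu^e$ as Radon measures on $\R^{n+1}_+\cup\R^n$, we have $\mu^e_j(B^+_\rho(\mathbf{y}))\to\mu^e(B^+_\rho(\mathbf{y}))$ for every $\rho$ outside the countable set where $\mu^e(\pa B_\rho(\mathbf{y})\cap\overline{\R^{n+1}_+})>0$. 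For such $\rho<r$ we obtain $\Theta(\mu^e,B^+_\rho(\mathbf{y}))\le\Theta(\mu^e,B^+_r(\mathbf{y}))$. For arbitrary $\rho<r$, approximate $\rho$ from above and $r$ from below by good radii. Then $\rho\mapsto \rho^{1-n}\mu^e(B^+_\rho(\mathbf{y}))$ is nondecreasing up to the choice of open or closed balls, and the two choices agree off a countable set. Consequently the limit $\Xi(\mu^e,y)=\lim_{\rho\to0}\Theta(\mu^e,B^+_\rho(\mathbf{y}))$ exists and is finite, being bounded by $C\Lambda$ through the uniform bound on $\Theta(u_j^e,\cdot)$ established before the definition of tangent maps.

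\emph{Step 2: comparison with the origin.} By Proposition \ref{prop: tangent measure1}(iv), $\Theta(\mu^e,B^+_R(\mathbf{0}))$ is independent of $R$, and it equals $\Xi(\mu^e,0)$; the proof of that proposition shows this common value is $\Xi(u,x_0)$. Fix $\rho>0$. Monotonicity from Step 1 at $\mathbf{y}$ and the inclusion $B^+_R(\mathbf{y})\subset B^+_{R+|y|}(\mathbf{0})$ give, for every $R>\rho$,
\[
\Theta(\mu^e,B^+_\rho(\mathbf{y}))\le \Theta(\mu^e,B^+_R(\mathbf{y}))\le \frac{(R+|y|)^{n-1}}{R^{n-1}}\,\Theta(\mu^e,B^+_{R+|y|}(\mathbf{0}))=\Big(1+\frac{|y|}{R}\Big)^{n-1}\Xi(\mu^e,0).
\]
Letting $R\to\infty$ and then $\rho\to0$ yields $\Xi(\mu^e,y)\le\Xi(\mu^e,0)$.

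\emph{Main obstacle.} The delicate point is justifying the passage to the limit in Step 1. We need $\mu^e_j(B^+_\rho(\mathbf{y}))\to\mu^e(B^+_\rho(\mathbf{y}))$, and the half balls touch the boundary $\R^n\times\{0\}$, where the defect measure $\nu$ lives. The weak convergence is tested against $C_c(\R^{n+1}_+\cup\R^n)$, so it does see mass on the flat boundary. For a relatively open half ball in $\overline{\R^{n+1}_+}$, the usual portmanteau argument applies once $\mu^e$ charges no part of the spherical cap $\pa^+B_\rho(\mathbf{y})$ or of the rim $\pa D_\rho(y)$. I would handle this by the countable-exception argument above. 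The only remaining care is that the limit of the right-hand side of \eqref{1.8} is nonnegative, which is automatic, so the identity itself need not be passed to the limit.
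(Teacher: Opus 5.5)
Your proposal is correct and follows the paper's proof: you pass the monotonicity identity \eqref{1.8} for $u_j^e$ at $\mathbf{y}$ to the limit, then use $B^+_R(\mathbf{y})\subset B^+_{R+|y|}(\mathbf{0})$ together with the $R$-independence of $\Theta(\mu^e,B^+_R(\mathbf{0}))$ and let $R\to\infty$. Your version is slightly tidier than the paper's. You justify the limit through good radii and use only the nonnegativity of the remainder term, whereas the paper carries that remainder along and, in doing so, implicitly identifies $\lim_j\Xi(u_j^e,y)$ with $\Xi(\mu^e,y)$ — an identification your argument never needs.
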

\begin{proof}
	By the definition of tangent map, there exists a sequence $r_j\ri 0$  such that
	$u_j\equiv u_{x_0,r_j}\wto v$ in $\widehat{H}^{1/2}_{\loc}(\R^n,N).$ By the monotonicity formula \eqref{1.8}, we have
	\begin{eqnarray}\label{sinp} \Theta(u^e_j,B_r^+(\mathbf{\mathbf{y}}))-\Theta(u^e_j,B_\rho^+(\mathbf{\mathbf{y}}))= \int_{B_r^+(\mathbf{y})\ba B_\rho^+(\mathbf{y})}\frac{|(\mathbf{x}-\mathbf{y})\cdot\na u^e_j|^2}{|\mathbf{x}-\mathbf{y}|^{n+1}}d\mathbf{x}
	\end{eqnarray}
	for each $y\in\Om$ and $0<\rho<r<d(y,\partial\Om)$.
	Letting $j\ri\oo$ we see that  the function $\rho\mapsto \Theta(\mu^e,B^+_\rho(\mathbf{y}))$ is {monotonically} 
	nondecreasing for every $\mathbf{y}:=(y,0)\in {\Omega\times\{0\}}$,  and
	$$\Xi(\mu^e,y):=\lim_{\rho\ri0}\Theta(\mu^e,B^+_\rho(\mathbf{y})) \quad\text{exists}.$$
Furthermore, letting $\rho\ri 0$ in \eqref{sinp} yields
	\begin{eqnarray*} \Xi({u^e_j},y)+\int_{B_r^+(\mathbf{y})}\frac{|(\mathbf{x}-\mathbf{y})\cdot\na u^e_j|^2}{|\mathbf{x}-\mathbf{y}|^{n+1}}d\mathbf{x}
=\Theta(u^e_j,B_r^+(\mathbf{\mathbf{y}})).
	\end{eqnarray*}
	Letting $j\ri\oo$ we obtain
	\begin{eqnarray}\label{8u}
	\Xi(\mu^e,y)+\lim_{j\ri\oo}\int_{B_r^+(\mathbf{y})}\frac{|(\mathbf{x}-\mathbf{y})
\cdot\na u^e_j|^2}{|\mathbf{x}-\mathbf{y}|^{n+1}}d\mathbf{x}
=\Theta(\mu^e,B_r^+(\mathbf{\mathbf{y}})).
	\end{eqnarray}
	On the other hand, by Proposition \ref{prop: tangent measure1},
	\begin{eqnarray*} \Theta(\mu^e,B_r^+(\mathbf{\mathbf{y}}))\leq\frac{(r+|\mathbf{y}|)^{n-1}}{r^{n-1}}
\Theta(\mu^e,B_{r+|\mathbf{y}|}^+(\mathbf{\mathbf{0}}))
		=\frac{(r+|\mathbf{y}|)^{n-1}}{r^{n-1}}\Xi(\mu^e,0).
	\end{eqnarray*}
	Combining with \eqref{8u} and letting $r\ri\oo$ we get
	\begin{eqnarray*}	\Xi(\mu^e,y)+\lim_{j\ri\oo}\int_{B_R^+(\mathbf{y})}\frac{|(\mathbf{x}-\mathbf{y})\cdot\na u^e_j|^2}{|\mathbf{x}-\mathbf{y}|^{n+1}}d\mathbf{x}\leq\Xi(\mu^e,0), \ \forall R>0.
	\end{eqnarray*}
	The proof is complete.
\end{proof}

Using the density function $\Xi(\mu^e,\cdot)$, we can deduce the following result,  similar to that of Lin \cite{Lin-1999-Annals}.

\begin{lemma}\label{ou}
	Suppose $x_0\in\sing(u) $ and $\mu$, $v$ and $\Xi(\mu,\cdot)$ are given by Lemma {\rm\ref{kun}}. Define $S(\mu^e)$ by
	$$S(\mu^e)=\big\{y\in\R^n\ \big|\ \Xi(\mu^e,y)=\Xi(\mu^e,0)\big\}.$$
	Then $S(\mu^e)$ is a linear subspace of $\R^n$ and $\Xi(\mu^e,\cdot)$ is translation invariant along $S(\mu^e)$, i.e.
	$$\Xi(\mu^e,x+y)=\Xi(\mu^e,x)\qquad \text{for all }\,x\in\R^n,\,y\in S(\mu^e).$$ Moreover,  $S(\mu^e)\subset \sing(v)\cup \spt(\nu)$.
\end{lemma}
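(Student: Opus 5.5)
The argument follows Lin's approach for stationary harmonic maps, run on the extended measure $\mu^e$ on $\overline{\mathbb{R}^{n+1}_+}$. The key input is the limiting monotonicity identity from the proof of Lemma \ref{kun}: for every $\mathbf{y}=(y,0)$ and $R>0$,
\[
\Xi(\mu^e,y)+\lim_{j\to\infty}\int_{B_R^+(\mathbf{y})}\frac{|(\mathbf{x}-\mathbf{y})\cdot\nabla u^e_j|^2}{|\mathbf{x}-\mathbf{y}|^{n+1}}\,d\mathbf{x}\le \Xi(\mu^e,0).
\]

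\textbf{Step 1: radial invariance at points of $S(\mu^e)$.} Let $y\in S(\mu^e)$. The identity above forces $\int_{B_R^+(\mathbf{y})}|(\mathbf{x}-\mathbf{y})\cdot\nabla u^e_j|^2\,d\mathbf{x}\to0$ for every $R>0$. Repeat the argument of Proposition \ref{prop: tangent measure1}(iv): test the stationarity identity with $\Psi_\lambda(\mathbf{x}-\mathbf{y})(\mathbf{x}-\mathbf{y})$ and control the cross terms by Cauchy--Schwarz. This shows $\mu^e$ is a cone measure with vertex $\mathbf{y}$, i.e.\ $\lambda^{1-n}\mu^e(\mathbf{y}+\lambda A)=\mu^e(\mathbf{y}+A)$. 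We already know $\mu^e$ is a cone with vertex $0$.

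\textbf{Step 2: linearity and translation invariance.} A Radon measure that is $(n-1)$-homogeneous about two vertices $0$ and $\mathbf{y}$ is invariant under translations by $t\mathbf{y}$ for all $t\in\mathbb{R}$. To see this, compose the dilations: dilating by $\lambda$ about $\mathbf{y}$ and then by $\lambda^{-1}$ about $0$ is the translation by $(1-\lambda^{-1})\mathbf{y}$, and the scaling factors cancel. This step is the main technical point; I would verify it carefully on test functions $\Psi\in C_c$, using that both vertices lie on $\partial\mathbb{R}^{n+1}_+$, so all these maps preserve the half space. Invariance under translations by $t\mathbf{y}$ immediately gives $\Theta(\mu^e,B_\rho^+(\mathbf{x}+t\mathbf{y}))=\Theta(\mu^e,B_\rho^+(\mathbf{x}))$, hence $\Xi(\mu^e,x+ty)=\Xi(\mu^e,x)$ for all $x$. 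This is the translation invariance claimed in the lemma. Closure of $S(\mu^e)$ under addition and scalar multiplication follows in turn. If $y_1,y_2\in S(\mu^e)$, then $\Xi(\mu^e,y_1+y_2)=\Xi(\mu^e,y_1)=\Xi(\mu^e,0)$. Also $\Xi(\mu^e,ty)=\Xi(\mu^e,0)$ for every $t\in\mathbb{R}$. So $S(\mu^e)$ is a linear subspace. It contains $0$ trivially.

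\textbf{Step 3: $S(\mu^e)\subset\sing(v)\cup\spt(\nu)$.} Since $x_0\in\sing(u)$, we need $\Xi(\mu^e,0)>0$. By monotonicity, $\Xi(\mu^e,0)=\Xi(u,x_0)$, and Lemma \ref{kkkl} gives $\Xi(u,x_0)>0$. (More precisely, the $\epsilon$-regularity Theorem \ref{thm: partial regularity of MPS} gives $\Xi(u,x_0)\ge\varepsilon_1$.) Now take $y\in S(\mu^e)$ and suppose $y\notin\sing(v)\cup\spt(\nu)$. Then $\nu$ vanishes near $\mathbf{y}$, and $v^e$ has bounded gradient near $\mathbf{y}$ because $v$ is smooth there. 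Hence $\rho^{1-n}\mu^e(B^+_\rho(\mathbf{y}))\to0$, i.e.\ $\Xi(\mu^e,y)=0<\Xi(\mu^e,0)$. This contradicts $y\in S(\mu^e)$ and completes the proof.
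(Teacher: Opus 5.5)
Your argument (two vertices of homogeneity force translation invariance) is Lin's standard one, which is what the paper intends: it states Lemma \ref{ou} without proof and refers to Lin. Steps 2 and 3 are fine. In Step 3, the bound on $\nabla v^e$ near $\mathbf{y}$ should be taken from $y\notin\Sigma=\spt(\nu)\cup\sing(v)$ and the uniform $C^{1,\alpha}$ estimates behind Proposition \ref{prop: defect measure}(i), not from mere continuity of $v$ at $y$.

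What needs repair is the first sentence of Step 1. The inequality you quote from the proof of Lemma \ref{kun} is not correct as written. It comes from letting $j\to\infty$ in $\Xi(u^e_j,y)+\int_{B_R^+(\mathbf{y})}\frac{|(\mathbf{x}-\mathbf{y})\cdot\nabla u^e_j|^2}{|\mathbf{x}-\mathbf{y}|^{n+1}}d\mathbf{x}=\Theta(u^e_j,B_R^+(\mathbf{y}))$. That limit step needs $\Xi(u^e_j,y)\to\Xi(\mu^e,y)$, but only upper semicontinuity, $\limsup_j\Xi(u^e_j,y)\le\Xi(\mu^e,y)$, is available. For example, if the points $x_0+r_jy$ are regular points of $u$, then $\Xi(u^e_j,y)=0$. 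For $y\in S(\mu^e)$ the weighted integral then tends to $\Theta(\mu^e,B^+_R(\mathbf{y}))=\Xi(\mu^e,0)>0$, not to $0$.

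The unweighted statement you actually use is still true, but it must be proved through annuli. The function $r\mapsto\Theta(\mu^e,B_r^+(\mathbf{y}))$ is nondecreasing, tends to $\Xi(\mu^e,y)=\Xi(\mu^e,0)$ as $r\to0$, and is at most $(1+|y|/r)^{n-1}\Xi(\mu^e,0)$. Hence it is constant, and in particular spheres about $\mathbf{y}$ carry no $\mu^e$-mass. Letting $j\to\infty$ in \eqref{sinp} then gives
\[
\lim_{j\to\infty}\int_{B_R^+(\mathbf{y})\setminus B_\rho^+(\mathbf{y})}\frac{|(\mathbf{x}-\mathbf{y})\cdot\nabla u^e_j|^2}{|\mathbf{x}-\mathbf{y}|^{n+1}}\,d\mathbf{x}=0\qquad\text{for all }0<\rho<R .
\]
On the inner ball, monotonicity gives, uniformly in $j$,
\[
\int_{B_\rho^+(\mathbf{y})}|(\mathbf{x}-\mathbf{y})\cdot\nabla u^e_j|^2\le\rho^2\int_{B_\rho^+(\mathbf{y})}|\nabla u^e_j|^2\le C\rho^{n+1}.
\]
Hence $\limsup_j\int_{B_R^+(\mathbf{y})}|(\mathbf{x}-\mathbf{y})\cdot\nabla u^e_j|^2\,d\mathbf{x}\le C\rho^{n+1}$ for every $\rho>0$, which is exactly what your Cauchy--Schwarz step needs. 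This is the same splitting as in Step 2 of the proof of Theorem \ref{thm: compactness of stationary HM}. With this replacement the proof is complete.
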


From the theory of defect measures in Proposition \ref{prop: tangent measure1}, we know that the Hausdorff dimension ${\rm dim} S(\mu^e)\leq n-1$, as $S(\mu^e)\subset \sing(v)\cup {\rm spt}(\nu)$ by the above lemma.
Therefore, for $j=0,1,\cdots,n-1$,  we define the stratification of ${\rm sing}(u)$ by {letting}
$$\Si^j(u):=\big\{x_0\in{\rm sing}(u):\mbox{dim}(S(\mu^e))\leq j\ \mbox{ for all tangent measures } \mu^e  \mbox{ of } u  \mbox{ at } x_0 \big \}.$$
It is easy to see that
$$\Si^0\su\Si^1\su\cdots\su\Si^{n-4}{\su}\Si^{n-3}\su\Si^{n-2}\su\Si^{n-1}={\rm sing}(u).$$
This is the so-called classical stratification of singular set of $u$. In the case $u$ is a minimizing $1/2$-harmonic map, 
the  compactness of these maps (see  \cite{Millot-Pegon-Schikorra-2021-ARMA}) implies that $\nu\equiv0$ and so
\[
\Sigma^k(u)=\big\{x\in \Omega:\text{no tangent maps of } u \text{ at } x \mbox{ is } (k+1)\mbox{-symmetric}\big\}
\]
where the notion of $k$-symmetry is standard (see Definition \ref{def:symmetry} below).

\subsection{Compactness theory of stationary $1/2$-harmonic maps}

In this subsection, we extend the compactness theory of stationary harmonic maps developed by Lin \cite[Lemma 3.1]{Lin-1999-Annals} to the fractional case. To this end, denote by $\mathcal{M}(B_5^+)$ the set of all measures $\mu^e$ which arise as weak limits of stationary $1/2$-harmonic maps in  $D_5$, and by $\pi(\mu^{e})$ the energy 
concentration set corresponding to $\mu^e$ (see Proposition \ref{prop: tangent measure1} for the definition). It follows by standard arguments that $\mathcal{M}(B_5^+)$ is weakly closed with respect to convergence of Radon measures in $B_5^+\cup D_5$. Our main theorem of this section is

\begin{theorem}\label{thm: compactness of stationary HM} Let $\mu^{e}\in{\cal M}(B_{5})$ and $\Si=\pi(\mu^{e})$.
\begin{itemize}
\item [(1)] If ${\cal H}^{n-1}(\Si)>0$, there exists a  smooth $1/2$-harmonic
	$\mathbb{S}^1$ in $N$, i.e. a {non-constant} $1/2$-harmonic map from $\mathbb{S}^{1}$ into $N$.
\item[(2)](Compactness) If $N$ does not admit any non-constant smooth $1/2$-harmonic
	$\mathbb{S}^1$, then every bounded sequence of stationary $1/2$-harmonic
	maps has a strongly convergent subsequence.
	\end{itemize}
	\end{theorem}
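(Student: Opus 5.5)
We follow Lin's blow-up scheme \cite[Lemma 3.1]{Lin-1999-Annals}, adapted to the Caffarelli--Silvestre extension. Here the defect measure lives on the flat boundary $\pa\R^{n+1}_+$, and the relevant bubbles are $1/2$-harmonic circles rather than harmonic $2$-spheres.

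\textbf{Step 1: blow up at a generic concentration point.} For (1), assume ${\cal H}^{n-1}(\Si)>0$. By Proposition \ref{prop: defect measure}, $\Si$ is $(n-1)$-rectifiable and $\nu=\Theta{\cal H}^{n-1}\lfloor\Si$ with $c\ep_1\le\Theta\le C\Lambda$. We also know ${\cal H}^{n-1}(\sing(u))=0$ (Theorem \ref{thm3}).

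Pick $x_0\in\Si\setminus\sing(u)$ satisfying three conditions:
\begin{itemize}
\item $\Si$ has an approximate tangent plane $T=\R^{n-1}$ at $x_0$;
\item $\Theta$ is approximately continuous at $x_0$;
\item $x_0$ is a Lebesgue point for the density.
\end{itemize}
Blowing up $\mu^e$ at $x_0$, and using the closedness of $\mathcal M$ and the monotonicity of Lemma \ref{kun}, we obtain a tangent measure $\Theta(x_0){\cal H}^{n-1}\lfloor(\R^{n-1}\times\{0\})$. The smooth part disappears because $u$ is $C^1$ near $x_0$. A diagonal argument then gives a sequence of stationary $1/2$-harmonic maps $v_i$ (rescalings of $u_i$) with two properties:
\[
|\na v_i^e|^2d\mathbf{x}\wto \Theta(x_0){\cal H}^{n-1}\lfloor\R^{n-1}, \qquad v_i\to \text{const}
\]
strongly in $L^2_{\loc}$.

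\textbf{Step 2: kill the tangential derivatives.} Using the stationarity identity (as in \eqref{eq: stationarity}) with vector fields along $\R^{n-1}$, together with the fact that the limit measure is invariant under translations in $\R^{n-1}$, we show
\[
\int_{B_2^+}\sum_{a=1}^{n-1}|\pa_a v_i^e|^2\,d\mathbf{x}\to0 .
\]
All energy then concentrates in the two-dimensional half-plane directions $(x_n,z)$.

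\textbf{Step 3: slice and rescale to a $1/2$-harmonic circle.} By Fubini, choose slices $y\in\R^{n-1}$ such that $w_i:=v_i^e(y,\cdot,\cdot)$ restricted to the half-disk $B^{2,+}_1$ satisfies:
\begin{itemize}
\item it is nearly harmonic, with tension $o(1)$ in $L^2$ coming from the vanishing tangential terms;
\item it has free-boundary condition $w_i\in N$ with conormal derivative $\perp T N$ on the segment;
\item its energy stays $\ge c\ep_1$ yet concentrates at a point.
\end{itemize}
Rescale at the concentration point at the scale where the energy first exceeds $\ep_1/2$ on some half-disk. By the $\ep$-regularity of \cite{Scheven-2006-MZ} applied slice-wise, and the uniform energy bound, we get $C^1_{\loc}$ convergence to a non-constant finite-energy harmonic map $W:\R^2_+\to\R^d$ with $W(\pa\R^2_+)\subset N$ and $\pa_zW\perp T_WN$, i.e. a free-boundary harmonic map. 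Its trace is a non-constant $1/2$-harmonic map $\R\to N$ of finite energy. Composing with the conformal map $\R^2_+\cong$ disk, the trace becomes a $1/2$-harmonic map from $\mathbb{S}^1$, smooth by Theorem \ref{thm3}(1) and \cite{F. Da Lio and  A. Pigati}. The point singularity at the image of $\infty$ is removable by finite energy and conformal invariance.

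\textbf{Main obstacle.} The hard part is Steps 2--3: making the slicing rigorous. The slices are only approximately harmonic and stationary, so we need $\ep$-regularity for free-boundary maps with small $L^2$ perturbation. We also need a non-degeneracy claim: energy cannot be lost to neck regions, nor entirely escape to the interior. The latter is excluded because interior convergence is strong.

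\textbf{Part (2).} Suppose $N$ has no non-constant $1/2$-harmonic $\mathbb{S}^1$. A bounded sequence of stationary maps has a weakly convergent subsequence, with limit measure $\mu^e\in\mathcal M$. By (1), ${\cal H}^{n-1}(\Si)=0$, so $\nu=\Theta{\cal H}^{n-1}\lfloor\Si=0$. Hence $|\na u_i^e|^2\to|\na u^e|^2$ as measures, giving strong $H^1$ convergence of $u_i^e$ and, via the trace/energy comparison of Lemma \ref{lem: 2.9}, strong $\widehat H^{1/2}_{\loc}$ convergence of $u_i$. Stationarity of the limit then passes to the limit in the identity.
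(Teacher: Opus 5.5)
Your Steps 1--2 reproduce the paper's reduction: blow up to $\mu^e=c\,\HH^{n-1}\lfloor(\R^{n-1}\times\{0\})$ with $v_i\to$ const, then kill the tangential energy. But Step 3 has a genuine gap, and it sits exactly where the bubble must be produced.

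A single slice $w_i=v_i^e(y,\cdot,\cdot)$ satisfies no usable equation. Since $\De v_i^e=0$, one has $\De_{(x_n,z)}w_i=-\sum_{a\le n-1}\pa_a^2v_i^e(y,\cdot,\cdot)$, which involves \emph{second} tangential derivatives, while Step 2 only makes the first ones small in $L^2$. Interior estimates for the harmonic functions $\pa_a v_i^e$ control $\pa_a^2v_i^e$ only away from $\{z=0\}$. Near the flat boundary, where the energy concentrates and $v_i$ may be singular, you have no bound, so ``tension $o(1)$ in $L^2$'' is unfounded.

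Nor can the $\ep$-regularity of \cite{Scheven-2006-MZ} (or Theorem \ref{thm: partial regularity of MPS}) be ``applied slice-wise''. It concerns stationary maps on $(n+1)$-dimensional half-balls and needs smallness of $r^{1-n}\int_{B_r^+}|\na v_i^e|^2$. That quantity involves all slices $y'$ with $|y'-y|<r$, so small energy on one slice does not imply it. Moreover, a two-dimensional rescaling of one slice is not the restriction of any rescaled stationary map. You list this as the main obstacle, but the proposal contains no mechanism to overcome it. The neck and energy-loss issues you raise are beside the point, since one non-constant bubble suffices.

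The missing idea, which is Lin's and the paper's, is never to slice. Instead, blow up in all $n+1$ variables, at a center and scale chosen so that full-dimensional $\ep$-regularity applies to the rescaled stationary maps.

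\begin{itemize}
\item Set $f_i(X_1)=\int_{[-1,1]\times(0,1/2)}|\na_{X_1}u_i^e(X_1,x_n,z)|^2\,dx_n\,dz$. Step 2 gives $\|f_i\|_{L^1(D^{n-1})}\to0$.
\item The weak-$(1,1)$ bound for the Hardy--Littlewood maximal function then yields $X_1^i$ with $\sup_{0<r<1}r^{1-n}\int_{D^{n-1}_r(X_1^i)}f_i\to0$. You can also arrange that $\{X_1^i\}\times[-1,1]$ avoids the closed, $\HH^{n-1}$-null singular set of $u_i$.
\item Choose $\de_i\to0$ and $x_n^i\to0$ so that $\sup_{x_n}\de_i^{1-n}\int_{D^{n-1}_{\de_i}(X_1^i)\times D^1_{\de_i}(x_n)\times(0,1/2)}|\na u_i^e|^2$ equals a fixed fraction $\ep_0/c(n)$ of the regularity threshold, attained at $x_n^i$.
\item Set $v_i(\mathbf{x})=u_i^e(p^i+\de_i\mathbf{x})$ with $p^i=(X_1^i,x_n^i,0)$.
\end{itemize}

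The key is that the maximal function controls averages over \emph{all} small balls around $X_1^i$. Fubini only controls $f_i$ at one point, and that control does not survive rescaling, whereas the maximal-function bound is scale invariant. The rescaled maps are extensions of stationary $1/2$-harmonic maps with vanishing tangential energy at every scale, normalized energy on the unit cylinder, and uniformly bounded density. $\ep$-regularity then gives $C^1_{\loc}$ convergence to a non-constant limit with $\na_{X_1}v_\infty=0$. Its trace is a non-constant, finite-energy $1/2$-harmonic map $\R\to N$.

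Two smaller points.
\begin{itemize}
\item In Step 2, stationarity with translation-type fields along $\R^{n-1}$ alone does not force the tangential energy to vanish. The paper uses the monotonicity identity (radial fields) at the centers $0$ and $e_j$, together with $|e_j\cdot\na u|^2\le2|(\mathbf{x}-e_j)\cdot\na u|^2+2|\mathbf{x}\cdot\na u|^2$.
\item In Part (2), Lemma \ref{lem: 2.9} bounds the extension by the trace, which is the wrong direction. You need the trace inequality $[g]^2_{H^{1/2}(D_{\rho/2})}\le C\int_{B^+_\rho}|\na G|^2$ for an $H^1$ extension $G$ of $g$, applied to $u_i^e-u^e$.
\end{itemize}
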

\begin{proof}
	Assertion (2) follows directly from assertion (1). The proof of assertion
	(1) is divided into  four steps:
	\begin{itemize}
	\item[\textbf{Step1}] We may assume that $\mu^{e}$ is the weak limit of
	a sequence $\{u_{i}\}\subset\widehat{H}_{\La}^{1/2}(D_{1},N)$ of
	stationary 1/2-harmonic maps for some $\La>0$, and $0\in\Si$ is
	a concentration point. Moreover, by a diagonal argument and blow-up
	procedure at the concentration point $0$, we can assume that
	\[
	u_{i}^{e}\wto{\rm constant}\qquad\text{in }H^{1}(B_{2}^{+},N)
	\]
	and
	\[
	|\na u_{i}^{e}|^{2}d\mathbf{x}\to\mu^{e}=c{\cal H}^{n-1}\lfloor\Sigma_{\ast}\qquad\text{in }B_{2}^{+}\cup D_{2}
	\]
 as weak convergence of Radon measures for some positive constant $c>0$,
	where
	$$\Si_{\ast}=D_{5}^{n-1}\times\{0\}\subset\R^{n-1}\times\{0\}.$$
	To clarify the dimension of disks in the base space $\R^{n}$, here and hereafter  we use $D_{r}^{k}=\{x\in\R^{k}:|x|<r\}$ to denote the $k$-dimensional unit ball in $\R^k$, and $\mu^{e}$ is translation invariant along $\R^{n-1}$.
	
	\item[\textbf{Step2}] Claim 1. There holds
	\[
	\int_{B_{1}^{+}}\sum_{j=1}^{n-1}\left|\pa_{x_{j}}u_{i}^{e}\right|^{2}d\mathbf{x}\to0\qquad\text{as }i\to\wq.
	\]
	To see this, write $e_{0}=0$ and let $e_{j}$ be the standard basis
	of $\R^{n}$. By the monotonicity formula \eqref{1.8}, for all $0\le j\le n-1$
	and all $0<r<R\le 4$, we have
	\[
	\int_{B_{R}^{+}(e_{j})\backslash B_{r}^{+}(e_{j})}\frac{|(\mathbf{x}-e_{j})\cdot\na u_{i}^{e}|^{2}}{|\mathbf{x}-e_{j}|^{n+1}}d\mathbf{x}=\Theta(u_{i}^{e},B_{R}^{+}(e_{j}))-\Theta(u_{i}^{e},B_{r}^{+}(e_{j})).
	\]
	Since $\mu^{e}$ is tranlation invariant along $\R^{n-1}$, sending
	$i\to\wq$ gives
	\[
	\lim_{i\to\wq}\int_{B_{R}^{+}(e_{j})\backslash B_{r}^{+}(e_{j})}\frac{|(\mathbf{x}-e_{j})\cdot\na u_{i}^{e}|^{2}}{|\mathbf{x}-e_{j}|^{n+1}}d\mathbf{x}=\Theta(\mu^{e},B_{R}^{+}(e_{j}))-\Theta(\mu^{e},B_{r}^{+}(e_{j}))=0
	\]
	{holds for all $1\le j\le n-1$}.
	To proceed, note that
	\[
	|e_{j}\cdot\na u_{i}^e|^{2}\le2|(x-e_{j})\cdot\na u_{i}^e|^{2}+2|(x-e_{0})\cdot\na u_i^e|^{2}.
	\]
	Thus, we obtain {that for $0<\sigma<\frac12$,}
	\[
	\begin{aligned}\int_{B_{1}^{+}}|e_{j}\cdot\na u_{i}^e|^{2} & \le2\int_{B_{1}^{+}}|(\mathbf{x}-e_{j})\cdot\na u_{i}^e|^{2}+2\int_{B_{1}^{+}}|(\mathbf{x}-e_{0})\cdot\na u_{i}^e|^{2}\\
	& \le2\int_{B_{2}^{+}(e_{j})\backslash B_{\si}^{+}(e_{j})}|(\mathbf{x}-e_{j})\cdot\na u_{i}^e|^{2}+{2\int_{B^+_\sigma(e_j)}|(\mathbf{x}-e_{j})\cdot\na u_{i}^e|^2} \\
	& \quad+2\int_{B_{1}^{+}\backslash B_{\si}^{+}}|(\mathbf{x}-e_{0})\cdot\na u_{i}^e|^{2}+2\int_{B_{\si}^+}|(\mathbf{x}-e_{0})\cdot\na u_{i}^e|^{2}\\
	& =o_{i}(1)+o_{i}(1)+2\int_{B_{\si}^+}|(\mathbf{x}-e_{0})\cdot\na u_{i}^e|^{2}+{2\int_{B^+_\sigma(e_j)}|(\mathbf{x}-e_{j})\cdot\na u_{i}^e|^2}.
	\end{aligned}
	\]
	The last two terms can be  estimated by
	\[
	\int_{B_{\si}^+}|(\mathbf{x}-e_{0})\cdot\na u_{i}^e|^{2}\le\si^{2}\int_{B^+_{\si}}|\na u_{i}^e|^{2}\le\si^{2}\cdot C(\La)\si^{n-2}=O(\si^{n}),
	\]
	and
	\[
	\int_{B^+_{\si}(e_j)}|(\mathbf{x}-e_{j})\cdot\na u_{i}^e|^{2}\le\si^{2}\int_{B^+_{\si}(e_j)}|\na u_{i}^e|^{2}\le O(\si^{n}).
	\]
	Hence combining the above estimates
	gives $\int_{B_{1}^{+}}|e_{j}\cdot\na u_{i}^e|^{2}=o_{i}(1)$ for all
	$0\le j\le n-1$. This proves the claim.
	
	\item[\textbf{Step3}] Locate the blow-up point. Denote $X_{1}=(x_{1},\cdots,x_{n-1})$
	and define
	\[
	f_{i}(X_{1})=\int_{[-1,1]\times(0,1/2)}\left|\na_{X_{1}}u_{i}^{e}(X_{1},x_{n},z)\right|^{2}dx_{n}dz
	\]
	for $i\ge1$. The claim in Step 2 implies that 
	$$\int_{D^{n-1}}f_{i}(X_{1})dX_{1}\to0, \  {\rm{as}}\  i\to\wq.$$
	We have the following observations:
	\begin{itemize}
	\item[(i)] By the partial regularity theory of stationary 1/2-harmonic maps,
	for each $i\ge1$, there exists a closed subset $E_{i}\subset D_{1}$, {
	with ${\cal H}^{n-1}(E_i)=0$},  such that $u_{i}\in C^{\wq}(D_{1}\backslash E_{i})$.
	
	\item[(ii)] By the Hardy-Littlewood {theorem}, there exists $C(n)>0$ such
	that
	\[
	\Big|\big\{X_{1}\in D^{n-1}:Mf_{i}(X_{1})>\la\big\}\Big|\le\frac{C}{\la}\|f_{i}\|_{L^{1}(D^{n-1})}
	\]
	for all $i\ge1$. {Here $Mf_i$ denotes the Hardy-Littlewood maximal function of $f_i$}.
	\end{itemize}
	Therefore, by taking $\la_{i}=\|f_{i}\|_{L^{1}(D^{n-1})}^{1/2}$ we have 
	 $$\big|\{X_{1}:Mf_{i}(X_{1})>\la_i\}\big|\le C\la_{i}\to0, \ {\rm{as}}\  i\to\wq,$$
	so that there exist $X_{1}^{i}\in D^{n-1}\backslash E_{i}$ {and $s_i>0$} such that
	\begin{equation}
	u_{i}\in C^{\wq}(D_{s_{i}}^{n-1}(X_{1}^{i})\times(-1,1))\label{eq: smoothness-1}
	\end{equation}
	and
	\[
	\sup_{0<r<1}r^{1-n}\int_{D_{r}^{n-1}(X_{1}^{i})}f_{i}dX_{1} \le Mf_i(X_1^i)\le \la_i \to0\qquad\text{as }i\to\wq.
	\]
	
	\noindent \text{Claim 2.} There exist $\de_{i}\to0$ and $x_{n}^{i}\to0$
	such that
	\[
	\sup_{x_{n}\in(-1/2,1/2)}\de_{i}^{1-n}\int_{D_{\de_{i}}^{n-1}(X_{1}^{i})\times D_{\de_{i}}^{1}(x_{n})\times(0,1/2)}|\na u_{i}^{e}|^{2}=\frac{\ep_{0}}{c(n)}\qquad\text{for }i\gg1
	\]
	is achieved at $x_{n}=x_{n}^{i}$, where $c(n)\ge1$ is a
	constant that will be determined later.
	
	To prove Claim 2, we argue by contradiction.  First, note that
	by the smoothness property (\ref{eq: smoothness-1}), for each $i\ge1$
	and each $x_{n}\in(-1/2,1/2)$, there has
	\[
	\de^{1-n}\int_{D_{\de}^{n-1}(X_{1}^{i})\times D_{\de}^{1}(x_{n})\times(0,1/2)}|\na u_{i}^{e}|^{2}\le C\de\to0\qquad\text{as }\de\to0.
	\]
	However, for each fixed $\de>0$, if
	\[
	\sup_{x_{n}\in(-1/2,1/2)}\de^{1-n}\int_{D_{\de}^{n-1}(X_{1}^{i})\times D_{\de}^{1}(x_{n})\times(0,1/2)}|\na u_{i}^{e}|^{2}<\frac{\ep_{0}}{c(n)}, \ i\gg 1.
	\]
	Then, by the $\epsilon$-regularity theorem, we can conclude that $u_{i}^{e}\in C^{\wq}(D_{\de}^{n-1}(X_{1}^{i})\times D_{\de}^{1}(0))$
	converges strongly in $H^{1}(B_{\de}^{+})$, which contradicts with
	the assumption that $D_{\de}^{n-1}(X_{1}^{i})\times D_{\de}^{1}(0)\cap\Si_{\ast}$
	has positive measure. Therefore, there exists $\de_{i}>0$ such that
	\[
	\sup_{x_{n}\in(-1/2,1/2)}\de_{i}^{1-n}\int_{D_{\de_{i}}^{n-1}(X_{1}^{i})\times D_{\de_{i}}^{1}(x_{n})\times(0,1/2)}|\na u_{i}^{e}|^{2}=
	\frac{\ep_{0}}{c(n)}\qquad\text{for }i\gg1.
	\]
	
	Now, suppose for each $i\ge1$, the above supremum is achieved at a
	point $x_{n}^{i}\in (-1/2,1/2)$. We need to show that $x_{n}^{i}\to0$.
	In fact, if $x_{n}^{i}\ge\de_0>0$ for instance for some $\de_0>0$, then
	away from $\Si_{\ast}$ we have
	\[
	\int_{D^{n-1}(X_{1}^{i})\times(D^{1}\backslash D_{\de_0/4}^{1}(x_{n}^{i}))\times(0,1/2)}|\na u_{i}^{e}|^{2}\ge C(\ep_{0},n)>0,
	\]
	which contradicts the assumption that $u_{i}^{e}\to\text{const.}$
	in $H^{1}$ away from $\Si_{\ast}$. Claim 2 is proved.
	
\item[\textbf{Step4}] Now we blow up $u_{i}^{e}$ at $p^{i}=(X_{1}^{i},x_{n}^{i},0)$
	by setting
	\[
	v_{i}({\bf x})=u_{i}^{e}(p^{i}+\de_{i}{\bf x}),\qquad {\bf x}\in\Om_{i}\equiv D_{R_{i}}^{n-1}\times D_{R_{i}}^{1}\times\left(0,R_{i}\right),
	\]
	where $R_{i}=1/(2\de_{i})$. Note that $v_{i}$ is a stationary 1/2-harmonic
	map satisfying, for any $0<R<R_{i}$,
	\begin{eqnarray}
	&  & R^{1-n}\int_{D_{R}^{n-1}\times D_{R_{i}}^{1}\times(0,R_{i})}\left|\na_{X_{1}}v_{i}\right|^{2}\to0,\quad\text{as }i\to\wq,\label{eq: 3.5}\\
	&  & \int_{D^{n-1}\times D^{1}\times(0,1/2)}|\na v_{i}|^{2}=\max_{b\in D_{R_{i}-1}^{1}}\int_{D^{n-1}(0)\times D^{1}(b)\times(0,1/2)}|\na v_{i}|^{2}=\frac{\ep_{0}}{c(n)}.\label{eq: 3.6}\\
	&  & \sup_{i\ge1}\int_{D_{R}^{n-2}\times D_{R}^{1}\times(0,R)}|\na v_{i}|^{2}\le C(\La)R^{n-1}.\label{eq: 3.7}
	\end{eqnarray}
	{\eqref{eq: 3.5} follows from Claim 1 and}  the scaling invariance:
	\[
	R^{1-n}\int_{D_{R}^{n-1}\times D_{R_{i}}^{1}\times(0,R_{i})}\left|\na_{X_{1}}v_{i}\right|^{2}=(\de_{i}R)^{1-n}\int_{D_{\de_{i}R}^{n-2}(X_{1}^{i})\times D_{1/2}^{1}(x_{n}^{i})\times(0,1/2)}|\na_{X_{1}}u_{i}^{e}|^{2},
	\]
	\eqref{eq: 3.6} follows from Claim 2, and {\eqref{eq: 3.7} follows from} the fact that $u_{i}\in\widehat{H}_{\La}^{1/2}$.
	
	\noindent Letting $i\to\wq$, we deduce from the $\ep$-regularity theory and
	(\ref{eq: 3.6}) that
	\[
	v_{i}\to v_{\wq}^{e}\qquad\text{in }C_{\loc}^{1}(\overline{\R_{+}^{n+1}})
	\]
	for some smooth 1/2-harmonic mapping $v_{\wq}$. Moreover, by (\ref{eq: 3.5})
	we find that $\na_{X_{1}}v_{\wq}=0$, i.e., $v_{\wq}$ depends only
	$x_{n}$ and $z$. By (\ref{eq: 3.7}) we deduce that
	\[
	%{\cal E}(v_{\wq},\R)=\int_{\R\times(0,\wq)}|\na v_{\wq}^{e}|^{2}\le C(\La).
	\sup_{R>1} \frac{1}{R^{n-1}}\int_{B_R^+}|\na v_{\wq}^{e}|^{2}\le C(\La).
	\]
	This  implies that $v_{\wq}$ is a {non-constant}
	smooth $1/2$-harmonic map from
	$\R$ into $N$ with {finite $1/2$-Dirichlet energy}. Hence $v_{\wq}$  is a $1/2$-harmonic $\mathbb{S}^1$ in $N$. 
	\end{itemize}
	The proof is complete.
\end{proof}

\section{Quantitative stratification of singular set}\label{sec:Quantitative stratification}

\subsection{Quantitative symmetry and cone splitting principle}
Given $u\in\widehat{H}^{1/2}(\Om,N)$, recall that
$u^{e}$ is the Poisson extension \eqref{extension} of $u$ in $\mathbb{R}_{+}^{n+1}$.
First, we recall the notion of quantitative symmetry introduced by Cheeger-Naber \cite{Cheeger-Naber-2013-CPAM}.
\begin{definition}[Symmetry]\label{def:symmetry} Given a measurable
	map $u:\R^{n}\to\R$,  we say that
	\begin{itemize}
	\item[(1)] $u$ is 0-homogeneous {or 0-symmetric} with respect to a point $p\in\R^{n}$,
	if $u(p+\lambda v)=u(p+v)$ for all $\lambda>0$ and $v\in\mathbb{R}^{n}$.
	
	\item[(2)] $u$ is $k$-symmetric if $u$ is 0-homogeneous with respect to
	the origin, and $u$ is translation invariant with respect to a $k$-dimensional
	subspace $V\subset\mathbb{R}^{n}$, i.e.,
	\[
	u(x+v)=u(x)\qquad\text{for all }x\in\mathbb{R}^{n},\,v\in V.
	\]
	\end{itemize}
\end{definition}

A map $u\in\widehat{H}^{1/2}(\Om,N)$  is $k$-symmetric if and only if $u^e$ is $0$-homogeneous in $\R_{+}^{n+1}$ with respect to the origin, and {is }translation invariant along a $k$-dimensional subspace $V\subset\mathbb{R}^{n}=\pa \R_{+}^{n+1}$. For convenience, we introduce

\begin{definition}[Boundary symmetry]\label{def: boundary symmetry} Given a
	map $h:\overline{\R_{+}^{n+1}}\to\R$. We say that $h$ is boundary
	$k$-symmetric, if $h$ is 0-homogeneous in $\R_{+}^{n+1}$ in the
	sense that	
	\[
	h(\lambda v)=h(v),\qquad\forall\,\la>0\quad\text{and}\quad v\in\overline{\mathbb{R}_{+}^{n+1}};
	\]
	and if there is a $k$-dimensional subspace $V\subset\mathbb{R}^{n}=\pa\R_{+}^{n+1}$
	such that
	\[
	h(x+v)=h(x),\qquad\forall\,x\in\R_{+}^{n+1}\text{ and }v\in V.
	\]
\end{definition}
In view of the above definitions,  $u$ is $k$-symmetric in $\R^n$ if and only if $u^{e}$
is boundary $k$-symmetric. The quantitative symmetry is then defined via comparison with boundary $k$-symmetric functions.
\begin{definition}[Quantitative symmetry]\label{def:quantitative symmetry}
	Given a map $u\in\widehat{H}^{1/2}(\Omega,N)$, $\ep>0$ and a nonnegative
	integer $k$, we say that $u$ is $(k,\ep)$-symmetric on $D_{r}(x)\subset\Omega$,
	if there exists a boundary $k$-symmetric function $h\colon\R_{+}^{n+1}\to\mathbb{R}$
	such that
	\[
	\medint_{B_{r}^{+}(\mathbf{x})}|u^{e}(\mathbf{y})-h(\mathbf{y}-\mathbf{x})|^{2}d\mathbf{y}\leq\ep,
	\]
	where $\mathbf{x}=(x,0)$. Alternatively, we say that $u^{e}$ is
	boundary $(k,\ep)$-symmetric on $B_{r}^{+}(\mathbf{x})$.  
\end{definition}

Equivalently, $u$ is $(k,\ep)$-symmetric on $D_{r}(x)$ if and only
if the scaled map $u_{x,r}(y)=u(x+ry)$ is $(k,\ep)$-symmetric on
$D_{1}(0)$. A good compactness property of \sout{this} the quantitative symmetry is stated in the following remark.

\begin{remark}\label{rmk: compactness-of-symmetry}
	Suppose $\{u_i\}\subset \widehat{H}^{1/2}(D_2)$ converges weakly to some function $u\in \widehat{H}^{1/2}(D_2)$. If $u_i$ is $(k,\ep_i)$-symmetric in $D_1$ for some $\ep_i\to 0$, then $u$ is $k$-symmetric in  $D_1$.
\end{remark}

The proof of this remark is exactly the same as that of \cite[Remark 3.2]{He-Z-X-24} and is omitted here.
Given the definition of quantitative symmetry, we now introduce a
quantitative stratification for points of a function according to
how much it is symmetric around those points.

\begin{definition}[Quantitative stratification]\label{ddd} For any
	map  $u\in\widehat{H}^{1/2}(\Omega,N)$, $r,\ep>0$ and $k\in\{0,1,\cdots,n\}$,
	we define the $k$-th quantitative singular stratum $S_{\ep,r}^{k}(u)$
	by
	\[
	S_{\ep,r}^{k}(u)\equiv\Big\{ x\in\Omega\ \big|\ u\text{ is not }(k+1,\ep)\text{-symmetric on }D_{s}(x)\text{ for any }r\le s<1\Big\}.
	\]
	Furthermore, we set
	\[
	S_{\ep}^{k}(u):=\bigcap_{r>0}S_{\ep,r}^{k}(u)\quad\text{ and }\quad S^{k}(u)=\bigcup_{\ep>0}S_{\epsilon}^{k}(u).
	\]
\end{definition}

Clearly, by Definition \ref{def:quantitative symmetry}, we have
	\[
S_{\ep,r}^{k}(u)\equiv\Big\{ x\in\Omega: u^e\text{ is not boundary }(k+1,\ep)\text{-symmetric on }B^+_{s}(\mathbf{x})\text{ for any }r\le s<1\Big\}.
\]
It is straightforward to check that $$k'\leq k\text{ or }\ep'\geq\ep\text{ or } r'\leq r\ \Longrightarrow \ S^{k'}_{\ep',r'}(u)\subseteq S^k_{\ep,r}(u).$$ In particular, we have \[S^0(u)\subset S^1(u)\subset\cdots\subset S^n(u)=\Omega.\]
The following lemma shows that $S^k_\ep(u)$ is indeed a quantitative stratification for singular sets of stationary $1/2$-harmonic maps.

\begin{lemma}\label{equi} Suppose $u\in \widehat{H}^{1/2}(\Om,N)$ is a stationary  $1/2$-harmonic map. Then
	\[
	S^{k}(u)=\Big\{x\in\Omega\ \big|\ \text{no tangent maps of }u\text{ at }x\text{ is }(k+1)\text{-symmetric}\Big\}.
	\]
	Consequently, we have
	\[
	S^{0}(u)\subset S^{1}(u)\subset\cdots\subset S^{n-1}(u)\subset{\rm sing}(u).
	\]
\end{lemma}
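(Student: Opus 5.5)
We prove the two inclusions separately, working throughout with the extensions $u^e$ and the boundary symmetry of Definition \ref{def:quantitative symmetry}. The tools are the compactness of blow-ups from Section \ref{sec:partial regularity}, Remark \ref{rmk: compactness-of-symmetry}, and the homogeneity of tangent maps in Proposition \ref{prop: tangent measure1}(ii).

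\emph{First inclusion: if $x\notin S^k(u)$, some tangent map is $(k+1)$-symmetric.} If $x\notin S^k(u)$, then $x\notin S^k_{\ep}(u)$ for every $\ep>0$. Taking $\ep_j=1/j$, we find radii $s_j\in(0,1)$ such that $u$ is $(k+1,\ep_j)$-symmetric on $D_{s_j}(x)$. We split into two cases.

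\emph{Case $s_j\to0$ along a subsequence.} The rescalings $u_{x,s_j}$ are bounded in $\widehat H^{1/2}_{\rm loc}$ by the uniform estimates preceding the definition of tangent maps. A further subsequence converges weakly to a tangent map $v$, and the extensions converge strongly in $L^2(B_1^+)$. By Remark \ref{rmk: compactness-of-symmetry}, $v$ is $(k+1)$-symmetric on $D_1$. Since $v$ is $0$-homogeneous by Proposition \ref{prop: tangent measure1}(ii), this symmetry extends to all of $\R^n$.

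\emph{Case $s_j\ge s_0>0$.} Passing to the limit in the $L^2$ closeness, $u^e$ itself is boundary $(k+1)$-symmetric on $B^+_{s_0}(\mathbf x)$ up to an error tending to zero. Hence $u^e(\mathbf x+\cdot)=h$ on $B^+_{s_0}$ for a boundary $(k+1)$-symmetric $h$. This gives $0$-homogeneity of $u$ near $x$, so every tangent map at $x$ equals $h|_{\R^n}$, which is $(k+1)$-symmetric.

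\emph{Second inclusion: if some tangent map $v=\lim u_{x,r_j}$ is $(k+1)$-symmetric, then $x\notin S^k(u)$.} By the compactness above, $u^e_{x,r_j}\to v^e$ in $L^2(B_1^+)$. Since $v^e$ is boundary $(k+1)$-symmetric, for every $\ep>0$ and $j$ large we get
\[
\medint_{B_1^+}|u^e_{x,r_j}-v^e|^2<\ep .
\]
So $u$ is $(k+1,\ep)$-symmetric on $D_{r_j}(x)$, and therefore $x\notin S^k_{\ep,r}(u)$ for $r\le r_j$. Thus $x\notin S^k_\ep(u)$ for every $\ep$, i.e. $x\notin S^k(u)$.

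The main technical point is this $L^2(B_1^+)$ convergence of the extensions. It follows from the weak $H^1(B_\rho^+)$ bound plus Rellich, together with the identification of the limit of the extensions as $v^e$. That identification uses the Poisson formula \eqref{extension}, the tail control of Lemma \ref{lem:2.1}, and $|u|\le C$ since $u$ is $N$-valued, which lets dominated convergence handle the far-field contribution.

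\emph{Consequence.} Monotonicity in $k$ is immediate from the definition, since $(k+2)$-symmetry implies $(k+1)$-symmetry. For $S^{n-1}(u)\subset\sing(u)$, take $x\in\reg(u)$. By Lemma \ref{kkkl}, $u$ is $C^1$ near $x$, so every blow-up is the constant map $u(x)$. A constant is $n$-symmetric, so by the characterization $x\notin S^{n-1}(u)$.
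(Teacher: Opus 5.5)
Your proof is correct and follows essentially the same route as the paper. Both arguments use compactness of the blow-ups, $u^e_{x,r}\to v^e$ in $L^2(B_1^+)$, with the same case split $r_i\to 0$ versus $r_i\to r>0$ for points $x\notin S^k(u)$; your direct argument for the other inclusion is just the contrapositive of the paper's lower-bound argument, and your extra justifications (identifying the limit of the extensions, and proving $S^{n-1}(u)\subset\sing(u)$, which the paper leaves implicit) are sound.
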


\begin{proof} For the moment we write
	\[
	\Si^{k}(u)=\Big\{x\in\Omega\ \big|\ \text{no tangent maps of }u\text{ at }x\mbox{ is }(k+1)\mbox{-symmetric}\Big\}.
	\]	
	Suppose $x\in S^{k}(u)$. Then $x\in S_{\ep}^{k}(u)$ for some $\ep>0$.
	Thus, for any boundary $(k+1)$-symmetric map $h$ and
	any $r>0$, there holds
	\[
	\medint_{B^+_{1} }|u^e_{x,r}-h|^{2}d {\bf y}\geq\ep.
	\]
	If $v$ is a tangent map of $u$ at $x$, there exists a sequence
	$r_{i}\to0$ such that $u^e_{x,r_{i}}\to v^e$ in $L_{\loc}^{2}(\R^{n+1}_+)$.
	Then it follows that
	\[
	\medint_{B^+_{1}}|v^e_{x,r}-h|^{2}d {\bf y}\geq\ep,
	\]
	which implies that $v$ is not $(k+1)$-symmetric. Hence $S^{k}(u)\subset\Sigma^{k}(u)$.
	
	For the \sout{reverse} opposite direction, suppose $x\notin S^{k}(u)$. Then there
	exist  sequences of positive numbers $r_{i}>0$ and boundary $(k+1)$-symmetric
	maps $h_{i}\colon\R_{+}^{n+1}\to N$ such that
\begin{equation}\label{eq: temp-1}
	\medint_{B^+_{1}}|u^e_{x,r_i}-h_{i}|^{2}d {\bf y}\leq i^{-1}.
\end{equation}

	Up to a subsequence we can assume that $u_{x,r_{i}}\rightharpoonup v$
	in $\widehat{H}^{1/2}(D_2)$, which implies that $u^e_{x,r_{i}}\to v^e$
	in $L^{2}(B^+_1)$. Then the weak lower semi-continuity of
	$L^{2}$-norm implies that
	\[
	\int_{B^+_{1}}|v^e-h|^{2}\,d{\bf y}\leq\liminf_{i\to\infty}\int_{B^+_{1}}|u^e_{x,r_i}-h_{i}|^{2}\,d{\bf y}=0
	\]
for some  boundary $(k+1)$-symmetric map $h$.
	
	If $r_{i}\to0$, then $v$ is a tangent map and thus is $(k+1)$-symmetric,
	which shows that $x\not\in\Sigma^{k}(u)$. If $r_{i}\to r>0$, then 	by sending $i\to\infty$
and using the lower semi-continuity of $L^{2}$-norm, we can infer from \eqref{eq: temp-1} that
	\[
	\medint_{B^+_{r}(x)}|u^e ( {\bf z})-h({\bf z}-x)|^{2}d{\bf z}=0
	\]
	for some $(k+1)$-symmetric map $h$.
	This will imply that all tangent maps of $u$ at $x$ are $(k+1)$-symmetric,
i.e., $x\notin\Sigma^{k}(u)$. Thus  $S^{k}(u)\supset\Sigma^{k}(u)$. The proof is complete. \end{proof}

The following  Proposition shows that stationary $1/2$-harmonic maps can satisfy $(0,\ep)$-symmetry property  naturally.  
Indeed,  if  $u$ is a stationary  $1/2$-harmonic map and $$\Theta(u^e,B^+_1(\textbf{x}))=\Theta(u^e,B^+_{1/2}(\textbf{x})),$$
 then by the monotonicity formula \eqref{1.8} and the unique continuation property (see e.g. \cite[Theorem 1.2]{Garofalo-Lin-86-Indiana}), $u^e$ must be $0$-symmetric with respect to $\textbf{x}$ in $\R^{n+1}_+$. 
Such a property can be quantitatively preserved under small perturbations (also called rigidity property).

\begin{proposition}\label{pro1}	Fix $\La>0$. For any $\ep>0,$ there exists $\delta_1=\delta_1(n,N,\Lambda,\ep)$ such that, if  $u\in \widehat{H}^{1/2}_\La(D_{8},N)$ 
is a stationary  $1/2$-harmonic map \sout{with} satisfying	
$$\Theta(u^e,B^+_r(\mathbf{x}))-\Theta(u^e,B^+_{r/2}(\mathbf{x}))<\delta_1$$
	for some $x\in D_1$ and $0<r<1$, then $u^e$ is boundary $(0,\ep)$-symmetric on $B^+_r(\mathbf{x})$.
\end{proposition}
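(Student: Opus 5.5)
We argue by contradiction and compactness, which is the standard route for such rigidity statements. Suppose the proposition fails for some $\ep>0$. Then there are stationary $1/2$-harmonic maps $u_i\in \widehat{H}^{1/2}_\La(D_8,N)$, points $x_i\in D_1$ and radii $r_i\in(0,1)$ with
\[
\Theta(u_i^e,B^+_{r_i}(\mathbf{x}_i))-\Theta(u_i^e,B^+_{r_i/2}(\mathbf{x}_i))<\frac1i,
\]
but $u_i^e$ is not boundary $(0,\ep)$-symmetric on $B^+_{r_i}(\mathbf{x}_i)$. Both conditions are scale and translation invariant, so we rescale: $v_i(y):=u_i(x_i+r_iy)$. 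Then $v_i$ is stationary on $D_{7/r_i}\supset D_7$. By the monotonicity formula (Theorem \ref{thm1}) and Lemma \ref{lem: 2.9},
\[
\Theta(v_i^e,B_\rho^+)\le C(n)\La \quad\text{for } \rho\le 2.
\]
The rescaled maps satisfy
\[
\Theta(v_i^e,B_1^+)-\Theta(v_i^e,B_{1/2}^+)<\frac1i,
\qquad
\medint_{B_1^+}|v_i^e-h|^2\ge\ep \ \text{ for every boundary $0$-symmetric } h .
\]

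Next we pass to the limit. As in the discussion preceding the definition of tangent maps, $v_i$ is uniformly bounded in $\widehat{H}^{1/2}(D_\rho)$ for fixed $\rho$, and $v_i^e$ is bounded in $H^1(B_2^+)\cap L^\infty$. By the Corollary after Lemma \ref{lem: 2.9} and a diagonal argument, a subsequence satisfies $v_i\rightharpoonup v$, $v_i^e\rightharpoonup v^e$ weakly in $H^1(B_2^+)$ and strongly in $L^2(B_2^+)$. We also have $|\na v_i^e|^2d\mathbf{x}\rightharpoonup\mu^e$ as Radon measures, so Proposition \ref{prop: defect measure} applies. The monotonicity identity \eqref{1.8} for $v_i^e$ at the origin gives
\[
\int_{B_1^+\setminus B_{1/2}^+}\frac{|\mathbf{x}\cdot\na v_i^e|^2}{|\mathbf{x}|^{n+1}}\,d\mathbf{x}<\frac1i .
\]
Since $\mathbf{x}\cdot\na v_i^e\rightharpoonup \mathbf{x}\cdot\na v^e$ weakly in $L^2$, lower semicontinuity yields $\mathbf{x}\cdot\na v^e=0$ a.e.\ on the annulus $B_1^+\setminus B_{1/2}^+$. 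Thus $v^e$ is radially constant there.

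The main obstacle is upgrading this to homogeneity on all of $B_1^+$. The function $v^e$ is harmonic in $\R^{n+1}_+$, so $w:=\mathbf{x}\cdot\na v^e$ is also harmonic in the open half-space. Since $w$ vanishes on the open set $(B_1\setminus \bar B_{1/2})\cap\R^{n+1}_+$, the classical unique continuation for harmonic functions (real analyticity) gives $w\equiv0$ in the connected set $\R^{n+1}_+\cap B_2$, where $v^e$ is defined as the extension of a function on $D_2$ plus tails. To avoid the issue that $v^e$ depends on values of $v$ outside $D_2$, we work on $B_2^+$ only. There $v^e$ is smooth in the interior, and $w\equiv0$ on $B_2^+$ by analyticity. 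Hence $h(\mathbf{y}):=v^e(\mathbf{y}/|\mathbf{y}|)$, suitably extended, is a boundary $0$-symmetric function agreeing with $v^e$ on $B_1^+$. This is where the reference to Garofalo--Lin in the paper enters, but for the interior of the half-ball plain analyticity suffices.

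Finally, the strong $L^2(B_1^+)$ convergence gives
\[
\medint_{B_1^+}|v_i^e-h|^2\to\medint_{B_1^+}|v^e-h|^2=0,
\]
which contradicts the lower bound $\ge\ep$ for large $i$. The constant $\delta_1$ obtained this way depends only on $n,N,\La,\ep$, since all bounds used depend only on these quantities.
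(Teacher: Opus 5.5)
Your proposal is correct and follows essentially the same route as the paper. Both arguments proceed by contradiction: rescale to $B_1^+$, extract a subsequence via the uniform $H^1(B_2^+)$ bound with strong $L^2$ convergence, show $\mathbf{x}\cdot\nabla v^e=0$ on the annulus via the monotonicity identity and lower semicontinuity, upgrade this to $0$-homogeneity by unique continuation, and contradict non-symmetry using strong $L^2(B_1^+)$ convergence.

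The only difference is in the unique-continuation step. Where the paper cites Garofalo--Lin, you observe that $\mathbf{x}\cdot\nabla v^e$ is itself harmonic in the open half-ball and conclude by real analyticity. This is an adequate and more elementary justification; as you implicitly note, the appeal to Proposition \ref{prop: defect measure} is not actually needed.
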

\begin{proof} {We argue by contradiction}.
	Suppose that there exists $\ep_0>0$ and
	a sequence of  stationary $1/2$-harmonic  maps $u_i\in \widehat{H}^{1/2}_\La(D_{8},N)$ {and $x_i\in D_1$ such that}
	$$\Theta(u_i^e,B^+_{r_i}(\textbf{x}_i))-\Theta(u_i^e,B^+_{r_i/2}(\textbf{x}_i))<i^{-1},$$  
	but $u_i^e$  is not boundary $(0,\ep_0)$-symmetric on $B^+_{r_i}(\textbf{x}_i)$.
	Let $\bar{u}_i(y)=u_i(x_i+r_iy)$. By scaling invariance we have, for all $s>0$,
	$$\Theta(\bar{u}_i^e,B^+_s(\textbf{0})) =\Theta(u^e_i,B^+_{r_is}(\textbf{x}_i)).$$
	The  monotonicity property \eqref{1.8} of stationary $1/2$-harmonic maps implies that $\{\bar{u}^e_i\}_{i\geq 1}$ are uniformly bounded in $H^1(B^+_2).$ 
	So up to  a subsequence, we may  assume that $\bar{u}^e_i\rightharpoonup v^e$ weakly in $H^{1}(B_1^+)$ and $\bar{u}^e_i\to v^e$ strongly in $L^{2}(B_1^+)$. 
	Then, by Proposition \ref{prop: defect measure},   $v$ is weakly 1/2-harmonic in $B_1$.
	
	Now using the monotonicity formula \eqref{1.8}, for all $i\in \N$, we have
	\begin{align*}
	\int_{B^+_1\backslash B^+_{1/2}}{|\textbf{y}\cdot\nabla \bar{u}^e_i|^2}d\textbf{y} &\le C(\Theta(\bar{u}_i^e,B^+_1 )-\Theta(\bar{u}_i^e,B^+_{1/2})\\
	&{=C\big(\Theta(u_i^e,B^+_{r_i}(\textbf{x}_i))-\Theta(u_i^e,B^+_{r_i/2}(\textbf{x}_i))\big)}\le C/i.
	\end{align*}
	Sending $i\to \infty$ and using the weak convergence of $\bar{u}^e_i\to v^e$ in $H^1(B^+_1)$, we deduce 
	 $$\int_{B^+_1\backslash B^+_{1/2}}{|\textbf{y}\cdot\nabla \bar{v}^e|^2}d\textbf{y}=0,$$
	 so that $v^e$ is radially invariant on $B^+_1(0)\backslash B^+_{1/2}(0)$. This implies that $v^e$ is 0-homogeneous on $\R^{n+1}$ by the unique continuation property (see e.g. \cite[Theorem 1.2]{Garofalo-Lin-86-Indiana}). In turn, the strong convergence of $\bar{u}^e_i\rightarrow v^e$ in $L^2(B_1^+)$ implies that
	\[
	\medint_{B^+_{r_i}(\textbf{x}_i)} |u^e_i-v^e((\cdot-\textbf{x}_i)/r_i)|^2=\medint_{B^+_{1}}|\bar{u}^e_i-v^e|^2\to 0\qquad \text{as }i\to \infty,
	\]
	contradicting with the assumption that $u^e_i $ is not boundary $(0,\ep_0)$-symmetric on $B^+_{r_i}(\textbf{x}_i)$. The proof is complete.
\end{proof}

\subsection{Properties of quantitative stratum} Before discussing the stratified stratum, we \sout{first} recall the notion of  quantitative frame introduced \sout{in} by
\cite{Naber-V-2018}.
\begin{definition}
	Let $\{y_i\}_{i=0}^k \subset D_1(0)$ and $\rho>0$. We say that these points $\rho$-effectively span a $k$-dimensional affine subspace if for all $i=1,\cdots,k$,
	$$\dist\big(y_i,y_0+\Span\big\{y_1-y_0,\cdots,y_{i-1}-y_0\big\}\big)\geq2\rho.$$
	More generally, a set $F\subset D_1(0)$ is said to $\rho$-effectively span a $k$-dimensional affine subspace, if there exist points $\{y_i\}_{i=0}^k\subset F$ which $\rho$-effectively spans a $k$-dimensional affine subspace.
\end{definition}

\begin{remark}\label{rmk: quantitative frame}
{\rm {The advantage of the  quantitative frame is twofold (see the comments right below \cite[Definition 28]{Naber-V-2018}):
\begin{itemize}
\item[(1)]  If $\{y_i\}_{i=0}^k$ $\rho$-effectively spans a $k$-dimensional affine subspace, then for every point 	$x\in y_0+{\rm span}\big\{y_1-y_0,\cdots,y_k-y_0\big\}$,
there exists a unique set of numbers $\{\alpha_i\}_{i=1}^k$ such that
$$x=y_0+\sum_{i=1}^k\alpha_i(y_i-y_0) \quad \text{with}\quad  |\alpha_i|\leq C(n,\rho)|x-y_0|.$$

\item[(2)] Quantitative frame is stable under limiting process: if  $\{y_i^j\}_{i=0}^k$ $\rho$-effectively spans a $k$-dimensional affine subspace for all $j\ge 1$, and $y_i^j\to y_i$ as $j\to \infty$, then $\{y_i\}_{i=0}^k$ also $\rho$-effectively spans a $k$-dimensional affine subspace.
\end{itemize}}}
\end{remark}

Using the notion of quantitative frame, the next proposition shows that for a sufficiently pinched singular stratum of a stationary 1/2-harmonic mapping, if it is of high dimension in essential, then it  satisfies an  one-side Reifenberg approximating property.
\begin{proposition}\label{pro32}Fix $\La>0$.
	For any $\ep,\rho>0$, there exists $\delta_2=\delta_2(n,N,\Lambda,\ep,\rho)>0$ such that the following holds: for any stationary  1/2-harmonic map $u\in \widehat{H}^{1/2}_{\Lambda}(D_8,N)$, suppose $S\subset S^k_{\ep,\delta_2}(u)$ is a subset such that the collection of points
	$$\mathcal{F}:=\left\{y\in S\cap D_1\ \big|\  \Theta(u^e,B^+_{2}(\mathbf{y}))- \Theta(u^e,B^+_\rho (\mathbf{y})) <\delta_2\right\}$$
	 $\rho$-effectively spans a $k$-dimensional affine plane $V\subset\R^n$, then
	$$S\cap D_1\subset D_{2\rho}(V).$$
\end{proposition}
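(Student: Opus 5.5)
We argue by contradiction and compactness, following Naber--Valtorta. Suppose the statement fails for some $\ep,\rho>0$. Then there are $\delta_i\to0$, stationary maps $u_i\in\widehat{H}^{1/2}_\La(D_8,N)$, sets $S_i\subset S^k_{\ep,\delta_i}(u_i)$, and points $y^i_0,\dots,y^i_k\in\mathcal F_i$ that $\rho$-effectively span affine planes $V_i$. In addition there is a point $x_i\in S_i\cap D_1$ with $\dist(x_i,V_i)>2\rho$. After passing to a subsequence we may assume:
\begin{itemize}
\item $y^i_j\to y_j$, $x_i\to x$ and $V_i\to V$;
\item by Remark \ref{rmk: quantitative frame}(2), the limit points $\{y_j\}$ still $\rho$-effectively span $V$, and $\dist(x,V)\ge2\rho$;
\item $u_i\rightharpoonup u$ weakly in $\widehat{H}^{1/2}(D_4)$, with $u_i^e\to u^e$ in $L^2_{\loc}$ and $|\na u_i^e|^2d\mathbf{x}\rightharpoonup\mu^e$, all supplied by Proposition \ref{prop: defect measure} and the uniform bounds from Lemma \ref{lem: 2.9} and monotonicity.
\end{itemize}

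\textbf{Step 1 (pinching gives symmetry of the limit measure).} For each $j$, the monotonicity identity \eqref{1.8} together with $\Theta(u_i^e,B^+_2(\mathbf y^i_j))-\Theta(u_i^e,B^+_\rho(\mathbf y^i_j))<\delta_i$ gives
\[\int_{B_2^+(\mathbf y^i_j)\setminus B^+_\rho(\mathbf y^i_j)}\frac{|(\mathbf x-\mathbf y^i_j)\cdot\na u_i^e|^2}{|\mathbf x-\mathbf y^i_j|^{n+1}}\to0.\]
Hence $u^e$ is radially invariant about each $\mathbf y_j$ on the annulus $A_j=B^+_2(\mathbf y_j)\setminus B^+_\rho(\mathbf y_j)$. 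Arguing as in the proof of Proposition \ref{prop: tangent measure1}(iv), i.e.\ testing the stationarity identity with $\Psi(\mathbf x)(\mathbf x-\mathbf y_j)$ and using Cauchy--Schwarz to kill the cross term, the limit measure $\mu^e$ is also conical about $\mathbf y_j$ on $A_j$.

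\textbf{Step 2 (cone splitting).} Conical about $y_0$ and about $y_1,\dots,y_k$, which effectively span $V$, should force $u^e$ (and $\mu^e$) to be translation invariant along $V-y_0$ in the region $B^+_{3/2}(\mathbf y_0)\setminus B^+_{C\rho}$ of points far from the frame. The mechanism is the usual one: homogeneity about two points yields invariance along their difference direction. This is the main obstacle, because the radial invariance only holds on annuli rather than globally. I would first apply unique continuation, as in Proposition \ref{pro1} via \cite{Garofalo-Lin-86-Indiana}, to the weakly $1/2$-harmonic limit $u$ (harmonic extension $u^e$). This extends homogeneity of $u^e$ about each $\mathbf y_j$ to all of $\R^{n+1}_+$. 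Then the algebraic cone-splitting applies cleanly: $u^e(\mathbf y_0+\lambda(\mathbf y-\mathbf y_0))=u^e(\mathbf y)$ combined with homogeneity about $\mathbf y_j$ gives invariance along $y_j-y_0$. Therefore $u^e$ is boundary $0$-homogeneous about $\mathbf y_0$ and invariant along the $k$-plane $V-y_0$.

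\textbf{Step 3 (add the point $x$).} We still need symmetry in one more direction. Since $x_i\in S^k_{\ep,\delta_i}(u_i)$, the map $u_i^e$ is not boundary $(k+1,\ep)$-symmetric on $B_s^+(\mathbf x_i)$ for any $\delta_i\le s<1$. I will show that $u^e$ is nevertheless $0$-homogeneous about $\mathbf x$, by proving that the frequency pinching at $x$ also tends to zero. Suppose instead $\Theta(\mu^e,B^+_s(\mathbf x))$ were not constant in $s$. The $V$-invariance plus the homogeneity about $y_0$ give a lower bound $\Xi(\mu^e,x)\ge$ the density at $y_0$, because points of the invariant cone realise the maximal density, as in Lemmas \ref{kun} and \ref{ou}. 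Monotonicity then forces equality, so $\mu^e$ and $u^e$ are conical about $\mathbf x$ as well. Since $x\notin V$, cone splitting upgrades the symmetry to invariance along $\Span(V-y_0,x-y_0)$, a $(k+1)$-dimensional subspace. Thus $u^e$ is boundary $(k+1)$-symmetric about $\mathbf x$ on a fixed-size ball $B^+_s(\mathbf x)$ with $s\in(\rho,1)$. By the strong $L^2$ convergence $u_i^e\to u^e$, the maps $u_i^e$ are boundary $(k+1,\ep)$-symmetric on $B^+_s(\mathbf x_i)$ for $i$ large. This contradicts $x_i\in S^k_{\ep,\delta_i}(u_i)$ and proves the proposition. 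The delicate point to check is the density comparison at $x$: this step uses that $\mathcal F$ consists of pinched points at scales down to $\rho$, so the invariant cone is well defined at scale comparable to $\dist(x,V)$.
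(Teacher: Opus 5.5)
\textbf{Step 3 has a genuine gap.} Your Steps 1--2 are a workable compactness substitute for the paper's direct estimate. The unique continuation you need there is just analyticity of the harmonic function $(\mathbf z-\mathbf y_j)\cdot\nabla u^e$ on $\R^{n+1}_+$.

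The problem is the claimed bound $\Xi(\mu^e,x)\ge\Xi(\mu^e,y_0)$, which has no basis. Maximal density is attained on the spine $V$. The monotonicity argument of Lemma \ref{kun} bounds the density at other points from \emph{above} by the vertex density, not from below. Your $x$ lies at distance $\ge 2\rho$ from $V$. So neither conicality of $u^e$ about $\mathbf x$ nor $(k+1)$-symmetry on a ball $B^+_s(\mathbf x)$ with $s\in(\rho,1)$ follows, and both are false in general.

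For a counterexample, take $k=0$, $n=2$, $y_0=0$, and $u(x)=x/|x|\colon\R^2\to\mathbb S^1$, which is a minimizing, hence stationary, $1/2$-harmonic map. The pinching at $0$ vanishes, and $\Xi(u,x)=0<\Xi(u,0)$ for every $x\neq 0$. Moreover $u^e$ is neither conical about $\mathbf x$ nor invariant along $\Span\{x\}$. The proposition still holds for this $u$, but only because $u$ is almost constant on $D_s(x)$ for $s\ll|x|$, i.e.\ at scales well below $\dist(x,V)$.

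\textbf{The missing idea.} The extra $(k+1)$-st direction comes from the geometry near $x$. Homogeneity about $\mathbf x$ must be produced separately, at small scales, from the monotonicity at $x$ itself.

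\begin{enumerate}
\item \emph{The extra direction.} In your limit, homogeneity about $\mathbf y_0$ and invariance along $\hat V=V-y_0$ give $(\mathbf z-\pi_V(\mathbf z))\cdot\nabla u^e(\mathbf z)=0$. Since $|\mathbf x-\pi_V(\mathbf x)|\ge 2\rho$, the unit normal $h(\mathbf z)$ varies only by $O(s/\rho)$ on $B_s^+(\mathbf x)$. Setting $P=\hat V\oplus\R h(\mathbf x)$, this gives $s^{1-n}\int_{B_s^+(\mathbf x)}|P\cdot\nabla u^e|^2\le C(s/\rho)^2\,\Theta(\mu^e,B_s^+(\mathbf x))$.
\item \emph{Homogeneity at $\mathbf x$.} Use monotonicity of $s\mapsto\Theta(\mu^e,B_s^+(\mathbf x))$ (as in the proof of Lemma \ref{5.5}) together with lower semicontinuity. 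These give $\int_{B_s^+(\mathbf x)}|(\mathbf z-\mathbf x)\cdot\nabla u^e|^2\,|\mathbf z-\mathbf x|^{-n-1}d\mathbf z\to 0$ as $s\to0$.
\item \emph{Conclusion.} Hence every blow-up of $u^e$ at $\mathbf x$ is boundary $(k+1)$-symmetric. So $u^e$ is $(k+1,\ep/2)$-symmetric on $B^+_{s_0}(\mathbf x)$ for some small $s_0>0$. Strong $L^2$ convergence then makes $u_i$ $(k+1,\ep)$-symmetric on $D_{s_0}(x_i)$, with $\delta_i\le s_0<1$ for large $i$. This is the desired contradiction.
\end{enumerate}

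The paper does the same thing quantitatively. Pinching at the frame gives $\int_{B_r^+(\mathbf x_0)}|P\cdot\nabla u^e|^2\lesssim\delta_2$ for $r\ll\rho$. Lemma \ref{5.3} then supplies homogeneity by pigeonholing a dyadic scale with small pinching at $x_0$.
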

To prove this Proposition,  we denote, for any $k$-dimensional subspace $L$,
$$|L\cdot\nabla f|^2=\sum\limits_{i=1}^{k}|\nabla_{e_i}f|^2,$$
 where $\{e_i\}_{i=1}^{k}$ is an  orthonormal basis of $L$. This quantity measures how far away the function $f$  is translation invariant along the subspace $L$.
\begin{proof}
	By assumption, we assume that  $V$	is  the affine subspace that is spanned by the $\rho$-independent frame  $\{y_{j}\}_{j=0}^{k}\subset \mathcal{F}$, i.e.,
	\[
	V=y_{0}+{\rm span}{\big\{y_{1}-y_{0},\cdots, y_k-y_0\big\}.}
	\]
	Let $x_0\in D_{1}\backslash D_{2\rho}(V)$, $\mathbf{x}_{0}=(x_0,0)$ and $\de_{2}>0$ to be
	determined later. We need to prove that $ {x}_{0}\not\in S_{\ep,\de_{2}}^{k}(u)$. The idea is  to show  that  $u^e$ is almost translation invariant along a $(k+1)$-dimensional subspace in a neighborhood of ${x}_{0}$.
	
	By the definition of $\mathcal{F}$, there holds $$\Theta(u^e,B^+_{2}(\mathbf{y}_i))-\Theta(u^e,B^+_\rho(\mathbf{y}_i))<\de_{2},\qquad \forall\,0\le i\le k.$$
Thus, for $0<r<\rho$ we have that $B^+_{r}(\mathbf{x}_0)\subset B^+_{2}(\mathbf{y}_i)\backslash B^+_{\rho}(\mathbf{y}_i)$
	for every $0\le i\le k$, so that by the monotonicity formula \eqref{1.8},
	\begin{align*}
	\int_{B^+_{r}(\mathbf{x}_0)}|(\textbf{z}-\mathbf{y}_i)\cdot\na u^e(\mathbf{z})|^{2}d\textbf{z}
	&\le\int_{B^+_{2}(\mathbf{y}_i)\backslash B^+_{\rho}(\mathbf{y}_i)}|(\textbf{z}-\mathbf{y}_i)\cdot\na u^e(\mathbf{z})|^{2}d\textbf{z}\\
	&\le {C(n,\rho)\big(\Theta(u^e,B^+_{2}(\mathbf{y}_i))-\Theta(u^e,B^+_\rho(\mathbf{y}_i))\big)}\\
	&\le C(n,\rho)\delta_2.
	\end{align*}
Consequently, by the triangle inequality we deduce
	\[
	\int_{B^+_{r}(\mathbf{x}_0)}|(\mathbf{y}_i-\mathbf{y}_0)\cdot\na u^e(\mathbf{z})|^{2}d\textbf{z} \le 2C(n,\rho)\de_{2},\qquad \forall\,1\le i\le k.
	\]
	Since $\{y_{j}\}_{j=0}^{k}$ is $\rho$-independent, we conclude that
	\begin{equation}
	\int_{B^+_{r}(\mathbf{x}_0)}|\hat{V}\cdot\na u^e(\mathbf{z})|^{2}d\textbf{z}\le C\de_{2}\label{eq: k-small}
	\end{equation}
	for some constant $C=C(n,\rho)>0$, where $\hat{V}={\rm span}\{ {y}_i-y_{0}\}_{i=1}^{k}\subset \R^n$.
	
	On the other hand, for each ${z}\in D_{r}({x}_0)\subset D_{2}\backslash D_{\rho}(V)$,
	let
	\[
	\pi_{V}(z)=y_{0}+\sum_{i=1}^{k}\al_{i}(z)(y_{i}-y_{0})
	\]
	be the orthogonal projection of $z$ in $V$. Then $| {z}-\pi_{V}( {z})|\ge\rho$,
	$|\al_{i}(z)|\le C(n,\rho)$ (see Remark \ref{rmk: quantitative frame}) and
	\[
	\begin{aligned}
\int_{B^+_{r}(\mathbf{x}_0)}|(\textbf{z}-\pi_{V}(\textbf{z}))\cdot\na u^e(\mathbf{z})|^{2}d\textbf{z} 
&\le\int_{B^+_{2}(\mathbf{y}_0)\backslash B^+_{\rho}(\mathbf{y}_0)}|(\textbf{z}-\mathbf{y}_{0})\cdot\na u^e(\mathbf{z})|^{2}d\textbf{z}\\
  &  + C(n, \rho)\sum_{i=1}^{k}\int_{B^+_{2}(\mathbf{y}_i)\backslash B^+_{\rho}(\mathbf{y}_i)}|(\mathbf{y}_{i}-\mathbf{y}_{0})\cdot\na u^e(\mathbf{z})|^{2}d\textbf{z}\\
  &\le C(n,\rho)\de_{2}.
	\end{aligned}
	\]
	Thus, by setting ${h(\mathbf{z})=\displaystyle\frac{\mathbf{z}-\pi_{V}(\mathbf{z})}{|\mathbf{z}-\pi_{V}(\mathbf{z})|}}$, it follows
	that
	\[
	\begin{aligned}&\int_{B^+_{r}(\mathbf{x}_0)}|h(\mathbf{x}_0)\cdot\na u^e(\mathbf{z})|^{2}d\textbf{z} \\
	& \le\int_{B^+_{r}(\mathbf{x}_0)}|h(\textbf{z})\cdot\na u^e(\mathbf{z})|^{2}+\int_{B^+_{r}(\mathbf{x}_0)}|(h(\textbf{z})-h(\mathbf{x}_0))\cdot\na u^e(\mathbf{z})|^{2}d\textbf{z}\\
	& \le C\de_{2}+Cr^{2}\int_{B^+_{r}(\mathbf{x}_0)}|\na u^e(\mathbf{z})|^{2}d\textbf{z}\\
	& \le C\de_{2}+C(n,\rho,\La)r^{n+1}.
	\end{aligned}
	\]
	Now we choose $r=r(n,\La,\rho)\ll\rho$ such that $C(n,\rho,\La)r^{n+1}\le C\de_{2}$.
	Thus
	\[
	\int_{B^+_{r}(\mathbf{x}_0)}|h(\mathbf{x}_0)\cdot\na u^e(\mathbf{z})|^{2}d\textbf{z}\le2C\de_{2}.
	\]
	Together with \eqref{eq: k-small},  we find that
	\[
	\int_{B^+_{r}(\mathbf{x}_0)}|P\cdot\na u^e(\mathbf{z})|^{2}d\textbf{z}\le C(n,\rho,\La)\de_{2}
	\]
for the $(k+1)$-dimensional subspace $P=\hat{V}\oplus \R h(\mathbf{x}_0)\subset\R^n$.	This shows that,  in a small  neighborhood of $\mathbf{x}_{0}$,  $u^e$ is almost translation invariant along a $(k+1)$-dimensional subspace.	
By Lemma \ref{5.3} below, we can choose $\de_{2}=\de_{2}(n,\rho,\La)$ sufficiently small so
	that $x\not\in S^k_{\ep,\de_2}(u)$. This proves Proposition \ref{pro32}.
\end{proof}
\begin{lemma}\label{5.3}
	For any $\ep>0$, there exists $\delta_3=\delta_3(n,N,\Lambda,\ep)>0$ such that if  $u\in \widehat{H}^{1/2}_{\Lambda}(D_8,N)$ is a stationary  $1/2$-harmonic map satisfying
	\begin{align}\label{81}
	\int_{B^+_1 }|P\cdot\nabla u^e|^2<\delta_3
	\end{align}
	for some $(k+1)$-dimensional subspace $P\subset\R^n$, then
	$S^k_{\ep,\overline{r}}(u)\cap D_{1/2}=\emptyset$ for  $\overline{r}=\delta_3^{\frac{1}{2(n-1)}}$. In particular, $0\not\in S^k_{\ep,\overline{r}}(u)$.
\end{lemma}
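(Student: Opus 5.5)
Fix $\ep>0$ and $x\in D_{1/2}$. To show $x\notin S^k_{\ep,\overline r}(u)$ it suffices to find a single scale $s\in[\overline r,1)$ on which $u^e$ is boundary $(k+1,\ep)$-symmetric on $B^+_s(\mathbf{x})$. Two ingredients are available: the hypothesis \eqref{81} gives almost-invariance along the $(k+1)$-plane $P$, and Proposition \ref{pro1} converts a small pinching of $\Theta$ into almost $0$-homogeneity. Combining them should give almost boundary $(k+1)$-symmetry. The combination is not quantitative directly, so we argue by compactness.

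\textbf{Step 1: a pinched scale.} Set $\overline r=\delta_3^{1/(2(n-1))}$ and consider the dyadic scales $s_j=2^{-j}\cdot\frac14$, $j=0,\dots,J$, with $s_J\ge\overline r$, so $J\approx \frac{1}{2(n-1)}\log_2(1/\delta_3)$. By monotonicity (Theorem \ref{thm1}) and Lemma \ref{lem: 2.9}, $\Theta(u^e,B^+_{s}(\mathbf{x}))\le C(n)\Lambda$ for all $s\le 1/4$. Telescoping,
\[
\sum_{j=1}^{J}\Big(\Theta(u^e,B^+_{s_{j-1}}(\mathbf{x}))-\Theta(u^e,B^+_{s_{j}}(\mathbf{x}))\Big)\le C\Lambda,
\]
so some $j$ has pinching at most $C\Lambda/J$. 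Since $J\to\infty$ as $\delta_3\to0$, this pinching can be made smaller than any prescribed $\delta'$. Thus there is $s=s_{j-1}\in[2\overline r,1/4]$ with $\Theta(u^e,B^+_s(\mathbf{x}))-\Theta(u^e,B^+_{s/2}(\mathbf{x}))<\delta'$.

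\textbf{Step 2: the invariance survives rescaling.} On this ball, \eqref{81} gives
\[
\int_{B^+_s(\mathbf{x})}|P\cdot\nabla u^e|^2<\delta_3 .
\]
In the rescaled map $u_{x,s}$ this becomes $s^{1-n}\int_{B_1^+}|P\cdot\nabla u^e_{x,s}|^2\le s^{1-n}\delta_3\le \overline r^{\,1-n}\delta_3=\delta_3^{1/2}$. This is exactly the reason for the exponent $\frac{1}{2(n-1)}$ in the definition of $\overline r$.

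\textbf{Step 3: contradiction by compactness.} Suppose the lemma fails for some $\ep_0$. Then there are stationary maps $u_i$ with $\delta_3=1/i$, points $x_i\in D_{1/2}$ and scales $s_i$ as above. The rescaled maps $v_i=(u_i)_{x_i,s_i}$ satisfy:
\begin{itemize}
\item uniform $\widehat H^{1/2}$ and $H^1(B_2^+)$ bounds, by the tangent-map estimates of Section 2;
\item pinching $\Theta(v_i^e,B_1^+)-\Theta(v_i^e,B_{1/2}^+)\to0$;
\item $\int_{B_1^+}|P_i\cdot\nabla v_i^e|^2\to0$;
\item $v_i$ is not $(k+1,\ep_0)$-symmetric on $D_1$.
\end{itemize}
Pass to subsequences with $P_i\to P$, $v_i^e\rightharpoonup w^e$ in $H^1$ and strongly in $L^2$. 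As in the proof of Proposition \ref{pro1}, the limit $w^e$ is radially invariant on the annulus and, by unique continuation, $0$-homogeneous. Weak lower semicontinuity gives $P\cdot\nabla w^e=0$ on $B_1^+$, and homogeneity propagates this to all of $\R^{n+1}_+$. Hence $w^e$ is boundary $(k+1)$-symmetric. Strong $L^2$ convergence then gives $\medint_{B_1^+}|v_i^e-w^e|^2\to0$, contradicting the fourth property.

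The main obstacle is Step 3, specifically the passage to the limit. We need the limit $w$ to be weakly $1/2$-harmonic with the monotonicity identity available, so that the unique continuation argument of Proposition \ref{pro1} applies. We also need translation invariance along $P$ on $B_1^+$ to upgrade to the whole half-space together with homogeneity. For the first point we rely on Proposition \ref{prop: defect measure} (stated on $D_1$; rescaling gives $D_2$). The second point is immediate once $w^e$ is $0$-homogeneous about the origin, since $P\subset\R^n\times\{0\}$.
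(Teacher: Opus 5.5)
Your proposal is correct and follows the paper's proof essentially step for step. It uses the same ingredients: a pinched dyadic scale in $[\overline r,1/2]$ obtained by telescoping the bounded density, the rescaling bound $s^{1-n}\delta_3\le\overline r^{\,1-n}\delta_3=\delta_3^{1/2}$, and a compactness contradiction in which the limit is $0$-homogeneous by unique continuation and $P$-invariant by lower semicontinuity.

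Two small remarks. First, in Step 2 the identity should read $\int_{B_1^+}|P\cdot\nabla u^e_{x,s}|^2=s^{1-n}\int_{B_s^+(\mathbf{x})}|P\cdot\nabla u^e|^2$; your factor $s^{1-n}$ is on the wrong side. Second, the ``main obstacle'' you flag is not really one. The unique continuation step only needs the weak $H^1$ limit $w^e$ to be harmonic in the open half-ball, so that $\mathbf{y}\cdot\nabla w^e$, which vanishes on the annulus, is harmonic; it does not need $w$ to be weakly $1/2$-harmonic.
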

\begin{proof}	The proof is exactly same as Naber-Valtorta \cite{Naber-V-2018}. Here we sketch it for convenience of the readers.
	First we claim that there is a constant $C_2(n,N,\Lambda)>0$ such that for every $x\in D_{1/2}$, there exists $r_x\in[\overline{r},1/2]$  such that
	\begin{align}\label{80}
	\Theta(u^e,B^+_{r_x} (\mathbf{x}))-\Theta(u^e,B^+_{{r_x}/2} (\mathbf{x}))<\frac{C_2(n,N,\Lambda)}{|\log\delta_3|}.
	\end{align}
	Indeed, if this is not true, then for  $u\in \widehat{H}^{1/2}_{\Lambda}(D_8,N)$ we may assume by choosing a good radii that $\Theta(u^e,B^+_{1/2} (\mathbf{x})) \le C_1(n,N,\Lambda)$. Then
	\begin{align*}
	C_1(n,N,\Lambda)\geq\Theta(u^e,B^+_{1/2} (\mathbf{x}))&\geq\sum_{i=1}^{|\log\overline{r}|+1}(\Theta(u^e,B^+_{2^{-i}} (\mathbf{x}))-\Theta(u^e,B^+_{2^{-i-1}} (\mathbf{x}))\\
	&\geq c(n)C_2(n,N,\Lambda),
	\end{align*}
	which is impossible if we take $C_2(n,N,\Lambda)={2C_1(n,N,\Lambda)}/{{c}(n)}.$ This proves the claim.

	%We now use a contradiction argument to show that, for every $x\in B^+_{1/2}(0)$, $u$ is $(k+1,\ep)$-symmetric on $B^+_{r_x}(x)$.
	We now argue by contradiction. 
	Suppose there exist an $\ep>0$ and a sequence of stationary  $1/2$-harmonic maps  $u_i\in \widehat{H}^{1/2}_\Lambda(D_8)$, together with $(k+1)$-dimensional subspaces $P_i\subset\R^n$, $\de_{3,i}\to 0$, $x_i\in D_{1/2}(0)$ and $r_i\in[\bar{r}_i,1]$, such that $u^e_i$ is not boundary $(k+1,\ep)$-symmetric on $B^+_{r_i}(\mathbf{x}_i)$, and \eqref{80} holds for $u^e_i$ at $x=x_i$, where $\bar{r}_i=\de_{3,i}^{1/2(n-2)}$ and $r_i=r_{x_i}$. Note also that by the definition of $r_x$, we have
	\begin{align}\label{79}
	r_i^{1-n}\int_{B^+_{r_i}(\mathbf{x}_i)}|P_i\cdot\nabla u^e_i|^2<r_i^{1-n}\delta_{3,i}\leq\de_{3,i}^{1/2}.
	\end{align}
		Using a simple rotation, we may assume that the $(k+1)$-dimensional subspaces $P_i$ are fixed by $P$, i.e., $P_i=P$ for all $i$.
	
	Let $v_i(x)=u_i(x_i+r_ix)$. Then we may assume that $v^e_i$ converges weakly in $H^1$ and strongly in $L^2$ to some weakly harmonic map $v^e$. It follows from \eqref{80} that $v^e$ is 0-symmetric by unique continuation (since $v^e$ is harmonic) and is translation invariant with respect to the $(k+1)$-dimensional subspace $P$ by \eqref{79}:
		\[
	\int_{B^+_1}|P\cdot \nabla v^e|^2\leq\liminf_{i\to \infty}\int_{B^+_1}|P\cdot \nabla v^e_i|^2=\liminf_{i\to \infty}r_i^{1-n}\int_{B^+_{r_i}(\mathbf{x}_i)}|P\cdot\nabla u^e_i|^2=0.
	\]
	Since $v^e_i\to v^e$ in $L^2$, this implies that $v^e_i$ is boundary $(k+1,\ep)$-symmetric on $B^+_{1}$ if $i\gg 1$, or equivalently, $u^e_i$ is $(k+1,\ep)$-symmetric with $B^+_{r_i}(\mathbf{x}_i)$. We get the desired contradiction.
\end{proof}

The next result shows that $\Theta(u^e,B^+_\rho(\cdot))$ remains almost constant on all pinched points.
\begin{lemma}\label{5.5}
For any $0<\rho,\eta<1$,  there exists $\de_4=\de_4(n,N,\Lambda,\rho,\eta)>0$  satisfying the following property. Let  $u\in \widehat{H}^{1/2}_{\Lambda}(D_8,N)$ be a stationary  
$1/2$-harmonic map and let $S\subset S^k_{\ep,\delta_4}(u)$. Let
 $$E=\sup_{y\in S\cap D_1(0)}\Theta(u^e,B^+_{2}(\mathbf{y})) .$$
 If the set
 $$\mathcal{F}=\big\{y\in S\cap D_1\ \big|\  \Theta(u^e,B^+_\rho(\mathbf{y}))>E-\delta_4, \ \mathbf{y}=(y,0)\big\}$$
  $\rho$-effectively spans a $k$-dimensional affine subspace $V\subset\mathbb{R}^n$, then
 $$\Theta(u^e,B^+_\rho(\mathbf{x})) \ge E-\eta \qquad \text{for all }  x\in V\cap D_1(0).$$
\end{lemma}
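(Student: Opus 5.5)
The plan is a compactness/contradiction argument modeled on Naber--Valtorta \cite[Lemma 4.?]{Naber-V-2018}, using the tangent-measure machinery of Section 2.

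\textbf{Setting up the contradiction.} Fix $\rho,\eta$ and suppose the lemma fails. Then there are stationary maps $u_i\in \widehat{H}^{1/2}_\La(D_8,N)$, sets $S_i\subset S^k_{\ep,\delta_i}(u_i)$ with $\delta_i\to0$, numbers $E_i$, frames $\{y^i_j\}_{j=0}^k\subset\mathcal F_i$ that $\rho$-effectively span $V_i$, and points $x_i\in V_i\cap D_1$ such that
\[
\Theta(u_i^e,B^+_\rho(\mathbf{x}_i))<E_i-\eta .
\]
By Lemma \ref{lem: 2.9} and monotonicity, $E_i\le C(n,\Lambda)$. Passing to subsequences, we may assume the following:
\begin{itemize}
\item $E_i\to E$, $y^i_j\to y_j$ and $x_i\to x$;
\item $V_i\to V$, where $\{y_j\}$ still $\rho$-effectively spans $V$ by Remark \ref{rmk: quantitative frame}(2), and $x\in V\cap\overline{D_1}$;
\item $u_i\rightharpoonup u$ and $|\nabla u_i^e|^2d\mathbf{x}\rightharpoonup\mu^e$, as in Proposition \ref{prop: defect measure}.
\end{itemize}
The monotonicity formula \eqref{1.8} passes to $\mu^e$ exactly as in the proof of Lemma \ref{kun}. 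So $r\mapsto r^{1-n}\mu^e(B^+_r(\mathbf{y}))$ is nondecreasing for every boundary point $\mathbf{y}$, with the radial-derivative defect term given by the limit of $\int|(\mathbf{z}-\mathbf{y})\cdot\nabla u_i^e|^2|\mathbf{z}-\mathbf{y}|^{-n-1}$.

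\textbf{Pinching at the frame points and symmetry of $\mu^e$.} At each $y_j$ we have $\Theta(u_i^e,B^+_\rho(\mathbf{y}^i_j))>E_i-\delta_i$. We also have $\Theta(u_i^e,B^+_2(\mathbf{y}^i_j))\le E_i$, since $y^i_j\in S_i\cap D_1$. Hence the monotonicity defect of $u_i^e$ on the annulus $B_2^+(\mathbf{y}^i_j)\setminus B_\rho^+(\mathbf{y}^i_j)$ tends to $0$. This gives
\[
\int_{B_2^+(\mathbf{y}^i_j)\setminus B_\rho^+(\mathbf{y}^i_j)}|(\mathbf{z}-\mathbf{y}^i_j)\cdot\nabla u_i^e|^2\,d\mathbf{z}\to0 .
\]
Repeating the argument of Proposition \ref{prop: tangent measure1}(iv) (stationarity identity plus Cauchy--Schwarz) with centers $y_j$, I expect $\mu^e$ to be conical with respect to each $y_j$ on the annular region $\rho<|\mathbf{z}-\mathbf{y}_j|<2$. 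Combining the cone structures at $y_0,\dots,y_k$ as in the proof of Proposition \ref{pro32} (triangle inequality), the directional energy $|\hat V\cdot\nabla u_i^e|^2$ tends to $0$ on the relevant region. The limit $\mu^e$ is then invariant under translations along $\hat V$, and conical about every point of $V$, at scales in a range containing $[\rho,\,2-C\rho]$ near $V\cap D_1$.

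\textbf{Concluding.} Since $x\in V$ and $\mu^e$ is conical and $V$-invariant there, I expect
\[
\rho^{1-n}\mu^e(B^+_\rho(\mathbf{x}))=\rho^{1-n}\mu^e(B^+_\rho(\mathbf{y}_0))\ge \lim_i\Theta(u_i^e,B^+_\rho(\mathbf{y}^i_0))\ge E .
\]
Here the inequality uses the weak convergence of measures applied to a slightly enlarged closed ball. For the other direction I use a slightly shrunk ball: by upper semicontinuity on compact sets, $\limsup_i\Theta(u_i^e,B^+_\rho(\mathbf{x}_i))\ge \rho^{1-n}\mu^e(B^+_{\rho'}(\mathbf{x}))$ for $\rho'<\rho$. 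Together with the cone structure, which makes $r\mapsto r^{1-n}\mu^e(B^+_r(\mathbf{x}))$ constant for $r$ near $\rho$, this gives $\liminf_i\Theta(u_i^e,B^+_\rho(\mathbf{x}_i))\ge E$. That contradicts $\Theta(u_i^e,B^+_\rho(\mathbf{x}_i))<E_i-\eta$.

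The main obstacle is the middle step. Translation invariance is only available on annuli, not on full cones, so the comparison between $B_\rho^+(\mathbf{x})$ and $B_\rho^+(\mathbf{y}_0)$ has to be done carefully, with the limits of the monotonicity quantities handled correctly. My first attempt would bypass full invariance. I would write $x=y_0+\sum_j\alpha_j(y_j-y_0)$ with $|\alpha_j|\le C(\rho)$, and estimate directly
\[
\Theta(u_i^e,B^+_\rho(\mathbf{x}_i))-\Theta(u_i^e,B^+_\rho(\mathbf{y}_0^i)).
\]
For this I would differentiate $\Theta(u_i^e,B^+_\rho(\cdot))$ along the segment from $y^i_0$ to $x_i$. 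The derivative is controlled by $\rho^{-n}\int_{B^+_{2\rho}}|\hat V\cdot\nabla u_i^e|\,|\nabla u_i^e|$, and the smallness of the $\hat V$-directional energy makes this $o(1)$, modulo the boundary contribution, which I would control by averaging over radii in $[\rho,\rho(1+\tau)]$. The $S^k_{\ep,\delta}$ hypothesis is not needed in this argument.
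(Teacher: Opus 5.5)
Your outline is the paper's own proof. Argue by contradiction and pass to a limit measure $\mu^e$ that inherits the monotonicity formula. At the limit frame you get $\Theta(\mu^e,B^+_2(\mathbf{y}_j))=\Theta(\mu^e,B^+_\rho(\mathbf{y}_j))=E$, then invariance of $\mu^e$ along $V$, and finally a contradiction with the drop at $\mathbf{x}$. The paper obtains the invariance only by citing Proposition \ref{prop: tangent measure1}. That result concerns blow-ups, where conicality holds at all scales. So the step you single out as the main obstacle is exactly the one the paper leaves unjustified. A minor point: the lower bound you need at $\mathbf{x}$, with a $\liminf$, comes from lower semicontinuity of $|\nabla u_i^e|^2d\mathbf{x}\rightharpoonup\mu^e$ on open sets, not from upper semicontinuity on compact sets.

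The repair you sketch, however, does not work as written.

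\emph{What the derivative needs.} Set $\phi_{\mathbf{c}}=\phi(|\cdot-\mathbf{c}|/\rho)$ and use the stationarity identity for the tangential field $X=\phi_{\mathbf{c}}v$, $v\in\hat V$. The derivative in $\mathbf{c}$ of the smoothed density is then, up to sign, $2\rho^{1-n}\int(\nabla\phi_{\mathbf{c}}\cdot\nabla u_i^e)\cdot(v\cdot\nabla u_i^e)\,d\mathbf{z}$. So you need $v\cdot\nabla u_i^e$ small in $L^2$ on the shell $\{\rho\le|\mathbf{z}-\mathbf{c}|\le(1+\tau)\rho\}$.

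\emph{What the pinching gives.} It only makes $(\mathbf{z}-\mathbf{y}^i_j)\cdot\nabla u_i^e$ small on $\{\rho\le|\mathbf{z}-\mathbf{y}^i_j|\le2\}$. Hence $\hat V\cdot\nabla u_i^e$ is small only on the intersection of these annuli.

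\emph{Where it breaks.} As soon as $0<|\mathbf{c}-\mathbf{y}^i_0|<(2+\tau)\rho$, that is, at the start of your segment, the shell enters $B^+_\rho(\mathbf{y}^i_0)$. There nothing controls $(\mathbf{z}-\mathbf{y}^i_0)\cdot\nabla u_i^e$. The other centres cannot substitute for it: $(\mathbf{z}-\mathbf{y}_1)\cdot\nabla u=(\mathbf{y}_0-\mathbf{y}_1)\cdot\nabla u+(\mathbf{z}-\mathbf{y}_0)\cdot\nabla u$, and differences of $\mathbf{y}_1,\dots,\mathbf{y}_k$ span only $k-1$ directions. The same objection applies to your claim that $r\mapsto r^{1-n}\mu^e(B^+_r(\mathbf{x}))$ is constant near $r=\rho$ when $x$ is within $2\rho$ of a frame point.

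\emph{A way to close the step.} Work in the limit and non-locally.
\begin{itemize}
\item $(\mathbf{z}-\mathbf{y}_j)\cdot\nabla v^e$ is harmonic in $\mathbb{R}^{n+1}_+$ and vanishes on an open set. By unique continuation (as in Proposition \ref{pro1}), $v^e$ is globally $0$-homogeneous about each $\mathbf{y}_j$, hence $V$-invariant.
\item For the defect part, subtracting the conicality identities about $\mathbf{y}_0$ and $\mathbf{y}_j$ gives local $\hat V$-invariance on the intersection of the annuli.
\item Conicality about $\mathbf{y}_1$ then transports this along rays from $\mathbf{y}_1$ into $B^+_\rho(\mathbf{y}_0)$, each of which also crosses that intersection.
\end{itemize}
This identifies $\mu^e$ with a $V$-invariant cone measure of density $E$ on $B^+_\rho(\mathbf{x})$. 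That requires the relevant balls to fit inside the outer radius $2$, e.g.\ $B^+_\rho(\mathbf{y}_0)\subset B^+_2(\mathbf{y}_1)$, which fails when $|y_0-y_1|>2-\rho$. Neither you nor the paper addresses this scale issue.
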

\begin{proof}
We  argue  by contradiction. Suppose $\{u_i\}\subset \widehat{H}^{1/2}_{\Lambda}(D_8,N)$ is a sequence of stationary  
$1/2$-harmonic maps satisfying
$$\sup_{y\in S_i\cap D_1(0)}\Theta(u^e_i,B^+_{2}(\mathbf{y}))=E_i \leq C(n,\Lambda).$$
For each $i\ge 1$, the set 
$$\mathcal{F}_i:=\big\{y\in S_i\cap D_1\ \big|\ \Theta(u^e_i,B^+_\rho(\mathbf{y})) >E_i-i^{-1}\big\}$$ 
contains a subset $\{y^i_j\}_{j=0}^k$ spanning $\rho$-effectively  a $k$-dimemsional affine subspace $V_i\subset\mathbb{R}^n$, and there exists $x_i\in V_i\cap D_2(0)$ such that
\begin{align}\label{fanz}
\Theta(u^e_i,B^+_\rho(\mathbf{x}_i)) \leq E_i-\eta.
\end{align}
The first consequence of the assumption is that
\begin{equation}\label{eq:lemma 312}
\Theta(u^e_i,B^+_{2}(\mathbf{y}_j^i))-\Theta(u^e_i,B^+_\rho(\mathbf{y}_j^i)) <1/i,\qquad \forall\,i\ge 1\text{ and } 0\le j\le k.
\end{equation}
Without loss of generality, we  further assume that, for each $0\le j\le k$,
\[y^i_j\to y_j\quad\text{and}\quad x_i\to x \qquad \text{as }i\to \infty\]
and $V_i$  converges to a $k$-dimensional affine subspace $V$ passing through  $x$.

By Proposition \ref{prop: defect measure}, there exist a  weakly  harmonic map $v^e$ and a defect measure $\nu$ such that up to a subsequence,
$u^e_i\to v^e$ weakly in $H^1(B_2^+)$ and strongly in $L^2(B^+_2)$, and
$$|\na u^e_i|^2d\mathbf{x}\wto \mu^e=|\De v^e|^2d\mathbf{x}+\nu.$$
 Using the monotonicity formula \eqref{1.8} and the stationarity of $u^e_i$, adapting the argument in the proof of Proposition \ref{prop: tangent measure1},  we find that for any fixed $y$, the function
$$r\mapsto \Theta(\mu^e,B^+_r(\mathbf{y})):=\Theta(v^e,B^+_r(\mathbf{y}))+r^{1-n}\nu(B^+_r(\mathbf{y}))$$ is monotonically nondecreasing. 
Thus, by sending $i\to \infty$ in \eqref{eq:lemma 312} and \eqref{fanz}, we obtain
\[\Theta(\mu^e,B^+_{2}(\mathbf{y}_j))=\Theta(\mu^e,B^+_\rho(\mathbf{y}_j))=E,\qquad \text{for all }\, 0\le j\le k\]
and \begin{equation}\label{eq: loss of energy}
\Theta(\mu^e,B^+_\rho(\mathbf{x})) \le E-\eta.
\end{equation}
Moreover, as that of  Proposition \ref{prop: tangent measure1}, we know that  $\mu^e,v^e,\nu$ are  translation invariant along $V$. Hence $\Theta(\mu^e,B^+_\rho(\mathbf{y})) \equiv \Theta(\mu^e,B^+_{2}(\mathbf{y}_j))=E$ for all $ y\in{V\cap D_1(0)}$, which clearly contradicts with \eqref{eq: loss of energy}. The proof is complete.
\end{proof}

The following  Lemma shows that the almost symmetry is preserved under certain pinching condition.
\begin{lemma}\label{5.6}
 For any  $\ep,\rho>0$, there exists $\de_5=\de_5(n,N,\Lambda,\rho,\ep)>0$ satisfying the following property. Suppose $u\in \widehat{H}^{1/2}_{\Lambda}(D_8,N)$ is a stationary  
 $1/2$-harmonic map  satisfying
 $$\Theta(u^e,B^+_1)-\Theta(u^e,B^+_{1/2})<\de_5.$$
 If there is a point $y\in D_3\setminus\{0\}$ such that
\begin{itemize}
\item[(1)] $\Theta(u^e,B^+_1(\mathbf{y}))-\Theta(u^e,B^+_{1/2}(\mathbf{y}))<\de_5$, and
\item[(2)]  $u^e$ is not boundary $(k+1,\ep)$-symmetric on $B^+_r(\mathbf{y})$ for some $r\in[\rho,2]$,
where $\mathbf{y}=(y,0)$, 
\end{itemize}
 then $u^e$  is not boundary $(k+1,\ep/2)$-symmetric on $B^+_r(0)$, or equivalently
 $u$  is not $(k+1,\ep/2)$-symmetric on $D_r( {0})$.  In particular,  
 $$\mathbf{y}\in S^k_{\ep,\rho}(u)\cap D_3\Rightarrow 0\in S^k_{\ep/2,\rho}(u).$$
\end{lemma}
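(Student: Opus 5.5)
The plan is to argue by contradiction and compactness, in the same way as Proposition \ref{pro1} and Lemma \ref{5.5}. Suppose the lemma fails for some $\ep,\rho>0$. Then there are stationary $1/2$-harmonic maps $u_i\in \widehat{H}^{1/2}_\La(D_8,N)$, points $y_i\in D_3\setminus\{0\}$ and radii $r_i\in[\rho,2]$ with the following properties. The pinching conditions hold with $\de_5=1/i$ at $0$ and at $\mathbf{y}_i$. The map $u_i^e$ is not boundary $(k+1,\ep)$-symmetric on $B^+_{r_i}(\mathbf{y}_i)$. Yet $u_i^e$ is boundary $(k+1,\ep/2)$-symmetric on $B^+_{r_i}(0)$, witnessed by some boundary $(k+1)$-symmetric $h_i$ with invariant subspace $V_i$. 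Passing to subsequences, we may assume $y_i\to y\in\overline{D_3}$, $r_i\to r\in[\rho,2]$ and $V_i\to V$. Using the bound on $\Theta$ from monotonicity and Lemma \ref{lem: 2.9}, we may also assume $u_i^e\wto v^e$ in $H^1_{\loc}$, $u_i^e\to v^e$ in $L^2_{\loc}$, and $|\na u_i^e|^2d\mathbf{x}\wto\mu^e$, where $\mu^e$ is as in Proposition \ref{prop: defect measure}. We will also normalize $h_i$, for example by replacing it with its $L^2$-projection onto boundary $(k+1)$-symmetric functions; then the $h_i$ are bounded in $L^2(B^+_2)$ and converge, up to a subsequence, to a boundary $(k+1)$-symmetric $h$ with invariant subspace $V$.

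Next we identify the symmetry of the limit. Passing the pinching at $0$ to the limit, as in the proof of Proposition \ref{pro1}, gives $\int_{B_1^+\setminus B_{1/2}^+}|\mathbf{x}\cdot\na v^e|^2=0$. Since $v^e$ is harmonic, unique continuation makes $v^e$ $0$-homogeneous about $0$. The same argument at $\mathbf{y}_i$ makes $v^e$ $0$-homogeneous about $\mathbf{y}=(y,0)$.

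The step I expect to need the most care is the case $y=0$, which is possible because only $y_i\neq0$ is assumed. If $y\neq 0$, then a function $0$-homogeneous about two distinct points is invariant in the direction $y$. Indeed, $v^e(\lambda\mathbf{x})=v^e(\mathbf{x})$ and $v^e(\mathbf{y}+\lambda(\mathbf{x}-\mathbf{y}))=v^e(\mathbf{x})$ together give $v^e(\mathbf{x}+t\mathbf{y})=v^e(\mathbf{x})$ for all $t$; this is the standard cone-splitting computation. Combined with the homogeneity about $0$, this gives $v^e(\mathbf{z}+\mathbf{y})$ close to $v^e(\mathbf{z})$ exactly. In the case $y=0$, the radii $r_i$ stay comparable to $\rho$ while $y_i\to0$, so no splitting is needed: the balls $B^+_{r_i}(\mathbf{y}_i)$ and $B^+_{r_i}(0)$ merely converge to the same ball.

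The conclusion then follows from comparing averages. By the strong $L^2$ convergence,
\[
\medint_{B^+_{r_i}}|u_i^e-h_i|^2\to\medint_{B^+_{r}}|v^e-h|^2\le \ep/2 .
\]
Likewise, $\medint_{B^+_{r_i}(\mathbf{y}_i)}|u_i^e(\mathbf{z})-h(\mathbf{z}-\mathbf{y}_i)|^2$ converges to $\medint_{B^+_{r}(\mathbf{y})}|v^e(\mathbf{z})-h(\mathbf{z}-\mathbf{y})|^2$. Here one uses the continuity of translations in $L^2$, together with a bound on $h$ in $L^2(B^+_4)$, which comes from the homogeneity of $h$ and the bound on $h$ in $L^2(B^+_2)$. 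By the translation invariance of $v^e$ along $y$, we have $v^e(\mathbf{z})=v^e(\mathbf{z}-\mathbf{y})$ (or trivially so if $y=0$). The last integral therefore equals $\medint_{B^+_r}|v^e-h|^2\le\ep/2<\ep$. Hence $u_i^e$ is boundary $(k+1,\ep)$-symmetric on $B^+_{r_i}(\mathbf{y}_i)$ for $i$ large, which contradicts (2).

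The "in particular" statement follows by applying this at every scale $s\in[\rho,1)$, using $\mathbf{y}\in S^k_{\ep,\rho}(u)$. The main technical points are the compactness of the normalized $h_i$ and the exact translation identity. If the normalization of $h_i$ is awkward, I will instead work with $v^e$ directly: the triangle inequality in the limit gives $\medint_{B_r^+}|v^e-h|^2\le\ep/2$ for any $L^2$ limit point $h$, and $h$ exists because bounded sets of boundary-symmetric functions restricted to the sphere are weakly compact and weak convergence suffices for the lower-semicontinuity inequality.
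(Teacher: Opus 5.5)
Your proposal is correct and follows essentially the same route as the paper's proof. Both argue by contradiction and compactness, obtain $0$-homogeneity of the limit $v^e$ about $0$ and $\mathbf{y}$ from the pinching plus unique continuation, use cone splitting when $y\neq 0$, and test $u_i^e$ on $B^+_{r_i}(\mathbf{y}_i)$ against the translated limit $h(\cdot-\mathbf{y}_i)$.

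Since the $h_i$ in general converge only weakly, rely on your fallback (lower semicontinuity giving $\medint_{B_r^+}|v^e-h|^2\le\ep/2$) rather than on the claimed convergence of $\medint_{B^+_{r_i}}|u_i^e-h_i|^2$. With that, your exact translation identity gives the strict bound $\ep/2<\ep$, which is slightly cleaner than the paper's splitting estimates, and letting $r_i$ vary handles the uniformity in $r$ that the paper glosses over.
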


\begin{proof}
	Suppose by contradiction that  $\{u^e_i\}_{i\ge 1}$ is a sequence of stationary $1/2$-harmonic maps  satisfying 
	$$\Theta(u^e_i,B^+_1)-\Theta(u^e_i,B^+_{1/2}) \leq i^{-1},$$ 
	and there exists a sequence $0\not=\mathbf{y}_i=(y_i,0)\in B^+_3$ such that 
	$$\Theta(u^e_i,B^+_1(\mathbf{y}_i))-\Theta(u^e_i,B^+_{1/2}(\mathbf{y}_i)) \leq i^{-1},$$
	 and that for each $i\in \N$, $u^e_i$ is not boundary $(k+1,\ep)$-symmetric on $B^+_r(\mathbf{y}_i)$, but  is boundary $(k+1,\ep/2)$-symmetric on $B^+_r(\textbf{0})$.
	That is, there exists a sequence of boundary $(k+1)$-symmetric maps $h_i$ such that
	$$\medint_{B^+_r(\textbf{0})}|u^e_i-h_i|^2\leq\ep/2.$$
	Up to a subsequence if necessary, we may assume that $\mathbf{y}_i\ri \mathbf{y}\in \overline{B}_3(\textbf{0})$, $u_i\rightharpoonup v$ in $\widehat{H}^{1/2}_{\Lambda}(D_8,N)$ and $u^e_i\to v^e$ in $L^2(B_6^+)$ for some weakly $1/2$-harmonic map $v\in \widehat{H}^{1/2}_{\Lambda}(D_8,N)$, and 
	$h_i\rightarrow h$ in $L^2(B_r^+)$ for some  boundary $(k+1)$-symmetric map $h$. Sending $i\to \wq$ and using the unique continuation principle for harmonic functions, we see that $v^e$ is 0-homogeneous with respect to the origin and $\mathbf{y}$; moreover, by the property of weak convergence we deduce	
	$$\medint_{B^+_r(\textbf{0})}|v^e-h|^2\leq\limsup_{i\ri\oo}\medint_{B^+_r(\textbf{0})}|u_i^e-h_i|^2
	\leq\ep/2.$$
	If $y=0$ (i.e., $y_i\to 0$), then since $N$ is compact, $\|u^e_i\|_{L^\oo}<+\infty$, and we get
	\begin{align*}
		\lim_{i\ri\oo}\medint_{B^+_r(\mathbf{y}_i)}&|u^e_i(\mathbf{x})-h(\mathbf{x}-\mathbf{y}_i)|^2\leq 2\lim_{i\ri\oo}\medint_{B^+_r(\mathbf{y}_i)}|u^e_i(\mathbf{x})-h(\mathbf{x})|^2d\mathbf{x}\\
		&+2\lim_{i\ri\oo}\medint_{B^+_r(\mathbf{y}_i)}|h(\mathbf{x})-h(\mathbf{x}-\mathbf{y}_i)|^2d\mathbf{x}\leq \ep.
	\end{align*}
	If $y\neq0,$ we can infer from the 0-homogeneity of $v$ at 0 and $y$ that  $v$ is  1-symmetric with respect to the line $\R y$ by the standard cone splitting principle, which in turn implies that
	\begin{align*}
		\lim_{i\ri\oo}&\medint_{B^+_r(\mathbf{y}_i)}|u^e_i(\mathbf{x})-h(\mathbf{x}-\mathbf{y}_i)|^2=\lim_{i\ri\oo}\medint_{B^+_r(\textbf{0})}|u^e_i(\mathbf{y}_i+\mathbf{x})-h(\mathbf{x})|^2d\mathbf{x}\\
		&\leq C\lim_{i\ri\oo}\medint_{B^+_r(\textbf{0})}|u^e_i(\mathbf{y}_i+\mathbf{x})-v^e(\mathbf{y}+\mathbf{x})|^2d\mathbf{x}+1.5\medint_{B^+_r(\textbf{0})}|v^e(\mathbf{y}+\mathbf{x})-h(\mathbf{x})|^2d\mathbf{x}\\
		&=C\lim_{i\ri\oo}\medint_{B^+_r(\textbf{0})}|u^e_i(\mathbf{y}_i+\mathbf{x})-v^e(\mathbf{y}+\mathbf{x})|^2d\mathbf{x}+1.5\medint_{B^+_r(\textbf{0})}|v^e(\mathbf{x})-h(\mathbf{x})|^2d\mathbf{x}\\
		&\leq \ep.
	\end{align*}
	In the first inequality we used the elementary inequality $|a+b|^2\le (1+\ep)|a|^2+C_\ep |b|^2$ for any $a,b\in \R^d$ and  $\ep>0$. Hence  we reach a contradiction in both cases.
\end{proof}

%\begin{remark}\label{rmk:on Lemma 313}
%In Section \ref{sec:covering lemma}, we shall repeatedly use the following variant of Lemma \ref{5.6}: Suppose $u\in \widehat{H}^{1/2}_{\Lambda}(D_8,N)$ is a stationary  1/2-harmonic map and $\Theta(u^e,B^+_1(\mathbf{x}))-\Theta(u^e,B^+_{1/2}(\mathbf{x})) <\de_5$. If there is some $y\in D_3(x)$ with $\Theta(u^e,B^+_1(\mathbf{y}))-\Theta(u^e,B^+_{1/2}(\mathbf{y})) <\de_5$, then $\mathbf{y}\in S^k_{\ep,\rho}(u^e)\cap D_3(\mathbf{x})\Rightarrow \mathbf{x}\in S^k_{\ep/2,\rho}(u^e)$.
%\end{remark}

\section{Reifenberg theorems and estimates of Jones' number}\label{sec:Reifenberg theorems}
The main aim of this section is to extend the Jones' number estimate of \cite[Theorem 7.1]{Naber-V-2017} for stationary harmonic maps to stationary 1/2-harmonic maps. This will be one of the key {ingredients} for establishing the rectifiability of each singular strata.  So we first recall Jones'  number  $\beta_2$ which quantifies how close the support of a measure $\mu$ is to a $k$-dimensional affine subspace.

\begin{definition}[\cite{Naber-V-2017}]\label{aaa}
Let $\mu$ be a nonnegative Radon measure on $D_3$. For $k\in\N$,  the $k$-dimensional Jones' $\beta_2$ number is defined \sout{as} by
\begin{align*}
\beta^k_{2,\mu}(x,r)^2={\inf\Big\{\int_{D_r(x)}\frac{d^2(y,V)}{r^2}\frac{d\mu(y)}{r^k}\big|\ V\su\R^n \mbox{ is } k\mbox{ dimensional affine space}\Big\}}
\end{align*}
where $D_r(x)\su D_3$.
\end{definition}

The importance of Jones' $\beta_2$ number can be found from the following two important quantitative Reifenberg theorems established by Naber and Valtorta \cite{Naber-V-2017}.
 \begin{theorem}[{Discrete-Reifenberg, \cite[Theorem 3.4]{Naber-V-2017}}]\label{rre}
 There exist  $\de_6=\de_6(n)>0$ and $C_R(n)>0$ such that the following property holds. Let $\{D_{r_x}(x)\}_{x\in \mathcal{C}}\su D_2\su\R^n$ be a family of pairwise disjoint balls with centers in $\mathcal{C}\subset D_1$ and let $\mu\equiv\sum_{x\in \mathcal{C}}\omega_kr_x^k\de_x$ be the associated measure. If for every 
$D_r(x)\su D_2,$
 there holds
 \begin{align}\label{eq: multiscale appro-1}
\int_{D_r(x)}\Big(\int_0^r\beta^k_{2,\mu}(y,s)^2\frac{ds}{s}\Big)d\mu(y)<\de_6^2r^k,
\end{align}
then 
 \begin{align*}
\sum_{x\in \mathcal{C}}r^k_x< C_R(n).
\end{align*}
 \end{theorem}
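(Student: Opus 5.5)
Since this is \cite[Theorem 3.4]{Naber-V-2017}, we would in principle just cite it. If we were to reprove it, we would follow the induction-on-scales scheme of Naber--Valtorta, which is purely geometric-measure-theoretic and uses nothing about $1/2$-harmonic maps. Set $r_j=2^{-j}$. The claim to prove by downward induction on $j$ is
\[
\mu(D_{r_j}(x))\le C_R(n)\, r_j^k \qquad\text{for every } x \text{ with } D_{r_j}(x)\subset D_2.
\]
The conclusion $\sum_{x\in\mathcal C} r_x^k<C_R$ is then the case $j=0$, up to a covering of $D_1$ by boundedly many such balls.

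\emph{Base case.} The base of the induction comes from the disjointness of the balls $\{D_{r_x}(x)\}$. Fix a ball $D_r(y)$. If it meets only centers $x$ with $r_x\ge r$, a volume count shows it can contain only $C(n)$ such points. Hence the contribution from those centers is at most $C(n) r^k$, using $r_x\le r$ for points counted at that scale; large balls are handled by truncating $\mu$ at scale $r$. So the estimate holds trivially at every scale below the smallest relevant radius, with a dimensional constant. To make the induction close at all scales, we would run it with the truncated measures $\mu_r=\sum_{r_x\le r}\omega_k r_x^k\delta_x$.

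\emph{Inductive step.} Assume the bound at all scales $\le r/2$; we upgrade it to scale $r$. The inductive hypothesis gives an a priori upper density bound $\mu(D_s(z))\le C_R s^k$ for $s<r$. At each point and scale, choose a best approximating $k$-plane $V_{z,s}$ realizing $\beta^k_{2,\mu}(z,s)$. Combining the density bound with the $\beta_2$ definition and Chebyshev shows that the planes at consecutive scales, and at nearby points of comparable scale, tilt by at most $C\big(\beta(z,s)+\beta(z,2s)\big)$ in Hausdorff distance on the ball.

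Using these planes, we build a Reifenberg-type map $\sigma: V_{y,r}\cap D_r\to\R^n$ as an infinite composition of partition-of-unity-weighted projections onto $V_{z,s}$ along $s=r_j$. We then check two things. First, its image approximates the support of $\mu$, so each center $x$ lies within $r_x$ of the image. Second, its Jacobian satisfies
\[
\log J\sigma \le C\int_0^r \beta^k_{2,\mu}(z,s)^2\,\frac{ds}{s}.
\]
The hypothesis \eqref{eq: multiscale appro-1} makes this sum $\le C\delta_6^2$ in $\mu$-average. On the set where the pointwise sum is $\le\delta_6$, the map is $(1+C\delta_6)$-bi-Lipschitz. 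Disjointness of the balls $D_{r_x}(x)$ then gives $\mu(D_r(y))\le (1+C\delta_6)\,\omega_k r^k$ there. The remaining bad set, where $\int_0^r\beta^2\,ds/s>\delta_6$, has $\mu$-measure at most $\delta_6 r^k$ by Chebyshev. This closes the induction with the \emph{same} constant $C_R$, once $\delta_6$ is chosen small depending only on $n$.

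\emph{Main obstacle.} The hardest step is the precise tilting estimate: showing that the errors accumulated across scales are square-summable in $\beta$, rather than summable in $\beta$ itself. Only then does \eqref{eq: multiscale appro-1} control the product of Jacobians. This needs the refined estimate in which the first-order error cancels by the choice of best planes, together with careful use of the inductive density bound to compare $\beta$ numbers at neighbouring points. Without that, the constant would grow by a fixed factor at each scale and the induction would not close.
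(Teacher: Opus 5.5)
Citing \cite[Theorem 3.4]{Naber-V-2017} is exactly what the paper does. The discrete Reifenberg theorem is quoted and never proved here, so your first sentence already matches the paper and suffices. The reproof sketch, however, has a genuine gap if it is meant to stand alone. The gap is not the square-summability issue you flag; it is the tilting claim you state in passing, which is false as stated.

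\textbf{The tilting claim fails with only an upper density bound.} Consider the following configuration:
\begin{itemize}
\item $\mu|_{D_{2s}(z)}$ has total mass $\eta s^k$ with $\eta\ll 1$, spread over a $k$-plane $P$ through $z$;
\item the mass in $D_{4s}(z)\setminus D_{2s}(z)$ is of order $s^k$, spread over a $k$-plane $W$ through $z$ making an angle of order one with $P$.
\end{itemize}
This is compatible with every upper density bound and, for $\eta$ small, with the hypothesis. Yet $\beta^k_{2,\mu}(z,2s)=0$ and $\beta^k_{2,\mu}(z,4s)^2\lesssim\eta$, while $V_{z,2s}=P$ and $V_{z,4s}\approx W$ differ by an angle of order one. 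The same happens when the mass in a ball sits near a $(k-1)$-plane, or is essentially a single atom $x$ with $r_x$ comparable to the scale.

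\textbf{The missing ingredient is non-degeneracy.} Plane comparison needs $\mu$ to effectively span a $k$-plane in the ball. This follows from a \emph{lower} bound $\mu(D_s(z))\ge\epsilon s^k$ combined with the inductive upper bound. Indeed, the part of $D_s(z)$ within $\rho s$ of a $(k-1)$-plane is covered by $C\rho^{1-k}$ balls of radius $\rho s$, so it carries mass at most $C\rho s^k<\epsilon s^k/2$. This is why the argument in \cite{Naber-V-2017}:
\begin{itemize}
\item refines only balls satisfying such a mass lower bound;
\item freezes low-mass balls and charges them against the $k$-volume of the approximating manifold with the small factor $\epsilon$;
\item treats balls at the scale $r_x$ of a center as final.
\end{itemize}
Your Chebyshev excess set cannot replace this, because the degenerate configurations above have arbitrarily small $\beta$ at every scale.

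\textbf{Two smaller points.} First, for the truncated measures, the bound $\mu_r(D_s(z))\le C_Rs^k$ fails for $s<r_x\le r$ near a single large center. Second, your base case is garbled: it mixes centers with $r_x\ge r$ and $r_x\le r$. The clean version passes to finite subfamilies, which inherit the hypothesis because both $\beta^k_{2,\mu}$ and the outer integral are monotone in $\mu$. Then $\mu_r=0$ for $r<\min_x r_x$.
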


Another quantitative Reifenberg theorem is \sout{as} follows.
\begin{theorem} [{Rectifiable-Reifenberg, \cite[Theorem 3.3]{Naber-V-2017}}]\label{rree}
 There exist constants $\de_7=\de_7(n)$ and $C=C(n)$ {such that the following property
 holds}. Assume that $S\subset D_2\su\R^n$ is $\HH^k$-measurable, and for each $D_r(x)\subset D_2$ there holds
 \begin{align}\label{eq: multiscale appro-2}
\int_{S\cap D_r(x)}\Big(\int_0^r\beta^k_{2,\HH^k|_S}(y,s)^2\frac{ds}{s}\Big)d\HH^k(y)<\de_7^2r^k.
\end{align}
Then $S\cap D_1$ is  $k$-rectifiable, and   $\HH^k(S\cap D_r(x))\leq C r^k$ for each $x\in S\cap D_1$.
 \end{theorem}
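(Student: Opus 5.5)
This statement is the Rectifiable-Reifenberg theorem of Naber--Valtorta, quoted verbatim from \cite[Theorem 3.3]{Naber-V-2017}. In this paper it is simply imported, so the ``proof'' is a citation. I will nevertheless sketch how I would prove it, following the structure of the Naber--Valtorta argument.

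The plan has two parts: first an upper Ahlfors bound, then rectifiability.

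\textbf{Step 1: the measure bound.} I would first prove $\HH^k(S\cap D_r(x))\le Cr^k$ by reducing to the Discrete-Reifenberg Theorem \ref{rre}.

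1. By scaling it suffices to treat $D_r(x)=D_1$.
2. Fix a small scale $s>0$. By a Vitali-type covering, choose a maximal family of pairwise disjoint balls $\{D_{r_y}(y)\}$ with centers $y\in S$ and $r_y\approx s$. Form the discrete measure $\mu_s=\sum \omega_k r_y^k\delta_y$.
3. Show that $\mu_s$ satisfies the hypothesis \eqref{eq: multiscale appro-1}.
   - The key point is that $\beta$-numbers of $\mu_s$ are controlled by those of $\HH^k|_S$ at comparable centers and slightly larger scales.
   - This requires a priori knowing $\HH^k|_S$ is comparable to $\mu_s$ on balls of radius $\ge s$, which is exactly what we are trying to prove. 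So I would run an induction on scales downward from scale $1$: assume the Ahlfors bound $\mu(D_t(z))\le C_Rt^k$ at all scales $t\ge 2^{-j}$, then propagate it to scale $2^{-j-1}$ using Theorem \ref{rre} applied to the rescaled ball.
4. The hypothesis on $S$ is preserved under restriction and rescaling, so the constant $C_R(n)$ does not degrade along the induction.
5. Letting $s\to0$ and using that $\HH^k$ is approximated by such packing sums gives the bound. A density-point argument handles the case where $\HH^k$ is not a priori locally finite.

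\textbf{Step 2: rectifiability.} With the upper bound in hand, $\HH^k|_S$ is locally finite, and the hypothesis gives
$$\int_0^1\beta^k_{2,\HH^k|_S}(y,s)^2\frac{ds}{s}<\infty \quad\text{for $\HH^k$-a.e. } y\in S.$$
I would then apply the Reifenberg-type construction on a good set.

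1. Decompose $S$ into pieces where the square function is uniformly small and the lower $k$-density is bounded below. Points of zero lower density form an $\HH^k$-null set, which can be discarded.
2. On each piece, build bi-Lipschitz Reifenberg maps from a $k$-plane. The summability of $\beta_2^2$ controls the tilting of best-fit planes across scales, which is what yields the bi-Lipschitz property rather than mere Hölder continuity.
3. Conclude that each piece lies in a Lipschitz graph image, so $S\cap D_1$ is $k$-rectifiable.

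\textbf{Main obstacle.} I expect the main difficulty to be in Step 1, specifically the comparison of $\beta$-numbers between the discrete measure and $\HH^k|_S$. The available $L^2$ smallness is integrated over scales, so controlling the averaged-over-scales error under the change of measure (with only the inductive Ahlfors bound available) is the delicate part. I would handle it by choosing best planes for $\HH^k|_S$ at scale $2t$ and estimating $d^2(y,V)$ for discrete centers $y$ using nearby points of $S$ inside $D_{r_y}(y)$.
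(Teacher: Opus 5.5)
The paper gives no proof of this statement---it is imported verbatim from \cite[Theorem 3.3]{Naber-V-2017}---so as a citation your proposal agrees with it. The sketch you add, however, has two genuine gaps.

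\textbf{Step 1 (measure bound).} Charging an atom of weight $\omega_k r_y^k$ to $\int_{S\cap D_{r_y}(y)}d^2(z,V)\,d\mathcal{H}^k(z)$ requires a \emph{lower} bound $\mathcal{H}^k(S\cap D_{r_y}(y))\gtrsim r_y^k$ at every atom. An upper Ahlfors bound on $\mu$ is not what is needed here, and a maximal packing at a fixed scale $s$ can fail the lower bound badly. For example, let $S$ be a flat $k$-disc together with a countable dense subset of $D_2$. Then $\beta^k_{2,\mathcal{H}^k|_S}\equiv 0$, so the hypothesis holds. Yet $\mu_s(D_1)\gtrsim s^{k-n}\to\infty$, so $\mu_s$ cannot satisfy \eqref{eq: multiscale appro-1}. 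Discarding null sets does not help: tiny off-plane discs of radius $\rho\ll s$ at distance $\sim 10s$ from the plane barely affect $\beta^k_{2,\mathcal{H}^k|_S}$, but each forces an atom of weight $\omega_k s^k$ into $\mu_s$.

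No induction on upper bounds can create the missing lower bound. Your induction also runs the wrong way: the bound at scales $t\ge 1$ is the conclusion itself, so there is no base case. In arguments of this type (cf.\ Step 2 of Lemma \ref{oooa}) one inducts upward from the smallest scale, where disjointness gives the bound for free. Finally, estimating $d(y,V)$ through nearby points of $S$ leaves an additive error $r_y^2$ per atom, which is not small at scales $t\sim r_y$.

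The working route puts the density theorem at the core of the argument:
\begin{itemize}
\item The hypothesis passes to measurable subsets, because $\beta^k_{2,\mu}$ is monotone in $\mu$.
\item On a subset of finite measure, $\Theta^{*k}\ge c(k)>0$ holds $\mathcal{H}^k$-a.e. So you can pick radii $r_y\le s$ with $\mathcal{H}^k(S\cap D_{r_y}(y))\ge c\,r_y^k$ and Vitali-select slightly separated disjoint balls.
\item Place the atoms at the barycenters $\bar y$ of $\mathcal{H}^k|_{S\cap D_{r_y}(y)}$. Jensen's inequality for the convex function $d^2(\cdot,V)$ removes the additive error.
\end{itemize}
Then $\beta^k_{2,\mu}(\bar y,t)=0$ for $t<r_y$ and $\beta^k_{2,\mu}(\bar y,t)\le C\,\beta^k_{2,\mathcal{H}^k|_S}(\bar y,3t)$ for $t\ge r_y$. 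The hypothesis therefore transfers directly to $\mu$, with no induction at all.

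\textbf{Step 2 (rectifiability).} The claim that points of zero lower density form an $\mathcal{H}^k$-null set is false. There are compact sets $E$ with $0<\mathcal{H}^k(E)<\infty$ whose lower $k$-density vanishes at every point. For $k=1$, $n=2$, replace each square of side $d_j$ by $M_j^2$ squares of side $d_j/M_j^2$ centered in the cells of an $M_j\times M_j$ grid, with $M_j\to\infty$. At radius $d_j/(2M_j)$ the density ratio is $\sim 1/M_j$. In general only the upper density is bounded below a.e.

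Under the hypothesis, positive lower density a.e.\ does hold, but only as a consequence of rectifiability, so assuming it is circular. It is also exactly what your Reifenberg construction needs: best planes at consecutive scales are tied together by $\beta_2$ only when the ball carries mass $\gtrsim r^k$ not concentrated near a $(k-1)$-plane. You need an argument that uses only positive upper density. One option is a stopping-time version of the construction that stops on low-density balls. Another is the Azzam--Tolsa criterion: a finite measure with $0<\Theta^{*k}<\infty$ and $\int_0^1\beta^k_{2}(x,r)^2\,dr/r<\infty$ a.e.\ is $k$-rectifiable. This applies once the corrected Step 1 gives local finiteness.
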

We remark that Mi\'skiewicz \cite{Miskiewicz-18-Finn}  improved the above two theorems.
The two conditions \eqref{eq: multiscale appro-1} and \eqref{eq: multiscale appro-2} are usually called multi-scale approximation conditions. 
Similar to  stationary harmonic maps \cite[Theorem 7.1]{Naber-V-2017},  we need to establish an $L^2$-subspace approximation theorem
{for stationary $1/2$-harmonic maps}.

For $x\in D_1, \mathbf{x}:=(x,0)$, and $r>0,$ we denote 
 \begin{align*}
 W_r({x}):= W_{r,8r}({x})=\int_{B^+_{8r}({\mathbf{x}})\ba B^+_r({\mathbf{x}})}\frac{|(\mathbf{y}-\mathbf{x})\cdot\na u^e(\mathbf{y})|^2}{|\mathbf{y}-\mathbf{x}|^{n+1}}d\mathbf{y}.
\end{align*}
%The main conclusion of this subsection is the following theorem. Except for the monotonicity formula, this theorem and its proof are much the same as Theorem 6.1 and so we only list the theorem here.

\begin{theorem}\label{ji}
For any $\ep>0$, there exist $C=C(n,N,\Lambda,\ep)>0$ and $\de_8=\de_8(n,N,\Lambda,\ep)>0$ such that, for any  stationary $1/2$-harmonic map $u\in \widehat{H}^{1/2}_\La(D_{10},N)$ and $0<r\leq1$,  and $x\in D_1$ with $\mathbf{x}:=(x,0)$, if $u^e$ is boundary $(0,\de_8)$-symmetric  but not $(k+1,\ep)$-symmetric on $B^+_{8r}(\mathbf{x})$, then for any nonnegative finite measure $\mu$ on $D_r(x)$,  we have
\begin{align}\label{po}
\beta^k_{2,\mu}({x},r)^2\leq Cr^{-k}\int_{D_r(x)}W_r({y})d\mu({y}).
\end{align}
\end{theorem}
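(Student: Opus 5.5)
Following Naber--Valtorta \cite[Theorem 7.1]{Naber-V-2017}, I would argue by scaling and a spectral decomposition of the second moment of $\mu$. By translation and scaling ($u\mapsto u(x+r\cdot)$, $\mu$ pushed forward and multiplied by $r^{-k}$), both sides of \eqref{po} scale the same way. So I reduce to $x=0$, $r=1$, and may assume $\mu(D_1)>0$. Let $\bar y=\mu(D_1)^{-1}\int y\,d\mu$ be the center of mass. Let $Q(v)=\int_{D_1}|(y-\bar y)\cdot v|^2\,d\mu(y)$ be the covariance form, with eigenvalues $\lambda_1\ge\dots\ge\lambda_n\ge0$ and orthonormal eigenvectors $v_1,\dots,v_n$. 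Taking $V=\bar y+\mathrm{span}\{v_1,\dots,v_k\}$ gives $\beta^k_{2,\mu}(0,1)^2\le\lambda_{k+1}+\dots+\lambda_n\le (n-k)\lambda_{k+1}$. It therefore suffices to prove that $\lambda_{k+1}\le C\int_{D_1}W_1(y)\,d\mu(y)$.

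\textbf{Step 1 (algebraic identity).} For each $j$ and each $\mathbf z\in B^+_{8}\setminus B^+_{...}$ (any $\mathbf z$ in a fixed annular region $A$ of $B^+_8$), differentiate the defining relation of $v_j$:
\[
\lambda_j\, v_j\cdot\na u^e(\mathbf z)=\int_{D_1}\big((y-\bar y)\cdot v_j\big)\,\big((\mathbf z-\mathbf y)-(\mathbf z-\bar{\mathbf y})\big)\cdot\na u^e(\mathbf z)\,d\mu(y),
\]
where $\mathbf y=(y,0)$. This holds because $\int (y-\bar y)\,d\mu=0$, so the $\mathbf z-\bar{\mathbf y}$ term integrates to zero. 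By Cauchy--Schwarz,
\[
\lambda_j^2|v_j\cdot\na u^e(\mathbf z)|^2\le \lambda_j\int_{D_1}|(\mathbf z-\mathbf y)\cdot\na u^e(\mathbf z)|^2d\mu(y).
\]
Summing over $j\le k+1$, using $\lambda_j\ge\lambda_{k+1}$, and integrating over a region $A\subset B_8^+(\mathbf y)\setminus B^+_1(\mathbf y)$ for all $y\in D_1$ (for instance $A=B^+_7\setminus B^+_2$), I get
\[
\lambda_{k+1}\int_A|P\cdot\na u^e|^2\le C(n)\int_{D_1}W_1(y)\,d\mu(y),\qquad P=\mathrm{span}\{v_1,\dots,v_{k+1}\}.
\]
Here the weight $|\mathbf z-\mathbf y|^{-(n+1)}$ is comparable to a constant on $A$.

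\textbf{Step 2 (non-degeneracy, the main obstacle).} I need a lower bound $\int_A|P\cdot\na u^e|^2\ge c(n,N,\Lambda,\ep)>0$ for every $(k+1)$-plane $P\subset\R^n$, valid whenever $\delta_8$ is small. I would prove this by contradiction and compactness. Take $u_i$ that are boundary $(0,1/i)$-symmetric but not $(k+1,\ep)$-symmetric on $B_8^+$, with planes $P_i\to P$ and $\int_A|P_i\cdot\na u_i^e|^2\to0$. By the compactness of stationary maps (the weak $H^1$, strong $L^2$ limits from Proposition \ref{prop: defect measure} and the Corollary after Lemma \ref{lem: 2.9}), $u_i^e\to v^e$ in $L^2(B_8^+)$.

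The limit $v^e$ is $0$-homogeneous, since the $L^2$ limit of $(0,1/i)$-symmetric maps is $0$-homogeneous (Remark \ref{rmk: compactness-of-symmetry}). By lower semicontinuity, $P\cdot\na v^e=0$ on $A$. Homogeneity then propagates this to all of $B_8^+$; harmonicity and unique continuation would also suffice. Hence $v^e$ is boundary $(k+1)$-symmetric. Strong $L^2$ convergence then makes $u_i$ $(k+1,\ep)$-symmetric on $B_8^+$ for large $i$, which is a contradiction. The delicate points are these: the hypothesis gives $(0,\delta)$-symmetry in $L^2$ rather than energy pinching, so I must check that it passes correctly to the limit; and lower semicontinuity of $\int|P\cdot\na u^e|^2$ under weak $H^1$ convergence must hold, which it does since the integrand is a convex quadratic.

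\textbf{Step 3 (conclusion).} Combining Steps 1 and 2 gives $\lambda_{k+1}\le c^{-1}C\int W_1\,d\mu$, hence $\beta^k_{2,\mu}(0,1)^2\le C\int_{D_1}W_1\,d\mu$. Rescaling back yields \eqref{po}.
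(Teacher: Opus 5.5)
Your proposal is correct and follows the paper's proof essentially step for step. It uses the same center-of-mass and second-moment reduction $\beta^k_{2,\mu}(0,1)^2\le (n-k)\lambda_{k+1}$, the same Cauchy--Schwarz bound on a fixed annulus as in Proposition \ref{hj}, and the same contradiction-and-compactness argument as in Lemma \ref{lem: non higher order sym} for the lower bound on $\int_A|P\cdot\nabla u^e|^2$. The remaining differences are cosmetic: your annulus is $B_7^+\setminus B_2^+$, you keep the hypothesis at scale $8$ throughout, you use $0$-homogeneity instead of unique continuation to spread $P\cdot\nabla v^e=0$, and your Step 1 identity has a harmless sign slip (its right-hand side is actually $-\lambda_j v_j\cdot\nabla u^e(\mathbf z)$, which does not matter once squared).
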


The proof of  {Theorem \ref{ji} is} similar to  \cite[Theorem 7.1]{Naber-V-2017}. We sketch the proof for the convenience of readers.  
 Assume that $x=0,r=1$ and $\mu$ is  a probability measure supported on  $D_1$. Let $ {x}_{cm}=\int_{D_1} xd\mu(x)$ 
be the mass center of $\mu$ in  $D_1$. The second moment $Q(\mu)$ of $\mu$ is the symmetric bilinear form  defined by
\begin{align*}
	Q(\mu)(v,w):=\int_{D_1}\big(( {x}- {x}_{cm})\cdot v\big)\big(({x}- {x}_{cm})\cdot w\big)d\mu( {x}),\qquad \text{for all }\ v,w\in \R^n.
\end{align*}
Let $\lambda_1(\mu)\ge \cdots\ge \lam_n(\mu)$ be nonincreasing eigenvalues of $Q(\mu)$ and $v_1(\mu),\cdots,v_n(\mu)$ be the associated eigenvectors. Then we have
\begin{align}\label{il}
	Q(\mu)(v_k)=\lam_kv_k=\int_{D_1}\big(( {x}- {x}_{cm})\cdot v_k\big) ( {x}- {x}_{cm}) d\mu( {x}).
\end{align}
Recall that the eigenvalues {can be characterized} by variational method, that is,
\begin{align*}
	\lam_1=\lam_1(\mu):=\max_{v\in \mathbb{S}^{n-1}}\int_{D_1}|( {x}- {x}_{cm})\cdot v|^2d\mu( {x}).
\end{align*}
Let $v_1=v_1(\mu)\in\mathbb{S}^{n-1}$ be any unit vector achieving such a maximum. By induction, we have
\begin{align*}
	\lam_{k+1}=\lam_{k+1}(\mu):=\max\Big\{\int_{D_1}|( {x}- {x}_{cm})\cdot v|^2d\mu( {x})\big|\ v\in \mathbb{S}^{n-1},\ v\cdot v_i=0\ \text{for all }i\leq k\Big\},
\end{align*}
 and let $v_{k+1}=v_{k+1}(\mu)\in \mathbb{S}^{n-1}$ be any unit vector achieving such a maximum. 
 Note that by definition of  $v_k$, $V_k= {x}_{cm}+\mbox{span}\{v_1,\cdots,v_k\}$ is the $k$-dimensional affine subspace achieving the minimum in the definition of $\beta_2$; see \cite[Remark 49]{Naber-V-2018}. Moreover,
\begin{align*}
\beta_{2,\mu}^k(0,1)^2=\int_{D_1} d^2( {x},V_k)d\mu( {x})=\lam_{k+1}(\mu)+\cdots+\lam_n(\mu).
\end{align*}
In fact, the second equality follows from the definition of $\la_k$ and
$$d^2( {x},V_k)=\sum_{i=k+1}^{n}(( {x}- {x}_{cm})\cdot v_i)^2.$$
The following property gives the relationship between $\lam_k,v_k$ and $W_1$. Similar to \cite[Proposition 50]{Naber-V-2018}, we have
\begin{proposition}\label{hj}
	Let  $\mu$ be a probability measure on $D_1$ and $u\in \widehat{H}^{1/2}(D_{10}, N)$. Let $\lam_k,\ v_k$ be defined  as  above. Then there exists $C(n)>0$ such that
	\begin{align*}
		\lam_k\int_{A^+_{2,4}}|(v_k,0)\cdot\na u^e(\mathbf{z})|^2d\mathbf{z}\leq C(n)\int_{D_1} W_1({x})d\mu({x}), \qquad \text{for all }\,k\ge 1,
	\end{align*}
	where $A^+_{2,4}=B^+_4(0)\backslash B^+_2(0)$.
\end{proposition}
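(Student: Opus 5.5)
We follow the argument of \cite[Proposition 50]{Naber-V-2018}, carried out on the extension $u^e$ in the upper half-space. We may assume $\lambda_k>0$, since otherwise there is nothing to prove. Set $\mathbf{x}_{cm}=(x_{cm},0)$. We apply the eigenvalue identity \eqref{il} with $v_k$ and take the scalar product with the vector $\nabla u^e(\mathbf{z})$, regarded as acting on horizontal directions through $(w,0)$. This gives, for each fixed $\mathbf{z}\in A^+_{2,4}$,
\begin{align*}
\lambda_k\,(v_k,0)\cdot\nabla u^e(\mathbf{z})=\int_{D_1}\big((x-x_{cm})\cdot v_k\big)\,\big((x-x_{cm},0)\cdot\nabla u^e(\mathbf{z})\big)\,d\mu(x).
\end{align*}

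The key algebraic step is to remove $\mathbf{x}_{cm}$ from the second factor. Write $(x-x_{cm},0)=(\mathbf{z}-\mathbf{x}_{cm})-(\mathbf{z}-\mathbf{x})$ with $\mathbf{x}=(x,0)$. The term containing $(\mathbf{z}-\mathbf{x}_{cm})\cdot\nabla u^e(\mathbf{z})$ does not depend on $x$. It is therefore multiplied by $\int\big((x-x_{cm})\cdot v_k\big)\,d\mu=0$, by the definition of the center of mass, and vanishes. Hence
\begin{align*}
\lambda_k\,(v_k,0)\cdot\nabla u^e(\mathbf{z})=-\int_{D_1}\big((x-x_{cm})\cdot v_k\big)\,\big((\mathbf{z}-\mathbf{x})\cdot\nabla u^e(\mathbf{z})\big)\,d\mu(x).
\end{align*}
Applying Cauchy--Schwarz in $\mu$, and using $\int|(x-x_{cm})\cdot v_k|^2\,d\mu=\lambda_k$, we get
\begin{align*}
\lambda_k^2\,|(v_k,0)\cdot\nabla u^e(\mathbf{z})|^2\le\lambda_k\int_{D_1}|(\mathbf{z}-\mathbf{x})\cdot\nabla u^e(\mathbf{z})|^2\,d\mu(x).
\end{align*}
Dividing by $\lambda_k>0$ leaves $\lambda_k\,|(v_k,0)\cdot\nabla u^e(\mathbf{z})|^2\le\int|(\mathbf{z}-\mathbf{x})\cdot\nabla u^e(\mathbf{z})|^2\,d\mu(x)$.

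Next we integrate over $\mathbf{z}\in A^+_{2,4}$ and use Fubini. For $x\in D_1$ and $\mathbf{z}\in A^+_{2,4}$ we have $1\le|\mathbf{z}-\mathbf{x}|\le 5<8$, so $A^+_{2,4}\subset B^+_8(\mathbf{x})\setminus B^+_1(\mathbf{x})$. On this region the weight $|\mathbf{z}-\mathbf{x}|^{-(n+1)}$ is bounded below by $5^{-(n+1)}$. Consequently
\begin{align*}
\int_{A^+_{2,4}}|(\mathbf{z}-\mathbf{x})\cdot\nabla u^e(\mathbf{z})|^2\,d\mathbf{z}\le 5^{n+1}\,W_1(x),
\end{align*}
and integrating this bound against $d\mu(x)$ gives the claim with $C(n)=5^{n+1}$.

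There is no real obstacle. The only points to check are that the center-of-mass cancellation works in the half-space setting, which it does because $\mathbf{x}$, $\mathbf{x}_{cm}$ and $v_k$ all lie in $\partial\mathbb{R}^{n+1}_+$, and that the annulus inclusion holds. No stationarity is needed; the argument uses only $u^e\in H^1_{\rm loc}$, which follows from Lemma \ref{lem: 2.9}.
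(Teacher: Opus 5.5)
Your proposal is correct and follows essentially the same route as the paper, which adapts Naber--Valtorta's Proposition 50: you test the eigenvector identity \eqref{il} against $\nabla u^e(\mathbf{z})$, use the center-of-mass cancellation to replace $(x-x_{cm},0)$ by $\mathbf{x}-\mathbf{z}$, apply Cauchy--Schwarz in $\mu$, and integrate over $A^+_{2,4}\subset B^+_8(\mathbf{x})\setminus B^+_1(\mathbf{x})$ to get the bound by $W_1(x)$. Keeping $x_{cm}$ explicit, rather than assuming $x_{cm}=0$ as the paper does, is slightly cleaner, because the annulus $A^+_{2,4}$ is centered at the origin and is not moved by that normalization.
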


\begin{proof} Without loss of generality, we assume  $x_{cm}=0.$ For any $\textbf{z}=(z,t)\in \R^{n+1}$  and $k=1,\cdots,n,$ multiplying 
{both sides of \eqref{il} by $\na u^e(\textbf{z})$} yields 	
\begin{align}\label{aam} 		\lam_k((v_k,0)\cdot\na u^e(\textbf{z}))=\int_{D_1}({x}\cdot v_k)(\na u^e(\textbf{z})\cdot \mathbf{x})d\mu(x)	\end{align}
with $\mathbf{x}=(x,0).$	By definition of mass center,
	\begin{align*}		\int_{D_1} \mathbf{x}\cdot \textbf{z}d\mu(x)=\int_{D_1} {x}\cdot {z}d\mu(x)={x}_{cm}\cdot {z}=0.	\end{align*}
Hence by  \eqref{aam}, 	
\begin{align*}		\lam_k((v_k,0)\cdot\na u^e(\textbf{z}))=\int_{D_1}({x}\cdot v_k)(\na u^e(\textbf{z})\cdot (\mathbf{x}-\textbf{z}))d\mu(x).	\end{align*}
By H\"{o}lder's inequality we have
	\begin{align*}		\lam_k^2|(v_k,0)\cdot\na u^e(\textbf{z})|^2\leq\lam_k\int_{D_1}|\na u^e(\textbf{z})\cdot (\mathbf{x}-\textbf{z})|^2d\mu(x).	\end{align*}
Without loss of generality we can assume $\lam_k>0$, otherwise there is nothing to prove. Direct computation gives
	\begin{align*}		\lam_k\int_{A^+_{2,4}(0)}|\na u^e(\textbf{z})\cdot (v_k,0)|^2d\textbf{z}&\leq \int_{D_1}\int_{A^+_{2,4}(0)}|\na u^e(\textbf{z})\cdot(\mathbf{x}-\textbf{z})|^2d\textbf{z}d\mu(x)\\	
	%	&\leq \int\int_{A^+_{3,4}(0)}\frac{|\na u^e(\textbf{z})\cdot(\mathbf{x}-\textbf{z})|^2}{|\mathbf{x}-\textbf{z}|^{n+1}}|\mathbf{x}-\textbf{z}|^{n+1}d\textbf{z}d\mu(x)\\	
			&\leq C(n) \int_{D_1}\int_{A^+_{1,8}(x)}\frac{|\na u^e(\textbf{z})\cdot(\mathbf{x}-\textbf{z})|^2}{|\mathbf{x}-\textbf{z}|^{n+1}} d\textbf{z}d\mu(x)\\	
				&\leq C(n)\int_{D_1}W_1(x)d\mu(x).	\end{align*}
	This completes the proof.\end{proof}

To further estimate the left hand side of the  inequality in Proposition \ref{hj}, we need the following Lemma.
\begin{lemma}\label{lem: non higher order sym}
	For any $\ep>0$, there exists  $\de_9=\de_9(n,N,\La,\ep)>0$ such that the following property
	{holds}. For any stationary $1/2$-harmonic map $u\in \widehat{H}^{1/2}_\La(D_8,N)$, if $u^e$ is boundary $(0,\de_9)$-symmetric on $B^+_1(0)$ but not  boundary $(k+1,\ep)$-symmetric, then 	
	\begin{align}\label{81}
	\int_{A^+_{2,4}}|P\cdot\nabla u^e(\mathbf{z})|^2d\mathbf{z}>\delta_9
	\end{align}
	for every $(k+1)$-dimensional subspace $P\subset\R^n$.
\end{lemma}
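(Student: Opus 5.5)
We argue by contradiction and compactness, in the same spirit as Lemma \ref{5.3} and Proposition \ref{pro1}. Suppose the statement fails for some $\ep>0$. Then there are stationary $1/2$-harmonic maps $u_i\in\widehat{H}^{1/2}_\La(D_8,N)$ and $(k+1)$-dimensional subspaces $P_i\subset\R^n$ with the following properties: $u_i^e$ is boundary $(0,i^{-1})$-symmetric on $B_1^+$; $u_i^e$ is not boundary $(k+1,\ep)$-symmetric on $B_1^+$; and
\[
\int_{A^+_{2,4}}|P_i\cdot\na u_i^e|^2\,d\mathbf{z}\le i^{-1}.
\]
After a rotation of $\R^n$, which preserves $\R^{n+1}_+$ and the class of boundary symmetric functions, and after passing to a subsequence, we may take $P_i=P$ fixed. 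The uniform bound $\mathcal{E}(u_i,D_8)<\La$, together with Lemma \ref{lem: 2.9} and the compactness corollary following it, gives $u_i\wto v$ in $\widehat{H}^{1/2}(D_8)$. It also gives $u_i^e\wto v^e$ weakly in $H^1(B_6^+)$ and strongly in $L^2(B_6^+)$. By Proposition \ref{prop: defect measure}, $v$ is a weakly $1/2$-harmonic map and $v^e$ is harmonic in $\R^{n+1}_+$.

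\textbf{Step 1: $v^e$ is $0$-homogeneous.} Let $h_i$ be boundary $0$-symmetric with $\medint_{B_1^+}|u_i^e-h_i|^2\le i^{-1}$. Since $N$ is compact, we may truncate $h_i$ so that it is uniformly bounded. A $0$-homogeneous function is determined by its trace on the half-sphere, and $L^2$ bounds on $B_1^+$ control that trace in $L^2$ with the weight $r^n$. Hence a subsequence $h_i\to h$ in $L^2(B_1^+)$, and $h$ is $0$-homogeneous. Passing to the limit gives $v^e=h$ on $B_1^+$. Because $v^e$ is harmonic, unique continuation (as in Proposition \ref{pro1}) shows that $v^e$ is $0$-homogeneous on all of $\R^{n+1}_+$.

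\textbf{Step 2: $v^e$ is invariant along $P$.} Weak lower semicontinuity in $H^1$ gives
\[
\int_{A^+_{2,4}}|P\cdot\na v^e|^2\le\liminf_i\int_{A^+_{2,4}}|P\cdot\na u_i^e|^2=0 .
\]
So $\pa_e v^e\equiv0$ on $A^+_{2,4}$ for every $e\in P$. Each $\pa_e v^e$ is harmonic in the connected open set $\R^{n+1}_+$, so unique continuation gives $\pa_e v^e\equiv0$ everywhere. Alternatively, $\pa_e v^e$ is homogeneous of degree $-1$ and vanishes on an annulus, hence vanishes everywhere. Therefore $v^e$ is translation invariant along the $(k+1)$-dimensional subspace $P\subset\R^n=\pa\R^{n+1}_+$. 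Combined with Step 1, $v^e$ is boundary $(k+1)$-symmetric.

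\textbf{Step 3: contradiction.} Strong $L^2$ convergence gives $\medint_{B_1^+}|u_i^e-v^e|^2\to0$. Thus, for large $i$, $u_i^e$ is boundary $(k+1,\ep)$-symmetric on $B_1^+$, with $h=v^e$ as the comparison function. This contradicts the choice of $u_i$ and proves the lemma.

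The main point to check is the compactness of the almost-symmetric comparison functions $h_i$ in Step 1. The definition places no bound on $h$, so one has to argue that uniformly bounded $L^2(B_1^+)$ norms of $0$-homogeneous functions yield a convergent subsequence whose limit is again $0$-homogeneous. Weak $L^2$ convergence already suffices here, since $0$-homogeneity is preserved under weak limits. Mirroring Remark \ref{rmk: compactness-of-symmetry} handles this.
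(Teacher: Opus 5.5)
Your proposal is correct and follows the paper's proof: a contradiction sequence with $P$ fixed by rotation, weak $H^1$/strong $L^2$ compactness of $u_i^e$ to a $0$-homogeneous harmonic limit $v^e$, and lower semicontinuity giving $P\cdot\nabla v^e=0$ on $A^+_{2,4}$. This is then propagated by unique continuation (or by degree $-1$ homogeneity of $\partial_e v^e$), and strong $L^2$ convergence contradicts the failure of $(k+1,\varepsilon)$-symmetry. Your Step 1 fills in the $0$-homogeneity of $v^e$, which the paper only asserts. There, $h_i\to v^e$ strongly in $L^2(B_1^+)$ follows at once from $u_i^e\to v^e$ and $\|u_i^e-h_i\|_{L^2(B_1^+)}\to 0$, so the truncation and compactness discussion for the $h_i$ is unnecessary (and bounded $L^2$ norms alone would give only weak convergence).
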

\begin{proof} We argue by contradiction. Suppose that there is a sequence of stationary $1/2$-harmonic maps $u_i\in \widehat{H}^{1/2}_\La(D_8,N)$ such that $u^e_i$ is boundary $(0,i^{-1})$-symmetric  on $B^+_1(0)$ but not boundary $(k+1,\ep_0)$-symmetric for some $\ep_0>0$. 
	Moreover, after an orthogonal transformation, there is a  $(k+1)$-dimensional subspace $P$ such that
	\begin{align}\label{999}
	\int_{A^+_{2,4}}|P\cdot\nabla u^e_i(\mathbf{z})|^2d\mathbf{z}\leq i^{-1},\ i\ge 1.
	\end{align}
	By extracting a subsequence, we can assume $u^e_i\rightharpoonup v^e$ in $W^{1,2}(B^+_8)$ and $u^e_i\ri v^e$ in $L^{2}(B^+_8)$ for some 0-homogeneous harmonic map $v^e\in W^{1,2}(B^+_8)$ and
	\begin{align*}
	\int_{A^+_{2,4}}|P\cdot\nabla v^e(\mathbf{z})|^2d\mathbf{z}=0.
	\end{align*}
	By the unique continuation property we know
	$$\int_{\R^{n+1}_+}|P\cdot\nabla v^e(\mathbf{z})|^2d\mathbf{z}=0.$$
	Thus $v^e$  is boundary $(k+1)$-symmetric on $B^+_1(0)$. Consequently $u^e_i$ is boundary $(k+1,\ep_0)$-symmetric for $i\gg 1$, 
	since $u^e_i$ converges strongly to $v^e$ in $L^2$. We get a desired contradiction. The proof is complete.
\end{proof}
 Now we can prove Theorem \ref{ji}.
\begin{proof}[Proof of Theorem \ref{ji}]  By scaling, we may assume $\mu(D_1(0))=1.$ Since $\lambda_k$ is nonincreasing,
\begin{align}\label{pl}
	\beta_{2,\mu}^k(0,1)^2=\lam_{k+1}+\cdots+\lam_n\leq(n-k)\lam_{k+1}.
\end{align}
Thus it suffices to estimate $\la_{k+1}$.  By Proposition \ref{hj}, we have
\begin{align*}
	\sum_{j=1}^{k+1}\lam_j\int_{A^+_{2,4}(0)}|\na u^e(\textbf{z})\cdot (v_j,0)|^2d\textbf{z}\leq(k+1)C\int_{D_1} W_1(x)d\mu(x).
\end{align*}
Let $V^{k+1}=\mbox{span}\big\{v_1,\cdots,v_{k+1}\big\}$. Then
\begin{align*}
	\lam_{k+1}\int_{A^+_{2,4}(0)}|V^{k+1}\cdot\na u^e(\textbf{z})|^2d\textbf{z}&=\lam_{k+1}\sum_{j=1}^{k+1}\int_{A^+_{2,4}(0)}|\na u^e(\textbf{z})\cdot (v_j,0)|^2d\textbf{z}
	\\
	&\le {\sum_{j=1}^{k+1}\lam_j\int_{A^+_{2,4}(0)}|\na u^e(\textbf{z})\cdot (v_j,0)|^2d\textbf{z}}\\
	&\leq C\int_{D_1} W_1(x)d\mu(x).
\end{align*}
Let $\de_9>0$ be the number {given by} Lemma \ref{lem: non higher order sym} and set $\delta_8=\delta_9$. 
Combining our assumption and  Lemma \ref{lem: non higher order sym} yields
\begin{align*}
	\int_{A^+_{2,4}(0)}|V^{k+1}\cdot\na u^e(\textbf{z})|^2d\textbf{z}\geq \de_8,
\end{align*}
and so
\begin{align*}
	\de_8\lam_{k+1}\leq\lam_{k+1}\int_{A^+_{2,4}(0)}|V^{k+1}\cdot\na u^e(\textbf{z})|^2d\textbf{z} \leq C\int _{D_1}W_1(x)d\mu(x).
\end{align*}
Since $\de_8=\de_9$ depends only on $n,N,\La,\ep$, by \eqref{pl}, we conclude
\begin{align*}
	\beta_{2,\mu}^k(0,1)^2\leq C(n,N,\La,\ep)\int_{D_1} W_1(x)d\mu(x).
\end{align*}
This completes the proof.
\end{proof}

\section{Covering lemma}\label{sec:covering lemma}
Following the  approach of Naber-Valtorta \cite[Section 6.2]{Naber-V-2018},  this section is devoted to  the following volume estimate for singular set.

\begin{lemma}[Main covering Lemma]\label{lemma:main covering lemma}
	Let $u\in \widehat{H}_{\La}^{1/2}(D_8,N)$ be a stationary weakly $1/2$-harmonic map.
	Fix any $\ep>0$ and $0<r<R\leq1$. Then there exist constants $\de=\de(n,N,\La,\ep)>0$ and $C(n)$ {such that} the following property
	{holds}. For any subset
	${S}\su S^k_{\ep,\de r}(u),$ there exists a finite covering of $S\cap D_R(0)$ satisfying
	\begin{align*}
	S\cap D_R(0)\su \bigcup_{x\in\3}D_{r_x}(x) \quad\text{with}\quad   r\le  r_x\le R
	\end{align*}
	and
		\begin{align}\label{pla}
 \sum_{x\in\3}r_x^k\leq C(n)R^k.
	\end{align}
	Moreover, the balls in $\{D_{r_x/5}(x)\}_{x\in \3}$ are pairwise disjoint and $\cal{C}\subset 	S\cap D_R(0) $.
\end{lemma}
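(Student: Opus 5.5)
We follow the scheme of Naber--Valtorta \cite[Section 6.2]{Naber-V-2018}. The tool is an energy-pinching induction on a finite set of "energy levels": the total energy $\Theta$ is bounded by $C(n,\La)$ and drops by a definite amount along the induction, so finitely many steps suffice. By scaling we may take $R=1$. Set
$$E=\sup_{y\in S\cap D_1}\Theta(u^e,B^+_{2}(\mathbf{y}))\le C(n,N,\La).$$
We fix $\rho=\rho(n)$ small (e.g. $\rho=10^{-1}\cdot$ a constant tied to the constants $\delta_6, C_R(n)$ of Theorem \ref{rre}). We fix $\eta=\eta(n,N,\La,\ep)$ small, and then $\de\le\min\{\de_2,\de_4,\de_5,\de_8\}$ with parameters $(\ep,\rho,\eta)$.

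\textbf{Step 1: good/bad ball decomposition.} For a ball $D_s(x)$ with $x\in S$, we call it \emph{good} if the pinched set
$$\mathcal{F}_s(x)=\big\{y\in S\cap D_s(x): \Theta(u^e,B^+_{\rho s}(\mathbf{y}))>E-\de\big\}$$
$\rho s$-effectively spans a $k$-plane $V$, and \emph{bad} otherwise.
\begin{itemize}
\item On a bad ball, $\mathcal{F}_s(x)$ lies in a $2\rho s$-neighborhood of some $(k-1)$-plane. We cover it by at most $C(n)\rho^{1-k}$ balls of radius $\rho s$, and cover the rest of $S\cap D_s(x)$ by balls of radius $\rho s$. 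On each of the latter balls the energy at scale $\rho s$ has dropped below $E-\de$.
\item On a good ball, Proposition \ref{pro32} gives $S\cap D_s(x)\subset D_{2\rho s}(V)$. Lemma \ref{5.5} then gives $\Theta\ge E-\eta$ along $V$.
\end{itemize}

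\textbf{Step 2: the inductive covering at fixed energy level.} Iterating Step 1 from scale $1$ down to $r$, we build a covering in which each final ball is of one of three types: it has radius $r$; it is a bad ball where the energy has dropped by $\de$; or it stops inside good balls. The key estimate is the packing bound $\sum r_x^k\le C(n)$ for the balls produced, which we prove via the Discrete Reifenberg Theorem \ref{rre} applied to $\mu=\sum\omega_k r_x^k\delta_x$. To verify the hypothesis \eqref{eq: multiscale appro-1} we argue as follows.
\begin{itemize}
\item Every center $y$ satisfies $y\in S^k_{\ep,\de r}(u)$, and is pinched at all scales in $[r_y,1]$: $\Theta(u^e,B^+_{\rho r_y}(\mathbf{y}))>E-\eta$ while $\Theta\le E$ at scale 2.
\item Proposition \ref{pro1} then shows $u^e$ is $(0,\de_8)$-symmetric at these scales. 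Theorem \ref{ji} therefore applies and bounds $\beta^k_{2,\mu}(y,s)^2\le Cs^{-k}\int_{D_s(y)}W_s\,d\mu$.
\item Summing over dyadic scales $s=2^{-j}$, the $W_{s}$'s telescope. Consequently $\int_{D_t(x)}\sum_{s\le t}\beta^2\,d\mu\le C\int_{D_t(x)}\big(\Theta(u^e,B^+_{8t}(\mathbf{y}))-\Theta(u^e,B^+_{r_y}(\mathbf{y}))\big)d\mu(y)\le C\eta\,\mu(D_t(x))$.
\end{itemize}
Choosing $\eta$ small so that $C\eta\cdot(\text{a priori bound }\mu(D_t)\le C t^k)<\de_6^2t^k$ closes the argument. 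The a priori bound comes from an inner induction on scales: first assume the packing bound holds below scale $t$, then prove it at scale $t$ using Theorem \ref{rre}, exactly as in \cite{Naber-V-2018}. Bad balls are handled by the $(k-1)$-dimensional count, which gives factor $C(n)\rho^{1-k}\rho^{k}$ in $k$-content and thus a contraction for $\rho$ small.

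\textbf{Step 3: iterating over energy drops.} Each bad-ball child has energy $\le E-\de$ at its own scale. We then restart Step 2 on that child with the new $E$; Lemma \ref{5.6} ensures the stratum condition transfers with $\ep/2$. Since $E\le C(n,\La)$, at most $K=C/\de$ restarts occur. However, naive iteration would give $C(n)^K$, which is not allowed since the claimed constant depends only on $n$. We avoid this following \cite{Naber-V-2018}: the packing estimate of Step 2 is proven for the total family at once, with all balls from all stages, using that the $\beta$-number bound only requires the pinching $\Theta(8s)-\Theta(r_y)$ summed along each center's history. This telescoping sum is at most $E\le C$ and is split into at most $K$ pieces each $\le\eta$. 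The main obstacle is exactly this step: organizing the multi-stage covering so that the Reifenberg constant $C_R(n)$ is used once, not $K$ times, and so that the Vitali-type disjointness of $D_{r_x/5}(x)$ (obtained by a final $5r$-covering selection with centers in $S$) is preserved.
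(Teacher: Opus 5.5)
Your Steps 1--2 follow the paper's Lemmas \ref{oooa} and \ref{ti}. These use the dichotomy via Proposition \ref{pro32} and Lemma \ref{5.5}, Lemma \ref{5.6} for the new centers, Proposition \ref{pro1} together with Theorem \ref{ji} to bound $\beta$ by $W$, telescoping, Theorem \ref{rre} inside an induction on scales, and the $(k-1)$-dimensional count that gives the contraction $C_b(n)\rho$.

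The gap is Step 3. You flag it as the main obstacle but do not resolve it, and the mechanism you sketch fails. Theorem \ref{rre} requires the multiscale $\beta$-integral to be \emph{smaller} than $\de_6^2t^k$. Inside one stage, that smallness comes from the pinching $E-\eta<\Theta\le E$. The pinching supplies the $(0,\de_8)$-symmetry needed to invoke Theorem \ref{ji}, and it gives $\int W_s\,\frac{ds}{s}\lesssim\eta$. Along a center's whole history, however, the total drop of $\Theta$ is only bounded by $C(n,N,\La)$, which is not small. There can be up to $C/\eta$ dyadic scales where $\Theta(u^e,B^+_{10s}(\mathbf{y}))-\Theta(u^e,B^+_{5s}(\mathbf{y}))>\eta$. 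At those scales neither Proposition \ref{pro1} nor Theorem \ref{ji} is available, and you only have the trivial bound $\beta^2\lesssim s^{-k}\mu(D_s)=O(1)$. So for the combined family the $\beta$-integral is only $O(\eta^{-1})\,t^k$, and Theorem \ref{rre} does not apply. Splitting the history into $K$ pieces of size $\le\eta$ is the same as applying Theorem \ref{rre} stage by stage, and then the constants multiply. The smallness-free version in \cite{Miskiewicz-18-Finn} does not help either, since its constant depends on the size of the $\beta$-integral.

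The paper makes no such improvement. It iterates Lemma \ref{ti} $i=[\de^{-1}E]+1$ times; since $\Theta\ge0$, every surviving ball then has radius $r$. A Vitali selection then gives $\sum r_x^k\le 10^kC_2(n)^iR^k$. Through $i$, this ``$C(n)$'' really depends on $(n,N,\La,\ep)$, as in \cite{Naber-V-2017}. That is all Theorem \ref{thm80} uses, since its $C_\ep$ depends on $(n,N,\La,\ep)$ anyway.

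An $n$-only bound for a covering by $r$-balls should not be expected: for $k=0$ it would essentially bound the number of isolated singular points in $D_R$ independently of the energy. Conversely, the statement taken literally, with $r\le r_x\le R$ and no energy-drop alternative, is trivial. Radius-$R$ balls centered at a maximal $\frac{2R}{5}$-separated subset of $S\cap D_R(0)$ satisfy every requirement with $C=6^n$.

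So drop the $n$-only claim and accept the factor $C_2(n)^i$; your argument then coincides with the paper's. Two points need care in that iteration:
\begin{itemize}
\item Each restart must use the original hypothesis $S\subset S^k_{\ep,\de r}(u)$, not a compounded $\ep/2$ from Lemma \ref{5.6}. Otherwise $\de$ would depend on $\ep 2^{-K}$ with $K\sim E/\de$, which is circular.
\item Your energy drop holds at scale $\rho s$ on children of radius $\rho s$, whereas the new level $E$ is measured at twice the radius. The paper closes this with the refinement to radius $\rho^2r_x$ in Step 3 of the proof of Lemma \ref{ti}.
\end{itemize}
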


The idea of the proof of Lemma \ref{lemma:main covering lemma} can be seen from the following  Lemma.
\begin{lemma}[Energy drop]\label{ti}
	Let $u\in \widehat{H}_\Lambda^{1/2}(D_8,N)$ be a stationary $1/2$-harmonic map.
	Fix any $\ep>0$ and $0<r<R\leq1$. There exist $\de=\de(n,N,\La,\ep)>0$ and $C_2(n)$ such that  for any subset
	${S}\su S^k_{\ep,\de r}(u)$,  there exists a finite covering of $S\cap D_R(0)$ satisfying
	\[
	S\cap D_R(0)\su \bigcup_{x\in\3}D_{r_x}(x) \quad\text{with}\quad  r\le  r_x\le R
	\]
	and
	\[\sum_{x\in\3}r_x^k\leq C_2(n)R^k.\]
	Moreover, for each $x\in\3,$ one of the following conditions  is satisfied:
	\begin{itemize}
		\item[i)] $r_x=r;$
		\item[ii)] we have the following uniform energy drop property:
		\begin{equation}\label{110}
		\sup_{y\in D_{r_x}(x)\cap S}\Theta(u^e,B^+_{2r_x}(\mathbf{y}))\leq E-\de,
		\end{equation}
		where $\mathbf{y}=(y,0)$ and $$E:=\sup_{x\in D_{R}(0)\cap{S}}\Theta(u^e,B^+_{2R}(\mathbf{x}))\qquad \text{with}\qquad \mathbf{x}=(x,0).$$
	\end{itemize}
\end{lemma}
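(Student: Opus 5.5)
I will follow the inductive covering scheme of Naber--Valtorta \cite[Section 6.2]{Naber-V-2018}, adapted to the boundary density $\Theta(u^e,B^+_s(\mathbf{y}))$. By scaling, assume $R=1$. Fix $\rho=\rho(n)$ small, to be chosen so that the Discrete-Reifenberg constant absorbs the geometric losses. Let $\de\le\min\{\de_2,\de_4,\de_5,\de_8\}$, with these constants evaluated at the relevant $\ep,\rho$ and at $\eta=\de$-type choices. The argument runs downward through the scales $\rho^j$, $j=0,1,\dots$, until $\rho^j\approx r$. At scale $\rho^j$ we keep a collection of ``bad'' balls $D_{\rho^j}(x)$ that have not yet stopped. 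Each such ball is treated with the following dichotomy. Set
\[
\mathcal F_x:=\Big\{y\in S\cap D_{\rho^j}(x):\Theta(u^e,B^+_{\rho^{j+1}}(\mathbf y))>E-\de\Big\}.
\]
\textbf{Case (a):} $\mathcal F_x$ does not $\rho^{j+1}$-effectively span a $k$-dimensional affine subspace. Then $\mathcal F_x$ lies in the $2\rho^{j+1}$-neighborhood of some $(k-1)$-plane. We cover it by $C(n)\rho^{-(k-1)}$ balls of radius $\rho^{j+1}$, which continue to the next scale. The remaining points of $S\cap D_{\rho^j}(x)$ already satisfy $\Theta(u^e,B^+_{\rho^{j+1}}(\mathbf y))\le E-\de$. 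We cover them by balls of radius comparable to $\rho^{j+1}/2$; on these balls the energy drop \eqref{110} holds by monotonicity, so they stop. \textbf{Case (b):} $\mathcal F_x$ effectively spans a $k$-plane $V$. Then Proposition \ref{pro32} gives $S\cap D_{\rho^j}(x)\subset D_{2\rho^{j+1}}(V)$. We cover $S\cap D_{\rho^j}(x)$ by about $C(n)\rho^{-k}$ balls of radius $\rho^{j+1}$ centered near $V$, and these continue to the next scale. We stop when the radius reaches $r$, which gives option i).

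The key difficulty is that case (b) multiplies the $k$-content by the factor $C(n)$ at every scale, so naive counting is useless. To get \eqref{pla} one must first refine the cover. Using Vitali, arrange that the final balls $\{D_{r_x/5}(x)\}$ are pairwise disjoint with centers in $S$. Then apply the Discrete-Reifenberg Theorem \ref{rre} to $\mu=\sum\omega_k r_x^k\de_x$. The plan is to verify the multiscale condition \eqref{eq: multiscale appro-1} on every subball $D_s(y)$. For each $\beta$-number at a ball where $u^e$ is $(0,\de_8)$-symmetric, use Theorem \ref{ji}. Points of $S\subset S^k_{\ep,\de r}$ are not $(k+1,\ep)$-symmetric at those scales. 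The required $(0,\de_8)$-symmetry comes from Proposition \ref{pro1}, once the pinching $\Theta(B^+_{8s})-\Theta(B^+_{s})$ is small, and this pinching holds on non-stopped balls because of Lemma \ref{5.5}. Theorem \ref{ji} then bounds $\beta^k_{2,\mu}(y,s)^2$ by $s^{-k}\int W_s\,d\mu$. Summing over dyadic scales, the $W_s$ telescope by \eqref{1.8}:
\[
\sum_{s=2^{-\ell}}W_s(z)\le C\big(\Theta(u^e,B^+_{2}(\mathbf z))-\Theta(u^e,B^+_{r_z}(\mathbf z))\big)\le C\de',
\]
where the last bound holds because all densities along non-stopped balls lie in $[E-\eta,E]$ by Lemma \ref{5.5}. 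This yields \eqref{eq: multiscale appro-1} with $\de'$ small.

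The main obstacle is making this last step legitimate. To do so, I would run the argument inductively as in \cite{Naber-V-2018}, proving the packing bound first on smaller balls (an upward induction on scales) so that the measure $\mu$ restricted to $D_s(y)$ is already known to satisfy $\mu(D_s(y))\le C s^k$. This is needed both to apply Theorem \ref{ji} and to sum the errors coming from balls with fewer than $k$ effective directions. Lemma \ref{5.6} will be used to transfer non-symmetry from $y$ to nearby centers when recentering. Finally, Lemma \ref{lemma:main covering lemma} follows by iterating Lemma \ref{ti}. Each iteration lowers $E$ by $\de$, and $E\le C\La$ by Lemma \ref{lem: 2.9}, so after at most $C\La/\de$ iterations every ball satisfies i). The constant then accumulates as a power of $C_2(n)$, which a further refined covering (again via Theorem \ref{rre}) reduces to $C(n)$.
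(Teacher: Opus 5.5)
There is a genuine gap, and it sits at the step you yourself flag as the main obstacle. You apply the Discrete-Reifenberg Theorem \ref{rre} to $\mu=\sum\omega_k r_x^k\de_x$ built from \emph{all} final balls. Part of its support consists of centers $y$ of the balls that stop in your Case (a) because of energy drop. At such $y$ you only know $\Theta(u^e,B^+_{\rho^{j+1}}(\mathbf y))\le E-\de$, with no lower bound. So $\Theta(u^e,B^+_{2}(\mathbf y))-\Theta(u^e,B^+_{r_y}(\mathbf y))$ can be of order $E$, which has two consequences. First, Proposition \ref{pro1} does not give $(0,\de_8)$-symmetry at all intermediate scales there. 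Second, $\int_0^r W_s(y)\,ds/s$ is not small. The right-hand side of \eqref{po} integrates $W_s$ over every support point of $\mu$ in $D_s(z)$, so the $\beta$-numbers are uncontrolled even at pinched centers $z$ near such balls. Hence the multiscale hypothesis of Theorem \ref{rre} cannot be verified; as you note, the pinching from Lemma \ref{5.5} is only available on non-stopped balls.

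Counting the stopped balls crudely does not rescue the argument either. Each Case (a) ball of radius $\rho^j$ produces stopped balls of content up to $C(n)\rho^{k-n}\rho^{jk}$. Case (b) can multiply the content of continuing balls by $C(n)$ at every scale. Nothing in your scheme makes the total content of Case (a) parents over the $\sim\log_{1/\rho}(1/r)$ scales summable. (A smaller, fixable point: stopped balls must be centered at distance $\ge 2\rho^{j+1}$ from $\mathcal F_x$, not just outside the balls covering $\mathcal F_x$. Otherwise $D_{r_x}(x)$ may contain points of $\mathcal F_x$ and \eqref{110} fails.)

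The missing idea is to separate the two mechanisms into two covering lemmas, as the paper does following Naber--Valtorta. In Lemma \ref{oooa} only balls whose $\mathcal F$ effectively spans a $k$-plane $V$ are refined, and the new centers are placed on $V$. Hence every center satisfies $\Theta(u^e,B^+_{r_x}(\mathbf x))\ge E-\eta$ by Lemma \ref{5.5}, and stays in $S^k_{\ep/2,r_x}(u)$ by Lemma \ref{5.6}. Balls on which $\mathcal F_x\subset D_{\rho r_x/5}(L)$ for a $(k-1)$-plane $L$ simply stop. Because every center of that covering is $\eta$-pinched, the upward induction with Theorem \ref{ji} and Theorem \ref{rre} legitimately gives $\sum r_x^k\le C_1(n)$, with $C_1$ independent of $\rho$.

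Energy-drop balls appear only afterwards, in Lemma \ref{ti} itself, and are never fed into Reifenberg. On each stopped ball:
the set $S\cap D_{r_x}(x)\setminus D_{2\rho r_x}(\mathcal F_x)$ is covered by final balls of radius $\rho r_x$, with content $\le C_f(n,\rho)r_x^k$;
the set $S\cap D_{r_x}(x)\cap D_{2\rho r_x}(\mathcal F_x)$ is covered by at most $C_b(n)\rho^{1-k}$ bad balls of radius $\rho r_x$, with total content $\le C_b(n)\rho\, r_x^k$.

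The factor $\rho$, coming from the $(k-1)$-dimensionality of $\mathcal F_x$, drives the argument; you observe it in Case (a) but never use it. Choose $\rho<1/(2C_1(n)C_b(n))$ and re-apply Lemma \ref{oooa} to each bad ball. The bad content then halves in each generation, so the final and $r$-ball contents form a geometric series bounded by a constant depending only on $n$. A last refinement to radius $\rho^2r_x$ converts the drop at scale $\rho r_x/10$ into \eqref{110} at scale $2r_x$.
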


With the help of Lemma \ref{ti}, Lemma \ref{lemma:main covering lemma} {can be proven} as follows.

\begin{proof}[Proof of Lemma \ref{lemma:main covering lemma}]	Note that the energy $E$ defined as in Lemma \ref{ti} satisfies $E\leq C\Lambda$.  So iterating Lemma \ref{ti} by at most $i=([\de^{-1}E]+1)$-times, we could obtain a covering $\{D_{r_x}(x)\}_{x\in \3^i}$ of $S\cap D_R(0)$ such that $r_x\le r$ and
	\[
	\sum_{x\in \3^i}r_x^k\leq C_2(n)^iR^k.
	\]
	We may assume that $x\in S\cap D_R(0)$ by considering the larger covering $\{D_{2r_x}(x)\}_{x\in \tilde{\cal C}^i}$. Then
	\[ S\cap D_R(0) \subset \bigcup_{x\in \tilde{\cal C}^i} D_{2 r_x}(x)\qquad \text{ and } \quad  \sum_{x\in {\tilde{\cal{C}}^i}}(2 r_x)^k\leq 2^kC_2(n)^iR^k.\]
	Finally, since $r_x\le 1$ for all $x\in \tilde{\cal C}^i$, we can use Vitali's covering lemma  to select a family of disjoint balls $\{D_{2r_x}(x)\}_{x\in \cal{C}}$ from that of $\tilde{\cal C}^i$ so that
	\[ S\cap D_R(0) \subset \bigcup_{x\in \cal{C}} D_{10r_x}(x)\qquad \text{ and } \quad  \sum_{x\in \cal{C}}(10r_x)^k\leq 10^kC_2(n)^iR^k.\]
	The proof is complete upon taking $C(n)=10^kC_2(n)^i$ and relabelling the balls.
\end{proof}

Therefore, the main task below is to prove Lemma  \ref{ti}. To this end, the auxiliary Lemma  \ref{oooa} below forms the first step. Before proceeding, let us make the following\\
\textbf{Convention}:  A family of balls $\{ B_{r_x}(x) \}_{x\in {I}} $ is called a ``Vitali covering" of a set $S$, if
\[S\subset \bigcup_{x\in I} B_{r_x}(x) \qquad\text{and}\qquad B_{r_x/5}(x)\cap B_{r_y/5}(y) =\emptyset,\quad \forall\,x,y\in I,\ {x\not=y}.  \]

\subsection{An auxiliary covering Lemma}
We first establish the following Lemma.
\begin{lemma}\label{oooa} For any $\ep>0$ and $0<\rho\leq100^{-1}$, there exist constants $\de=\de(n,N,\La,\rho,\ep)>0$ and $C_1(n)$ 
such that the following property {holds}.
For any stationary  $1/2$-harmonic map $u\in \widehat{H}_{\La}^{1/2}(D_8,N)$	 and $0<r<R\leq1$,   let
	$${S}\su S^k_{\ep,\de r}(u)\qquad \text{and}\qquad E=\sup\limits_{x\in S \cap D_{R}(0) }\Theta(u^e,B^+_{2R}(\mathbf{x})), $$ where $\mathbf{x}=(x,0)$.
Then  there exists a finite covering of $S\cap D_R(0)$ such that
	\begin{align*}
	S\cap D_R(0)\su \bigcup_{x\in\3}D_{r_x}(x) \quad\text{with}\quad  R\ge r_x\geq r
	\end{align*}
	and
		\begin{align}\label{pla}
 \sum_{x\in\3}r_x^k\leq C_1(n)R^k.
	\end{align}
	Moreover, for each $x\in\3,$ one of the following conditions  is satisfied:
\begin{itemize}
\item[i)] $r_x=r$;
\item[ii)] the set
	$$\mathcal{F}_x=\Big\{y\in S\cap D_{r_x}(x):\Theta(u^e,B^+_{\rho r_x/10}(\mathbf{y}))>E-\de\Big\}$$
	is contained in $D_{\rho r_x/5}(L)\cap D_{2r_x}(x),$ where $L$ is some  $(k-1)$-dimensional affine subspace in $\R^n$.
\end{itemize}
\end{lemma}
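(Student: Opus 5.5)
This follows the scheme of Naber--Valtorta \cite[Lemma 6.x]{Naber-V-2018}: a downward induction on scales with covering by balls of radius $\rho^j R$. By scaling (the density $\Theta$ is scale invariant) we may take $R=1$. Fix $\delta$ small, smaller than the constants $\delta_2,\delta_4$ of Proposition \ref{pro32} and Lemma \ref{5.5} with parameters adapted to $\rho$ and $\ep$.

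\emph{The iteration.} At stage $j$ we hold a covering of $S\cap D_1$ by three kinds of balls of radius $\rho^j$. \emph{Bad} balls $D_{\rho^j}(x)$ are those for which the set $\mathcal F_x$ does not $\rho\rho^j/10$-effectively span a $k$-plane. For such a ball, $\mathcal F_x$ lies in the $\rho^{j+1}/5$-neighbourhood of some $(k-1)$-plane $L$, which is exactly alternative ii); we stop refining it. \emph{Good} balls are those for which $\mathcal F_x$ does effectively span a $k$-plane $V$; these we refine. Balls that have reached radius $r$ are stopped as alternative i).

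\emph{Refining a good ball.} For a good ball $D_{\rho^j}(x)$, two facts hold. First, since $E-\delta<\Theta(u^e,B^+_{\rho\rho^j/10}(\mathbf y))\le\Theta(u^e,B^+_{2\rho^j}(\mathbf y))\le E$ on $\mathcal F_x$ (the last inequality by monotonicity and the definition of $E$, using $2\rho^j\le 2R$), the pinching hypothesis of Proposition \ref{pro32} holds after rescaling $D_{\rho^j}(x)$ to $D_1$. Since $S\subset S^k_{\ep,\delta r}$ and $\rho^j\ge r$, we get $S\cap D_{\rho^j}(x)\subset D_{2\rho^{j+1}/10}(V)$. Second, we need a covering of the tube $D_{\rho^{j+1}}(V)\cap D_{\rho^j}(x)$ by balls of radius $\rho^{j+1}$ with centers on $V$, chosen so that the balls with radius divided by $5$ are disjoint. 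The number of such balls is at most $C(n)\rho^{-k}$, so the $k$-content multiplies by at most $C(n)$ each time.

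\emph{Avoiding the geometric blowup.} Multiplying by $C(n)$ at every scale would give a bound $C(n)^{j}$, which is useless; this is the central difficulty. The remedy, as in Naber--Valtorta, is to count the content not ball by ball but with the discrete Reifenberg Theorem \ref{rre}, applied to the measure $\mu=\sum\omega_k r_x^k\delta_x$ of the final (disjoint, Vitali) collection. To verify \eqref{eq: multiscale appro-1}, we use Theorem \ref{ji} at each center $y$ and scale $s$. Its hypotheses hold at the relevant points. The points are not $(k+1,\ep)$-symmetric because they lie in $S^k_{\ep,\delta r}$. They are $(0,\delta_8)$-symmetric by Proposition \ref{pro1}, because the density is pinched, i.e. within $\delta$ of $E$. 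This pinching comes from Lemma \ref{5.5}, which makes the density almost $E$ along $V$, and all new centers lie near $V$.

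\emph{Summing the estimate.} Theorem \ref{ji} gives $\beta^2\le C s^{-k}\int W_s\,d\mu$. Summing over dyadic scales, the $W_s$ telescope by the monotonicity formula \eqref{1.8}, giving
\[
\int_{D_s(x)}\sum_{t\le s}\beta^k_{2,\mu}(y,t)^2\,d\mu\le C\int\big(\Theta(u^e,B^+_{16s}(\mathbf y))-\Theta(u^e,B^+_{\rho^{j}}(\mathbf y))\big)\,d\mu\le C\delta'\mu(D_s(x)),
\]
where the pinching defect $\delta'$ is small. To close the argument we first prove the weaker bound $\mu(D_s)\le C s^k$ inductively from small scales upward, as in \cite{Naber-V-2018}. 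The quantity $C\delta'$ then becomes less than $\delta_6^2$, and Theorem \ref{rre} gives $\sum r_x^k\le C_R(n)$ with a constant depending only on $n$.

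Finally, we restore the full covering. Re-enlarging the radii, adding the stopped balls of types i) and ii), and applying Vitali's lemma costs only a dimensional factor, giving \eqref{pla}. The main obstacle I expect is the bookkeeping in this inductive upward-scale argument. Specifically, it requires that every center carrying positive $\mu$-mass has pinched density at all intermediate scales, so that both Theorem \ref{ji} and the telescoping apply. The subtle step is choosing $\delta$ relative to $\rho$, via Lemma \ref{5.5} with $\eta\ll\delta_8$, so that pinching propagates from the spanning points of $\mathcal F_x$ to all the new centers on $V$.
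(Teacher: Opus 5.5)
Your architecture matches the paper's proof (which follows Naber--Valtorta). You stop when $\mathcal F_x$ lies near a $(k-1)$-plane or when $r_x=r$. Otherwise you confine $S$ near the spanned plane $V$ by Proposition \ref{pro32} and recenter on $V$ with densities pinched by Lemma \ref{5.5}. You then control $\sum r_x^k$ by an upward induction on scales that feeds Theorem \ref{ji} and Proposition \ref{pro1} into the discrete Reifenberg Theorem \ref{rre}.

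There is, however, a genuine gap in checking the hypotheses of Theorem \ref{ji} at the centers. You place the new centers on $V$, and then assert that they are not $(k+1,\ep)$-symmetric at all scales $s\ge r_x$ ``because they lie in $S^k_{\ep,\de r}$''. Points of $V$ need not belong to $S$, so this step is unjustified. Moving the centers into $S$ does not help either. Lemma \ref{5.5} gives the lower bound $\Theta\ge E-\eta$ only at points of $V$. A point of $S$ near $V$ but outside $\mathcal F$ has no lower density bound at all. So at such a point neither Proposition \ref{pro1} nor the telescoping estimate $\int_{r_x/5}^{1/10}W_s(x)\,\frac{ds}{s}\le c\eta$ is available. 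Every center carrying $\mu$-mass must have \emph{both} pinched density and quantitative non-symmetry at all scales in $[r_x,1]$. Neither choice of centers gives you both for free.

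The missing ingredient is Lemma \ref{5.6}. It says that if two points at distance comparable to the scale are both pinched, then non-$(k+1,\ep)$-symmetry at one forces non-$(k+1,\ep/2)$-symmetry at the other. Apply it to two points:
\begin{itemize}
\item a center $x\in V$, pinched by Lemma \ref{5.5};
\item a spanning point $y\in\mathcal F\subset S\subset S^k_{\ep,\de r}(u)$ of the parent ball, pinched by the definition of $\mathcal F$ and at distance comparable to the parent radius $r_x/\rho$.
\end{itemize}
This gives $x\in S^k_{\ep/2,r_x}(u)$, which is property (P2) in the paper's proof, propagated through the generations. Accordingly, Theorem \ref{ji} must be used with $\ep/2$ in place of $\ep$, and $\de$ must also be taken below $\de_5(n,N,\La,\rho,\ep)$.

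With this step inserted, the rest of your outline goes through. In particular, you close the Reifenberg hypothesis via the inductive bound $\mu(D_{2s}(x))\le Cs^k$, and this keeps $\eta$ independent of $r$.
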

Later we will choose $\rho=\rho(n)$ in the proof of Lemma \ref{ti}, see \eqref{eq:  definition of rho}.
\begin{proof}
	By the scaling invariance property of $\Theta$, we may assume $R=1$. To simplify the presentation, we also assume 
	 that
	\begin{align}\label{332}
	r=\rho^{{l}}, \quad \rho=2^{-a},\quad a,\ {l}\in\N.
	\end{align}

	We divide the proof into two main  steps.
	\begin{itemize}
	\item [{\bf Step1}] (Inductive covering) Let $\eta>0$ be a constant to be determined later. Define the initial set
	\[
	\mathcal{F}^0=\{y\in D_1\cap{S}:\Theta(u^e,B^+_{\rho /10}(\mathbf{y}))\geq E-\de\}.
	\]
	If there is a $(k-1)$-dimensional affine subspace $L_0$ such that $\mathcal{F}^0\su D_{\rho/5}(L_0),$ then we say that $D_1$ is a {\it good ball},
	{and} our claim clearly holds.
Therefore, we will assume that $D_1$ is a {\it bad ball}, that is, the set $ \mathcal{F}^0$  spans $({\rho}/{10})$-effectively   a  $k$-dimensional affine subspace $V_0\subset \R^n$ .
In this case, we choose $\de\leq\de_2$ according to Proposition \ref{pro32} so that
	\begin{equation*}
 S\cap D_1\su D_{\rho/5}(V_0).
	\end{equation*}
We then choose a finite Vitali covering of $D_{\rho/5}(V_0)\cap D_1$ by balls $\{D_\rho(x)\}_{x\in\3^1}$ with $\3^1\su V_0\cap D_1$ such that
	$$S\cap D_1\su\bigcup_{x\in \3^1}D_{\rho}(x)\cap D_1.$$
	Note that by Lemma \ref{5.5}, for all $x\in\3^1\su V_0\cap D_1$, we have
	$$\Theta(u^e,B^+_{\rho /10}(\mathbf{x}))\geq E-\eta,$$
	as long as $\de\le \de_4$ which depends also on $\eta$. Under the same smallness assumption, Lemma \ref{5.6}  implies that for each $x\in \3^1$ we have $x\in S^k_{\ep/2,\rho}(u).$	

Now we divide the above covering according the following rules: For each $x\in\3^1$,  set
	\begin{align*}
\mathcal{F}_x^1=\left\{y\in S\cap D_{\rho}(x):\Theta(u^e,B^+_{\rho^2 /10}(\mathbf{y}))\geq E-\de\right\},
\end{align*} and  let
	\begin{align*}
	\3^1_g=\Big\{x\in\3^1\big| \mathcal{F}_x^1 \su D_{ \rho^2/5}(L_x^1) \
	\mbox{\rm for a}\ (k-1)\mbox{\rm -dimensional affine subspace}\ L_x^1\Big\},
	\end{align*}
	\begin{align*}
	\3^1_b=\Big\{x\in&\3^1\big| \mathcal{F}_x^1\ (\rho^2/10)\mbox{-effectively spans a}\ k\mbox{-dimensional affine subspace}\ \bar{L}_x^{1}\Big\}.
	\end{align*}
	Then we have $	\3^1=	\3^1_g\cup 	\3^1_b$ and
		$$S\cap D_1(0)\subset \bigcup_{x\in \3^1_b}D_{\rho}(x)\cup\bigcup_{x\in \3^1_g}D_{\rho}(x)\equiv D_{\rho}(\3^1_b)\bigcup D_{\rho}(\3^1_g).$$
		Hence the remaining work is to deal with bad balls. Consider any bad ball $D_\rho(x)$.
	By Proposition \ref{pro32}, we have
	$$S\cap D_\rho(x)\su D_{ \rho^2/5}(\bar{L}_x^{1})$$ for some $k$-dimensional affine subspace $\bar{L}_x^{1}$.
	Proceeding  as above we obtain
	$$S\cap D_\rho(x)\su\bigcup_{y\in \3^2_x}D_{\rho^2}(y)=\bigcup_{y\in \3^2_{x,b}}D_{\rho^2}(y)\cup\bigcup_{y\in \3^2_{x,g}}D_{\rho^2}(y)\equiv D_{\rho^2}(\3^2_{x,b})\bigcup D_{\rho^2}(\3^2_{x,g}),$$
	with $\3^2_x=\3^2_{x,b}\cup\3^2_{x,g}\su \bar{L}_x^{1}\cap D_\rho(x)$ so that $\{D_{\rho^2}(y):y\in \cal{C}^2_x \} $ is a Vitali covering.
	Moreover, {for $y\in \mathcal{C}_x^2$,  if we set}
	\begin{align*}
	\mathcal{F}_y^2=\Big\{z\in S\cap D_{2\rho}(y)\ \big|\Theta(u^e,B^+_{\rho^3 /10}(\textbf{z}))\geq E-\de\Big\},
	\end{align*}
	then
	\begin{align*}
	\3^2_{x,g}=&\Big\{y\in\3^2_{x}\ \big| \mathcal{F}_y^2 \su D_{ \rho^3/5}(L_y^2) \
	 \mbox{\rm for a}\ (k-1)\mbox{\rm -dimensional affine subspace}\ L_y^2\Big\}	
	\end{align*}
	and
	\begin{align*}
	\3^2_{x,b}=\Big\{y\in\3^2_x\ \big| \mathcal{F}_y^2\ (\rho^3/10)\mbox{-}\mbox{\rm effectively spans a}\ k\mbox{\rm -dimensional affine subspace}\ \bar{L}_y^{2}\Big\}.
	\end{align*}
	Set $\3_b^2=\bigcup\limits_{x\in\3^1_b}\3^2_{x,b}$, $\3^2_g=(\bigcup\limits_{x\in\3^1_b}\3^2_{x,g})\cup\3^1_g$, and $\3^2=\3^2_g\cup\3^2_b$. Then
	$$S\cap D_1(0)\su\bigcup_{x\in \3^2}D_{r^2_{x}}(x)=\bigcup_{x\in \3^2_g}D_{r^2_{x}}(x)\cup\bigcup_{x\in\3^2_b}D_{r^2_{x}}(x)\equiv D_{r^2_{x}}(\3^2_g)\bigcup D_{\rho^2}(\3^2_b),$$
	where $r^2_{x}=\rho$ if $x\in\3^1_g$ and $r^2_{x}=\rho^2$ if $x\in\3^2_g\ba\3^1_g$ or $\3^2_b$.
	Furthermore, with the same reasoning as the initial covering, there holds that
	for all $x\in\3^2$,  $\Theta(u^e,B^+_{r^2_{x}}(\mathbf{x}))\geq E-\eta$;  and
	for all $s\in[r^2_{x},1]$, $u$ is not $(k+1,\ep/2)$-symmetric on $D_{s}(x)$, that is, $x\in S^k_{\ep/2,r^2_{x}}(u).$
	
	Repeating this procedure, we then build a covering of the form
	\[
	S\cap D_1(0)\su \bigcup_{x\in \3^j}D_{r^j_{x}}(x)=\bigcup_{x\in \3^j_g}D_{r^j_{x}}(x)\cup\bigcup_{\3^j_b}D_{r^j_{x}}(x)\equiv D_{r^j_{x}}(\3^j_g)\bigcup D_{r^j_{x}}(\3^j_b),
	\]
	with $$\3_b^j=\bigcup\limits_{x\in\3^{j-1}_b}\3^j_{x,b}, \qquad \3^j_g=\Big(\bigcup\limits_{x\in\3^{j-1}_b}\3^j_{x,g}\Big)\cup\3^{j-1}_g$$
	 and $\3^j=\3^j_b\cup\3^j_g,$ for $j\geq2$. Notice that
	\[
	r^j_{x}=\begin{cases}
	\rho^i, & \text{ if } x\in\3^i_g\ba\3^{i-1}_g,i=2,\cdots,j,\\
	\rho^j, & \text{ if } x\in\3^j_b.
	\end{cases}
	\]
	Moreover, $\3^j_{g}$ is  given by
	\begin{align*}
	\Big\{x\in\3^j\ \big|\ r^j_{x}\geq\rho^j\ \mbox{\rm and}\ \mathcal{F}_x^j\su D_{ \rho r^j_{x}/5}(L_x^j) \
	\mbox{\rm for a}\ (k-1)\mbox{\rm -dimensional affine subspace}\ L_x^j\Big\},
	\end{align*}
	where
	\begin{align*}
	\mathcal{F}_x^j=\Big\{y\in S\cap D_{2r^j_{x}}(x)\ \big|\Theta(u^e,B^+_{\rho r^j_{x}/10}(\mathbf{y}))\geq E-\de\Big\},
	\end{align*}
	and
	\begin{align*}
	\3^j_{b}=\Big\{x\in\3^j\ \big| & r^j_{x}=\rho^j, 
	\ \mathcal{F}_x^j\ {\rho r^j_{x}}/{10}\mbox{-}\mbox{\rm effectively spans a}\ k\mbox{\rm -dimensional affine subspace}\ \bar{L}_x^{j}\Big\}.
	\end{align*}
	Furthermore,  for all $x\neq y\in\3^j\backslash \3_g^{j-1}$, we have $D_{r_x^j/5}(x)\cap D_{r_x^j/5}(y)=\emptyset$ and
	\begin{itemize}
		\item[(P1)] For all $x\in\3^j,$  $\Theta(u^e,B^+_{r^j_{x}}(\mathbf{x})))\geq E-\eta.$
		\item[(P2)] For all $x\in\3^j$ and for all $s\in[r^j_{x},1]$, $u$ is not $(k+1,\ep)$-symmetric on $D_{s}(x)$, that is, $x\in S^k_{\ep/2,r^j_{x}}(u).$
	\end{itemize}
	Taking $j=l$ and $\rho^{l}=r$ in \eqref{332} and $\3=\3^{l}$. Then the covering part of Lemma \ref{oooa} is almost complete, except that balls with centers in $\3$ does not necessarily satisfy the disjointness condition $D_{r_x^i/5}(x)\cap D_{r_y^j/5}(y)=\emptyset$ for all $x,y\in \3$. However, we may do a further covering for the collection of balls $\{D_{r_x}(x)\}$ to select a subcollection of disjoint balls $\{D_{r_x}(x)\}$ such that $\{D_{5r_x}(x)\}$ satisfies all the covering requirements of the Lemma. Thus, by relabeling these balls if necessary, we may assume the collection $\{D_{r_x}(x)\}_{x\in \3}$ satisfies the covering part of Lemma \ref{oooa}.
\item[\textbf{Step2}] (Reifenberg estimates)
	In order to prove the volume estimate \eqref{pla}, we define a measure
	\[
	\mu:=\omega_k\sum_{x\in\3}r_x^k\de_x.
	\]
	and the associated  measures
	\[
	\qquad \qquad  \mu_t:=\omega_k\sum_{x\in\3_t}r_x^k\de_x,\]
	where $\3_t=\{x\in\3:r_x\leq t\}$ for all $ 0<t\le 1$.  Note that $\cal{C}\subset D_1$ and $\mu_t$ is supported on the discrete set $\cal{C}_t$ for all $r\le t\le 1$.

	Set $r_j=2^jr,j=0,1,\cdots,al-3$. Then $r_{al-3}=1/8$ by \eqref{332}.
	Since $D_{r/5}(x)\cap D_{r/5}(y)=\emptyset$ for all $x,y\in\3_r,$ it follows \sout{easily} that 
	$$\mu_r(D_r(z))\leq c(n)r^k, \ \forall z\in D_1.$$
	Assume now for all   we have
	\begin{align}\label{1300}
	\mu_{r_j}(D_{r_j}(x))\leq C_R(n)r_j^k,\qquad \forall\,x\in D_1,
	\end{align}
	where $C_R(n)$ is the constant in Theorem \ref{rre}.  We next show \eqref{1300} is true for ${j+1}$ and hence it holds for all $j\leq al-3$ by induction.

	We first show that \eqref{1300} holds with constant $C_1(n)=c(n)C_R(n)$. To this end, we write
	\[
	\mu_{r_{j+1}}=\mu_{r_j}+\widetilde{\mu}_{r_{j+1}}:=\sum_{x\in\3_{{r_j}}}\omega_kr_x^k\de_x+\sum_{x\in\3,r_x\in({r_j},{r_{j+1}}]}\omega_kr_x^k\de_x.
	\]
	Take a covering of $D_{r_{j+1}}(x)$ by $M$ balls $\{D_{{r_j}}(y_i)\}$, $M\leq c(n)$, such that $\{D_{{r_j}/5}(y_i)\}$ are disjoint. Then by induction we have
	\[
	\mu_{{r_j}}(D_{r_{j+1}}(x))\leq\sum_{j=1}^{M}\mu_{{r_j}}(D_{{r_j}}(y_i))\leq c(n)C_R(n)r_j^k.
	\]
	By definition of $\widetilde{\mu}_{r_{j+1}}$ and pairwise disjointness of $\{D_{r_x/5}(x)\}$, we have
	\[
	\widetilde{\mu}_{r_{j+1}}(D_{r_{j+1}}(x))\leq c(n)r_{j+1}^k.
	\]
	Thus, for the measure $\mu_{r_{j+1}}$, there holds at the moment the rough estimate
	\begin{align}\label{13200}
	\mu_{r_{j+1}}(D_{r_{j+1}}(x))\leq c(n)C_R(n)r_{j+1}^k ,\qquad \forall\,x\in D_1.
	\end{align}
	
	To improve the estimate \eqref{13200}, we shall apply the discrete Reifenberg Theorem \ref{rre}. 
	Hence, let us fix  $D_{r_{j+1}}(x_0)$ and set
	\[
	{\mu_{j+1}}=\mu_{r_{j+1}}|_{D_{{r_{j+1}}}(x_0)}.
	\]
	We claim that for all $z\in \supp(\mu_{j+1})=\cal{C}_{r_{j+1}}\cap D_{r_{j+1}}(x_0)$,
	\begin{align}\label{137}
	\beta_2=\beta^k_{2,{\mu_{j+1}}}(z,s)^2\leq C_1s^{-k}\int_{D_s(z)}{W}_s(y)d{\mu_{j+1}}(y).
	\end{align}
	
	Note that \eqref{137} trivally holds if $0<s\leq r_z/5$ and we next consider the case $s\geq r_z/5$.  Since  $\Theta(u^e,B^+_{10s}(\textbf{z}))\leq E$, by (P1) we have
	$$\Theta(u^e,B^+_{10s}(\textbf{z}))-\Theta(u^e,B^+_{5s}(\textbf{z}))\leq\eta\quad \text{ for all } s\in[ r_{z}/5,1/10].$$
	Given $\delta_8(n,N,\Lambda,\rho,\ep)$ as  in Theorem \ref{ji}, it follows from Proposition \ref{pro1} that there exists $\eta_0=\eta_0(n,N,\Lambda,\rho)$ and $\de=\de_1(n,N,\Lambda,\rho,\ep,\de_8)>0$ such that if $\eta\leq\eta_0$, then $u$ is $(0,\de_8)$-symmetric on $D_{10s}(z)$. By (P2), $u$ is not $(k+1,\ep/2)$-symmetric on  $D_{10s}(z)$. Hence the claim follows from Theorem \ref{ji}.
	
	%Notice that the above inequality holds for all $0<s\leq 1/10$ since it is trivial for $0<s\leq r_x/5$ and
	We may assume without loss of generality $W_s(x)=0$ if $0<s\leq r_x/5$. Note that, by the induction assumption and \eqref{13200}, for all $j\leq al-3$ and $s\in [r,r_{j+1}],$ and $z\in D_1$,
	\begin{equation}\label{eq:rough growth for mu s}
	\mu_s(D_s(z))\leq c(n)C_R(n)s^k.
	\end{equation}
	We claim that for any $r<s\leq r_{j+1}$, we have
	\begin{equation}\label{eq:improved growth for mu j}
	\mu_{r_{j+1}}(D_s(z))\leq c(n)C_R(n)5^ks^k.
	\end{equation}
	Indeed, if $y\in D_s(z)\cap \supp(\mu)$, then $\frac{r_y}{5}\leq |y-z|\leq s$ and so $y\in \3_{5s}$, which implies $D_s(z)\cap \supp(\mu)\subset \3_{5s}$. Since $r\leq 5s\leq 5r_{j+1}$, we have
	\[
	\mu_{j+1}(D_s(z))\leq \mu_{5s}(D_s(z))\leq \mu_{5s}(D_{5s}(z))\leq c(n)C_R(n)5^ks^k.
	\]
	Fixing $s\le r\le r_{j+1}$ and integrating \eqref{137} over $D_r(y)\subset D_{r_{j+1}}(x_0)$ leads to
	\[
	\begin{aligned}
	&\int_{D_r(y)} \beta^k_{2,\mu_{j+1}}(z,s)^2d\mu_{j+1}(z)\\
	&\leq C_1s^{-k}\int_{D_r(y)}\Big[\int_{D_s(z)}{W}_s(x)d\mu_{j+1}(x)\Big]d\mu_{j+1}(z)\\
	&\leq C_1s^{-k}\int_{D_r(y)}\int_{D_{2r}(y)}\chi_{D_s(z)}(x)W_s(x)d\mu_{j+1}(x)d\mu_{j+1}(z)\\
	&\le C_1s^{-k}\int_{D_{2r}(y)}\mu_{j+1}(D_s(x))W_s(x)d\mu_{j+1}(x)\\
	&\stackrel{\eqref{eq:improved growth for mu j}}{\leq}C_1c(n)C_R(n)\int_{D_{2r}(y)}W_s(x)d\mu_{j+1}(x).
	\end{aligned}
	\]
	Hence integrating $s$ from 0 to $r$ leads to
	\[
	\int_{D_r(y)}\int_0^r \beta^k_{2,\mu_{j+1}}(z,s)^2\frac{ds}{s}d\mu_{j+1}(z)\leq c(n)C_1C_R\int_{D_{2r}(y)}\int^r_0{W}_s(x)\frac{ds}{s}d{\mu_{j+1}}(x).
	\]
	Observe that for all  $x\in \mbox{supp}(\mu)$ and $r\leq r_{j+1}\leq1/10,$ we have
	\begin{align*}
	\int_0^r{W}_s(x)\frac{ds}{s}&=\int_{r_x/5}^r{W}_s(x)\frac{ds}{s}\leq\int_{r_x/5}^{1/10}{W}_s(x)\frac{ds}{s}\\
	&\leq c[\Theta(u^e,B^+_1(\textbf{x}))-\Theta(u^e,B^+_{r_\textbf{x}/5}(\textbf{x}))]\leq c\eta.
	\end{align*}
	Using again the induction hypothesis and \eqref{13200}, we arrive at
	\begin{align}\label{1380}
	\int_{D_r(y)}\left(\int_0^r\beta^k_{2,\mu_{j+1}}(z,s)^2\frac{ds}{s}\right)d\mu_{j+1}(z)&\leq cc(n)C_1C_R^2\eta r_{j+1}^k\nonumber\\
	&\leq cc(n)C_1C_R^22^{al-3}\eta r^k
	\end{align}
	for all $y\in D_{r_{j+1}}(x)$ and $2^{3-al}r_{j+1}\leq r\leq r_{j+1}.$
	
Our desired estimate \eqref{137} follows by choosing $\eta $ small enough such that
\begin{equation}\label{equ: smallness of eta-3}
\eta\leq 2^{3-al}\frac{\de_7^2}{cc(n)C_1C_R^2},
\end{equation}
and then applying Theorem \ref{rre} to $\mu_{j+1}$. 
\end{itemize}
\end{proof}

We remark that by \cite[Theorem 1.1]{Miskiewicz-18-Finn}, the  smallness assumption \eqref{equ: smallness of eta-3} on $\eta$ is not 
necessary. However, this can not remove the smallness condition on $\eta$, since, for instance, we still need $\eta$ to be very small in the formula 
\eqref{137} below in order to apply Proposition \ref{pro1}.

\subsection{Proof of Lemma  \ref{ti}}
The proof is  is rather long and will be  divided into three steps.
For simplicity we will assume that
$$r=\rho^l, \qquad R=1.$$ We will use an induction
argument. We will use superscripts
$f,b$ to indicate {\it final} and {\it bad} balls respectively.
\begin{proof}
\begin{itemize}
	\item[\textbf{Step1}] (Recovering of bad balls: the first step)  First, Lemma \ref{oooa}  gives a {finite} covering $\{D_{r_x}(x)\}_{x\in \3}$ of $S\cap D_{1}(0)$ satisfying
	\[
	S\cap D_{1}(0)\subset\bigcup_{x\in{\cal C}_{r}^{0}}D_{r}(x)\cup\bigcup_{x\in{\cal C}_{+}^{0}}D_{r_{x}}(x),
	\]
	where
	\[
	{\cal C}_{r}^{0}=\Big\{x\in{\cal C}\ \big| r_{x}=r\Big\}\quad\text{and}\quad{\cal C}_{+}^{0}=\Big\{x\in{\cal C}\ \big| r_{x}>r\Big\},
	\]
	and
	\begin{equation}\label{eq: 6.67}
	\sum_{x\in{\cal C}_{r}^{0}\cup{\cal C}_{+}^{0}}r_{x}^{k}\le C_1(n).
	\end{equation}
	Moreover, for each $x\in{\cal C}_{+}^{0}$, the set
	\[
	\mathcal{F}_{x}=\Big\{ y\in S\cap D_{r_{x}}(x):\Theta(u^e,B^+_{ \rho r_{x}/10}(\mathbf{y}))>E-\de \Big\}
	\]
	is contained in a small neighborhood of a ($k-1$)-dimensional affine subspace. 
	
	To proceed, we only need to refine the part ${\cal C}_{+}^{0}$. Let $x\in{\cal C}_{+}^{0}$ so that $r_x>r$. We have two cases:
	\begin{itemize}
	\item[\textbf{Case1}]  $\rho r_{x}=r$. In view of the desired covering in the statement of Lemma \ref{ti}, we directly cover $S\cap D_{r_{x}}(x)$
	by a family of balls
	$$\Big\{D_{r_{y}}(y): y\in{\cal C}_{x}^{(1,r)}\subset S\cap D_{r_{x}}(x)\Big\}\qquad \text{with}\qquad r_{y}=\rho r_{x}=r,$$
and $\{D_{r_{y}/5}(y)\}_{y\in{\cal C}_{x}^{(1,r)}}$ being pairwise disjoint.  A simple volume comparison argument shows that the cardinality  $\sharp\left({\cal C}_{x}^{(1,r)}\right)\le C(n)\rho^{-n}$. Hence
	\[
	\sum_{y\in{\cal C}_{x}^{(1,r)}}r_{y}^{k}=\sharp\left({\cal C}_{x}^{(1,r)}\right)(\rho r_{x})^{k}\le C(n)\rho^{k-n}r_{x}^{k}=:C_{r}(n,\rho)r_{x}^{k}.
	\]
	Collect all such points $x$ and set
	\[
	{\cal C}^{(1,r)}={\cal C}_{r}^{0}\cup\bigcup_{x\in{\cal C}_{+}^{0},\rho r_{x}=r}{\cal C}_{x}^{(1,r)}.
	\]
	It follows from \eqref{eq: 6.67} that
	\begin{align*}
	\sum_{y\in{\cal C}^{(1,r)}}r_{y}^{k}&=\Big(\sum_{x\in{\cal C}_{r}^{0}}+\sum_{x\in{\cal C}_{+}^{0}}\sum_{y\in{\cal C}_{x}^{(1,r)}}\Big)r^{k}\\
	&\le\sum_{x\in{\cal C}_{r}^{0}}r^{k}+C_{r}(n,\rho)\sum_{x\in{\cal C}_{+}^{0}}r_{x}^{k}\\
	&\le C_{1}(n)C_{r}(n,\rho).
	\end{align*}
	
	\item[\textbf{Case2}]  $\rho r_{x}>r$.	
In this case, we  cover $S\cap D_{r_{x}}(x)$ by making  use of the fact that 
$$\mathcal{F}_{x}\subset D_{\rho r_{x}/5}(L)\cap D_{r_{x}}(x)$$
for some $(k-1)$-dimensional affine subspace $L$. First we choose a Vitali  covering for the part away from $D_{2\rho r_{x}}(\mathcal{F}_{x})$:
	\[
	S\cap D_{r_{x}}(x)\backslash D_{2\rho r_{x}}(\mathcal{F}_{x})\subset\bigcup_{y\in{\cal C}_{x}^{(1,f)}}D_{r_{y}}(y)\quad\text{with }r_{y}=\rho r_{x},
	\]
where ${\cal C}_{x}^{(1,f)}\subset S\cap D_{r_{x}}(x)$. Since $y$ is away from  $\mathcal{F}_{x}$,  the energy drop property holds, that is,
\begin{equation}\label{eq: energy-drop-2}
\Theta\Big(u^e, B^+_{r_y/10}(\mathbf{y})\Big)\le E-\de.
\end{equation}
In fact, we also have the almost desired energy drop property:
\begin{equation}\label{eq: energy-drop-3}
\sup_{z\in {S}\cap D_{r_{y}}(y)} \Theta(u^e, B^+_{\rho r_{y}/10}(\mathbf{z}))\le E-\de,  \qquad \forall\,y\in{\cal C}_{x}^{(1,f)}.
\end{equation}
Indeed, for each $z\in S\cap D_{r_{y}}(y)$,
the fact $|z-y|\le r_{y}=\rho r_{x}$ implies that $z\not\in \mathcal{F}_{x}$.
Note also that $S\cap D_{r_{y}}(y)\subset S\cap D_{2r_{x}}(x)$, since
$\rho=\rho(n)<\frac{1}{100}$. Hence $z\in S\cap D_{2r_{x}}(x)$.
Consequently, the definition of $\mathcal{F}_x$ implies that
\[
\Theta\left(u^{e},B_{\rho r_{y}/10}^{+}(\mathbf{z})\right)\le\Theta\left(u^{e},B_{\rho r_{x}/10}^{+}(\mathbf{z})\right)\le E-\de.
\]
This proves \eqref{eq: energy-drop-3}. Thanks to this energy drop  property,	
these balls $\{D_{r_{y}}(y): y\in {\cal C}_{x}^{(1,f)}\}$ will be part of the ``final'' balls. 
Moreover, the number $\sharp\left({\cal C}_{x}^{(1,f)}\right)$ is bounded from above by a constant $C(n)\rho^{-n}$. This implies that
	\[
	\sum_{y\in{\cal C}_{x}^{(1,f)}}r_{y}^{k}=\sharp\left( {\cal C}_{x}^{(1,f)}\right)(\rho r_{x})^{k}\le C(n)\rho^{k-n}r_{x}^{k}=C_{f}(n,\rho)r_{x}^{k}.
	\]
	Collecting all the subset ${\cal C}_{x}^{(1,f)}$ from the above,  we
	obtain the first generation of final balls:
	\[
	{\cal C}^{(1,f)}=\bigcup_{x\in{\cal C}_{+}^{0},\rho r_{x}>r}{\cal C}_{x}^{(1,f)},
	\]
	together with the volume estimate  from \eqref{eq: 6.67}  that
	\[
	\sum_{y\in{\cal C}^{(1,f)}}r_{y}^{k}=\sum_{x\in{\cal C}_{+}^{0}}\sum_{y\in{\cal C}_{x}^{(1,f)}}r_y^{k}\le C_{f}(n,\rho)\sum_{x\in{\cal C}_{+}^{0}}r_{x}^{k}\le C_{f}(n,\rho)C_{1}(n).
	\]
	
	For the remaining part of $S\cap D_{r_{x}}(x)$, we first choose a Vitali covering:
	\[
	S\cap D_{r_{x}}(x)\cap D_{2\rho r_{x}}(\mathcal{F}_{x})\subset\bigcup_{y\in{\cal C}_{x}^{1,b}}D_{r_{y}}(y)\quad\text{with }r_{y}=\rho r_{x},
	\]
 where ``$b$'' means bad balls, on which $r_y>r$ and
the	energy drop property \eqref{eq: energy-drop-2}   can not be  determined.
	However, since 
	$$\mathcal{F}_{x}\subset D_{\rho r_{x}/5}(L)\cap D_{2r_{x}}(x)$$
	for some ($k-1$)-dimensional affine subspace $L$, we have a better cardinality estimate by volume comparison argument:
	\[
\sharp\left(\3_{x}^{(1,b)}\right)\le C_b(n)\rho^{1-k}.
	\]
	This means that relatively there are  quite few  bad balls are. As a result,
	\[
	\sum_{y\in{\cal C}_{x}^{(1,b)}}r_{y}^{k}=\sharp\left(\3_{x}^{(1,b)}\right)\rho r_{x}^{k}  \le C_{b}(n)\cdot \rho \cdot r_{x}^{k}.
	\]
Collect all the bad balls together to get the first generation of bad balls
	\[
	{\cal C}^{(1,b)}=\bigcup_{x\in{\cal C}_{+}^{0},\rho r_{x}>r}{\cal C}_{x}^{(1,b)},
	\]
together with the volume estimate by \eqref{eq: 6.67}
	\[
	\sum_{y\in{\cal C}^{(1,b)}}r_{y}^{k}=\sum_{x\in{\cal C}_{+}^{0}}\sum_{y\in{\cal C}_{x}^{(1,b)}}r_{y}^{k}\le C_{b}(n)\rho\sum_{x\in{\cal C}_{+}^{0}}r_{x}^{k}\le C_{1}(n)C_{b}(n)\cdot \rho.
	\]
Now we choose $\rho=\rho(n)$ by requiring
\begin{equation}\label{eq:  definition of rho}
	0<\rho<\min\Big\{ 100^{-1},\frac{1}{2C_{1}(n)C_{b}(n)}\Big\}
\end{equation}
	such that
	\[
	\sum_{y\in{\cal C}^{(1,b)}}(\rho r_{x})^{k}\le \frac12.
	\]
Note that $r<r_{y}=\rho r_x\le\rho$ for all $y\in{\cal C}^{(1,b)}$.
\end{itemize}
		
\item[\textbf{Step2}] (Induction step) To proceed, denote
	\[
	C^\prime_{2}(n)=2C_{1}(n)\left(C_{r}(n,\rho)+C_{f}(n,\rho)\right),
	\]
	such that
	\[
\sum_{y\in{\cal C}^{(1,r)}\cup{\cal C}^{(1,f)}}r_{y}^{k} \le(C_{f}(n,\rho)+C_{r}(n,\rho))\sum_{x\in{\cal C}}r_{x}^{k}\le\frac{1}{2}C^\prime_{2}(n)
	\] and that
	\[
\sum_{y\in{\cal C}^{(1,b)}}(\rho r_{x})^{k}\le \frac12\le   \frac12C^\prime_{2}(n).
\]	
		Our aim is to derive the following
	covering: For each $1\le i\le l$, there holds
	\[
	S\cap D_{1}(0)\subset\bigcup_{x\in{\cal C}^{(i,r)}}D_{r_x}(x)\cup\bigcup_{x\in{\cal C}^{(i,f)}}D_{r_{x}}(x)\cup\bigcup_{x\in{\cal C}^{(i,b)}}D_{r_{x}}(x).
	\]
Moreover,  the following properties hold:
	\begin{itemize}
	\item[(1)] For each $x\in{\cal C}^{(i,r)}$, $r_{x}=r$;
	\item[(2)] For each $x\in{\cal C}^{(i,f)}$, the energy drop condition holds:
\begin{equation}\label{eq: energy-drop-4}
\sup_{z\in \cal{S}\cap D_{r_{x}}(x)} \Theta(u^e, B^+_{\rho r_{x}/10}(\mathbf{z}))\le E-\de;
\end{equation}
	these balls will be called final balls;
	\item[(3)] For each $x\in{\cal C}^{(i,b)}$, we have $r<r_{x}\le\rho^{i}$.
	On these ``bad'' balls, none of the above two  conditions is
	verified.
	\item[(4)] There holds
	\begin{equation}\label{eq: 6.70}
	\sum_{y\in{\cal C}^{(i,r)}\cup{\cal C}^{(i,f)}}r_{y}^{k}\le\big(\sum_{j=1}^{i}2^{-j}\big)C^\prime_{2}(n); \quad \sum_{y\in{\cal C}^{(i,b)}}r_{y}^{k}\le2^{-i}\le2^{-i}C^\prime_{2}(n).
	\end{equation}
	\end{itemize}
	We have proved the above induction for $i=1$ in Step 1. 	Suppose now
it holds for some $1\le i<l$. We need to prove that it holds for
$i+1$.		It is clear that we only need to decompose those bad balls.	
	Fix a point $x\in{\cal C}^{(1,b)}$, i.e., $D_{r_{x}}(x)$ is a bad
	ball and $r<r_{x}\le\rho^{i}$.
	\begin{itemize}
	\item[\textbf{Case1}]($\rho r_{x}=r$). Then we cover it simply by balls
	of radius $\rho r_{x}$ as in step 1, and obtain a covering of $r$-balls
	whose centers lie in ${\cal C}_{x}^{(i+1,r)}$, together with the estimate
	\[
	\sum_{y\in{\cal C}_{x}^{(i+1,r)}.}(\rho r_{x})^{k}\le C(n)\rho^{k-n}r_{x}^{k}=C_{r}(n,\rho)r_{x}^{k}.
	\]
	
	\item[\textbf{Case2}]($\rho r_{x}>r$). In this case we apply the previous Lemma
	to get a covering $\{D_{r_{y}}(y)\}_{y\in{\cal C}^{x}}$ such that
	\[
	S\cap D_{r_{x}}(x)\subset\bigcup_{y\in{\cal C}_{r}^{x}}D_{r}(y)\cup\bigcup_{y\in{\cal C}_{+}^{x}}D_{r_{y}}(y)\qquad\text{with }r_{y}\ge r
	\]
	and
	\[
	\sum_{y\in{\cal C}_{r}^{x}\cup{\cal C}_{+}^{x}} r_{y}^{k}\le C_{1}(n)r_{x}^{k}.
	\]
	Moreover, for each $y\in{\cal C}_{+}^{x}$, there is a ($k-1$)-dimensional
	affine subspace $L_{y}$ such that
	\[
	\mathcal{F}_{y}\equiv\Big\{ z\in S\cap D_{r_{y}}(y)\ \big|\Theta(u^e,B^+_{\rho r_{y}/10}(\textbf{z}))\ge E-\de\Big\} \subset D_{\rho r_{y}/5}(L_{y})\cap D_{r_{y}}(y).
	\]
	Of course we will reserve ${\cal C}_{+}^{x}$ as part of ${\cal C}^{(i+1,r)}$.
	Thus below we assume that $y\in{\cal C}_{+}^{x}$. The method is totally similar.
	\begin{itemize}
	\item[\textbf{Case 2.1}]($\rho r_{y}=r$). Then as that of Case 1, we get
	a simple covering of at most $C(n)\rho^{-n}$ $r$-balls $\{D_{r_z}(z)\}_{z\in{\cal C}_{y}^{(i+1,r)}}$
	of $D_{r_{y}}(y)$ with $r_z=r$. So we define
	\[
	{\cal C}^{(i+1,r)}={\cal C}^{(i,r)}\cup\bigcup_{x\in{\cal C}^{(i,b)},\rho r_{x}=r}{\cal C}_{x}^{(i+1,r)}\cup\bigcup_{x\in{\cal C}^{(i,b)},\rho r_{x}>r}\Big({\cal C}_{r}^{x}\cup\bigcup_{y\in{\cal C}_{+}^{x},\rho r_{y}=r}{\cal C}_{y}^{(i+1,r)}\Big).
	\]
	This shows that how much more is ${\cal C}^{(i+1,r)}$ than that of
	${\cal C}^{(i,r)}$. The newly generated $r$-balls have measures
		\[
	\begin{aligned} & \sum_{x\in{\cal C}^{(i,b)},\rho r_{x}=r}\big(\sum_{z\in{\cal C}_{x}^{(i+1,r)}}r_{z}^{k}\big)
+\sum_{x\in{\cal C}^{(i,b)},\rho r_{x}>r}\big(\sum_{y\in{\cal C}_{r}^{x}}r_{y}^{k}+\sum_{y\in{\cal C}_{+}^{x}}\sum_{z\in{\cal C}_{y}^{(i+1,r)}}r_{z}^{k}\big)\\
	&\quad\le C_{r}(n,\rho)\Big(\sum_{x\in{\cal C}^{(i,b)},\rho r_{x}=r}r_{x}^{k}+\sum_{x\in{\cal C}^{(i,b)},\rho r_{x}>r}\big(\sum_{y\in{\cal C}^{x}}r_{y}^{k}\big)\Big)\\
	&\quad\le C_{r}(n,\rho)C_{1}(n)\sum_{x\in{\cal C}^{(i,b)}}r_{x}^{k}\\
	&\quad\le 2^{-i}C_{r}(n,\rho)C_{1}(n).
	\end{aligned}
	\]
Thus the total measure of $r$-balls in the $(i+1)$-th generation is estimated by
	\[
	\begin{aligned}\sum_{z\in{\cal C}^{(i+1,r)}}r_{z}^{k}
	& \le\sum_{z\in{\cal C}^{(i,r)}}r_{z}^{k}+2^{-i}C_{r}(n,\rho)C_{1}(n).
	\end{aligned}
	\]
	\item[\textbf{Case 2.2}]($\rho r_{y}>r$). We cover $D_{r_{y}}(y)$ by
	\[
	\begin{aligned} & S\cap D_{r_{y}}(y)\backslash D_{2\rho r_{y}}(\mathcal{F}_{y})\subset\bigcup_{z\in{\cal C}_{y}^{(i+1,f)}}D_{r_{z}}(z),\\
	& S\cap D_{r_{y}}(y)\cap D_{2\rho r_{y}}(\mathcal{F}_{y})\subset\bigcup_{z\in{\cal C}_{y}^{(i+1,b)}}D_{r_{z}}(z),
	\end{aligned}
	\quad\text{with }r_{z}=\rho r_{y}.
	\]
As that of \eqref{eq: energy-drop-3},  each ball $D_{r_{z}}(z)$ in the first covering ${\cal C}_{y}^{(i+1,f)}\subset S\cap D_{r_{y}}(y)$ satisfies
	the energy drop property \eqref{eq: energy-drop-4} in the inductive assumption. Moreover,
	\[
	\sum_{z\in{\cal C}_{y}^{(i+1,f)}}r_{z}^{k}\le C_{f}(n,\rho)r_{y}^{k}, \quad \sum_{z\in{\cal C}_{y}^{(i+1,b)}}r_{z}^{k}\le C_{b}(n)\cdot \rho\cdot r_{y}^{k}.
	\]
	Note that in this case no $r$-balls occur anymore. Set
	\[
	{\cal C}^{(i+1,f)}={\cal C}^{(i,f)}\cup\bigcup_{x\in{\cal C}^{(i,b)}}\bigcup_{y\in{\cal C}_{+}^{x}}{\cal C}_{y}^{(i+1,f)},\quad{\cal C}^{(i+1,b)}=\bigcup_{x\in{\cal C}^{(i,b)}}\bigcup_{y\in{\cal C}_{+}^{x}}{\cal C}_{y}^{(i+1,b)}.
	\]
We have
	\[
	\begin{aligned}\sum_{z\in{\cal C}^{(i+1,f)}}r_{z}^{k} & =\sum_{z\in{\cal C}^{(i,f)}}r_{z}^{k}+\sum_{x\in{\cal C}^{(i,b)}}\sum_{y\in{\cal C}_{+}^{x}}\sum_{z\in{\cal C}_{y}^{(i+1,f)}}r_{z}^{k}\\
	& \le\sum_{z\in{\cal C}^{(i,f)}}r_{z}^{k}+C_{f}(n,\rho)\sum_{x\in{\cal C}^{(i,b)}}\sum_{y\in{\cal C}_{+}^{x}}r_{y}^{k}\\
	& \le\sum_{z\in{\cal C}^{(i,f)}}r_{z}^{k}+C_{f}(n,\rho)C_{1}(n)\sum_{x\in{\cal C}^{(i,b)}}r_{x}^{k}\\
	& \le\sum_{z\in{\cal C}^{(i,f)}}r_{z}^{k}+2^{-i}C_{f}(n,\rho)C_{1}(n).
	\end{aligned}
	\]
	Recall that $C^\prime_{2}(n)=2\left(C_{r}(n,\rho)+C_{f}(n,\rho)\right)C_{1}(n)$.
	Hence by the inductive assumption we obtain
	\begin{align*}
	\big(\sum_{z\in{\cal C}^{(i+1,r)}}+\sum_{z\in{\cal C}^{(i+1,f)}}\big)r_{z}^{k}
	&\le\big(\sum_{z\in{\cal C}^{(i,r)}}+\sum_{z\in{\cal C}^{(i,f)}}\big)r_{z}^{k}
	+2^{-i-1}C^\prime_{2}(n)\\
	&\le C^\prime_{2}(n)\sum_{j=1}^{i+1}2^{-j},
	\end{align*}
	and also by the choice of $\rho$, we obtain
	\[
	\begin{aligned}\sum_{z\in{\cal C}^{(i+1,b)}}r_{z}^{k}= & \sum_{x\in{\cal C}^{(i,b)}}\sum_{y\in{\cal C}_{+}^{x}}\sum_{z\in{\cal C}_{y}^{(i+1,b)}}r_{z}^{k}\le\sum_{x\in{\cal C}^{(i,b)}}\sum_{y\in{\cal C}_{+}^{x}}C_{b}(n)\rho r_{y}^{k}\\
	& \le\sum_{x\in{\cal C}^{(i,b)}}C_{b}(n)\rho C_{1}(n)r_{y}^{k}\le2^{-1}\sum_{x\in{\cal C}^{(i,b)}}r_{y}^{k}\le2^{-i-1}.
	\end{aligned}
	\]
	This proves \eqref{eq: 6.70} for $i+1$.
	
	Finally,  note that for every $z\in{\cal C}^{(i+1,b)}$, there exists $x\in{\cal C}^{(i,b)}$ such that
	$$r<r_{z}=\rho r_{y}\le\rho r_{x}\le\rho^{i+1}, $$
since $r_{x}\le\rho^{i}$ by inductive	 assumption.   The proof of the induction is complete.
Since $r=\rho^{l}$, the above procedure will stop at $i=l$.
\end{itemize}
\end{itemize}
	\item[\textbf{Step3}](Final refinement)  Now, we have obtained a covering of $S\cap D_1(0)$
	\[
	S\cap D_1(0)\su \bigcup_{x\in\3}D_{r_x}(x)=\bigcup_{x\in\3_{r}}D_{r_x}(x)\cup\bigcup_{x\in\3_{+}}D_{r_x}(x),
	\]
	with (by \eqref{eq: 6.70})
	\[\sum_{x\in\3}r_x^k\leq C^\prime_{2}(n),\]
	where $\3_{r}$ consists of centers of  $r$-balls in  $\3$, and  $\3_{+}$ consists of centers of  balls $D_{r_x}(x)$ with $r_x> r$ and
	\[
	\begin{aligned}
	\sup_{y\in S\cap D_{r_x}(x)}\Theta(u^e,B^+_{\rho r_{x}/10}(\mathbf{y}))\leq E-\de.
	\end{aligned}
	\]
	
	To deduce the final convering of $S\cap D_1(0)$, we need to decompose the balls in $\3_{+}$ into smaller ones, but in a controllable way. So fix a point $x\in\3_{+}$.  If $\rho^2 r_x\le r$, then directly choose a Vitali covering of at most $C(n,\rho)$ balls of $\{D_r(y)\}_{y\in \mathcal{C}^x_r}$ to cover $S\cap D_{r_x}(x)$. The volume   is well controlled:
	\[
	\sum_{y\in \mathcal{C}^x_r} r_y^k\le C(n,\rho) r_x^k.
	\]
	In the case $\rho^2 r_x> r$, we cover $S\cap D_{r_x}(x)$ by a Vitali covering of at most $C(n,\rho)$ balls of $\{D_{r_y}(y)\}_{y\in \mathcal{C}^x_+}$ with $r_y=\rho^2 r_x$. Then, we have the following inclusion relationship
	\[
	S\cap D_{r_x}(x)\subset   \bigcup_{y\in \mathcal{C}^x_+}D_{r_y}(y)\quad\text{and}\quad D_{r_y}(y)\subset D_{r_x}(x).
	\]
	This \sout{makes sure} ensures that for each $z\in D_{r_y}(y)$, there holds
	\[B^+_{2r_y}(z)=B^+_{2\rho^2 r_x}(z)\subset B^+_{\rho r_x/10}(z). \]
	Consequently, by the monotonicity of $\Theta$, we deduce, for each $y\in \mathcal{C}^x_+$, that
	\[
	\begin{aligned}
	\sup_{z\in S\cap D_{r_y}(y)}\Theta(u^e,B^+_{2 r_{y}}(\mathbf{z}))\leq E-\de.
	\end{aligned}
	\]
	With the help of the above decomposition, we obtain the covering
		\[S\cap D_1(0)\subset\bigcup_{x\in\3_{r}}D_{r}(x)\cup\bigcup_{x\in\3_{+}, \rho^2 r_x\le r}\bigcup_{ y\in \mathcal{C}^x_r}D_{r}(y)\cup\bigcup_{x\in\3_{+}, \rho^2 r_x> r}\bigcup_{y\in  \mathcal{C}^x_+}D_{r_y}(y),
		 \]
together with the volume estimate
	\[
	\sum_{x\in \mathcal{C}_r}r_x^k + \sum_{x\in \3_+}\sum_{y\in \mathcal{C}^x_r\cup \mathcal{C}^x_+ }r_y^k\leq C(n,\rho)\sum_{x\in \3}r_x^k\leq C(n,\rho)C_2^\prime(n).
	\]
	This completes the proof of Lemma \ref{ti} upon taking 	$C_2(n)=C(n,\rho)C_2^\prime(n)$.
	\end{itemize}
\end{proof}

\section{Proof of the main results}\label{sec:proof of main results}
\subsection{Proof of Theorem \ref{thm80}}
The proof of Theorem \ref{thm80} follows from the Main Covering Lemma  \ref{lemma:main covering lemma} and the rectifiable-Reifenberg Theorem \ref{rree}.
\begin{proof}[Proof of Theorem \ref{thm80}] Let $\de>0$ be defined as in Lemma \ref{lemma:main covering lemma}. It follows that, for all $0<r<\delta$,
\begin{align}\label{z3}
	\mbox{Vol}\Big(T_r(S^k_{\ep,r}(u))\cap D_1(0)\Big)\le \mbox{Vol}\Big(T_r\big(S^k_{\ep,\de }(u)\cap D_1(0)\big)\Big) \leq C'_\ep r^{n-k},
\end{align}
with $C'_{\ep}=C'_\ep(n,N,\Lambda,\ep)>0$.   This proves the  volume estimate \eqref{J} for all $0<r<\de$. In general, since $\de$ also depends on $n,N,\Lambda,\ep$, for any $\de\le r<1$ we have
\[\label{z3}
\mbox{Vol}\Big(T_r\big(S^k_{\ep,r}(u)\cap D_1(0)\big)\Big) \le \mbox{Vol}(T_1( D_1(0)) \leq C_m \le C_m \left(\frac{r}{\de} \right)^{n-k}.\]
The proof of  \eqref{J} is complete. The volume estimate \eqref{Jk} follows from  \eqref{J} by noting that $S^k_{\ep}(u)\subset S^k_{\ep, r}(u)$ for any $r>0$.

To  prove  the rectifiability of $S^k(u)$, it is sufficient to prove the rectifiability of $S^k_\ep(u)$ for each $\ep>0$, as $S^k(u)=\bigcup_{i\ge 1}S^k_{1/i}(u)$.

 By the volume estimate \eqref{Jk}, we have $\HH^k(S^k_\ep(u)\cap D_1(0))\leq C_\ep.$ Applying the same estimates on $D_r(x)$ with $x\in D_1(0)$ and $r\leq 1$ gives the Alhfors upper bound estimate
\begin{equation}\label{jkj}
\HH^k(S^k_\ep(u)\cap D_r(x))\leq C_\ep r^k.
\end{equation}

 Let $S\su S_\ep^k(u)\cap D_1(0)$ be an arbitrary measurable subset with $\HH^k(S)>0$. Set
 $$g(x,r)=\Theta(u^e,B^+_r(\mathbf{x}))-\Xi(u,x),\quad\forall\, x\in D_1(0)\text{ and }\ r\leq 1.$$
Since $u\in \widehat{H}_{\La}^{1/2}(\Omega,N)$, we know that $g$ is uniformly bounded and converges to zero everywhere,  the dominated convergence theorem implies that for each $\de>0,$ there exists $\overline{r}>0$ such that
 \begin{equation*}\label{139}
\medint_S g(x,10\overline{r})d\HH^k(x)\leq\de^2.
\end{equation*}
So we can find a measurable subset $E\su S$ with $\HH^k(E)\leq\de\HH^k(S)$ and $g(x,10\overline{r})\leq\de$ for all $x\in F=S\ba E$. Cover $F$ by a finite number of balls $D_{\overline{r}}(x_i)$ centered on $F.$ Rescaling if necessary, we may assume that $D_{\overline{r}}(x_i)=D_1(0)$. Then $g(x,10)\leq\de$ for $x\in F.$ Similar to \eqref{137}, choose $\de$ sufficiently small so that $u$ is $(0,\de_9)$ symmetric in $D_{10}$. Theorem \ref{ji} implies
\begin{equation*}
\beta_{2,\HH^k|_F}(z,s)^2\leq C_1s^{-k}\int_{D_s(z)}W_s(t)d\HH^k|_F(t) \quad \text{for all }z\in F,\ s\leq1.
\end{equation*}
Integrating the  estimate over $z\in D_r(x)$ and using \eqref{jkj} yield that for all  $x\in D_1(0)$ and $s\leq r\leq1,$ there holds
\begin{align*}
\int_{D_r(x)}\beta_{2,\HH^k|_F}(z,s)^2d\HH^k|_F(z)&\leq C_1s^{-k}\int_{D_r(x)}\int_{D_s(z)}W_s(t)d\HH^k|_F(t)d\HH^k|_F(z)\non\\
&\leq C_1C_\ep\int_{D_{r+s}(x)}W_s(z)d\HH^k|_F(z).
\end{align*}
Integrating again with respect to $s\in [0,r]$, similar to \eqref{1380}, we obtain that for all $x\in D_1(0)$ and $r\leq1,$
\begin{align*}
\int_{D_r(x)}\int_0^r\beta_{2,\HH^k|_F}(z,s)^2\frac{ds}{s}d\HH^k|_F(z)&\leq C_1C_\ep\int_{D_{2r}(x)}[\Theta(u^e,B^+_{10r}(\textbf{z}))-\Xi(u,z)]d\HH^k|_F(z)\non\\
&\leq c(n)C_1C_\ep^2\de r^k.
\end{align*}
Choosing
$$\de\le\frac{\de_7^2}{c(n)C_1C_\ep^2},$$
 we deduce from Theorem \ref{rree} that $F\cap D_1(0)$ is $k$-rectifiable.

Repeating the above argument with $E$ in place of $F$, we could find another measurable set $E_1\subset E$ with $\HH^k(E_1)\leq \delta \HH^k(E)$, and that $F_1:=E\ba E_1$ is $k$-rectifiable. Continuing this process, we eventually conclude that $S$ is $k$-rectifiable.

The proofs of other assertions are similar to that of \cite{Naber-V-2017} and are omitted.
\end{proof}

%\section{Proofs of  Regularity theorems Theorems}

\subsection{Symmetry implies regularity}
To prove Theorems \ref{thm84} and \ref{thm85}, we  first deduce the following $\ep$-regularity theorem in the spirit of Cheeger-Naber \cite{Cheeger-Naber-2013-CPAM}.

\begin{theorem}\label{thm: sym-to-reg} Given $\La>0$. There exists
	a constant $\de(n,\La,s)>0$ such that, if $u\in\hat{H}_{\Lambda}^{1/2}(D_{3},N)$
	is a stationary $1/2$-harmonic map, then
	\[
	r_{u}(0)\ge\kappa_{2},
	\]
	where $\kappa_{2}=\kappa_{2}(n)>0$ is the constant given by Theorem
	\ref{thm: partial regularity of MPS}, provided one of the following
	conditions is satisfied:
	\begin{enumerate}
		\item $u$ is an $(n-1,\de)$-symmetric minimizing $1/2$-harmonic map;
		\item The target manifold $N$ does not admit any non-constant smooth 0-homogeneous
		stationary $1/2$-harmonic map from $\R^{2}\backslash\{0\}$ to $N$,
		and $u$ is an $(n-2,\de)$-symmetric stationary $1/2$-harmonic map;
		\item For some $k\ge1$, $N$ does not admit any non-constant smooth 0-homogeneous
		minimizing $1/2$-harmonic map from $\R^{l+1}\backslash\{0\}$ to $N$
		for all $1\le l\le k$, and $u$ is an $(n-k-1,\de)$-symmetric minimizing 		
		$1/2-harmonic map$;
		\item For some $k\ge1$, $N$ does not admit any non-constant smooth 0-homogeneous
		stationary $1/2$-harmonic map from $\R^{l+1}\backslash\{0\}$ to $N$
		for all $1\le l\le k$, and $u$ is an $(n-k-1,\de)$-symmetric stationary
		$1/2$-harmonic map.
	\end{enumerate}
\end{theorem}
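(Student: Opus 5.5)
We argue by contradiction and compactness, in the spirit of Cheeger--Naber. Suppose the statement fails under one of the four conditions. Then there are stationary (resp. minimizing) $1/2$-harmonic maps $u_i\in\widehat{H}^{1/2}_\La(D_3,N)$ such that $u_i$ is $(m,\de_i)$-symmetric on $D_1$ with $\de_i\to0$, where $m=n-1,n-2,n-k-1,n-k-1$ in the respective cases, but $r_{u_i}(0)<\kappa_2$. After passing to a subsequence, $u_i\wto v$ in $\widehat{H}^{1/2}(D_2)$ and $u_i^e\to v^e$ strongly in $L^2(B^+_2)$. Moreover $|\na u_i^e|^2d\mathbf{x}\wto\mu^e=|\na v^e|^2d\mathbf{x}+\nu$, as in Proposition \ref{prop: defect measure}. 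By Remark \ref{rmk: compactness-of-symmetry}, $v$ is $m$-symmetric on $D_1$, i.e. $v^e$ is $0$-homogeneous and invariant along an $m$-plane $V$. We will show that $\nu\equiv0$ and that $v$ is constant. Then $\Theta(u_i^e,B_1^+)\to0$, and Theorem \ref{thm: partial regularity of MPS} (equivalently Proposition \ref{prop: reg-scale-estimate} applied after rescaling) gives $r_{u_i}(0)\ge\kappa_2$ for large $i$, which is a contradiction.

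Next we show $\nu\equiv0$. In the minimizing cases (1) and (3), the compactness of minimizing $1/2$-harmonic maps \cite{Millot-Pegon-Schikorra-2021-ARMA} gives strong convergence, so $\nu=0$ directly. In the stationary cases (2) and (4) we need $V$ to have codimension at least $2$, which holds since $m\le n-2$. The measure $\mu^e$ is also almost cone-like and translation invariant along $V$: the $(m,\de_i)$-symmetry passes to the limit measure. The cleanest way to see this is to use the monotonicity identity \eqref{1.8} at points of $V$ together with the stationarity argument of Proposition \ref{prop: tangent measure1}(iv). To make the $0$-homogeneity of $\mu^e$ rigorous, I would first upgrade the $L^2$ almost-symmetry to almost pinching of $\Theta$, for instance by passing to a nearby scale. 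Hence $\spt\nu$ is a cone invariant along $V$. If $\nu\ne0$, blowing up at an $\mathcal{H}^{n-1}$-generic point of $\Sigma$ yields a measure concentrated on an $(n-1)$-plane. Theorem \ref{thm: compactness of stationary HM}(1) then produces a nonconstant $1/2$-harmonic $\mathbb{S}^1$, i.e. a nonconstant $0$-homogeneous $1/2$-harmonic map $\R^2\setminus\{0\}\to N$ (after the conformal identification of $\R$ with a circle). This contradicts the hypothesis with $\ell=1$. So $\nu=0$, $u_i^e\to v^e$ strongly in $H^1_{\loc}$, and $v$ is stationary (resp. minimizing).

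Now we show $v$ is constant. Write $v(x)=\phi(x'')$ with $x''\in V^\perp\cong\R^{n-m}$, where $\phi$ is $0$-homogeneous. In case (1), $n-m=1$, so $\phi$ is a function on $\R$ that is constant on each half-line. A jump would cost infinite energy locally, or would contradict $\HH^{n-1}(\sing v)=0$ from Theorem \ref{thm3}; hence $v$ is constant. In the other cases, $\phi$ is a $0$-homogeneous stationary (resp. minimizing) map $\R^{\ell+1}\setminus\{0\}\to N$ with $\ell=n-m-1\le k$. Its singular set is a cone of dimension at most $\ell-1$ by Theorem \ref{thm3}. We induct on $\ell$: at any singular point away from the origin, a tangent map is invariant along one more direction, so it reduces to a lower-dimensional $0$-homogeneous map that is constant by induction. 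Hence $\phi$ is smooth away from $0$, and the nonexistence hypothesis forces $\phi$ to be constant.

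The main obstacle is ensuring $\nu\equiv0$ in the stationary case while transferring symmetry of $u_i^e$ to $\mu^e$. The $L^2$ closeness in Definition \ref{def:quantitative symmetry} does not directly control the energy measure, and I expect this to require the almost-pinching and cone-splitting arguments of Lemma \ref{5.6} together with the dimension reduction on $\Sigma$.
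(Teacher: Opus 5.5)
Your proposal follows the paper's route. The paper splits the argument into a compactness lemma (Lemma~\ref{lem: compactness}), an $(n,\varepsilon)$-regularity lemma and a symmetry self-improvement lemma, while you run one contradiction argument. The ingredients are the same: the weak limit is $m$-symmetric, there is no defect measure, the limit is constant, and Theorem~\ref{thm: partial regularity of MPS} then gives $r_{u_i}(0)\ge\kappa_2$.

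Your dimension-reduction induction for cases (2)--(4) is a useful addition. Tangent maps of $\phi$ at $y\neq0$ split off $\mathbb{R}y$ and are constant by induction, so by strong convergence and Lemma~\ref{kkkl} these points are regular and $\phi$ is smooth off the origin. The paper calls these cases ``totally similar'', but this smoothness is exactly what is needed before the nonexistence hypothesis can be applied.

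The step you single out as the main obstacle should be removed: it is both unjustified and unnecessary. The $L^2$-closeness in Definition~\ref{def:quantitative symmetry} gives no information on where $|\nabla u_i^e|^2$ concentrates. It also cannot be upgraded to almost-pinching, since Proposition~\ref{pro1} only goes from pinching to symmetry. For example, suppose $N$ admitted a nonconstant finite-energy $1/2$-harmonic map $\omega\colon\mathbb{R}\to N$. Then the maps $\omega((x_1-\frac12)/\lambda_i)$ with $\lambda_i\to0$ are stationary with bounded energy and $(n,\delta_i)$-symmetric on $D_1$ with $\delta_i\to0$. Yet $\nu$ is carried by $\{x_1=\frac12\}$, which is not a cone, and there is no pinching at the origin.

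What you actually need is already available. By Proposition~\ref{prop: defect measure}(iii), $\nu=\Theta\,\mathcal{H}^{n-1}\lfloor\Sigma$ with $\Theta\ge c\varepsilon_1$, so $\nu\neq0$ forces $\mathcal{H}^{n-1}(\Sigma)>0$. Theorem~\ref{thm: compactness of stationary HM}(1) then performs its own blow-up at a generic point of $\Sigma$ for an arbitrary limit measure. This yields the $1/2$-harmonic circle that contradicts the $\ell=1$ hypothesis. This is how the paper obtains Lemma~\ref{lem: compactness}(2). Neither symmetry of $\mu^e$ nor any codimension condition on $V$ enters, and your own generic blow-up sentence never used the cone structure anyway.

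A minor point: for stationary maps, Theorem~\ref{thm3} only gives $\mathcal{H}^{\ell}(\mathrm{sing}\,\phi)=0$, not $\dim\le\ell-1$. Your induction does not use that bound, so nothing breaks.
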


The proof is divided into the following three Lemmas, which may be of
independent interest. The first one is a compactness result.

\begin{lemma}[Compactness]\label{lem: compactness} Let $\{u_{i}\}_{i\ge1}\subset\widehat{H}_\Lambda^{1/2}(D_3,N)$
	be a sequence of uniformly bounded stationary $1/2$-harmonic map, and
	$u_{i}\wto u$ in $\hat{H}^{1/2}(D_{3},N)$. Then $u$ is a
	weakly $1/2$-harmonic map. 	Moreover, there holds
	\[
	u_{i}\to u\qquad\text{strongly in }\hat{H}^{1/2}(D_{1},N),
	\]
	provided one of the following conditions is satisfied:
	\begin{enumerate}
		\item $\{u_{i}\}_{i\geq1}$ is a sequence of minimizing $1/2$-harmonic maps.
		In this case, $u$ is also a minimizing $1/2$-harmonic map.
		\item The target manifold $N$ does not admit any non-constant smooth 0-homogeneous
		stationary $1/2$-harmonic map from $\R^{2}\backslash\{0\}$ to $N$.
		In this case, $u$ is also a stationary $1/2$-harmonic map.
	\end{enumerate}
\end{lemma}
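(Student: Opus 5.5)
The weak $1/2$-harmonicity of $u$ comes directly from Proposition \ref{prop: defect measure}(ii). After passing to a subsequence we may assume $u_i\to u$ in $L^2(D_3)$ and almost everywhere, $u_i^e\wto u^e$ in $H^1(B_2^+)$, and $|\na u_i^e|^2d\mathbf{x}\wto\mu^e=|\na u^e|^2d\mathbf{x}+\nu$. The proposition then shows that $u$ is weakly $1/2$-harmonic in $D_3$. Strong convergence in $\widehat{H}^{1/2}(D_1)$ is the same as showing $\nu=0$ on $D_{3/2}$, provided we also control the nonlocal tail. Indeed, $\mathcal{E}(u_i-u,D_1)$ is bounded by $C\int_{B_2^+}|\na(u_i^e-u^e)|^2$ plus a tail term. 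The tail tends to zero because $u_i\to u$ a.e., $|u_i|\le C$ (since $N$ is compact), and dominated convergence applies with the kernel $|x-y|^{-n-1}$ for $x\in D_1$, $y\notin D_{3/2}$. The local part is handled by the standard comparison between the Gagliardo seminorm and the Dirichlet energy of the extension, in the form of \cite[Lemma 2.8]{Millot-Pegon-Schikorra-2021-ARMA}. In both cases, therefore, everything reduces to ruling out a nonzero defect measure.

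\textbf{Case (2).} Suppose $\nu\neq0$. Then $\mathcal{H}^{n-1}(\Sigma)>0$ by Proposition \ref{prop: defect measure}(iii), where $\Sigma$ is the concentration set. We apply the blow-up procedure from the proof of Theorem \ref{thm: compactness of stationary HM}, choosing an $\mathcal{H}^{n-1}$-density point of $\Sigma$ where $\Sigma$ has an approximate tangent plane and $\Theta$ is approximately continuous. Blowing up there and using a diagonal argument yields a sequence whose defect measure is $c\mathcal{H}^{n-1}\lfloor\R^{n-1}$ and whose weak limit is constant. Steps 2--4 of that proof then produce a nonconstant smooth $1/2$-harmonic map $v_\infty(x_n)$ on $\R$ with $\sup_R R^{1-n}\int_{B_R^+}|\na v_\infty^e|^2<\infty$. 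The remaining task is to turn this into a \emph{$0$-homogeneous} map $\R^2\setminus\{0\}\to N$. The point is that $v_\infty$ depends only on one variable and has bounded renormalized energy, so $w(x_1,x_2):=v_\infty(x_2)$ is not homogeneous. Instead we blow $v_\infty$ down at infinity, using the monotonicity formula in dimension one, i.e.\ $\Theta(v_\infty^e,B_R^+)$ nondecreasing and bounded. The tangent map at infinity is a $0$-homogeneous stationary map on $\R$, hence piecewise constant with possibly one jump at $0$. This is not yet what is needed.

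A better route is to perform the dimension reduction one level earlier. After Step 2 we know the limiting concentration is translation invariant along $\R^{n-1}$ only, so instead we take a two-dimensional slice. We choose the rescaling so that the limit lives on $\R^2=\mathrm{span}\{e_{n-1},e_n\}$, and then take a tangent map at infinity. By Proposition \ref{prop: tangent measure1} this gives a $0$-homogeneous stationary $1/2$-harmonic map $\R^2\setminus\{0\}\to N$, smooth away from the origin, which is nonconstant because energy is quantized by $\ep_0$ on annuli. The hypothesis of (2) forbids this, so $\nu=0$. Stationarity of $u$ then follows by passing to the limit in the inner-variation identity \eqref{eq: stationarity}, which is now justified by strong $H^1_{\loc}$ convergence of $u_i^e$.

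\textbf{Case (1).} Here we use the comparison (Luckhaus-type extension) argument of \cite{Millot-Pegon-Schikorra-2021-ARMA} for minimizers. Given a competitor $w$ for $u$ on $D_\rho$, we glue $w$ to $u_i$ on a thin annulus via a Luckhaus interpolation in the extension picture, using boundary traces that are close in $L^2$ and bounded in $H^{1/2}$ on a good sphere. Minimality of $u_i$ then gives $\limsup\mathcal{E}(u_i,D_\rho)\le\mathcal{E}(w,D_\rho)$. Taking $w=u$ yields convergence of energies, hence strong convergence, and also minimality of $u$.

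\textbf{Main obstacle.} The hardest step is producing a genuinely $0$-homogeneous two-dimensional map in (2), rather than merely the $1/2$-harmonic line of Theorem \ref{thm: compactness of stationary HM}. I would first try to take tangent measures of $\nu$ at a generic point, reducing to the case where $\nu$ is a cone measure invariant along $\R^{n-2}$, so that the concentration set is a union of half-planes. Blowing up transversally at the vertex should then give the homogeneous map on $\R^2$.
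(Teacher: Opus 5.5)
There is a genuine gap in the last step of Case (2). Your reduction to $\nu=0$, the Luckhaus-type comparison for Case (1), and the blow-up in Case (2) that produces the nonconstant $1/2$-harmonic line $v_\infty(x_n)$ all follow the paper's route. (The paper just cites \cite{Millot-Pegon-Schikorra-2021-ARMA} and Lin for (1), and Theorem \ref{thm: compactness of stationary HM} for (2).) But none of your proposed ways of passing from $v_\infty$ to a nonconstant $0$-homogeneous map on $\R^2$ can work:
\begin{itemize}
\item After Step 2 every limit is independent of $x_1,\dots,x_{n-1}$. So there is no genuinely two-dimensional limit on $\mathrm{span}\{e_{n-1},e_n\}$ to blow down.
\item Blowing down $v_\infty$, in one or two variables, gives a $0$-homogeneous weak limit depending only on $x_n$. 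That is a step function $a\chi_{\{x_n<0\}}+b\chi_{\{x_n>0\}}$, which lies in $H^{1/2}_{\loc}$ only if $a=b$. So the tangent map at infinity is constant, and all the energy sits in a defect measure on $\{x_n=0\}$.
\item The $\ep_0$-quantization bounds $\mu^e$ from below, not $|\na v^e|^2$. It therefore says nothing about nonconstancy of the tangent map.
\item Proposition \ref{prop: tangent measure1} only gives smoothness off a closed codimension-one rectifiable set (on $\R^2$, possibly a union of rays), not off the origin.
\item At $\HH^{n-1}$-a.e.\ point of $\Sigma$ the tangent measures are flat, so the reduction to cones invariant only along $\R^{n-2}$ is not generic. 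Blowing up at such a vertex would again yield a constant weak limit plus defect.
\end{itemize}

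The missing idea is that no further blow-up is needed: by conformal invariance, $v_\infty$ already is the forbidden object.
\begin{itemize}
\item Since $v_\infty^e$ depends only on $(x_n,z)$, the bound $\sup_R R^{1-n}\int_{B_R^+}|\na v_\infty^e|^2<\infty$ says $v_\infty^e$ has finite Dirichlet energy on the half-plane.
\item The two-dimensional Dirichlet energy and the free-boundary condition $\partial_\nu v_\infty^e\perp T_{v_\infty}N$ are conformally invariant. Transplant by the Cayley map to the disk and remove the boundary point: a point has zero $H^{1/2}$-capacity in dimension one, and then Theorem \ref{thm3}(1) applies. This gives a smooth nonconstant $1/2$-harmonic $\phi\colon\mathbb{S}^1\to N$.
\item Set $w(x)=\phi(x/|x|)$ on $\R^2\setminus\{0\}$. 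Its Poisson extension is $0$-homogeneous, hence $\Delta_{\mathbb{S}^2}$-harmonic on the hemisphere $\mathbb{S}^2_+$ with boundary values $\phi$ on the equator.
\item Since $\mathbb{S}^2_+$ is conformal to the disk, this extension is the transplant of the disk extension of $\phi$, and the free-boundary condition transfers. So $w$ is a nonconstant smooth $0$-homogeneous $1/2$-harmonic map on $\R^2\setminus\{0\}$, contradicting (2).
\item If you also want stationarity across the origin, note it is equivalent to the horizontal barycenter of $|\na w^e|^2$ on $\mathbb{S}^2_+$ vanishing. You can arrange this by first precomposing $\phi$ with a M\"obius map of the disk.
\end{itemize}
Identifying hypothesis (2) with the absence of $1/2$-harmonic circles is exactly what the paper's one-line appeal to Theorem \ref{thm: compactness of stationary HM} implicitly uses.

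A smaller point concerns your tail estimate. It needs $u_i\to u$ a.e.\ on $D_{3/2}^c$, but you only arranged this on $D_3$. Weak convergence in $\widehat{H}^{1/2}(D_3)$ does not give it on $D_3^c$ for a general target. Either add it as a hypothesis, or state the conclusion for the Gagliardo seminorm on $D_1$.
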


\begin{proof}
	Assertion (1) is proved in the case $N=\mathbb{S}^{m-1}$ in \cite[Theorem 7.3]{Millot-Pegon-Schikorra-2021-ARMA}. {The general case can be proved similar to that of Lin \cite{Lin-1999-Annals}.}
	
Assertion (2) is proved in Theorem \ref{thm: compactness of stationary HM}.
\end{proof}
\begin{lemma}[$(n,\epsilon)$-regularity]\label{lemma: new epsilon regularity}
	There exists $\ep>0$ depending only on $n,\La,m$ such that
	\[
	r_{u}(0)\ge\kappa_{2},
	\]
	provided one of the following conditions is satisfied:
	\begin{enumerate}
		\item $u\in\widehat{H}_\Lambda^{1/2}(D_3,N)$ is an $(n,\ep)$-symmetric
		minimizing $1/2$-harmonic map;
		\item $u\in\widehat{H}_\Lambda^{1/2}(D_3,N)$ is an $(n,\ep)$-symmetric
		stationary $1/2$-harmonic map and the target manifold $N$ does not
		admit any non-constant smooth $0$-homogeneous stationary biharmonic map
		from $\R^{2}\backslash\{0\}$ to $N$.
	\end{enumerate}
\end{lemma}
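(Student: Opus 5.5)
We argue by contradiction, combining the compactness Lemma~\ref{lem: compactness} with the $\ep$-regularity Proposition~\ref{prop: reg-scale-estimate}. Suppose the lemma fails. Then there are $\ep_i\to0$ and maps $u_i\in\widehat{H}^{1/2}_\La(D_3,N)$ with the following properties: each $u_i$ is $(n,\ep_i)$-symmetric on $D_1$, each $u_i$ is minimizing in case (1) and stationary in case (2), and $r_{u_i}(0)<\kappa_2$. We pass to a subsequence with $u_i\wto u$ in $\widehat{H}^{1/2}(D_3)$ and $u_i^e\to u^e$ in $L^2(B_2^+)$. Remark~\ref{rmk: compactness-of-symmetry} then shows that $u$ is $n$-symmetric on $D_1$, that is, $u^e$ is boundary $n$-symmetric. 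Translation invariance along all of $\R^n$ together with $0$-homogeneity forces $u^e$ to be constant. Indeed, invariance in $x$ makes $u^e$ depend only on $z$, and $0$-homogeneity makes it constant on the ray $z>0$. Hence $u\equiv p$ for some $p\in N$ on $D_1$.

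The next step is to upgrade this to energy smallness. By Lemma~\ref{lem: compactness}, in either case we get $u_i\to u$ strongly in $\widehat{H}^{1/2}(D_1,N)$. Working with the extensions is more convenient. Under hypothesis (1) or (2), the defect measure $\nu$ of Proposition~\ref{prop: defect measure} vanishes. In case (2) this follows from Theorem~\ref{thm: compactness of stationary HM}, where the blow-up producing a $1/2$-harmonic $\mathbb{S}^1$ is exactly a $0$-homogeneous map from $\R^2\setminus\{0\}$; in case (1) it is the minimizing compactness. Therefore $|\na u_i^e|^2d\mathbf{x}\wto|\na u^e|^2d\mathbf{x}=0$ near $\overline{B^+_{1/2}}$. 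Consequently $\Theta(u_i^e,B^+_{1/2})\to0$. Monotonicity then gives $\Theta(u_i^e,B^+_{s}(\mathbf{x}))\le\Theta(u_i^e,B^+_{1/4}(\mathbf{x}))\to0$ uniformly for $x\in D_{1/8}$, by comparison with the ball $B^+_{1/2}$.

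We conclude with Theorem~\ref{thm: partial regularity of MPS}, applied at scale $R$ comparable to $1/4$. This gives a uniform Lipschitz bound $\|\na u_i\|_{L^\infty(D_{\kappa_2/4})}^2\lesssim\Theta(u_i^e,B^+_{1/4})\to0$. We need a bound on a ball of radius $\kappa_2$, not $\kappa_2 R$. To get it, we cover $D_{\kappa_2}$ by smaller half-balls centered at boundary points of $D_{2}$, all of whose densities tend to $0$ by the previous paragraph. This is possible because symmetry on $D_1$ is scale-invariant information, and the argument can be repeated with the symmetry ball replaced by a larger one (the statement's $D_3$ provides room). The covering gives $\kappa_2\sup_{D_{\kappa_2}}|\na u_i|\le1$ for large $i$, i.e.\ $r_{u_i}(0)\ge\kappa_2$, which is a contradiction.

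The main obstacle is the middle step: extracting energy smallness of $u_i^e$ from weak convergence to a constant requires ruling out concentration. That is precisely where hypothesis (1) or (2) enters, through the vanishing of $\nu$. We would check carefully that the strong $\widehat{H}^{1/2}$ convergence of Lemma~\ref{lem: compactness} transfers to strong $H^1$ convergence of the extensions on $B^+_{1/2}$ via Lemma~\ref{lem: 2.9}. Alternatively, we would argue directly that $\nu=0$ using the defect measure theory. A secondary technical point is matching the radius $\kappa_2$ exactly; it may require shrinking $\ep$ or adjusting the normalization of the scales.
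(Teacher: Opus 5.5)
Your proposal is correct and follows the paper's argument: argue by contradiction, use the compactness Lemma~\ref{lem: compactness} for strong convergence, use Remark~\ref{rmk: compactness-of-symmetry} to make the limit $n$-symmetric and hence constant, then combine energy smallness with the $\ep$-regularity of Section~\ref{sec:partial regularity} (the paper quotes Proposition~\ref{prop: reg-scale-estimate} and glosses over the scale, whereas you go through Theorem~\ref{thm: partial regularity of MPS} on the extensions). For the radius $\kappa_2$ you do not need a larger symmetry ball: $\na u^e\equiv0$ on all of $B_1^+$ and $\nu=0$ give $\mathbf{E}(u_i^e,B^+_\rho)\to0$ for every $\rho<1$, so applying Theorem~\ref{thm: partial regularity of MPS} at each center $x\in D_{\kappa_2}$ with $R=(1-\kappa_2)/4$ yields $\sup_{D_{\kappa_2}}|\na u_i|\to0$.
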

\begin{proof}
	We argue by contradiction. Suppose condition (1) holds. For any $k\ge1$, there exists an $(n,1/k)$-symmetric
	minimizing $1/2$-harmonic maps $u_{k}\in\widehat{H}_\Lambda^{1/2}(D_3,N)$
	such that $r_{u_{k}}(0)<1/2$. By Lemma \ref{lem: compactness}, we
	can assume that $u_{k}\to u$ strongly in $\widehat{H}_\Lambda^{1/2}(D_3,N)$
	for some minimizing $1/2$-harmonic map $u\in\widehat{H}_\Lambda^{1/2}(D_3,N)$.
	Moreover, $u$ is $n$-symmetric. Thus $u\equiv const.$ As a result, the strong convergence implies that ${\cal E}(u_{k},D_{1})<\varepsilon_{2}$ for $k\gg1$,
	which in turn implies that $r_{u_{k}}(0)\ge\kappa_{2}$ by $\ep$-regularity theory in Section 2. We reach 	a contradiction, and assertion (1) is proved.
	Assertion (2) is proved similarly in view of the compactness Lemma \ref{lem: compactness}.
\end{proof}
The last ingredient \sout{of} is the following symmetry self-improvement
Lemma.

\begin{lemma}[symmetry self-improvement] For any $\ep>0$, there
	exists $\de>0$ such that for any stationary $1/2$-harmonic map $u\in\widehat{H}_\Lambda^{1/2}(D_3,N)$,
	if $u$ satisfies one of the four conditions in Theorem \ref{thm: sym-to-reg},
	then $u$ is also $(n,\ep)$-symmetric on $D_{1}$. \end{lemma}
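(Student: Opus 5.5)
We argue by contradiction and compactness, case by case, following Cheeger--Naber and Naber--Valtorta. Fix $\ep>0$ and suppose that for some $\de_i\to0$ there are stationary (respectively minimizing) $1/2$-harmonic maps $u_i\in\widehat{H}^{1/2}_\La(D_3,N)$. Each $u_i$ satisfies the relevant condition of Theorem \ref{thm: sym-to-reg} with $\de=\de_i$, but fails to be $(n,\ep)$-symmetric on $D_1$. After passing to a subsequence, $u_i\wto u$ in $\widehat{H}^{1/2}(D_3)$ and $u_i^e\to u^e$ strongly in $L^2(B_2^+)$. By Remark \ref{rmk: compactness-of-symmetry}, $u$ is $(n-k-1)$-symmetric: $u$ is $0$-homogeneous and invariant along an $(n-k-1)$-plane $V$. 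In case (1) we take $k=0$, and in case (2) we take $k=1$. Under the hypotheses of cases (1),(2) and (4) with $k\ge 1$, or under minimality, Lemma \ref{lem: compactness} gives strong convergence $u_i\to u$ in $\widehat{H}^{1/2}(D_1)$. The limit $u$ is then stationary (respectively minimizing). In case (4) with $k\ge2$ we first note that the absence of $0$-homogeneous stationary maps from $\R^2\setminus\{0\}$ already yields the compactness in Lemma \ref{lem: compactness}(2).

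Next we classify the limit. Since $u$ is $0$-homogeneous and invariant along $V$, we can write $u(x)=\phi(x^\perp)$ with $x^\perp\in V^\perp\cong\R^{k+1}$, where $\phi$ is $0$-homogeneous on $\R^{k+1}$. By strong convergence and invariance of the $1/2$-energy under this splitting (via the extension $u^e$ and the stationarity identity), $\phi$ is a stationary (respectively minimizing) $1/2$-harmonic map on $\R^{k+1}$. Its singular set is a cone; since $\phi$ is $0$-homogeneous, the singular set is either $\{0\}$ or a larger cone.

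We then run an induction on the dimension of the singular set of $\phi$. If $\phi$ were singular at some $y\ne0$, a tangent map at $y$ would be $0$-homogeneous and invariant along $\R y$ (cone splitting, as in Lemma \ref{5.6}). This reduces to a $0$-homogeneous map on $\R^{l+1}$ with $l<k$. Iterating, the hypothesis that there are no nonconstant smooth $0$-homogeneous maps $\R^{l+1}\setminus\{0\}\to N$ for $1\le l\le k$, together with $\dim\sing\le n-2$ for minimizers in case (1), shows that $\phi$ is smooth away from $0$. The hypothesis then forces $\phi$ to be constant. In case (1) with $k=0$, a $0$-homogeneous minimizer on $\R$ is smooth by Theorem \ref{thm3}(1) and hence constant. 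Thus $u$ is constant, and in particular $n$-symmetric.

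Finally, strong $L^2$ convergence of $u_i^e$ to the constant $u^e$ shows that $u_i$ is $(n,\ep)$-symmetric on $D_1$ for large $i$, which is a contradiction. The main obstacle is the classification step. Passing stationarity to the limit and to the reduced map $\phi$ requires strong convergence, because otherwise defect measures (Proposition \ref{prop: defect measure}) would spoil it. The dimension-reduction induction also needs care, since singular points of $\phi$ away from the origin must be handled by tangent maps, which again need the compactness of Lemma \ref{lem: compactness}.
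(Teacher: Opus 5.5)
Your proposal is correct and follows the same contradiction--compactness route as the paper. You pass to a limit via Lemma \ref{lem: compactness}, note that the limit is $(n-k-1)$-symmetric, show it must be constant, and contradict the strong $L^2$ convergence of $u_i^e$. The paper writes out only case (1), reducing to a $0$-homogeneous $1/2$-harmonic map on an interval, and dismisses the other cases as ``totally similar''; your splitting plus dimension-reduction induction for cases (2)--(4) supplies exactly the details the paper omits.
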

\begin{proof}
	\textbf{Case 1.} Condition (1) holds.
	Suppose, for some $\ep_{0}>0$,  there exists a sequence of minimizing
	$1/2$-harmonic maps $u_{k}\in\widehat{H}_\Lambda^{1/2}(D_3,N)$
	which is $(n-1,1/k)$-symmetric for each $k\ge1$ but  not $(n,\ep_{0})$-symmetric.	By Lemma \ref{lem: compactness}, we can assume that $u_{k}\wto u$
	in $\widehat{H}_\Lambda^{1/2}(D_1,N)$ for some weakly $1/2$-harmonic
	map $u\in\widehat{H}_\Lambda^{1/2}(D_3,N)$.   Then $u$ is $n-1$ symmetric on $D_{1}$. (Note that here we  do not even need to know whether $u$ is minimizing or not, since we have very high 	symmetry). However this
	implies that $u$ is a  $1/2$-harmonic map on the one dimensional
	interval $(-1,1)$ and thus smooth. But then the homogeneity of $i$
	 in turn implies that $u\equiv const$.
	This contradicts to the fact that $u_{k}$ is not $(n,\ep_{0})$-symmetric
	on $D_{1}$ since $u_{k}\to u$ strongly in $L^{2}(D_{1})$. The proof
	is complete. The remaining cases are totally similar and the details are omitted.
\end{proof}
Now we can prove Theorem \ref{thm: sym-to-reg}.

\begin{proof}[Proof of Theorem \ref{thm: sym-to-reg}] It follows
	from the above three Lemmas. The proof is complete.\end{proof}

\subsection{Proofs of Theorems \ref{thm84} and \ref{thm85} }
Now we sketch the proof of Theorem \ref{thm84}.
\begin{proof}[Proof of Theorem \ref{thm84}]
By a scaling argument, 	Theorem \ref{thm: sym-to-reg} implies
	$$\{x\in D_1(0) : r_u(x)<r\}\su S_{\ep,r}^{n-2}(u).$$
	Thus by Theorem \ref{thm80} there exists $C>0$ such that for each $0<r<1$ we have
	\[
	{\rm Vol}(D_r(\{x\in D_1(0):r_u(x)<r\}))\leq{\rm Vol}(D_r(S_{\ep,r}^{n-2}(u)))\leq Cr^2,
	\]
	which gives the second estimate of \eqref{emhm}. Moreover, note that $\dim_{\mathcal{H}}(S_{\ep,r}^{n-2})\le n-2$ implies that
	$$\dim_{\mathcal{H}}({\rm sing}(u))\le n-2.$$
	
	 To estimate $\na u$, simply observe that by Definition \ref{pppv} there holds
	\begin{align*}
	\{x\in D_1(0):|\nabla u|>r^{-1}\}\su\{x\in D_1(0):r_u(x)<r\}.
	\end{align*}
	The proof is thus complete.
\end{proof}

\begin{proof}[Proof Theorem \ref{thm85}]
It is similar to that of Theorem \ref{thm84} and hence is omitted.
\end{proof}
\medskip 

\textbf{Acknowledgement.} C.Y. Guo is supported by the Young Scientist Program of the Ministry of Science and Technology of China (No.~2021YFA1002200), the NSF of China (No.~12101362 and 12311530037), the Taishan Scholar Project and the NSF of Shandong Province (No.~ZR2022YQ01). C. Y. Wang is partially supported by NSF 
DMS 2453789 and Simons Travel Grant TSM-00007723.  C.L. Xiang is financially supported by the NSFC  (No.~12271296) and  the NSF of Hubei province (No. 2024AFA061). G. F. Zheng is supported by the National Natural Science Foundation of China (No.~12271195 and No.~12171180). Guo and Xiang are partly supported by the Jiangsu Provincial
Scientific Research Center of Applied Mathematics (Grant No.~BK20233002). Jiang, Xiang and Zheng are also partly supported by the Open Research Fund of Key Laboratory of Nonlinear Analysis \& Applications  (Central China Normal University) (No.NAA2023ORG003), Ministry of Education, P. R. China.


\begin{thebibliography}{99}

 %\bibitem{Angelsberg-2006}
% \textsc{G. Angelsberg}, \emph{A monotonicity formula for stationary weakly biharmonic maps}, Math. Z. \textbf{252} (2006), no. 2, 287-293.

\bibitem{Adams-1996}
D. R. Adams, L. Hedberg, Function Spaces and Potential Theory. Springer, Berlin, 1996.


\bibitem{Berlyand}
\textsc{L. Berlyand, P. Mironescu, V. Rybalko and E. Sandier}, {\it Minimax critical points in Ginzburg-Landau problems with semi-stiff boundary conditions: existence and bubbling}, Commun. Partial Differ. Eqs. \textbf{9} (2014), 946-1005.


 \bibitem{Bethuel-1993}
 F. Bethuel, {\it On the singular set of stationary harmonic maps}, Manuscripta Math. \textbf{78} (1993), 417-443.

% \bibitem{Branding-Oniciuc-2019}
 %\textsc{V. Branding and C. Oniciuc}, \emph{Unique continuation theorems for biharmonic maps}, Bull. Lond. math. Soc. \textbf{51} (2019), no. 4, 603-621.

 \bibitem{Breiner-Lamm-2015}
 \textsc{C. Breiner and T. Lamm}, {\it Quantitative stratification and higher regularity for biharmonic maps}, Manuscripta Math. \textbf{148} (2015), no. 3-4, 379-398.

 \bibitem{L. Caffarelli and L.Silvestre}
  \textsc{L. Caffarelli and L.Silvestre}, {\it An extension problem related to the fractional Laplacian}, Comm. Partial Ddiffetential Equations. \textbf{32} (2007), no. 7-9, 1245-1260.

% \bibitem{Chang-W-Y-1999} \textsc{S.-Y. Chang, L. Wang and P. Yang,}
 %\emph{A regularity theory of biharmonic maps.} Comm. Pure Appl.
 %Math. \textbf{52} (1999), no. 9, 1113-1137.

 %\bibitem{Cheeger-H-N-2013}
 %\textsc{J. Cheeger, R.Haslhofer and A. Naber}, \emph{Quantitative stratification and the regularity of mean curvature flow}, Geom. Funct. Anal. \textbf{23} (2013), no. 3, 828-847.

 %\bibitem{Cheeger-H-N-2015}
 %\textsc{J. Cheeger, R.Haslhofer and A. Naber}, \emph{Quantitative stratification and the regularity of harmonic map flow}, Cal. Var. Partial Differential Equations \textbf{53} (2015), no.  1-2, 365-381.


% \bibitem{Cheeger-Naber-2013-Invent}
 %\textsc{J. Cheeger and A. Naber}, \emph{Lower bounds on Ricci curvature and quantitative behavior of sigular sets}, Invent. Math. \textbf{191} (2013), 321-339.

 \bibitem{Cheeger-Naber-2013-CPAM}
 \textsc{J. Cheeger and A. Naber}, {\it Qantitative stratification and the regularity of harmonic maps and minimal currents}, Comm. Pure. Appl. Math. \textbf{66} (2013), no.6, 965-990.

 %\bibitem{Cheeger-N-V-2015}
 %\textsc{J. Cheeger, A. Naber and D.Valtorta}, \emph{Critical sets of elliptic equations}, Comm. Pure Appl. Math. \textbf{68} (2015), no. 2, 173-209.

% \bibitem{Cheeger-J-N-2021}
 %\textsc{J. Cheeger, W. Jiang and A. Naber},  \emph{Rectifiability of singular sets of noncollapsed limit spaces with Ricci curvature bounded below}, Ann. of Math. (2) \textbf{193} (2021), no. 2, 407-538.

% \bibitem{Chen-Yin-2019}
 %\textsc{Y. Chen and H. Yin}, \emph{Uniqueness of tangent cone for biharmonic map with isolated singularity}, Pacific J. Math. \textbf{299} (2019), no. 2, 401-426.

 %\bibitem{Chen-Zhu-2023}
% \textsc{Y. Chen and M. Zhu}, \emph{Bubbling analysis for extrinsic biharmonic maps from general Riemannian 4-manifolds}, Sci. China Math. 66 (2023), no. 3, 581-600.

% \bibitem{DeLellis-book} \textsc{C. De Lellis}, \emph{Rectifiable sets, densities and tangent measures.} Zur. Lect. Adv. Math. European Mathematical Society (EMS), Z\"urich, 2008.

\bibitem{CLMS-1993} \textsc{R. Coifman, P. L. Lions, Y. Meyer and S. Semmes,} {\it Compensated compactness and Hardy spaces.} J. Math. Pures Appl. (9) \textbf{72} (1993),  247-286.


\bibitem{Da Lio-book-2018}
\textsc{F. Da Lio}, Fractional Harmonic Maps, {\it Recent Developments in Nonlocal Theory}, De Gruyter, Berlin (2018).

    \bibitem{DaLio-etal-2022-AMPA} \textsc{F. Da Lio, K. Mazowiecka, A. Schikorra and L. Wang}, {\it A fractional version of Rivi\`ere's GL(N)-gauge.} Annali di Matematica \textbf{201}  (2022), 1817-1853.

 \bibitem{F. L. T. Riviere}
\textsc{F. Da Lio, L. Martinazzi and T. Rivière}, {\it Fractional Harmonic Maps, Blow-up analysis of a nonlocal Liouville-type equation}, Anal. PDE. \textbf{8} (2015), 1757-1805.

\bibitem{F. Da Lio and  A. Pigati}
\textsc{F. Da Lio and  A. Pigati}, {\it Free boundary minimal surfaces: a nonlocal approach}, to appear in Ann. Sc. Norm.Super. Pisa Cl. Sci. (5), preprint. arXiv:1712.04683

\bibitem{Da and Ri}
\textsc{F. Da Lio and T. Riviere}, {\it Three-term commutator estimates and the regularity of 1/2-harmonic maps into spheres}, Anal. PDE. \textbf{4}(1) (2011), 149-190.

\bibitem{Da and Ri 2}
\textsc{F. Da Lio and  T. Riviere}, {\it Sub-criticality of non-local Schrödinger systems with antisymmetric potentialsand applications to half-harmonic maps}, Adv. Math. \textbf{227}(3) (2011), 1300-1348.

\bibitem{F. Duzaar and K. Steffen}
\textsc{F. Duzaar and K. Steffen}, {\it A partial regularity theorem for harmonic maps at a freeboundary}, Asymptot. Anal. \textbf{2} (1989), 299–343.

\bibitem{F. Duzaar and K. Steffen 2}
\textsc{F. Duzaar and K. Steffen}, {\it An optimal estimate for the singular set of a harmonic mapin the free boundary}, J. Reine Angew. Math. \textbf{401} (1989), 157–187.


 %\bibitem{Evans-G-book-2015}
 %\textsc{L. C. Evans and R. F. Gariepy}, \emph{Measure theory and fine properties of functions}, Revised edition. Textbooks in Mathematics. CRC Press, Boca Raton, FL, 2015.
 %Studies in Advanced Mathematics. CRC Press, Boca Raton, FL, 1992.

 \bibitem{Evans-1991}
 \textsc{L. C. Evans}, {\it Partial regularity for stationary harmonic maps into spheres}, Arch. Rat. Mech. Anal. \textbf{116} (1991), 101-163.

 %\bibitem{Evans-book-2010}
% \textsc{L. C. Evans}, \emph{Partial differential equations} (2nd edition), Amer. Math. Soc. 2010.

%\bibitem{Fall-Felli-2014}
%\textsc{M. M. Fall  and V. Felli}  \emph{Unique continuation property and local asymptotics of solutions to
%fractional elliptic equations}, Commun. PDE. \textbf{39} (2014), 354–97.

 \bibitem{Federer-book-1969}
 \textsc{H. Federer}, {\it Geometric Measure Theory}, Springer-Verlag, New York Inc. 1969.


\bibitem{A. M. Fraser}
\textsc{A. M. Fraser}, {\it On the free boundary variational problem for minimal disks}, Commun. Pure Appl. Math. \textbf{53} (2000), 931–971.

\bibitem{Garofalo-Lin-86-Indiana}   \textsc{N. 		Garofalo and F. H.  Lin}, {\it Monotonicity properties of variational integrals, Ap weights and unique continuation.}   Indiana Univ. Math. J. \textbf{35} (1986), no. 2, 245-268.

 %\bibitem{Giaquinta-book-1983}
% \textsc{M. Giaquinta}, \emph{Multiple integrals in the calculus of variations and nonlinear elliptic systems}, Princeton University Press, 1983.

 %\bibitem{Giaquinta-Giusti-1983}
 %\textsc{M. Giaquinta and E. Giusti}, \emph{Differentiability of minima of non-differentiable functionals}, Invent. Math. \textbf{72} (1983), 285-298.

% \bibitem{Giaquinta-I-1987}
 %\textsc{M. Giaquinta and P. A. Ivert}, \emph{Partial regularity for minima of variational integrals}, Archiv for Math. \textbf{25} (1987), 221-229.

 %\bibitem{Giaquinta-M-book-2012}
% \textsc{M. Giaquinta and L. Martinazzi}, \emph{An Introduction to the Regularity Theory for Elliptic Systems, Harmonic Maps and Minimal Graphs}, Appunti. Scuola Normale Suoeriore di Pisa (Nuova Serie) [Lecture Notes. Scuola Normale Superiore di Pisa (Nuova Serie)], \textbf{11}. Edizioni della Normale, Pisa, 2012.

 %\bibitem{Grisvard-1985}
%\textsc{P. Grisvard}, \emph{Elliptic Problems in Nonsmooth Domains}, Pitman, Boston, MA (1985)

 %\bibitem{Gruter-1981}
 %\textsc{M. Gr\"{u}ter}, \emph{Regularity of weak $H$-surfaces}, J.reine angew. Math. \textbf{329} (1981),  1-15.

  \bibitem{Guo-Jiang-Xiang-Zheng}
\textsc{C.Y. Guo, G.C. Jiang, C.L. Xiang and G.F. Zheng}, {\it Optimaal higher regularity for biharmonic maps via quantitative stratification}, to appear in Peking Math. J., available at \url{https://link.springer.com/article/10.1007/s42543-025-00107-0}, 2025.

 %\bibitem{Guo-Wang-Xiang-2023-CVPDE}
 %\textsc{C.-Y. Guo, C. Y. Wang and C.-L. Xiang}, \emph{$L^p$-regularity for fourth order elliptic systems with antisymmetric potentials in higher dimensions}, Calc. Var. Partial Differential Equations \textbf{62} (2023), no. 1, Paper No. 31.

 %\bibitem{Guo-Xiang-2020-JLMS}
% \textsc{C.-Y. Guo and C.-L. Xiang}, \emph{Regularity of solutions for a fourth-order elliptic system via conservation law}, J. Lond. Math. Soc. (2) 101 (2020), no. 3, 907–922.

\bibitem{Gulliver-Jost1987}
\textsc{ R. Gulliver and J. Jost}, {\it Harmonic maps which solve a free boundary problem}. J. Reine
Angew. Math. \textbf{381} (1987), 61–89.



 %\bibitem{Guo-Xiang-2021-TAMS}
% \textsc{C. Y. Guo and C. L. Xiang}, \emph{Regularity of weak solutions to higher order elliptic systems in critical dimensions}, Tran. Amer. Math. Soc. (2) \textbf{374} (2021), no. 3-4, 491-524.


 %\bibitem{Guo-Xiang-Zheng-2021-CVPDE}
% \textsc{C.-Y. Guo, C.-L. Xiang and G.-F. Zheng}, \emph{The Lamm-Rivi\`ere system I: $L^P$ regularity theory}, Calc. Var. Partial Differential Equations 60 (2021), no. 6, Paper No. 213.

 %\bibitem{Guo-Xiang-Zheng-2022-JMPA}
% \textsc{C.-Y. Guo, C.-L. Xiang and G.-F. Zheng}, \emph{$L^p$ regularity theory for even order elliptic systems with antisymmetric first order potentials}, J. Math. Pures Appl. (9) \textbf{165} (2022), 286-324.

\bibitem{R. Hardt and F. H. Lin}
\textsc{R. Hardt and F. H. Lin}, {\it Partially constrained boundary conditions with energy minimiz-ing mappings}, Commun. Pure Appl. Math. \textbf{42} (1989), 309–334.

\bibitem{He-Z-X-24} \textsc{Y. He, G. F. Zheng and C L. Xiang},  {\it A global regularity theory for sphere-valued fractional harmonic maps}, Math. Z. 311 (2025), no. 4, Paper No. 89. [math.AP].

 %\bibitem{Helein-1990}
 %\textsc{F. H\'{e}lein}, \emph{R\'{e}gularit\'{e} des applications faiblement harmoniques entre une surface et une sph\'{e}re}, (French) [Regularity of weakly harmonic maps between a surface and $n$-sphere] C. R. Acad. Sci. Paris S\'{e}r. I Math. \textbf{311} (1990), no. 9, 519-524.

 \bibitem{Helein1991}
 \textsc{F. H\'{e}lein}, {\it R\'{e}gularit\'{e} des applications faiblement harmoniques entre une surface et une vari\'{e}t\'{e} riemannienne}, (French) [Regularity of weakly harmonic maps between a surface and a Riemannian manifold] C. R. Acad. Sci. Paris S\'{e}r. I Math. \textbf{312} (1991).


 %\bibitem{Helein-1991-Eng}
 %\textsc{F. H\'{e}lein}, \emph{Regularity of weakly harmonic maps from a surface into a manifold with symmetries}, Manuscripta Math. \textbf{70} (1991), no. {2}, 203-218.

% \bibitem{Helein-2002}
 %\textsc{F. H\'{e}lein}, \emph{Harmonic maps, conservation law and moving frames}, Translated from the 1996 French original. With a foreword by James Eells. Second edition. Cambridge Tracts in Mathematics, \textbf{150}. Cambridge University Press, Cambridge, 2002.

 %\bibitem{Hirsch-S-V-2019}
 %\textsc{J. Hirsch, S. Stuvard and D. Valtorta}, \emph{Rectifiability of the singular set of multiple valued energy minimizing harmonic maps}, Trans. Amer. Math. Soc. \textbf{371} (2019), no. {6}, 4303-4352.

 % \bibitem{Hitchhiker-2011}
% \textsc{D. N. Eleonora, P. Giampiero and V. Enrico},  \textsc{Hitchhiker’s guide to the
%fractional Sobolev spaces}, Bull. Sci. Math., \textbf{136} (2012), no. \{5\}, 521–573.


\bibitem{HSSW-2022-CPDE} \textsc{A. Hyder, A. Segatti, Y. Sire and C. Wang} {\it Partial regularity of the heat flow of half-harmonic maps and applications to harmonic maps with free boundary}. Communications in PDE, \textbf{47} (2022), 1845-1882.

 \bibitem{Laurain-Riviere-2013}
 \textsc{P. Laurain and T. Riviere}, {\it Energy quantization for biharmonic maps}, Adv. Calc. Var. \textbf{6} (2013), no. 2, 191-216.

\bibitem{Lin-1999-Annals}
\textsc{F. H. Lin}, {\it Gradient estimates and blow-up analysis for stationary harmonic maps}, Ann. of Math. (2) \textbf{149} (1999), no. 3, 785-829.

%\bibitem{Lin-Wand}
%\textsc{F. H. Lin and C. Y. Wang}, \emph{The Analysis of Harmonic Maps and Their Heat Flows}, World Scientific Publishing Co. Pte. Ltd., Hackensack, NJ, 2008.

% \bibitem{Liu-Yin-2016}
 %\textsc{L. Liu and H. Yin}, \emph{Neck analysis for biharmonic maps}, Math. Z. \textbf{283} (2016), no. 3-4, 807-834.


%\bibitem{Mattila-book-1995}
%\textsc{P. Mattila}, \emph{Geometry of sets and measures in Euclidean space}, Cambridge Studies in Advanced Mathematics, \textbf{44}. Cambridge University Press, Cambridge, 1995.

\bibitem{Mazowiecka-Schikorra-2018} \textsc{K. Mazowiecka and A. Schikorra,}
{\it Fractional div-curl quantities and applications to nonlocal geometric equations.} J. Funct. Anal. \textbf{275} (2018), no. 1, 1-44.

\bibitem{Millot-Pegon-Schikorra-2021-ARMA} \textsc{  V.  Millot, M.  Pegon and A. Schikorra, }
{\it Partial Regularity for Fractional Harmonic Maps into Spheres.} Arch. Ration. Mech. Anal. \textbf{242} (2021),  747-825.

\bibitem{Millot-Pegon-2020}
\textsc{V. Millot and M. Pegon}, {\it Minimizing 1/2-harmonic maps into spheres}, Calc. Var.
Partial Differ. Equ.  {59}, Art. \textbf{55}, (2020).

\bibitem{Millot-Sire-15} \textsc{ V. Millot and Y. Sire,}
{\it On a fractional Ginzburg-Landau equation and 1/2-harmonic maps into spheres.} Arch. Ration. Mech. Anal. \textbf{215} (2015),  125-210.

%\bibitem{Millot-Sire-Wang-19} \textsc{ V. Millot, Y. Sire and K. Wang,}
%\emph{Asymptotics for the fractional Allen-Cahn equation and stationary nonlocal minimal surfaces},
%Arch. Ration. Mech. Anal.\textbf{ 231} (2019), no. 2, 1129-1216.


% \bibitem{Moser-2008-CPDE}
 %\textsc{R. Moser}, \emph{A variational problem pertaining to biharmonic maps}, Comm. Partial Differential Equations \textbf{33} (2008), no. 7-9, 1654-1689.



\bibitem{Miskiewicz-18-Finn} \textsc{ M. Mi\'skiewicz}, {\it Discrete Reifenberg-type theorem.}
Ann. Acad. Sci. Fenn. Math. \textbf{ 43 } (2018),  3-19.


 %\bibitem{Morrey-1968}
 %\textsc{C. B. Morrey}, \emph{Partial regularity results for nonlinear elliptic systems}, Journ. Math. and Mech. \textbf{17} (1968), 649-670.

 %\bibitem{Morrey-book-1986}
 %\textsc{C. B. Morrey}, \emph{Multiple integrals in the calculus of variations of variations}, Springer-verlag, Berlin, 1986.


% \bibitem{Moser-book-2005}
 %\textsc{R. Moser}, \emph{Partial regularity for harmonic maps and related problem}, World Scientific Publishing Co. Pte. Ltd., Hackensack, NJ, 2005.

% \bibitem{Moser-2015-TAMS}
 %\textsc{R. Moser}, \emph{An $L^p$ regularity theory for harmonic maps}, Trans. Amer. Math. Soc. \textbf{367} (2015), no. 1, 1-30.


% \bibitem{Naber-Notes-2020}
 %\textsc{A. Naber}, \emph{Lecture notes on rectifiable Reifenberg for measures}, Harmonic analysis and applications, 289-345, IAS/Park City Math. Ser., \textbf{27}, Amer. Math. Soc., Providence, RI, 2020.


\bibitem{Naber-V-2017}
\textsc{A. Naber and D. Valtorta}, {\it Reifenberg-rectifiable and the regularity of stationary and minimizing harmonic maps}, Ann. of Math. (2) \textbf{185} (2017), no.1, 131-227.

 \bibitem{Naber-V-2018}
 \textsc{A. Naber and D. Valtorta}, {\it Stratification for sigular set of approximate harmonic maps}, Math. Z. \textbf{290} (2018), no. 3-4, 1415-1455.

\bibitem{Naber-Val-2025-arXiv} \textsc{A. Naber and D. Valtorta,}  {\it Energy identity for stationary harmonic maps}. Preprint at arXiv:2401.02242v2 [math.AP].


% \bibitem{Naber-V-V-2019}
 %\textsc{A. Naber, D. Valtorta and G. Veronelli}, \emph{Quantitative regularity for $p$-harmonic maps}, Comm. Anal. Geom. \textbf{27} (2019), no. 1, 111-159.

 %\bibitem{Nirenberg-1996}
% \textsc{L. Nirenberg}, \emph{An extended interpolation inequality}, Ann. Scuola Norm. Sup. Pisa CI. Sci. (3) \textbf{20} (1996),  no. 4,  733-737.


%\bibitem{Preiss-1987}
%D. Preiss, \emph{Geometry of measures in $\R^n$: distribution, rectifiability, and densities}, Ann. of Math. (2) \textbf{125} (1987), no. 3, 537-643.
% \bibitem{Riviere-1995-Acta}
 %\textsc{T. Rivi\`{e}re}, \emph{Everywhere discontinuous harmonic maps into spheres}, Acta Math. \textbf{175} (1995), 197-226.

 \bibitem{Riviere-2007-Invent}
 \textsc{T. Riviere}, {\it Conservation laws for conformally invariant variational problems}, Invent. Math. \textbf{168} (2007), 1-22.

 %\bibitem{Riviere-2012}
% \textsc{T. Rivi\`{e}re}, \emph{The role of conservation laws in the analysis of conformally invariant problems}, Edizioni della Normale, Pisa, 2012.

 %\bibitem{Rivere-Struwe-2008}
% \textsc{T. Rivi\`{e}re and M. Struwe}, \emph{Partial regularity for harmonic maps and related problems}, Commun.Pure Appl. Math. \textbf{61} (4) (2008), 451-463.

\bibitem{Scheven-2006-MZ}
\textsc{C. Scheven}, {\it Partial regularity for stationary harmonic maps at a free boundary}, Math. Z. \textbf{253}. (2006), 135-157.

%\bibitem{Sampson-1978}
 %\textsc{J. H. Sampson}. \emph{Some properties and applications of harmonic mappings}, Ann. Sci. Ecole Norm. ´Sup. (4), \textbf{11}(2) (1978), 211-228.

\bibitem{Scheven-2008-ACV}
\textsc{C. Scheven}, {\it Dimension reduction for the singular set of biharmonic maps}, Adv. Cal. Var. \textbf{1} (2008), no. 1, 53-91.

\bibitem{Scheven-2009-Poincare} \textsc{C. Scheven}, {\it An optimal partial regularity result for minimizers of an intrinsically defined second-order functional}, Ann. Inst. H. Poincar\'e Anal. Non Lin\'eaire \textbf{26} (5)  (2009), 1585–1605.

% \bibitem{Schikorra-2010-Poincare}
 %\textsc{A. Schikorra}, \emph{A remark on guage transformation and the moving frame method}, Ann. Inst. H. Poincar\'{e} Anal. Non Lin\'{e}aire \textbf{27} (2010), 503-515.


 \bibitem{Schikorra-2018-Anal}
 \textsc{A. Schikorra}, {\it Boundary equations and regularity theory for geometric variational systems with Neumanndata}, Arch. Ration. Mech. Anal. \textbf{229} (2018), 709–788.


% \bibitem{Schoen-1984}
 %\textsc{R. Schoen}, \emph{Analytic aspects of the harmonic map problem}, Seminar on Nonlinear Partial Differential Equations(S.S.Chern editor), MSRI Publications, vol.2, Springer-Verlag, New York, 1984.

 \bibitem{Schoen-Uhlenbeck-1982}
 \textsc{R. Schoen and K. Uhlenbeck}, {\it  A regularity and singularity theory for harmonic maps}, J. Diff. Geom. \textbf{17} (1982), 307-335.

% \bibitem{Simon-book-1983}
 %\textsc{L.Simon}, \emph{Lectures on Geometric Measure Theory}, Proceedings of the Centre for Mathematical Analysis, Australian National University, \textbf{3}. Australian National University, Centre for Mathematical Analysis, Canberra, 1983.
%
 %\bibitem{Simon-1996-Regularity}
 %\textsc{L. Simon}, \emph{Proof of the basic regularity theorem for harmonic maps}, Nonlinear partial differential equations in differential geometry (Park City, UT, 1992), 225-256, IAS/Park City Math. Ser., \textbf{2}, Amer. Maath. Soc., Province, RI, 1996.

% \bibitem{Simon-1996}
 %\textsc{L. Simon}, \emph{Singularities of geometric variational problems}, Nonlinear partial differential equations in differential geometry (Park City, UT, 1992), 225-256, IAS/Park City Math. Ser., \textbf{2}, Amer. Maath. Soc., Province, RI, 1996.

\bibitem{Simon-book-1996}
\textsc{L. Simon}, {\it Theorems on The Regularity and Singularity of Energy Minimizing Maps}, ETH Lecture Notes, Birkh\"{a}user, Z\"{u}rich, 1996.

 %\bibitem{Strzelecki-2003-CV} \textsc{P. Strzelecki,}
 %\emph{On biharmonic maps and their generalizations}. Calc. Var. Partial Differential Equations \textbf{18} (2003),  401-432.

 %\bibitem{Strzelecki-2006}
 %\textsc{P. Strzelecki}, \emph{Gagliardo-Nirenberg inequalities with a BMO term}, Bull. Lond. Math. Soc. \textbf {38}  (2006), 294-300.


 \bibitem{Struwe-1984}
\textsc{M. Struwe}, {\it On a free boundary problem for minimal surfaces}, Invent. Math. \textbf{75} (1984), 547-560.

\bibitem{Struwe-2024-APDE} \textsc{M. Struwe,} {\it  Plateau flow or the heat flow for half-harmonic maps}. Analysis and PDE \textbf{17} (2024), 1397-1438.


% \bibitem{Struwe-2008}
% \textsc{M. Struwe}, \emph{Partial regularity for biharmonic maps revisited}, Calc. Var. Partial Differential Equations \textbf{33} (2008), no. 2, 249-262.

 %\bibitem{Wang-2004-MathZ}
% \textsc{C. Y. Wang}, \emph{Biharmonic maps from $\R^4$ into a Riemannian manifold}, Math. Z. \textbf{247}, (2004), 65-87.

% \bibitem{Wang-2004-CPAM}
 %\textsc{C. Y. Wang}, \emph{Stationary weakly 1/2-harmonic maps from $\mathbb{R}^n$ into a Riemannian manifold}, Comm. Pure Appl. Math. \textbf{57}  (2004), 419-444.

 %\bibitem{Wang-2004-CVPDE}
 %\textsc{C. Y. Wang}, \emph{Remarks on biharmonic maps into spheres}, Calc. Var. Partial Differential Equations. \textbf{21} (2004), no. 3, 221-242.

%\bibitem{Wang-Zheng-2012-JFA}
%\textsc{C. Y. Wang and S. Z. Zheng}, \emph{Energy identity of approximate biharmonic maps to Riemannian manifolds and its application}, J. Funct. Anal. \textbf{263} (2012), no. 4, 960-987.



%\bibitem{Wang-Zheng-2013-DCDS}
%\textsc{C. Y. Wang and S. Z. Zheng}, \emph{Energy identity for a class of approximate biharmonic maps in dimension four}, Discrete Contin. Dyn. Syst. \textbf{33} (2013).

%\bibitem{Wang-Zheng-2012-JFA}
%%\textsc{C. Y. Wang and S. Z. Zheng}, \emph{Energy identity of approximate biharmonic maps to Riemannian manifolds and its application}, J. Funct. Anal. \textbf{263} (2012), no. 4, 960-987.

%\bibitem{Ziemer-book-1989}
%\textsc{W. P. Ziemer}, \emph{Weakly differentiable functions.Sobolev spaces and functions of bounded variation}, Graduate Texts in Mathematics, \textbf{120}. Springer, New York, 1989.


%\bibitem{phar}
%A.Naber,D.Voltorta,G.Veronelli,:Quantitative regularity for p-harmonic maps

\bibitem{Wang-Xiang-Zheng-2026-JDE}
\textsc{Y.-Y. Wang, C.-L. Xiang and G.-F. Zheng}, {\it Quantitative regularity for minimizing intrinsic fractional harmonic maps}, J. Differential Equations \textbf{463} (2026), Paper No. 114243.
\end{thebibliography}
\end{document}